\newcommand{\arxiv}[1]{\href{http://arxiv.org/abs/#1}{\tt arXiv:\nolinkurl{#1}}}
\newcommand{\arXiv}[1]{\href{http://arxiv.org/abs/#1}{\tt arXiv:\nolinkurl{#1}}}
\newcommand{\googlebooks}[1]{(preview at \href{http://books.google.com/books?id=#1}{google books})}
\definecolor{dark-red}{rgb}{0.7,0.25,0.25}
\definecolor{dark-blue}{rgb}{0.15,0.15,0.55}
\definecolor{medium-blue}{rgb}{0,0,.8}
\definecolor{DarkGreen}{RGB}{0,150,0}
\definecolor{rho}{named}{red}
\theoremstyle{plain}
\newtheorem{thm}{Theorem}[section]
\newtheorem*{thm*}{Theorem}
\newtheorem{thmalpha}{Theorem}
\newtheorem{cor}[thm]{Corollary}
\newtheorem*{cor*}{Corollary}
\newtheorem*{conj*}{Conjecture}
\newtheorem{lem}[thm]{Lemma}
\newtheorem{facts}[thm]{Facts}
\newtheorem{prop}[thm]{Proposition}
\newtheorem*{quest*}{Question}
\newtheorem*{claim*}{Claim}
\theoremstyle{definition}
\newtheorem{defn}[thm]{Definition}
\newtheorem{construction}[thm]{Construction}
\newtheorem{assumption}[thm]{Assumption}
\newtheorem{nota}[thm]{Notation}
\newtheorem{ex}[thm]{Example}
\newtheorem{sub-ex}[thm]{Sub-Example}
\newtheorem{rem}[thm]{Remark}
\newtheorem*{rem*}{Remark}
\DeclareMathOperator{\Ad}{Ad}
\DeclareMathOperator{\coev}{coev}
\DeclareMathOperator{\End}{End}
\DeclareMathOperator{\ev}{ev}
\DeclareMathOperator{\Hom}{Hom}
\DeclareMathOperator{\op}{op}
\DeclareMathOperator{\ONB}{ONB}
\DeclareMathOperator{\Ob}{Ob}
\DeclareMathOperator{\spann}{span}
\DeclareMathOperator{\id}{id}
\DeclareMathOperator{\ind}{ind}
\DeclareMathOperator{\Irr}{Irr}
\DeclareMathOperator{\tr}{tr}
\newcommand{\comment}[1]{}
\newcommand{\be}{\begin{enumerate}[label=(\arabic*)]}
\newcommand{\ee}{\end{enumerate}}
\newcommand{\C}{\mathbb{C}}
\newcommand{\set}[2]{\left\{#1 \middle| #2\right\}}
\def\semicolon{;}
\def\applytolist#1{
    \expandafter\def\csname multi#1\endcsname##1{
        \def\multiack{##1}\ifx\multiack\semicolon
            \def\next{\relax}
        \else
            \csname #1\endcsname{##1}
            \def\next{\csname multi#1\endcsname}
        \fi
        \next}
    \csname multi#1\endcsname}
\def\calc#1{\expandafter\def\csname c#1\endcsname{{\mathcal #1}}}
\def\bbc#1{\expandafter\def\csname bb#1\endcsname{{\mathbb #1}}}
\def\bfc#1{\expandafter\def\csname bf#1\endcsname{{\mathbf #1}}}
\def\sfc#1{\expandafter\def\csname s#1\endcsname{{\sf #1}}}
\def\fc#1{\expandafter\def\csname f#1\endcsname{{\mathfrak #1}}}
\newcommand{\Rep}{{\sf Rep}}
\newcommand{\FreeMod}{{\sf FreeMod}}
\newcommand{\Bim}{{\sf Bim}}
\newcommand{\bfBim}{{\sf Bim_{b.f.}}}
\renewcommand{\Vec}{{\sf Vec}}
\newcommand{\fdVec}{{\sf Vec_{f.d.}}}
\newcommand{\Hilb}{{\sf Hilb}}
\newcommand{\fdHilb}{{\sf Hilb_{f.d.}}}
\newcommand{\noshow}[1]{}
\newcommand{\MR}[1]{}
\tikzset{
	super thick/.style={line width=3pt}
}
\tikzset{
    quadruple/.style args={[#1] in [#2] in [#3] in [#4]}{
        #1,preaction={preaction={preaction={draw,#4},draw,#3}, draw,#2}
    }
} 
\tikzstyle{shaded}=[fill=red!10!blue!20!gray!30!white]
\tikzstyle{unshaded}=[fill=white]
\tikzstyle{empty box}=[circle, draw, thick, fill=white, opaque, inner sep=2mm]
\tikzstyle{annular}=[scale=.7, inner sep=1mm, baseline]
\tikzstyle{rectangular}=[scale=.75, inner sep=1mm, baseline=-.1cm]
\tikzstyle{mid>}=[decoration={markings, mark=at position 0.5 with {\arrow{>}}}, postaction={decorate}]
\tikzstyle{mid<}=[decoration={markings, mark=at position 0.5 with {\arrow{<}}}, postaction={decorate}]
\tikzstyle{over}=[double, draw=white, super thick, double=]
\newcommand{\roundNbox}[6]{
	\draw[rounded corners=5pt, very thick, #1] ($#2+(-#3,-#3)+(-#4,0)$) rectangle ($#2+(#3,#3)+(#5,0)$);
	\coordinate (ZZa) at ($#2+(-#4,0)$);
	\coordinate (ZZb) at ($#2+(#5,0)$);
	\node at ($1/2*(ZZa)+1/2*(ZZb)$) {#6};
}
\begin{document}
\title{Operator algebras in rigid C*-tensor categories}
\author{Corey Jones and David Penneys}
\date{\today}
\maketitle
\begin{abstract} 
In this article, we define operator algebras internal to a rigid C*-tensor category $\cC$.
A C*/W*-algebra object in $\cC$ is an algebra object $\bfA$ in $\ind$-$\cC$ whose category of free modules $\FreeMod_{\cC}(\bfA)$ is a $\cC$-module C*/W*-category respectively.
When $\cC=\fdHilb$, the category of finite dimensional Hilbert spaces, we recover the usual notions of operator algebras.

We generalize basic representation theoretic results, such as the Gelfand-Naimark and von Neumann bicommutant theorems, along with the GNS construction.
We define the notion of completely positive maps between C*-algebra objects in $\cC$ and prove the analog of the Stinespring dilation theorem.

As an application, we discuss approximation and rigidity properties, including amenability, the Haagerup property, and property (T) for a connected W*-algebra $\bfM$ in $\cC$.
Our definitions simultaneously unify the definitions of analytic properties for discrete quantum groups and rigid C*-tensor categories.
\end{abstract}

\setcounter{tocdepth}{2}
\tableofcontents

\settocdepth{section}

\section{Introduction}\label{sec:Introduction}

Algebras of operators on Hilbert space were first introduced to give a rigorous mathematical understanding of quantum mechanics.
Of particular importance are von Neumann algebras (W*-algebras) and C*-algebras, introduced by von Neumann \cite{MR0009096} and Gelfand-Naimark \cite{MR0009426} respectively.
Later on, abstract algebraic characterizations of C* and W*-algebras were given, which make no reference to the underlying Hilbert space.
Operator algebras have seen important applications to many branches of mathematics, including representation theory, conformal and quantum field theory, and most recently topological phases of matter.

Classically, the symmetries of a mathematical object form a group.
In recent decades, we have seen the emergence of quantum mathematical objects whose symmetries form a group-like object called a \emph{tensor category}.
Two important examples of such objects are quantum groups and subfactors, which are said to encode \emph{quantum symmetry}.

The modern theory of subfactors began with Jones' landmark result in \cite{MR0696688} showing that the index of a ${\rm II}_1$ subfactor lies in the set
$
\set{4\cos^2(\pi/n)}{n\geq 3}\cup[4,\infty]
$.
We study a finite index subfactor $N\subseteq M$ by analyzing its standard invariant, which has a number of different axiomatizations.
In finite depth, we have Ocneanu' paragroups \cite{MR996454,MR1642584}, and in the general case, we have Popa's $\lambda$-lattices \cite{MR1334479} and Jones' planar algebras \cite{math.QA/9909027}.
We may also view the standard invariant as the rigid C*-tensor category whose objects are the bifinite $N-N$ Hilbert bimodules generated by $L^2(M)$ 
and whose morphisms are bounded $N-N$ bilinear intertwiners, together with the distinguished Frobenius algebra object $L^2(M)$.

We are currently seeing the emergence of new mathematical objects which encode \emph{enriched} quantum symmetry, including superfusion categories \cite{MR2609644,1603.05928,1603.09294,1606.03466}, tensor categories enriched in braided tensor categories \cite{MR2177301,Enriched}, para planar algebras \cite{1602.02662}, and anchored planar algebras in braided pivotal categories \cite{1607.06041}.
To understand these notions from an operator algebraic framework, we must develop a theory of \emph{enriched operator algebras}, or operator algebras \emph{internal} to a rigid C*-tensor category.

In turn, these enriched operator algebras give a uniform approach to analytic properties, like amenability, the Haagerup property, and property (T), for discrete groups, (discrete) quantum groups, subfactors \cite{MR1729488}, and rigid C*-tensor categories \cite{MR3406647}.

\subsection{Algebras in monoidal categories and module categories}

Suppose $\cC$ is a semi-simple monoidal category enriched in $\fdVec$, the category of finite dimensional vector spaces.
A unital algebra in $\cC$ is an object $a\in \cC$ together with morphisms $i: 1_\cC\to a$ and $m: a\otimes a\to a$ which satisfy unit and associativity axioms.
It is easy to see that the category of algebras in $\cC$ with algebra maps is equivalent to the category of lax monoidal functors $\cC^{\text{op}}\to \fdVec$ with lax monoidal natural transformations via the Yoneda embedding $a\mapsto \cC(\,\cdot\,, a)$.

However, we want to generalize all algebras, including infinite dimensional ones.
To do so, we replace $\cC$ with $\Vec(\cC)$, the category of linear functors $\cC^{\op}\to \Vec$, where $\Vec$ denotes the category of all vector spaces.
There is a natural notion of tensor product of two such functors akin to the Day convolution product \cite{MR0272852}.
(In fact, $\Vec(\cC)$ is equivalent to the $\ind$-category of $\cC$.)
Again, we have an equivalence of categories between algebra objects in $\Vec(\cC)$ and lax monoidal functors $\cC^{\text{op}}\to \Vec$. 
An ordinary complex algebra $A$ now corresponds to the involutive lax monoidal functor $\bfA:\cC=\Vec_{\sf f.d.}^{\text{op}} \to \Vec$ sending $\bbC$ to $A$. 

A common theme in mathematics is trading an object for its representation theory.
For example, the Gelfand transform allows us to trade a unital commutative C*-algebra $A$ for its compact Hausdorff topological space of algebra representations $A\to \bbC$.
In this sense, we think of C*-algebras as encoding non-commutative topology.

For each $c\in \cC$, we write $\mathbf{c} = \cC(\,\cdot\,, c)$ for its image under the Yoneda embedding.
We may trade our algebra object $\bfA\in \Vec(\cC)$ for its category $\FreeMod_\cC(\bfA)$ of \emph{free modules} in $\Vec(\cC)$, whose objects are right $\bfA$-modules of the form $\mathbf{c}\otimes \bfA$ for $c\in\cC$, and whose morphisms are right $\bfA$-module maps.
References for the free module functor include  \cite{MR1936496,MR2863377,1509.02937}.
Notice that $\FreeMod_\cC(\bfA)$ carries the structure of a \emph{left} $\cC$-\emph{module category}.
We may now recover the algebra structure on $\bfA$ from the category $\FreeMod_\cC(\bfA)$ with its distinguished base-point $\bfA$ by the Yoneda lemma, or the internal hom contruction, since for each $c\in \cC$, we have a canonical isomorphism 
$$
\bfA(c) \underset{\text{Yoneda}}{\cong} \Hom_{\Vec(\cC)}(\mathbf{c}, \bfA)\cong \Hom_{\FreeMod_\cC(\bfA)}(\mathbf{c}\otimes \bfA , \bfA).
$$
Thus the category algebra objects $\bfA\in \Vec(\cC)$ is equivalent to the category of left $\cC$-module categories $\cM$ with distinguished \emph{basepoint} $m\in\cM$ and whose objects are of the form $c\otimes m$ for $c\in\cC$.
We call such a left $\cC$-module category \emph{cyclic}.
Usually this result is stated as a correspondence between Morita classes of algebras and left $\cC$-module categories without basepoints.
Remembering the basepoint allows us to recover the actual algebra, not just its Morita class.

\begin{figure}[!ht]
\begin{tabular}{|l|l|}
\hline
Algebras in $\bfA\in\Vec(\cC)$ 
& 
Cyclic $\cC$-module categories
\\\hline
Trivial algebra $\mathbf{1}=\cC(\,\cdot\,,1_\cC)\in \Vec(\cC)$
&
Trivial cyclic $\cC$-module category $(\cC,1_\cC)$
\\
Algebra natural transformation $\bfA\Rightarrow \bfB$ 
& 
Cyclic $\cC$-module functor $(\cM,m)\to (\cN,n)$
\\
Endomorphism algebra $\bfL(\bfV)$
&
Cyclic $\cC$-module generated by $\bfV\in \Vec(\cC)$
\\
Representation $\bfA \Rightarrow \bfL(\bfV)$
&
Cyc.~$\cC$-mod.~functor $(\cM,m) \to (\Vec(\cC),\bfV)$
\\\hline
\end{tabular}
\caption{Morita correspondence between algebra objects and cyclic module categories}
\label{fig:MoritaCorrespondence1}
\end{figure}

\subsection{\texorpdfstring{$*$}{*}-Algebras and operator algebras}

More structure on $\cC$ is needed to define a $*$-structure on an algebra object $\bfA\in \Vec(\cC)$.
An \emph{involutive structure} on $\cC$ is an anti-tensor functor $(\overline{\,\cdot\,},\nu,\varphi, r): \cC\to \cC$ where
$\overline{\,\cdot\,}$ is an anti-linear functor,
$\nu_{a,b}: \overline{a}\otimes \overline{b} \to \overline{b\otimes a}$ is a family of natural isomorphisms,
$\varphi:\overline{\overline{\,\cdot\,}}\Rightarrow \id$ is a monoidal natural isomorphism, 
and $r: 1_\cC\to \overline{1_\cC}$ is a \emph{real structure}, all of which are compatible (see Section \ref{sec:Involutions} for more details).
Given an involutive structure on $\cC$, 
we define a $*$-algebra object in $\Vec(\cC)$ to be an involutive lax monoidal functor $\cC^{\text{op}} \to \Vec$.

Hilbert spaces provide a natural context to discuss representations of $*$-algebras and operator algebras.
This has two important consequences.

First, to define an operator algebra internal to an involutive tensor category, 
we restrict our attention to bi-involutive tensor categories \cite{1511.05226} which have a dagger structure compatible with the involutive structure.
(In the graphical calculus for tensor categories, one thinks of the involutive structure as reflection about the $y$-axis and the dagger structure as reflection about the $x$-axis.
The bi-involutive condition is that these reflections commute.)
In this article, we focus completely on the case when $\cC$ is a \emph{rigid} C*-\emph{tensor category} with simple unit object.
While there should be interesting results using arbitrary bi-involutive categories, our most important results rely on objects of $\cC$ being \emph{dualizable}, and on the fact tha we may equip each morphism space $\cC(a,b)$ with the structure of a finite dimensional Hilbert space.

Our Morita correspondence above in Figure \ref{fig:MoritaCorrespondence1} restricts to an equivalence of categories between $*$-algebras in $\Vec(\cC)$ 
and cyclic $\cC$-module \emph{dagger} categories. 
Using this finer correspondence, we say that a $*$-algebra object $\bfA\in \Vec(\cC)$ is a C*-\emph{algebra object} precisely when the dagger category $\FreeMod_\cC(\bfA)$ is a C*-category.
This definition is similar in spirit to the fact that being a C*-algebra is merely a property of a complex $*$-algebra, not extra structure. 
There is an obvious analogous definition for a W*-algebra object, which again is a property of a $*$-algebra object.

The second consequence is that to represent a $*$-algebra object $\bfA\in\Vec(\cC)$, we need the notion of a Hilbert space object in $\cC$.
As with vector spaces, we replace the category of Hilbert spaces $\Hilb$ with $\Hilb(\cC)$, the category of linear dagger functors $\bfH: \cC^{\op} \to \Hilb$ with bounded linear natural transformations.
This means if $\bfH, \bfK\in \Hilb(\cC)$, a natural transformation $\theta=(\theta_c)_{c\in\cC}: \bfH\Rightarrow \bfK$ is a morphism in $\Hom_{\Hilb(\cC)}(\bfH, \bfK)$ if and only if
$$
\sup_{c\in\cC}\|\theta_c : \bfH(c)\to \bfK(c)\|<\infty.
$$
Notice that in this case, the adjoint $\theta^* : \bfK \Rightarrow \bfH$ exists, and $\Hilb(\cC)$ is a W*-category.
Whereas $\Vec(\cC)$ is merely involutive, $\Hilb(\cC)$ is bi-involutive, as the dagger structure is compatible with the involutive structure.

Given a Hilbert space object $\bfH\in \Hilb(\cC)$, there is a canonical W*-algebra object $\bfB(\bfH)\in \Vec(\cC)$ given by the internal hom;
we have natural isomorphisms
$$
\bfB(\bfH)(a)
\underset{\text{Yoneda}}{\cong}
\Hom_{\Vec(\cC)}(\mathbf{a}, \bfB(\bfH)) 
\cong 
\Hom_{\Hilb(\cC)}(\mathbf{a}\otimes \bfH , \bfH).
$$

\begin{figure}[!ht]
\begin{tabular}{|l|l|}
\hline
$*$-Algebras in $\bfA\in\Vec(\cC)$ 
& 
Cyclic $\cC$-module dagger categories
\\\hline
$*$-Algebra nat.~trans.
&
Cyclic $\cC$-module dagger functor
\\
C*-algebra object 
&
Cyclic $\cC$-module C*-category
\\
W*-algebra object
&
Cyclic $\cC$-module W*-category
\\
W*-algebra object $\bfB(\bfH)$
&
Cyc.~$\cC$-mod.~W*-cat.~generated by $\bfH\in \Hilb(\cC)$
\\
$*$-Algebra representation into $\bfB(\bfH)$
&
Cyclic $\cC$-module dagger functor into $(\Hilb(\cC), \bfH)$
\\
Normal $*$-algebra nat.~trans.
&
Normal cyclic $\cC$-module dagger functor
\\\hline
\end{tabular}
\caption{Correspondence between $*$-algebra objects and cyclic module dagger categories}
\end{figure}

We prove many of the basic operator algebra theorems for operator algebras in $\cC$.

\begin{thmalpha}
[Gelfand-Naimark]
Given any \emph{C*}-algebra object $\bfA\in \Vec(\cC)$,
there is a Hilbert space object $\bfH\in \Hilb(\cC)$ and 
a faithful $*$-algebra natural transformation $\theta: \bfA\Rightarrow \bfB(\bfH)$.
\end{thmalpha}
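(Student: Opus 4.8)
The plan is to carry out the analogue of the classical proof of Gelfand--Naimark: produce enough GNS representations of $\bfA$ and take their Hilbert space direct sum. Recall the structure available: by hypothesis $\cM := \FreeMod_\cC(\bfA)$ is a C*-category, so $\bfA(1_\cC) \cong \End_\cM(\bfA)$ is a unital C*-algebra and the C*-identity holds in every hom space of $\cM$; thus for $a \in \bfA(c) \cong \Hom_\cM(\mathbf c \otimes \bfA, \bfA)$ one has $\|a\|^2 = \|a a^*\|_{\bfA(1_\cC)}$, where $aa^* \in \End_\cM(\bfA)$. We will also use that every $c \in \cC$ is dualizable and that the morphism spaces of $\cC$ are finite dimensional Hilbert spaces, so that $\cC$ carries categorical (partial) traces.

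Next I would set up a categorical GNS construction attached to an arbitrary state $\omega$ on the C*-algebra $\bfA(1_\cC)$. Define the Hilbert space object $\bfH_\omega \in \Hilb(\cC)$ by letting $\bfH_\omega(c)$ be the separated completion of $\bfA(c) \cong \Hom_\cM(\mathbf c \otimes \bfA, \bfA)$ with respect to the positive semidefinite form obtained by combining $\omega$ with the categorical partial trace over $c$; functoriality and positivity then reduce, via dualizability, to positivity of $\omega$, so $\bfH_\omega$ is a genuine dagger functor $\cC^{\op} \to \Hilb$. The left regular action of $\bfA$ on itself descends to a $*$-algebra natural transformation $\pi_\omega : \bfA \Rightarrow \bfB(\bfH_\omega)$: for $a \in \bfA(c)$ left multiplication by $a$ is bounded (with norm controlled by $\|a\|$) because $a^*a$ is sandwiched and $a^*a \le \|a\|^2 \id$, while multiplicativity and $*$-compatibility are diagrammatic checks using the dagger and involutive structures of $\cM$. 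The class $\Omega_\omega$ of $\id_\bfA$ in $\bfH_\omega(1_\cC)$ is a cyclic vector, and $\|\pi_\omega(a)\Omega_\omega\|^2$ equals an $\omega$-value of a categorical trace of $a^*a$; since the categorical trace is faithful, this vanishes for all states $\omega$ only when $a = 0$.

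Then I would assemble these. Set $\bfH := \bigoplus_\omega \bfH_\omega$ over all states $\omega$ on $\bfA(1_\cC)$ --- this lies in $\Hilb(\cC)$ because the transition maps stay uniformly bounded --- and $\theta := \bigoplus_\omega \pi_\omega$, which is a $*$-algebra natural transformation $\bfA \Rightarrow \bfB(\bfH)$ since each $\pi_\omega$ is contractive and $\mathbf c \otimes (-)$ commutes with direct sums in $\Hilb(\cC)$. If $\theta_c(a) = 0$ then $\pi_\omega(a) = 0$ for every $\omega$, hence $\pi_\omega(a)\Omega_\omega = 0$ for every state $\omega$, hence $a = 0$ by the previous paragraph; so $\theta$ is faithful. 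If $\bfA$ is non-unital one runs the same argument with the net obtained from an approximate unit of $\bfA(1_\cC)$.

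The hard part will be the categorical GNS construction itself: realising $\bfH_\omega$ as an honest object of $\Hilb(\cC)$, equipping it with the correct $\cC$-module transition structure (with the right categorical dimension normalisations), and checking that the left regular action assembles into a bona fide $*$-algebra natural transformation into $\bfB(\bfH_\omega)$ that is natural with respect to the Day-type monoidal product on $\Vec(\cC)$ and the internal hom defining $\bfB(-)$. Everything else is a formal consequence of the C*-identity in $\cM$ together with the fact that a C*-algebra has enough states, and each of the compatibility checks should reduce to a manageable string-diagram computation in the rigid C*-tensor category $\cC$.
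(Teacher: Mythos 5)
Your proposal is correct and follows essentially the same route as the paper: bootstrap from the ordinary Gelfand--Naimark theorem for the base C*-algebra $\bfA(1_\cC)$, use the $\bfA(1_\cC)$-valued inner product $\langle\,\cdot\,|\,\cdot\,\rangle_a$ coming from the conditional expectation (whose complete positivity is the key lemma), complete fiberwise to get a Hilbert space object, and let $\bfA$ act by left multiplication, with boundedness from $f^*\circ f\leq\|f\|^2\id$. The only cosmetic difference is that you take the direct sum of GNS representations over all states of $\bfA(1_\cC)$, whereas the paper fixes a single faithful nondegenerate representation $(\pi,K)$ of $\bfA(1_\cC)$ and forms the completion of $\bfA(a)\otimes K$ with respect to $\langle\pi(\langle g|f\rangle_a)\xi,\eta\rangle_K$; your $\bfH$ is the paper's construction applied to the universal representation, and your ``states separate positive elements'' faithfulness argument matches the paper's choice of a vector $\xi$ with $\langle\pi(\langle f|f\rangle_a)\xi,\xi\rangle\neq 0$.
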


We also have a notion of a state on $\bfA$ and a version of the GNS construction.

Given two C*-algebra objects $\bfA,\bfB\in \Vec(\cC)$, a $*$-natural transformation $\theta: \bfA\Rightarrow \bfB$ is called \emph{completely positive} if for every $c\in\cC$, the induced linear $*$-map 
$$
\theta_{\overline{c}\otimes c} : 
\End_{\FreeMod_\cC(\bfA)}(\mathbf{c}\otimes \bfA , \mathbf{c}\otimes \bfA)
\longrightarrow
\End_{\FreeMod_\cC(\bfB)}(\mathbf{c}\otimes \bfB , \mathbf{c}\otimes \bfB)
$$
maps positive elements to positive elements (note that both of these endomorphism algebras are C*-algebras).
Since the categories of free $\bfA/\bfB$-modules admits finite direct sums, positivity on $\cC$ implies complete positivity of the maps $\theta_{\overline{c}\otimes c}$.

\begin{thmalpha}
[Stinespring dilation]
Suppose $\theta: \bfA \Rightarrow \bfB(\bfH)$ is unital completely positive.
Then there is a Hilbert space object $\bfK\in\Hilb(\cC)$, a $*$-algebra natural transformation $\pi: \bfA\Rightarrow \bfB(\bfK)$, and an isometry $v: \bfH\Rightarrow \bfK$ such that $\theta = \Ad(v)\circ \pi$ as $*$-natural transformations.
\end{thmalpha}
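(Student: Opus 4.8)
The plan is to internalize the classical Stinespring construction --- i.e.\ the GNS-type construction associated to a $\bfB(\bfH)$-valued completely positive map (for $\bfH=\mathbf 1$ it recovers the GNS construction mentioned above). Regard the Hilbert space object $\bfH$ as an object of $\Vec(\cC)=\ind\text{-}\cC$ (forgetting the dagger structure) and set $\bfK_0:=\bfA\otimes\bfH$, the monoidal product in $\ind\text{-}\cC$; it is a left $\bfA$-module via $m\otimes\id_{\bfH}$. After unwinding the anti-tensor isomorphism $\overline{\bfK_0}\cong\overline{\bfH}\otimes\overline{\bfA}$ and reassociating, equip $\bfK_0$ with the $\mathbf 1$-valued sesquilinear form given by the composite
\[
\overline{\bfH}\otimes\bigl(\overline{\bfA}\otimes\bfA\bigr)\otimes\bfH
\xrightarrow{\;\id\otimes\mu\otimes\id\;}
\overline{\bfH}\otimes\bfA\otimes\bfH
\xrightarrow{\;\id\otimes\theta\otimes\id\;}
\overline{\bfH}\otimes\bfB(\bfH)\otimes\bfH
\xrightarrow{\;\ev\;}
\mathbf 1,
\]
where $\mu\colon\overline{\bfA}\otimes\bfA\to\bfA$ is the ``$(a,b)\mapsto a^{*}b$'' morphism built from the $*$- and multiplication structure of $\bfA$, and $\ev$ is the $\bfB(\bfH)$-action on $\bfH$ followed by the inner product $\overline{\bfH}\otimes\bfH\to\mathbf 1$. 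This is the internal form of $\langle a\otimes\xi,\,b\otimes\eta\rangle=\langle\xi,\theta(a^{*}b)\eta\rangle_{\bfH}$.

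The crux is that this form is positive semidefinite, and this is exactly where complete positivity of $\theta$ enters. Unwinding the coend defining $\bfK_0(c)$, a vector is represented by a finite sum $\sum_{k}\phi_{k}\otimes x_{k}\otimes\zeta_{k}$ with $\phi_{k}\in\cC(c,a_{k}\otimes b_{k})$, $x_{k}\in\bfA(a_{k})$, $\zeta_{k}\in\bfH(b_{k})$, and its self-pairing is a fixed expression in the $\phi_{k}$'s applied to the entries of the Gram matrix $[\theta(x_{k}^{*}x_{l})]_{k,l}$. With $d:=\bigoplus_{k}a_{k}$, the matrix $[x_{k}^{*}x_{l}]_{k,l}$ is a positive element of the C*-algebra $\End_{\FreeMod_\cC(\bfA)}(\mathbf d\otimes\bfA,\mathbf d\otimes\bfA)$ (it equals $X^{*}X$), and $[\theta(x_{k}^{*}x_{l})]_{k,l}$ is its image under the $2$-box map $\theta_{\overline{d}\otimes d}$; positivity of that image is precisely the hypothesis that $\theta$ is completely positive (since the free-module categories admit direct sums, this one family of maps encodes all matrix amplifications). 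Separating by the null subobject and completing each $\bfK_0(c)$ produces a Hilbert space object $\bfK\in\Hilb(\cC)$, with the naturality and boundedness constraints inherited from the naturality of $\theta$ and the structure of $\bfH\in\Hilb(\cC)$.

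It remains to build $\pi$ and $v$. For $a\in\bfA(c)$, left multiplication on $\bfK_0$ is a map $\mathbf c\otimes\bfK_0\to\bfK_0$; applying $\theta_{\overline{c}\otimes c}$ to the positive element $\|a\|^{2}\id-a^{*}a$ of the C*-category $\FreeMod_\cC(\bfA)$ and using positivity of the form gives the uniform estimate $\|a\,\psi\|\le\|a\|\,\|\psi\|$, so the action descends to the null subobject and extends to a bounded $\mathbf c\otimes\bfK\to\bfK$, i.e.\ an element of $\bfB(\bfK)(c)$. Naturality in $c$, compatibility with $m$, and the $*$-relation $\langle\pi(a)\psi,\psi'\rangle=\langle\psi,\pi(a^{*})\psi'\rangle$ (from associativity of $m$ and the definition of the form) are then formal, so $\pi\colon\bfA\Rightarrow\bfB(\bfK)$ is a $*$-algebra natural transformation. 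The unit $i\colon\mathbf 1\to\bfA$ induces $\bfH=\mathbf 1\otimes\bfH\to\bfA\otimes\bfH=\bfK_0$, and composing with the quotient/completion map gives $v\colon\bfH\Rightarrow\bfK$; it is an isometry because $\langle v\xi,v\eta\rangle=\langle\xi,\theta(1^{*}1)\eta\rangle=\langle\xi,\eta\rangle$ by unitality of $\theta$. Finally $\langle v\eta,\pi(a)\,v\xi\rangle=\langle 1\otimes\eta,\,a\otimes\xi\rangle=\langle\eta,\theta(a)\xi\rangle$, which is exactly $v^{*}\pi(a)v=\theta(a)$, i.e.\ $\theta=\Ad(v)\circ\pi$.

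The one genuine step is the positivity above --- both the positive semidefiniteness itself and the identification of the Gram-matrix condition with complete positivity as phrased here (positivity of each $\theta_{\overline{c}\otimes c}$ on the $2$-box algebras, together with closure of $\FreeMod_\cC(\bfA)$ under direct sums). Everything else is bookkeeping: promoting $\bfK_0$, its form, and the $\bfA$-action to honest (pre-)Hilbert space objects and natural transformations in $\cC$, with the right uniform bounds. Once the dictionary between ``finite sums of elementary tensors over an object $c$'' and ``(positive) elements of $\End_{\FreeMod_\cC(\bfA)}(\mathbf d\otimes\bfA)$'' is in place, the positivity of the form and the boundedness of $\pi$ reduce to the standard C*-facts $X^{*}X\ge 0$ and $\|a\|^{2}\id-a^{*}a\ge 0$ transported through $\theta$, and the identity $\theta=\Ad(v)\circ\pi$ is a one-line diagram chase.
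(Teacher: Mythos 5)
Your proposal is correct and follows essentially the same route as the paper: both internalize the classical Stinespring/GNS construction on $\bfA\otimes\bfH$ with the form $\langle a\otimes\xi,b\otimes\eta\rangle=\langle\xi,\theta(a^{*}b)\eta\rangle_{\bfH}$, obtain positivity from complete positivity of $\theta$ on the endomorphism C*-algebras of free modules, get boundedness of the left action from $a^{*}a\le\|a\|^{2}\id$ transported through $\theta$, and take $v$ induced by the unit. Your Gram-matrix argument (writing $[x_k^*x_l]=X^*X$ over $d=\oplus_k a_k$ and applying $\theta_{\overline{d}\otimes d}$) in fact spells out the positivity step that the paper only asserts in one line.
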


We can also define the notion of the commutant of a $*$-algebra object $\bfA\in \Vec(\cC)$ in a given representation.
To do so, we pass to the corresponding $\cC$-module dagger categories.
A representation $\pi: \bfA \Rightarrow \bfB(\bfH)$ gives us a $\cC$-module dagger functor $\check{\pi}: \FreeMod_\cC(\bfA) \to \Hilb(\cC)$ sending $\bfA$ to $\bfH$.
We define the commutant of a cyclic left $\cC$-module dagger subcategory $\cM\subset \Hilb(\cC)$ with basepoint $\bfH$ to be the cyclic \emph{right} $\cC$-module dagger subcategory $\cM'\subset \Hilb(\cC)$ with basepoint $\bfH$ whose morphism space $\cM'(\bfH\otimes \mathbf{c}, \bfH\otimes \mathbf{d})$ is the space of all $f\in \Hom_{\Hilb(\cC)}(\bfH\otimes \mathbf{c}, \bfH\otimes \mathbf{d})$ which commute with all morphisms from $\cM$:
$$
\set{f:\bfH\otimes \mathbf{c} \to \bfH\otimes \mathbf{d}\,}{\,
\text{for all $a,b\in\cC$ and $g\in \cM(\mathbf{a}\otimes \bfH, \mathbf{b}\otimes \bfH)$, }
\begin{tikzpicture}[baseline=-.1cm]
	\draw (0,-1.1) -- (0,1.1);
	\draw[thick, red] (-.4,-1.1) -- (-.4,-.5);
	\draw[thick, blue] (-.4,-.5) -- (-.4,1.1);
	\draw[thick, DarkGreen] (.4,-1.1) -- (.4,.5);
	\draw[thick, orange] (.4,.5) -- (.4,1.1);
	\roundNbox{unshaded}{(.1,.5)}{.3}{0}{.2}{$f$}
	\roundNbox{unshaded}{(-.1,-.5)}{.3}{.2}{0}{$g$}
	\node at (0,-1.3) {\scriptsize{$\bfH$}};
	\node at (0,1.3) {\scriptsize{$\bfH$}};
	\node at (.4,-1.3) {\scriptsize{$\mathbf{c}$}};
	\node at (.4,1.3) {\scriptsize{$\mathbf{d}$}};
	\node at (-.4,-1.3) {\scriptsize{$\mathbf{a}$}};
	\node at (-.4,1.3) {\scriptsize{$\mathbf{b}$}};
\end{tikzpicture}
=
\begin{tikzpicture}[baseline=-.1cm]
	\draw (0,-1.1) -- (0,1.1);
	\draw[thick, red] (-.4,-1.1) -- (-.4,.5);
	\draw[thick, blue] (-.4,.5) -- (-.4,1.1);
	\draw[thick, DarkGreen] (.4,-1.1) -- (.4,-.5);
	\draw[thick, orange] (.4,-.5) -- (.4,1.1);
	\roundNbox{unshaded}{(.1,-.5)}{.3}{0}{.2}{$f$}
	\roundNbox{unshaded}{(-.1,.5)}{.3}{.2}{0}{$g$}
	\node at (0,-1.3) {\scriptsize{$\bfH$}};
	\node at (0,1.3) {\scriptsize{$\bfH$}};
	\node at (.4,-1.3) {\scriptsize{$\mathbf{c}$}};
	\node at (.4,1.3) {\scriptsize{$\mathbf{d}$}};
	\node at (-.4,-1.3) {\scriptsize{$\mathbf{a}$}};
	\node at (-.4,1.3) {\scriptsize{$\mathbf{b}$}};
\end{tikzpicture}
}.
$$
We similarly define the commutant of a cyclic right $\cC$-module dagger subcategory of $\Hilb(\cC)$ with basepoint $\bfH$ as a left $\cC$-module dagger subcategory.
Thus the bicommutant of a left $\cC$-module dagger category is again a left $\cC$-module dagger category, which allows us to take the bicommutant of an algebra object in $\Vec(\cC)$.

\begin{thmalpha}
[von Neumann bicommutant]
Given a cyclic left $\cC$-module dagger subcategory $\cM\subset \Hilb(\cC)$ with basepoint $\bfH$, the weak*-closure of $\cM$ is equivalent to its bicommutant $\cM''$.
Thus if $\bfM\in \Vec(\cC)$ is a \emph{W*}-algebra object and $\pi: \bfM \Rightarrow \bfB(\bfH)$ is a faithful normal $*$-algebra natural transformation, we have a $*$-algebra natural isomorphism $\bfM\cong \bfM''$.
\end{thmalpha}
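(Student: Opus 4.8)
The plan is to deduce this categorical bicommutant theorem from the classical von Neumann bicommutant theorem, applied object-by-object to the endomorphism von Neumann algebras of $\Hilb(\cC)$, and then to read off the statement about $\bfM$ by transporting everything through the functor $\check\pi$.

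First I would reduce to a statement about endomorphism algebras. The dagger categories $\cM$, $\overline{\cM}^{w*}$, and $\cM''$ have the same objects --- the $\mathbf{c}\otimes\bfH$ together with finite direct sums, using $\mathbf{c}\otimes\bfH\oplus\mathbf{d}\otimes\bfH\cong\mathbf{(c\oplus d)}\otimes\bfH$ --- so the content is the equality $\cM''(\mathbf{a}\otimes\bfH,\mathbf{b}\otimes\bfH)=\overline{\cM(\mathbf{a}\otimes\bfH,\mathbf{b}\otimes\bfH)}^{w*}$ of subspaces of $\Hom_{\Hilb(\cC)}(\mathbf{a}\otimes\bfH,\mathbf{b}\otimes\bfH)$, for all $a,b\in\cC$. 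Setting $e=a\oplus b$ and letting $p_a,p_b$ be the coordinate projections of $\mathbf{e}\otimes\bfH$, each of these spaces is a corner $p_b\,\End_{(-)}(\mathbf{e}\otimes\bfH)\,p_a$; since $p_a,p_b$ have the form $\varphi\otimes\id_{\bfH}$ with $\varphi\in\End_\cC(e)$, they lie in $\cM$ (closed under the $\cC$-action, and containing $\id_{\bfH}$) and in $\cM''$ (they touch only the $\mathbf{e}$-leg, so they slide past everything in $\cM'$), and compression by a fixed pair of projections is weak$*$-continuous and satisfies $\overline{p_b\cS p_a}^{w*}=p_b\overline{\cS}^{w*}p_a$ for a $*$-subalgebra $\cS\ni p_a,p_b$. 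So it suffices to prove, for every $e\in\cC$, that $\End_{\cM''}(\mathbf{e}\otimes\bfH)=\overline{\End_\cM(\mathbf{e}\otimes\bfH)}^{w*}$ inside $\cA_e:=\End_{\Hilb(\cC)}(\mathbf{e}\otimes\bfH)$, which is a von Neumann algebra because $\Hilb(\cC)$ is a W*-category.

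Next I would invoke the classical theorem inside $\cA_e$: it gives $\overline{\End_\cM(\mathbf{e}\otimes\bfH)}^{w*}=(\End_\cM(\mathbf{e}\otimes\bfH))^{\sharp\sharp}$, where $\sharp$ is the commutant in $\cA_e$. Hence everything reduces to the lemma that this ordinary commutant $(\End_\cM(\mathbf{e}\otimes\bfH))^{\sharp}$ equals the realization in $\cA_e$ of the categorical commutant $\cM'$ --- where a morphism $f\in\cM'(\bfH\otimes\mathbf{c},\bfH\otimes\mathbf{d})$ is turned into an element of $\cA_e$ by choosing solutions to the conjugate equations for $c,d,e$ and rotating the $\mathbf{c},\mathbf{d}$-legs, and one assembles all of these (over $c,d$ and over a decomposition $e\cong\bigoplus_i c_i$). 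Granting this lemma and its mirror image for the right $\cC$-module category $\cM'$, the double commutant $(\End_\cM(\mathbf{e}\otimes\bfH))^{\sharp\sharp}$ is the realization of $\cM''$, i.e.\ $\End_{\cM''}(\mathbf{e}\otimes\bfH)$, which finishes the first assertion. I expect this lemma to be the main obstacle: one containment is just the definition of $\cM'$ read through the graphical calculus, but the other --- that commuting with all of $\End_\cM(\mathbf{e}\otimes\bfH)$ is enough to force a morphism into $\cM'$ --- is the categorical incarnation of the classical step $\xi\in\overline{A\xi}$, and it must use \emph{cyclicity} of $\cM$ (its objects are exactly the $\mathbf{c}\otimes\bfH$, so it contains enough morphisms attached to $\bfH$) in the role played by unitality of a $*$-subalgebra classically. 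Along the way one must check that the rigidity rotations realizing $\cM'$ and $\cM''$ inside $\cA_e$ are weak$*$-homeomorphisms, so that the topological part of the classical theorem transports; this, together with the bookkeeping of left- versus right-module variances, is where rigidity of $\cC$ (dualizable objects, finite-dimensional Hilbert-space hom-spaces, simple unit) enters essentially.

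Finally, for the statement about W*-algebra objects: given a W*-algebra object $\bfM$ and a faithful normal $\pi:\bfM\Rightarrow\bfB(\bfH)$, the $\cC$-module dagger functor $\check\pi:\FreeMod_\cC(\bfM)\to\Hilb(\cC)$ has essential image a cyclic left $\cC$-module dagger subcategory $\cM$ with basepoint $\bfH=\check\pi(\bfM)$. Since $\FreeMod_\cC(\bfM)$ is a $\cC$-module W*-category and $\pi$ is faithful and normal, $\check\pi$ restricts on each endomorphism algebra --- hence, via the corner argument above, on each morphism space --- to a faithful normal $*$-homomorphism of von Neumann algebras, so it is a weak$*$-homeomorphism onto a weak$*$-closed image. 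Thus $\cM$ is weak$*$-closed, so by the first part $\cM=\overline{\cM}^{w*}=\cM''$. Transporting back through the basepoint-preserving correspondence between $*$-algebra objects in $\Vec(\cC)$ and cyclic $\cC$-module dagger categories --- under which $\bfM\leftrightarrow(\cM,\bfH)$ and $\bfM''\leftrightarrow(\cM'',\bfH)$ --- gives the desired $*$-algebra natural isomorphism $\bfM\cong\bfM''$.
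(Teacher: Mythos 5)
Your reduction to corners of endomorphism algebras (Roberts' $2\times 2$ trick) and your final paragraph deducing $\bfM\cong\bfM''$ from the first assertion are both fine, but the core of the argument --- computing $\overline{\End_\cM(\mathbf{e}\otimes\bfH)}^{w*}$ as a bicommutant taken object-by-object inside $\cA_e=\End_{\Hilb(\cC)}(\mathbf{e}\otimes\bfH)$ --- has a genuine gap, and the ``main obstacle'' lemma you flag is in fact false. The relative commutant of $\End_\cM(\mathbf{e}\otimes\bfH)$ inside $\cA_e\cong\bigoplus_{c\in\Irr(\cC)}B\bigl((\mathbf{e}\otimes\bfH)(c)\bigr)$ always contains the center $Z(\cA_e)=\prod_c\bbC\cdot\id_{(\mathbf{e}\otimes\bfH)(c)}$, which is generically \emph{not} in any realization of the categorical commutant $\cM'$; consequently the relative bicommutant contains $\overline{\End_\cM(\mathbf{e}\otimes\bfH)}^{w*}\vee Z(\cA_e)$ and strictly overshoots the weak*-closure. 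Concretely, take $\cC=\fdHilb(G)$, $\bfM=\bbC[G]$ acting on $\bfH=L^2(\bfM)$, and $e=1_\cC$: then $\End_\cM(\bfH)=\bbC$ (the algebra is connected), $\cA_{1_\cC}\cong\ell^\infty(G)$, and your bicommutant returns $\ell^\infty(G)$, whereas $\End_{\cM''}(\bfH)=\bbC$. The point is that the categorical commutant is cut down by commutation with the \emph{off-diagonal} morphism spaces $\cM(\mathbf{a}\otimes\bfH,\mathbf{b}\otimes\bfH)$ with $a\ne b$ (equivalently, with creation-type morphisms and the naturality constraints across the $\Irr(\cC)$-grading), and this information is discarded the moment you fix a single object $\mathbf{e}\otimes\bfH$ and look only at its endomorphisms. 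No appeal to cyclicity repairs this, because the defect is already present for the full category $\cM_\bfH$ itself, whose categorical commutant is trivial while $\End_{\cM_\bfH}(\mathbf{e}\otimes\bfH)^{\prime}\cap\cA_e=Z(\cA_e)$ is large.

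The fix --- and this is what the paper does --- is to apply the classical bicommutant theorem not fiberwise but once, inside a single \emph{linking} von Neumann algebra $\mathrm{W}^*(\bfH)$ acting on the total auxiliary Hilbert space $K=\bigoplus_{a,b}\bigoplus_{e\in\Irr(\cC)}(\mathbf{a}\otimes\bfH\otimes\mathbf{b})(e)$, so that all hom-spaces of $\cM$ (including the off-diagonal ones) sit as corners $q_bMq_a$ of one unital $*$-subalgebra $M\subseteq\cL_\bfH$. The categorical right commutant is then identified with the honest relative commutant $M'\cap\cR_\bfH$ (Proposition \ref{prop:CategoricalAndAlgebraicCommutant}), and the theorem follows from the lattice computation $(M'\cap\cR_\bfH)'\cap\cL_\bfH=M''\vee(\cR_\bfH'\cap\cL_\bfH)=M''\vee\cL_\cC=M''$, whose nontrivial ingredient is Proposition \ref{prop:MHprime} ($\cL_\bfH'\cap\cR_\bfH=\cR_\cC$, i.e.\ $\bfB(\bfH)'=\mathbf{1}$) --- proved by exactly the kind of creation-operator analysis that your object-by-object reduction cannot access. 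If you want to salvage your outline, the lemma you must prove is not about $\End_\cM(\mathbf{e}\otimes\bfH)^{\prime}$ but about the commutant of the whole of $\lambda(\cM)$ inside the linking algebra; at that point your argument essentially becomes the paper's.
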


To prove these results, we bootstrap from the ordinary theory of operator algebras, which is the case $\cC=\fdHilb$.
In this case, $\Vec(\cC) \cong \Vec$, and we see that $*$/C*/W*-algebras $\bfA\in\Vec(\cC)$ exactly correspond to $*$/C*/W*-algebras respectively.
We expect that much of the world of ordinary operator algebras generalizes to the enriched setting.
We summarize the dictionary between ordinary and enriched operator algebras in the table below.

\begin{figure}[!ht]
\begin{tabular}{|l|l|}
\hline
Operator algebras in $\fdHilb$ 
& 
Operator algebras in $\cC$ 
\\\hline
C*-algebra $A$ 
& 
C*-algebra object $\bfA\in\Vec(\cC)$
\\
$*$-homomorphism $A\to B$ 
& 
$*$-algebra n.t.~$\bfA \Rightarrow \bfB$
\\
Hilbert space $H$ 
& 
Hilbert space object $\bfH\in\Hilb(\cC)$
\\
C*-algebra $B(H)$ 
& 
C*-algebra object $\bfB(\bfH)\in \Vec(\cC)$ 
\\
Representation $A\to B(H)$ 
& 
$*$-Algebra n.t.~$\bfA \Rightarrow \bfB(\bfH)$
\\
c.p.~map $A\to B$ 
& 
$*$-n.t.~$\theta: \bfA\Rightarrow \bfB$
such that $\theta_{\overline{c}\otimes c}$ is positive for all $c\in\cC$
\\
State $\varphi: A\to \bbC$
&
State $\varphi: \bfA(1_\cC) \to \bbC$
\\\hline
von Neumann algebra $M$
&
W*-algebra object $\bfM\in \Vec(\cC)$
\\
Trivial algebra $\bbC$
&
Trivial algebra object $\mathbf{1}\in \Vec(\cC)$
\\
Representation $M\to B(H)$
&
normal $*$-algebra n.t.~$\bfM \Rightarrow \bfB(\bfH)$
\\
Commutant $M'\cap B(H)$
&
Commutant W*-algebra object $\bfM'\in \Vec(\cC)$
\\
$B(H)'=\bbC$
&
$\bfB(\bfH)' = \mathbf{1}$
\\
W*-completion $A''\subseteq B(H)$
&
W*-algebra object $\bfA''\in \Vec(\cC)$
\\\hline
\end{tabular}
\caption{
Analogy between ordinary operator algebras and operator algebras in $\cC$.
We use the shorthand `n.t.' for natural transformation and `c.p.' for completely positive.}
\end{figure}

\subsection{Application to approximation and rigidity properties}

Analytic properties such as amenability, the Haageruup property, and property (T) were first defined for countable discrete groups $G$ to characterize the local topological behavior of the  \emph{unitary dual}, i.e., the C*-tensor category of unitary representations, with respect to the Fell topology \cite{MR2415834}.
One can replace this unitary dual with the state space of the universal C*-algebra of $G$, and a state may be viewed as an element of $\ell^\infty(G)$.
In turn, approximation properties, like amenability and the Haagerup property can be defined by the ability to approximate an arbitrary state in $\ell^\infty(G)$ with states that decay sufficiently nicely in $\ell^{\infty}(G)$.
Rigidity properties, like property (T), can be defined by the inability to approximate the identity with \textit{any} class that is small at infinity.

These analytic properties were later generalized to ${\rm II}_1$-factors and quantum groups.
More recently, Popa and Vaes \cite{MR3406647} introduced these notions for rigid C*-tensor categories, generalizing Popa's definitions for subfactors \cite{MR1302385,MR1729488}.
This has led to a great deal recent interest in understanding these properties from different perspectives \cite{MR3509018, MR3447719,1511.06332}.
It has become clear that in this setting, the unitary dual of a rigid C*-tensor category $\cC$ is the tensor category $\cZ(\Hilb(\cC))$.
This category was first introduced by Neshveyev and Yamashita \cite{MR3509018} as the unitary ind-category of $\cC$, and it was shown to be equivalent to the representation category of Ocneanu's tube algebra \cite{1511.07329}.

In this article, we define analytic properties of W*-algebra objects $\bfM\in \Vec(\cC)$ for which $\bfM(1_\cC)$ is trivial.
Our definitions simultaneously unify the definitions of analytic properties for countable discrete groups, quantum groups, subfactors, and rigid C*-tensor categories.  The basic idea is to find a canonical correspondence between states (in the ordinary sense) on the algebra of interest (quantum group algebra, tube algebra, etc.) and ucp maps (in our categorical sense) on an appropriately defined W*-algebra object in the appropriate category.

\subsection{Application to enriched quantum symmetry}

As mentioned earlier, one of the main motivations of this project is the emergence of objects which can be thought to encode enriched quantum symmetries, like superfusion categories, tensor categories enriched in braided tensor categories, and planar algebras internal to a braided pivotal category. 
Now that we have a notion of an enriched fusion category and planar algebra, we should look for the notion of an enriched subfactor.
We would then like to prove that enriched categories and planar algebras arise as the standard invariants of these enriched subfactors.
Indeed, in \cite[Rem.~2.22]{1602.02662}, Jaffe and Liu tell us the theory works in full generality in the $G$-graded setting for $G$-graded subfactors acting on $G$-graded Hilbert spaces for $G$ an abelian group.

The anchored planar algebras defined in \cite{1607.06041} allow the enriching category $\cC$ to be an arbitrary braided pivotal category, whose objects are not necessarily Hilbert spaces.
Thus a more abstract version of operator algebra internal to a rigid C*-tensor category is needed for this more abstract $\cC$-graded subfactor theory.
Indeed, our present notion of C*/W*-algebra object in $\Vec(\cC)$ will be used in \cite{uAPA} to define the notion of a unitary anchored planar algebra in $\Vec(\cC)$ where $\cC$ is a unitary ribbon category.
We anticipate a future generalization of subfactor theory to the $\cC$-graded setting.

\subsection{Acknowledgements}

The authors would like to thank
Marcel Bischoff,
Shamindra Ghosh,
Andr\'{e} Henriques,
Zhengwei Liu,
Thomas Sinclair, 
James Tener and
Makoto Yamashita
for helpful conversations.
Corey Jones was supported by Discovery Projects `Subfactors and symmetries' DP140100732 and `Low dimensional categories' DP160103479 from the Australian Research Council.
David Penneys was supported by DMS NSF grants 1500387 and 1655912.
Both authors would like to thank the Hausdorff Institute for Mathematics for their hospitality during the 2016 trimester on von Neumann algebras, along with the organizers Dietmar Bisch, Vaughan Jones, Sorin Popa, and Dima Shlyakhtenko.

\settocdepth{subsection}

\section{Background}\label{sec:Background}

\begin{nota}
In this article, as much as possible, we use the following types of letters to denote the following types of things.
Categories are denoted by $\cC, \cM, \cN$.
Objects in $\cC$ are denoted by lower case roman letters starting from the beginning of the alphabet $a,b,c$, 
and morphisms in $\cC(c,a\otimes b)$ will be denoted by greek letters $\alpha,\beta,\gamma$ and $\phi,\psi$.

Functors will be denoted by bold-face capital letters, e.g., $\bfA,\bfF$.
For $a\in \cC$, the representable functor $\cC(\,\cdot\,,a)$ is denoted by the bold-face lower case letter $\mathbf{a}$.
Natural transformations are usually denoted by $\theta$, $\sigma$, and $\tau$.

Vectors in vector spaces or Hilbert spaces are denoted by $f,g$ or by $\eta, \xi, \zeta$ depending on the context.
\end{nota}

We begin with a review of the notions of categories that arise in this article.
Sections \ref{sec:Categories}--\ref{sec:RigidC*TensorCategories} are adapted from \cite[\S2.1-2.2]{1511.05226}.

\subsection{Linear, dagger, and C*-categories}
\label{sec:Categories}

In this article, a linear category will mean a $\Vec$-enriched category, where $\Vec$ denotes the category of complex vector spaces, which may be infinite dimensional.
We denote by $\fdVec$ the category of finite dimensional complex vector spaces.

A \emph{dagger category} is a linear category $\cC$ equipped with an anti-linear map $\cC(a,b)\to \cC(b,a)$ for all $a,b\in\cC$ called the \emph{adjoint}.
It must satisfy the axioms $\psi^{**} = \psi$ and $(\psi\circ \phi)^* = \phi^*\circ \psi^*$ for composable $\psi, \phi$, which implies $\id_a^*=\id_a$ for all $a\in\cC$.
An invertible morphism $\psi\in \cC(a,b)$ is called \emph{unitary} if $\psi^*=\psi^{-1}$.

We denote by $\Hilb$ the category of complex Hilbert spaces, and $\fdHilb$ is the category of finite dimensional complex Hilbert spaces.

\begin{rem}
\label{rem:PropertyStarAlgebraC*}
Recall that a $*$-algebra being a C*-algebra is a property, not extra structure.
Given a $*$-algebra $A$, $A$ is a C*-algebra if and only if
$$
\|a\|^2 = \sup\set{|\lambda|\geq 0\,}{\,a^*a-\lambda 1_A\text{ is not invertible}}
$$
defines a genuine norm on $A$, $A$ is complete in this norm, the norm is sub-multiplicative $\|ab\|\leq \|a\|\cdot\|b\|$, and the norm satisfies the C*-axiom $\|a^*a\|=\|a\|^2$.
\end{rem}

\begin{defn}
A dagger category $\cC$ is called a C*-\emph{category} if it satisfies the following two properties (which are not extra structure):
\begin{itemize}
\item 
For every $a,b\in\cC$ and $\psi\in \cC(a,b)$, there is a $\phi\in \cC(a, a)$ such that $\psi^*\circ \psi = \phi^*\circ \phi$.
\item 
For each $a,b\in\cC$, the function $\|\cdot\|: \cC(a,b)\to [0,\infty]$ given by
$$
\|\psi\|^2 = \sup\set{|\lambda|\geq 0\,}{\,\psi^*\circ \psi - \lambda\id_a \text{ is not invertible}}
$$
is a norm on $\cC(a,b)$, $\cC(a,b)$ is complete in this norm, the norm is sub-multiplicative $\|\psi\circ \phi\|\leq \|\psi\|\cdot \|\phi\|$, and these norms satisfy the C*-axiom $\|\psi^*\circ \psi\|=\|\psi\|^2$.
\end{itemize}
Equivalently, by \cite{MR808930}, a dagger category $\cC$ is a C*-category if it admits a faithful dagger functor $\cC\to \Hilb$ which is norm-closed on the level of hom spaces.
When $\cC$ admits direct sums, a dagger category is a C*-category if and only if $\cC(a,a)$ is a C*-algebra for every $a\in \cC$
(see \cite[Def.~1.1]{MR808930}, and use Roberts' $2\times2$ trick to see each $\cC(a,b)$ as a closed subspace of the C*-algebra $\End_\cC(a\oplus b)$).

A C*-category $\cC$ is called a W*-\emph{category} if each Banach space $\cC(a,b)$ has a predual $\cC(a,b)_*$.
Again, this is a property, and not extra structure.
When $\cC$ admits direct sums, we recall from \cite[Lem.\,2.6]{MR808930} that a C*-category is a W*-category if and only if $\cC(a,a)$ is a von Neumann algebra for every $a\in \cC$.
\end{defn}

A functor between linear categories $\bfF: \cC\to \cD$ is called \emph{linear} if the map $\bfF: \cC(a,b) \to \cD(\bfF(a), \bfF(b))$ is linear for all $a,b\in\cC$.
If $\cC$ and $\cD$ are dagger categories, then a linear functor is called a \emph{dagger functor} if $\bfF(\psi^*) = \bfF(\psi)^*$ for all morphisms $\psi$ in $\cC$.
A dagger functor between W*-categories $\bfF: \cC\to \cD$ is called \emph{normal} if for all $a,b\in \cC$, the map $\cC(a,b) \to \cD(\bfF(a), \bfF(b))$ is continuous with respect to the weak*-topologies.
Equivalently, if $\cC, \cD$ admit direct sums, $\bfF$ is normal if for all $c\in\cC$, the induced $*$-homomorphism $\cC(c,c) \to \cD(\bfF(c), \bfF(c))$ is normal.

\subsection{Involutions on tensor categories}
\label{sec:Involutions}

A \emph{tensor category} is a linear monoidal category $(\cC, \otimes, \alpha, \lambda, \rho, 1_\cC)$, where $\otimes: \cC\times \cC \to \cC$ is the tensor product bifunctor, which is associative up to the associator natural isomorphisms $\alpha$, which satisfy the pentagon axiom.
We denote the unit object by $1_\cC$, and $\lambda, \rho$ are the unitor natural isomorphisms which satisfy the triangle axioms.
Wherever possible in the sequel, we suppress the associators and unitors to ease the notation.

A \emph{dagger tensor category} is a dagger category and a tensor category such that the associator and unitor natural isomorphisms are unitary, and which satisfies the compatibility condition $(\psi\otimes \phi)^* = \psi^*\otimes \phi^*$. 
A C*-\emph{tensor category} is a dagger tensor category whose underlying dagger category is a C*-category.

\begin{defn}
A tensor category is called \emph{involutive} if there is a covariant anti-linear functor $\overline{\,\cdot\,}: \cC\to \cC$ called the \emph{conjugate}.
This functor is involutive, meaning there are natural isomorphisms $\varphi_c: c\to \overline{\overline{c}}$ for all $c\in \cC$ satisfying $\overline{\varphi_c}=\varphi_{\overline{c}}$, and anti-monoidal, meaning there are natural isomorphisms $\nu_{a,b}: \overline{a}\otimes \overline{b}\to \overline{b\otimes a}$ and an isomorphism $r: 1\to \overline{1}$ satisfying the following axioms:
\begin{itemize}
\item
(associativity)
$\nu_{a,c\otimes b} \circ (\id_{\overline{a}}\otimes \nu_{b,c}) = \nu_{b\otimes a, c} \circ (\nu_{a,b}\otimes \id_{\overline{c}})$
\item
(unitality)
$\nu_{1,a}\circ (r\otimes \id_{\overline{a}}) = \id_{\overline{a}} = \nu_{a,1} \circ (\id_{\overline{a}} \otimes r)$.
\end{itemize}
We require $\varphi$, $\nu$, and $r$ to be compatible:
$\varphi_1 = \overline{r}\circ r$
and
$\varphi_{a\otimes b} = \overline{\nu_{b\otimes a}} \circ \nu_{\overline{a}\otimes \overline{b}} \circ (\varphi_a \otimes \varphi_b)$.
\end{defn}

\begin{ex}
The category $\Vec$ is involutive where the conjugate is defined to be taking the complex conjugate vector space.
We denote the conjugate of $V\in \Vec$ by $\overline{V}=\set{\overline{v}}{v\in V}$, where $\lambda \overline{v} = \overline{\overline{\lambda}v}$ for all $\lambda\in \bbC$.
Given a map $T: V\to W$, we define $\overline{T} : \overline{V}\to \overline{W}$ by $\overline{T} \overline{v} = \overline{Tv}$.
Note that $\overline{T}$ is linear, while $T\mapsto \overline{T}$ is anti-linear.
\end{ex}

The notions of dagger category and involutive tensor category can be combined to form the notion of a \emph{bi-involutive} tensor category, defined in \cite[\S2.1]{1511.05226}.

\begin{defn}
\label{defn:bi-involutive}
A \emph{bi-involutive tensor category} is an involutive dagger tensor category such that $\overline{\,\cdot\,}$ is a dagger functor, and the natural isomorphisms $\varphi$, $\nu$, and $r$ are all unitary.

Note that being bi-involutive is a property of an involutive tensor category which is a dagger tensor category, not extra structure.
\end{defn}

\begin{ex}
The category $\Hilb$ is bi-involutive with complex conjugation and adjoints.
\end{ex}

\begin{rem}
\label{rem:CopInvolutive}
If $\cC$ is (bi-)involutive, then $\cC^{\op}$ is too by defining $\varphi_{\cC^{\op}} = \varphi^{-1}_\cC$, $\nu_{\cC^{\op}} = \nu^{-1}_\cC$, and $r_{\cC^{\op}} = r^{-1}_\cC$.
\end{rem}

A lax tensor functor between tensor categories $\cC$ and $\cD$ consists of a triple $(\bfF, \mu, i)$ where $\bfF: \cC\to \cD$ is a functor equipped with a morphism $\iota: 1_\cD\to \bfF(1_\cC)$ and a natural transformation $\mu_{a,b}: \bfF(a)\otimes \bfF(b) \to \bfF(a\otimes b)$ for $a,b\in\cC$ which satisfies the following axioms:
\begin{itemize}
\item
(associativity)
$\mu_{a,b\otimes c} \circ (\id_{\bfF(a)}\otimes \mu_{b,c}) = \mu_{a\otimes b,c}\circ(\mu_{a,b}\otimes \id_{\bfF(c)})$
\item
(unitality)
$\mu_{1,a}\circ  (\iota \otimes \id_{\bfF(a)}) = \id_{\bfF(a)} = \mu_{a,1} \circ (\id_{\bfF(a)}\otimes \iota)$.
\end{itemize}
A lax tensor functor is called a (strong) tensor functor if $\iota$ and all $\mu_{a,b}$ are isomorphisms.
A natural transformation of (lax) tensor functors $\theta: (\bfF, \mu^\bfF, \iota^\bfF) \Rightarrow (\bfG, \mu^\bfG, \iota^\bfG)$ is a natural transformation $\theta: \bfF\Rightarrow \bfG$ such that $\theta_{a\otimes b}\circ \mu^\bfF_{a,b} = \mu^\bfG_{a,b}\circ (\theta_a\otimes \theta_b)$ for all $a,b\in \cC$ and $\theta_{1_\cC}\circ \iota^\bfF = \iota^\bfG$.

If $\cC, \cD$ are dagger tensor categories, then a lax tensor functor is called a dagger lax tensor functor if $\bfF$ is a dagger functor.
A dagger lax tensor functor is called a dagger (strong) tensor functor if $\iota$ and all $\mu_{a,b}$ are unitary isomorphisms.

\begin{defn}
\label{defn:InvolutiveFunctor}
Suppose now $\cC$ and $\cD$ are involutive tensor categories.
A (lax or strong) tensor functor $(\bfF, \mu,\iota): \cC\to \cD$ is called \emph{involutive} if it is equipped with a natural isomorphism $\chi_a : \bfF(\overline{a})\to \overline{\bfF(a)}$ for all $a\in \cC$ satisfying the axioms
\begin{itemize}
\item
(involutive)
$
\varphi_{\bfF(a)}
=
\overline{\chi_{a}}\circ \chi_{\overline{a}} \circ \bfF(\varphi_a)
$
\item
(unital)
$\chi_{1_\cC} \circ \bfF(r_\cC) \circ \iota = \overline{\iota}\circ r_\cD$
\item
(monoidal)
$\chi_{a\otimes b} \circ \bfF(\nu_{b, a}) \circ \mu_{\overline{b}, \overline{a}} 
=
\overline{\mu_{a,b}} \circ \nu_{\bfF(b),\bfF(a)}\circ (\chi_b \otimes \chi_a)$. 
\end{itemize} 

Now suppose $\cC$ and $\cD$ are bi-involutive tensor categories.
An involutive (lax or strong) tensor functor $(\bfF, \mu,\iota, \chi):\cC\to\cD$ is called \emph{bi-involutive} if $\bfF$ is also a dagger tensor functor and all $\chi_a$ are unitary isomorphisms.
\end{defn}

\subsection{Rigid C*-tensor categories}
\label{sec:RigidC*TensorCategories}

\begin{defn}
A tensor category $\cC$ is called \emph{rigid} if every $c\in \cC$ admits 
\begin{itemize}
\item
a dual object $c^{\vee}$ together with maps $\ev_c: c^\vee \otimes c\to 1_\cC$ and $\coev_c: 1_\cC\to c\otimes c^\vee$ which satisfy the zig-zag axioms 
$(\id_c \otimes \ev_c)\circ (\coev_c \otimes \id_c) = \id_c$ and
$(\ev_{c} \otimes \id_{c^\vee})\circ (\id_{c^\vee}\otimes \coev_c) = \id_{c^\vee}$, and
\item
a predual object $c_{\vee}$ such that $(c_\vee)^\vee \cong c$.
\end{itemize}
The dual of a morphism $\psi\in \cC(a,b)$ is given by 
$$
\psi^\vee 
= 
(\ev_b \otimes \id_{a^\vee})\circ (\id_{b^\vee}\otimes \psi \otimes \id_{a^\vee}) \circ (\id_{b^\vee}\otimes \coev_a):b^\vee \to a^\vee.
$$
\end{defn}

A tensor category is called \emph{semi-simple} if it is equivalent \emph{as a category} to a direct sum of copies of $\fdVec$.
Equivalently, $\cC$ is semi-simple if it admits finite direct sums (including the empty direct sum, i.e., the zero object), and every object is a direct sum of finitely many (maybe zero) simple objects, which satisfy $\cC(a,a)\cong \bbC$.

\begin{assumption}
All our rigid C*-tensor categories in this article are assumed to have simple unit object and admit finite direct sums and subobjects.
However, it is important to note that there are interesting examples without simple unit objects, e.g., unitary multifusion categories.
\end{assumption}

\begin{ex}[{\cite[\S2.1]{1511.05226}}]
Let $\cC$ be a rigid C*-tensor category.
By \cite{MR1444286}, $\cC$ is semi-simple, since all endomorphism algebras are finite dimensional C*-algebras.
By \cite[Thm.\,4.7]{MR2091457} and \cite[\S4]{MR3342166}, a rigid C*-tensor category $\cC$ has a canonical bi-involutive structure.
The conjugation is determined up to unique unitary isomorphism by 
$\ev_a: \overline{a}\otimes a\to 1_\cC$ and $\coev_a : 1_\cC\to a\otimes \overline{a}$ which satisfy the zig-zag axioms, together with the \emph{balancing condition}
\begin{equation}
\label{eq:BalancingCondition}
\coev_{a}^* \circ (\psi\otimes \id_{\overline{a}}) \circ \coev_a 
=
\ev_a \circ (\id_{\overline{a}}\otimes \psi)\circ \ev_a^* 
\end{equation}
for all $\psi\in \cC(a\to a)$.
The above equation gives a scalar multiple of $\id_{1_\cC}$ for each $a\in \cC$ by setting $\psi=\id_a$ called the \emph{quantum dimension} of $a$, which is denoted $\dim_\cC(a)$ or $d_a$.

The conjugate of a morphism $\psi\in \cC(a,b)$ is $\overline{\psi} = (\psi^*)^\vee: \overline{a}\to \overline{b}$.
The coherence structure isomorphims $j, \nu, \varphi$ are given by:
\begin{equation}
\label{eq:Bi-involutiveStructureOnC}
\begin{split}
r&=\coev_1
\\
\nu_{a,b} 
&= 
(\ev_a \otimes \id_{\overline{b\otimes a}}) \circ (\id_{\overline{a}} \otimes \ev_b \otimes \id_{a\otimes \overline{b\otimes a}}) \circ (\id_{\overline{a}\otimes \overline{b}}\otimes \coev_{b\otimes a})
\\
\varphi_a &= (\id_{\overline{\overline{a}}} \otimes \ev_a) \circ (\ev_{\overline{a}}^* \otimes \id_a) = (\coev_a^*\otimes \id_{\overline{\overline{a}}})\circ (\id_a \otimes \coev_{\overline{a}})
\end{split}
\end{equation}
where the second equivalent definition of $\varphi$ above follows from the balancing condition.
The unitary natural isomorphims $\varphi$ equip $\cC$ with a canonical spherical pivotal structure.
\end{ex}

An important class of examples of rigid C*-tensor categories are unitary fusion categories, which only have finitely many isomorphims classes of simple objects.

\begin{ex}
From our perspective, one of the most important examples of a rigid C*-tensor category is the category of bifinite bimodules over a ${\rm II}_1$ factor $(N,\tr_N)$, denoted $\bfBim(N)$.
The tensor product is given by the Connes fusion tensor product, denoted $\boxtimes_N$ (see \cite{MR1424954} for more details).

Let $\Omega\in L^2(N)$ be the image of $1\in N$.
Given an $H\in \bfBim(N)$, we call a $\xi\in H$ \emph{right} $N$-\emph{bounded} if the map $N\Omega \to H$ given by $n\Omega \mapsto \xi n$ is bounded.
We denote the extension of this map by $L_\xi:L^2(N)\to H$.
There is a similar notion of a vector being right bounded, and by \cite[Prop.~1.5 and Rem.~1.6]{MR1424954}, $\xi\in H$ is left $N$-bounded if and only if it is right $N$-bounded.
We denote the subspace of bi-bounded vectors of $H$ by $H^\circ$.
It is easy to see that $L_\xi$ is right $N$-linear, and thus so is its adjoint.
Thus for every $\eta, \xi\in H^\circ$, we have an $N$-valued inner product given by $\langle \eta|\xi\rangle_A = L_\eta^*L_\xi$, which defines a unique element of $N$, since it commutes with the right $N$-action.

For $H\in \bfBim(N)$, the conjugate bimodule $\overline{H}$ is the conjugate Hilbert space with $N-N$ action given by $n\cdot \overline{\xi}\cdot m = \overline{m^*\xi n^*}$.
We see that the map $\ev_H:\overline{H}\boxtimes_N H \to 1 :=L^2(N)$ defined by $\overline{\eta}\otimes \xi \mapsto \langle \eta|\xi\rangle_N$ is a bounded $N-N$ bimodule map.

By \cite{MR561983}, there is a finite subset $\cB=\{\beta\}\subset H^\circ$ called a \emph{right} $N$-\emph{basis} such that $\sum_{\beta\in \cB} L_\beta L_\beta^* = \id_H$.
This means that for all $\xi\in H^\circ$, we have 
$\xi = \sum_{\beta\in \cB} \beta \langle \beta| \xi\rangle_N$.
There is a similar notion of left $N$-basis, but we will not need it.
We see that the map $\coev_{H}:L^2(N)=1\to H\boxtimes_N \overline{H}$ given by $n\Omega \mapsto \sum_{\beta\in \cB} n\beta \otimes \overline{\beta}$ is independent of the choice of $\cB$.
Moreover, 
it is easy to verify that the maps $\ev_H$ and $\coev_H$ satisfy the zig-zag relations.

We note that $\ev_H$ and $\coev_H$ do not necessarily satisfy the balancing condition \eqref{eq:BalancingCondition}.
Standard solutions to the conjugate equations are obtained by renormalization on irreducible bimodules, and extending to direct sums in the standard way.
We refer the reader to \cite{MR1444286,MR3342166} for more details.
\end{ex}

\begin{nota}
In the remainder of this article, $\cC$ is a rigid C*-tensor category (with simple unit object $1_\cC$), and $\Irr(\cC)$ is a fixed set of representatives of the simple objects of $\cC$.
\end{nota}

\begin{defn}
For all $a,b\in\cC$, $\cC(a,b)$ is a Hilbert space with inner product\footnote{Here, we carry two notations for the inner product on a Hilbert space $H$: $\langle x,y\rangle_H = \langle y|x\rangle_H$.
The first is linear on the left, and the second is linear on the right.}
 given by
\begin{equation}
\label{eq:InnerProductInC}
\langle \phi ,\psi\rangle_{\cC(a,b)} 
=
\langle \psi |\phi\rangle_{\cC(a,b)} :=
\begin{tikzpicture}[baseline=-.1cm]
	\draw (0,1.3) arc (180:0:.3cm) -- (.6,-1.3) arc (0:-180:.3cm) -- (0,1.3);
	\roundNbox{unshaded}{(0,1)}{.3}{0}{0}{$\varphi_a$}
	\roundNbox{unshaded}{(0,0)}{.3}{0}{0}{$\psi^*$}
	\roundNbox{unshaded}{(0,-1)}{.3}{0}{0}{$\phi$}
	\node at (-.2,1.5) {\scriptsize{$\overline{\overline{a}}$}};
	\node at (-.2,.5) {\scriptsize{$a$}};
	\node at (.8,0) {\scriptsize{$\overline{a}$}};
	\node at (-.2,-.5) {\scriptsize{$b$}};
	\node at (-.2,-1.5) {\scriptsize{$a$}};
\end{tikzpicture}
\,.
\end{equation}
For all $a,b,c\in\Irr(\cC)$, we fix orthonormal bases $\ONB(c,a\otimes b)$ for $\cC(c,a\otimes b)$ with respect to the above inner product.
\end{defn}

Now taking $a,c,d,e\in\cC$, we have natural vector space isomorphisms
\begin{equation}
\label{eq:BoxtimesTensorProduct}
\bigoplus_{b\in \Irr(\cC)}
\cC(c, a\otimes b) \otimes \cC(b, d\otimes e)
\cong
\cC(c, a\otimes d\otimes e) 
\cong
\bigoplus_{f\in \Irr(\cC)}
\cC(f, a\otimes d)\otimes \cC(c, f\otimes e).
\end{equation}
These isomorphisms are \emph{not} Hilbert space isomorphisms, since the sets
\begin{equation}
\label{eq:NotONBs}
\begin{split}
&\set{(\id_a\otimes \beta)\circ \alpha}{b\in \Irr(\cC),\,\, \alpha\in \cC(c, a\otimes b),\text{ and } \beta\in \cC(b, d\otimes e)}
\\
&\set{(\gamma\otimes \id_e)\circ \delta}{f\in \Irr(\cC),\,\,\gamma\in \cC(f, a\otimes d)\text{ and } \delta\in \cC(c, f\otimes e)}
\end{split}
\end{equation}
are \emph{not} orthonormal bases for $\cC(c, a\otimes d\otimes e)$ under the inner product from \eqref{eq:InnerProductInC}.
To fix this problem we define a different inner product on the tensor product spaces above in \eqref{eq:BoxtimesTensorProduct}.

\begin{defn}
\label{defn:BoxtimesInnerProduct}
For $c,a,d,e\in \cC$ and $b\in \Irr(\cC)$, we define a new inner product on the vector space $\cC(c, a\otimes b) \otimes \cC(b, d\otimes e)$ by $\langle \alpha_1\otimes \beta_1 | \alpha_2\otimes \beta_2\rangle = d_b^{-1} \langle \alpha_1|\alpha_2\rangle_{\cC(c, a\otimes b)} \langle \beta_1|\beta_2\rangle_{\cC(b, d\otimes e)}$.
Note that since $b\in \Irr(\cC)$, this inner product agrees with the graphical inner product on $\cC(c, a\otimes d\otimes e)$ from \eqref{eq:InnerProductInC}.
We write 
$\cC(c, a\otimes b) \boxtimes \cC(b, d\otimes e)$
to denote this space with its new inner product.

Similarly, when $f\in \Irr(\cC)$, we define 
$\cC(f, a\otimes d)\boxtimes \cC(c, f\otimes e)$
to be the Hilbert space with the inner product which multiplies the ordinary tensor product inner product by $d_f^{-1}$.

As a rule of thumb, whenever we see two hom spaces separated by a $\boxtimes$ symbol, they will share one simple object in $\Irr(\cC)$ on either side. 
The inner product is \emph{balanced} by dividing by the dimension of this simple object.
\end{defn}

Now using the $\boxtimes$ Hilbert space tensor product of hom spaces, the natural isomorphisms in \eqref{eq:BoxtimesTensorProduct} above become unitary.
Thus for all $a,c,d,e\in\cC$, we have a unitary operator
\begin{equation}
\label{eq:UnitaryFrom6j}
\begin{split}
\bigoplus_{b\in \Irr(\cC)}
\cC(b,d\otimes e) \boxtimes \cC(c, a\otimes b) 
&\overset{U}{\to}
\bigoplus_{f\in \Irr(\cC)}
\cC(f,a\otimes d) \boxtimes \cC(c, f\otimes e)
\\
\begin{tikzpicture}[baseline = -1cm, xscale=-1]
    \draw (.25,-1.2) -- (.25,-1.5);
    \draw (-1,-.3) arc (-180:0:.5cm);
    \draw (-.5,-.8) .. controls ++(270:.3cm) and ++(180:.3cm) ..  (.25,-1.2)  .. controls ++(0:.3cm) and ++(270:.8cm) .. (1,-.3);
    \filldraw[fill=white] (-.5,-.8) circle (.05cm) node [above] {\scriptsize{$\beta$}};
    \filldraw[fill=white] (.25,-1.2) circle (.05cm) node [above] {\scriptsize{$\alpha$}};
    \node at (-1.15,-.5) {\scriptsize{$\mathbf{e}$}};
    \node at (.15,-.5) {\scriptsize{$\mathbf{d}$}};
    \node at (1.15,-.5) {\scriptsize{$\mathbf{a}$}};
    \node at (-.5,-1.2) {\scriptsize{$\mathbf{b}$}};
    \node at (.25,-1.7) {\scriptsize{$\mathbf{c}$}};
\end{tikzpicture}
&\mapsto
\sum_{\substack{
f\in\Irr(\cC)
\\
\gamma\in \ONB(f,a\otimes d)
\\
\delta\in \ONB(c,f\otimes e)
}}
U_{\alpha,\beta}^{\gamma,\delta}\,\,
\begin{tikzpicture}[baseline = -1cm]
    \draw (.25,-1.2) -- (.25,-1.5);
    \draw (-1,-.3) arc (-180:0:.5cm);
    \draw (-.5,-.8) .. controls ++(270:.3cm) and ++(180:.3cm) ..  (.25,-1.2)  .. controls ++(0:.3cm) and ++(270:.8cm) .. (1,-.3);
    \filldraw[fill=white] (-.5,-.8) circle (.05cm) node [above] {\scriptsize{$\gamma$}};
    \filldraw[fill=white] (.25,-1.2) circle (.05cm) node [above] {\scriptsize{$\delta$}};
    \node at (-1.15,-.5) {\scriptsize{$\mathbf{a}$}};
    \node at (.15,-.5) {\scriptsize{$\mathbf{d}$}};
    \node at (1.15,-.5) {\scriptsize{$\mathbf{e}$}};
    \node at (-.5,-1.2) {\scriptsize{$\mathbf{f}$}};
    \node at (.25,-1.7) {\scriptsize{$\mathbf{c}$}};
\end{tikzpicture}
\,.
\end{split}
\end{equation}
Although the operator $U$ is unitary, $(U_{\alpha,\beta}^{\gamma,\delta})$ is not a unitary matrix, since the bases in \eqref{eq:NotONBs} are not orthonormal bases.
Although these operators $U$ depend on the objects $a,c,d,e\in\cC$, we suppress them from the notation.
The collection of unitary operators $U$ satisfy the pentagon axiom.

\begin{rem}
\label{rem:BasisAndAdjoint}
Often, we will use that 
$$
\sum_{\beta\in \ONB(c,a\otimes b)} \beta\otimes \beta^*
\in \cC(c, a\otimes b) \otimes \cC(a\otimes b, c)
$$
is independent of the choice of $\ONB$.
\end{rem}

\subsection{The category \texorpdfstring{$\Vec(\cC)$}{Vec(C)}}

In this section, we introduce the category of $\cC$-graded vector spaces for a rigid C*-tensor category $\cC$.   
This category is equivalent to $\ind$-$\cC$ but has a simpler description and is easier to work with due to the semi-simplicity of $\cC$.

\begin{defn}Let $\cC$ be a rigid C*-tensor category.  
Let $\Vec(\cC)$ be the tensor category 
whose objects are linear functors $\bfV:\cC^{\op}\rightarrow \Vec$, and
whose morphisms from $\bfV$ to $\bfW$ are given by natural transformations of functors.
For $\bfV\in \Vec(\cC)$, the vector spaces $\bfV(a)$ are called the \emph{fibers} of $\bfV$.  

For functors $\bfV,\bfW\in \Vec(\cC)$, their tensor product is given by
$$ 
(\bfV\otimes \bfW)(\, \cdot\, )
:=
\bigoplus_{a, b\in \Irr(\cC)}\bfV(a)\otimes \cC(\, \cdot\,  , a\otimes b)\otimes \bfW(b) 
$$
The associator of $\Vec(\cC)$ is given by the standard associator in $\Vec$, along with the 6j symbols in $\cC$.
Suppose we have $\bfU, \bfV, \bfW \in \Vec(\cC)$, so that
\begin{align*}
(\bfU\otimes [\bfV\otimes \bfW])(c) 
&= 
\bigoplus_{a,b,d,e\in \Irr(\cC)} \bfU(a)\otimes \cC(c, a\otimes b) \otimes [\bfV(d)\otimes \cC(b,d\otimes e) \otimes \bfW(e)]
\\
([\bfU\otimes \bfV]\otimes \bfW)(c) 
&= 
\bigoplus_{a,d,e,f\in \Irr(\cC)} [\bfU(a) \otimes \cC(f,a\otimes d)\otimes \bfV(d) ]\otimes \cC(c, f\otimes e) \otimes \bfW(e)
\end{align*}
The associator 
$\bfU\otimes (\bfV\otimes \bfW) \Rightarrow (\bfU\otimes \bfV)\otimes \bfW$ has $c$-component for $c\in \cC$ given by
$$
\eta\otimes \alpha\otimes (\xi \otimes \beta \otimes \zeta)
\mapsto 
\sum_{\substack{
f\in\Irr(\cC)
\\
\gamma\in \ONB(f,a\otimes d)
\\
\delta\in \ONB(c,f\otimes e)
}}
U_{\alpha,\beta}^{\gamma,\delta}
(\eta\otimes \gamma \otimes \xi) \otimes \delta\otimes \zeta
$$
where $(U_{\alpha,\beta}^{\gamma,\delta})$ is the matrix representation of the operator $U$ from \eqref{eq:UnitaryFrom6j} with respect to the appropriate bases.

Given natural transformations $\sigma: \bfU \Rightarrow \bfV$ and $\tau:\bfW\Rightarrow \bfX$, we define $\sigma\otimes \tau: \bfU\otimes \bfW \Rightarrow \bfV\otimes \bfX$ by
$(\sigma\otimes\tau)_c =\bigoplus_{a,b\in \Irr(\cC)} \sigma_a \otimes \id_{\cC(c, a\otimes b)} \otimes \tau_b$.
\end{defn}

\begin{rem}
\label{rem:TensorProductCanonical}
Notice that this tensor structure of $\Vec(\cC)$ depends on the choice $\Irr(\cC)$, but picking any other choice $\Irr(\cC)'$, we get a canonical equivalence of monoidal structures.
Picking a unitary isomorphism $\gamma_a \in \cC(a, a')$ for each $a\in \Irr(\cC)$ and the corresponding $a'\in \Irr(\cC)'$, the monoidal equivalence is given on the summands of $(\bfV\otimes \bfW)(c)$ by
$\eta\otimes \alpha \otimes \xi\in \bfV(a)\otimes \cC(c, a\otimes b)\otimes \bfW(b)$
maps to
$$
\bfV(\gamma_a^*)(\xi)\otimes [(\gamma_a\otimes \gamma_b)\circ \alpha] \otimes \bfW(\gamma_b^*)(\xi)
\in
\bfV(a')\otimes \cC(c, a'\otimes b')\otimes \bfW(b')
$$
which is independent of the choice of $\gamma_a,\gamma_b$, since each appears with its adjoint in the formula.

We note that the category $\Vec(\cC)$ is monoidaly equivalent to the ind-category of $\cC$, where the monoidal structure is given by the Day convolution product \cite{MR0272852}.
In general, if $\cC$ is not semi-simple, then one cannot simply use linear functors, but rather functors that commute with filtered co-limits.
However, for our purposes, our model proves especially easy to work with. 
\end{rem}

\begin{defn}[Yoneda embedding]
We have a fully faithful, monoidal functor $\cC\rightarrow \Vec(\cC)$ which sends the object $a\in \cC$ to the functor $\mathbf{a}:=\cC(\, \cdot\, , a): \cC^{\op}\rightarrow \Vec$.
In particular, the tensor unit in $\Vec(\cC)$ is the functor $\mathbf{1}=\cC(\, \cdot\, , 1_{\cC})$. 
We will call functors equivalent to those of the form $\mathbf{a}$ \textit{compact objects} in $\Vec(\cC)$.  
It is an easy exercise to see that these are precisely the dualizable objects in the tensor category $\Vec(\cC)$.
\end{defn}

\begin{rem}
\label{rem:VecCNaturalTransformations}
Suppose $\theta=(\theta_c)_{c\in\cC}\in \Hom_{\Vec(\cC)}(\bfV, \bfW)$ for $\bfV,\bfW\in \Vec(\cC)$.
First, for every $c\in \Irr(\cC)$, we have $\theta_c: \bfV(c)\to \bfW(c)$ is some linear transformation.
Conversely, given such a family of maps $\theta_c: \bfV(c)\to \bfW(c)$ for all $c\in \Irr(\cC)$, we see that naturality is satisfied for all maps between simples $c\to c'$.
Hence we may extend $\theta$ to all objects in $\cC$ by additivity to get a natural transformation.
In other words, 
$$
\Hom_{\Vec(\cC)}(\bfV, \bfW) \cong \prod_{c\in \Irr(\cC)} L(\bfV(c), \bfW(c)).
$$
\end{rem}

We now give an easy way to construct vector space objects $\bfV\in \Vec(\cC)$.

\begin{defn}
\label{defn:EasyConstructionOfVectorSpaceObjects}
Suppose we have a family of vector spaces $\set{V_c}{c\in \Irr(\cC)}$.
We may construct a canonical vector space object $\bfV\in \Vec(\cC)$ as follows
(although it depends on the choice of $\Irr(\cC)$).

First, we define $\bfV(c) = V_c$ for all $c\in\Irr(\cC)$.
For arbitrary $b\in \cC$, we define $\bfV(b) = \bigoplus_{a\in \Irr(\cC)} \bfV(a)\otimes \cC(b,a)$.
Notice that the spaces $\bfV(b)$ only depended on the choice of $\Irr(\cC)$.
Suppose now we have a map $\psi\in \cC(c,b)$ for $b,c\in\cC$.
We get a natural transformation $\bfV(\psi): \bfV(b)\to \bfV(c)$ by $\bfV(\psi) = \bigoplus_{a\in \Irr(\cC)} \id_{\bfV(a)}\otimes \mathbf{a}(\psi)$.
It is now easy to verify that $\bfV$ is a linear functor $\cC^{\text{op}}\to \Vec$.
\end{defn}

\begin{defn}
\label{defn:VecCInvolutive}
We now endow $\Vec(\cC)$ with an involutive structure.
First, we define for $\bfV\in \Vec(\cC)$ a linear functor $\overline{\bfV}: \cC^{\op}\to \Vec$ by $\overline{\bfV}(c) := \overline{\bfV(\overline{c})}$ on objects, and on morphisms $\psi\in \cC(a, b)$, we define $\overline{\bfV}(\psi) = \overline{\bfV(\overline{\psi})}$.

Note that $\overline{\,\cdot\,}$ is an anti-linear functor.
If $\theta=(\theta_c)_{c\in\Irr(\cC)}: \bfV\Rightarrow \bfW$ is a natural transformation, we get a natural transformation $\overline{\theta}: \overline{\bfV}\Rightarrow \overline{\bfW}$ by $\overline{\theta}_c=\overline{\theta_{\overline{c}}}$.
We thus have a natural identification between $\Hom_{\Vec(\cC)}(\overline{\bfV}, \overline{\bfW})$ and the complex conjugate vector space $\overline{\Hom_{\Vec(\cC)}(\bfV,\bfW)}$.

The natural isomorphism $\varphi_{\bfV}: \bfV \to \overline{\overline{\bfV}}$ is given by $(\varphi_\bfV)_c = \bfV(\varphi_c^{-1})$ where $\varphi_c: c\to \overline{\overline{c}}$ is the canonical pivotal structure of $\cC$, since $\varphi_\Vec$ is the identity.
The unit map $r:\mathbf{1}\to \overline{\mathbf{1}}$ has components $r_c : \mathbf{1}(c) \to \overline{\mathbf{1}}(c)$ given by post-composing with $r_\cC$:
$$
\mathbf{1}(c) = \cC(c, 1_\cC) \xrightarrow{-\circ r_\cC} \cC(c, \overline{1}_\cC) = \overline{\mathbf{1}}(c).
$$
The natural isomorphisms $\nu_{\bfV,\bfW}:\overline{\bfV}\otimes \overline{\bfW} \to \overline{\bfW\otimes \bfV}$ are constructed as follows.
First, we note that for all $a,b,c\in\cC$, there is a canonical linear (unitary) isomorphism 
$$
\overline{\cC(\overline{c}, \overline{b}\otimes \overline{a})}
\xrightarrow{\overline{\,\,\cdot\,\,}^\Vec}
\cC(\overline{c}, \overline{b}\otimes \overline{a})
\xrightarrow{\nu_{b,a}\circ-}
\cC(\overline{c}, \overline{a\otimes b})
\xrightarrow{\overline{\,\,\cdot\,\,}^{\cC}}
\cC(c, a\otimes b)
$$
Now for all $a\in \Irr(\cC)$, there is a unique $a_*\in \Irr(\cC)$ such that $a_*\cong \overline{a}$.
We choose a unitary isomorphism $\gamma_a : a_* \to \overline{a}$ for all $a\in \Irr(\cC)$.
Now we construct a map between
\begin{align*}
(\overline{\bfV}\otimes \overline{\bfW})(c) 
&=
\bigoplus_{a,b\in \Irr(\cC)} \overline{\bfV}(a) \otimes \cC(c, a\otimes b) \otimes \overline{\bfW}(b)
\qquad\text{ and}
\\
(\overline{\bfW\otimes \bfV})(c) 
&=
\overline{(\bfW\otimes \bfV)(\overline{c})} 
=
\overline{\bigoplus_{a_*,b_*\in \Irr(\cC)} \bfW(b_*) \otimes \cC(\overline{c}, b_*\otimes a_*) \otimes \bfV(a_*)}
\\&\overset{\nu_{\, \Vec}}{\cong}
\bigoplus_{a_*,b_*\in \Irr(\cC)} \overline{\bfV(a_*)} \otimes \overline{\cC(\overline{c}, b_*\otimes a_*)} \otimes \overline{\bfW(b_*)}
\end{align*}
as follows.
Note that we have isomorphisms $\overline{\bfV(\gamma_a^*)}: \overline{\bfV(a_*)}\to \overline{\bfV(\overline{a})}$, $\overline{\bfW(\gamma_b^*)} :\overline{\bfW(b_*)}\to \overline{\bfW(\overline{b})}$,  and
$\overline{\cC(\overline{c}, b_*\otimes a_*)} \cong \overline{\cC(\overline{c}, \overline{b}\otimes \overline{a})}$ by $\overline{\psi}\mapsto \overline{(\gamma_b\otimes \gamma_a)\circ \psi}$, where these conjugates are taken in $\Vec$.
Using these isomorphisms, we get an isomorphism from the summands of $(\overline{\bfW\otimes \bfV})(c)$
to the summands of $(\overline{\bfV}\otimes \overline{\bfW})(c)$ which is independent of the choice of $\gamma_a, \gamma_b$ by Remark \ref{rem:BasisAndAdjoint}, since every time $\gamma_a,\gamma_b$ appear, we also see their adjoints $\gamma_a^*,\gamma_b^*$.
\end{defn}

\begin{defn}
A \emph{real structure} on a vector space object $\bfV\in \Vec(\cC)$ is a natural isomorphism $\sigma: \bfV \Rightarrow \overline{\bfV}$ satisfying $\overline{\sigma}\circ \sigma= \varphi_\bfV$.
\end{defn}

\subsection{Graphical calculus for the Yoneda embedding}

The Yoneda lemma gives a natural identification $\bfV(a)\cong \Hom_{\Vec(\cC)}(\mathbf{a}, \bfV)$.
In turn, this allows us to use the tensor category graphical calculus to represent elements of $\bfV(a)$ as morphisms in $\Vec(\cC)$.
Given $\xi\in \bfV(a)$, we denote the corresponding natural transformation $\mathbf{a}\Rightarrow \bfV$ as a coupon with strings and labels:
\begin{equation}
\label{eq:YonedaDiagram}
\bfV(a) 
\ni
\xi
\,\,\longleftrightarrow\,\,
\begin{tikzpicture}[baseline = -.1cm]
    \draw (0,-.6) -- (0,.6);
    \roundNbox{unshaded}{(0,0)}{.3}{0}{0}{$\xi$}
    \node at (.2,-.5) {\scriptsize{$\mathbf{a}$}};
    \node at (.2,.5) {\scriptsize{$\bfV$}};
\end{tikzpicture}
\in
\Hom_{\Vec(\cC)}(\mathbf{a},\bfV).
\end{equation}
If $\psi\in \cC(b,a)$, by the naturality condition of the Yoneda Lemma, the map $\bfV(\psi): \bfV(a)\to \bfV(b)$ is given diagrammatically by
\begin{equation}
\label{eq:NaturalityOfYoneda1}
\begin{tikzpicture}[baseline = -.1cm]
    \draw (0,-.6) -- (0,.6);
    \roundNbox{unshaded}{(0,0)}{.3}{.5}{.5}{$\bfV(\psi)(\xi)$}
    \node at (.2,-.5) {\scriptsize{$\mathbf{a}$}};
    \node at (.2,.5) {\scriptsize{$\bfV$}};
\end{tikzpicture}
=
\begin{tikzpicture}[baseline = -.6cm]
    \draw (0,-1.6) -- (0,.6);
    \roundNbox{unshaded}{(0,-1)}{.3}{0}{0}{$\psi$}
    \roundNbox{unshaded}{(0,0)}{.3}{0}{0}{$\xi$}
    \node at (.2,-1.5) {\scriptsize{$\mathbf{b}$}};
    \node at (.2,-.5) {\scriptsize{$\mathbf{a}$}};
    \node at (.2,.5) {\scriptsize{$\bfV$}};
\end{tikzpicture}\,.
\end{equation}
Naturality of the Yoneda Lemma in $\bfV$ says that if $\theta: \bfV \Rightarrow \bfW$ is a natural transformation and $\xi\in \bfV(a)$, we have
\begin{equation}
\label{eq:NaturalityOfYoneda2}
\begin{tikzpicture}[baseline = -.1cm]
    \draw (0,-.6) -- (0,.6);
    \roundNbox{unshaded}{(0,0)}{.3}{.2}{.2}{$\theta_a(\xi)$}
    \node at (.2,-.5) {\scriptsize{$\mathbf{a}$}};
    \node at (.2,.5) {\scriptsize{$\bfW$}};
\end{tikzpicture}
=
\begin{tikzpicture}[baseline = -.6cm]
    \draw (0,-1.6) -- (0,.6);
    \roundNbox{unshaded}{(0,-1)}{.3}{0}{0}{$\xi$}
    \roundNbox{unshaded}{(0,0)}{.3}{0}{0}{$\theta$}
    \node at (.2,-1.5) {\scriptsize{$\mathbf{a}$}};
    \node at (.2,-.5) {\scriptsize{$\bfV$}};
    \node at (.2,.5) {\scriptsize{$\bfW$}};
\end{tikzpicture}\,.
\end{equation}

\begin{lem}
\label{lem:Yoneda}
Given $\bfV,\bfW\in \Vec(\cC)$, $a,b\in\Irr(\cC)$,  $\xi\in \bfV(a)$, $\eta\in \bfW(b)$, and a morphism $\alpha\in\cC(c,a\otimes b)$,
the natural transformation corresponding to
$$
\xi\otimes \alpha\otimes \eta
\in
\bfV(a)\otimes \cC(c,a\otimes b)\otimes \bfW(b)
\subset (\bfV\otimes \bfW)(c)
$$
is given by
\begin{equation}
\label{eq:SimpleTensors}
\begin{tikzpicture}[baseline = -.4cm]
    \draw (0,0) -- (0,.6);
    \draw (1,0) -- (1,.6);
    \draw (.5,-.8) -- (.5,-1.2);
    \draw (0,-.3) arc (-180:0:.5cm);
    \filldraw[fill=white] (.5,-.8) circle (.05cm) node [above] {\scriptsize{$\alpha$}};
    \roundNbox{unshaded}{(0,0)}{.3}{0}{0}{$\xi$}
    \roundNbox{unshaded}{(1,0)}{.3}{0}{0}{$\eta$}
    \node at (-.15,-.5) {\scriptsize{$\mathbf{a}$}};
    \node at (1.15,-.5) {\scriptsize{$\mathbf{b}$}};
    \node at (.35,-1) {\scriptsize{$\mathbf{c}$}};
    \node at (.2,.5) {\scriptsize{$\bfV$}};
    \node at (1.2,.5) {\scriptsize{$\bfW$}};
\end{tikzpicture}\,
\in
\Hom_{\Vec(\cC)}(\mathbf{c}, \bfV\otimes \bfW).
\end{equation}
where we draw an open circle to denote the morphism $\alpha\in\cC(c,a\otimes b)$.
\end{lem}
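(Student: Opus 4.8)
The plan is to invoke the sharp form of the Yoneda lemma that underlies the graphical conventions of \eqref{eq:YonedaDiagram}: for any $\bfU\in\Vec(\cC)$ and any $c\in\cC$, the map $\Hom_{\Vec(\cC)}(\mathbf{c},\bfU)\to\bfU(c)$ sending $\Theta\mapsto\Theta_c(\id_c)$ is a bijection, and it is this bijection that \eqref{eq:YonedaDiagram} depicts. So it suffices to let $\Theta\colon\mathbf{c}\Rightarrow\bfV\otimes\bfW$ be the natural transformation drawn on the right-hand side of \eqref{eq:SimpleTensors}, evaluate $\Theta_c$ at $\id_c$, and check that the outcome is $\xi\otimes\alpha\otimes\eta$.

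First I would unwind the diagram: by the conventions of \eqref{eq:YonedaDiagram}--\eqref{eq:NaturalityOfYoneda2}, the right-hand side of \eqref{eq:SimpleTensors} is the composite in $\Vec(\cC)$
$$
\mathbf{c}\xrightarrow{\widehat{\alpha}}\mathbf{a}\otimes\mathbf{b}\xrightarrow{\xi\otimes\eta}\bfV\otimes\bfW,
$$
where $\widehat{\alpha}$ is the image of $\alpha\in\cC(c,a\otimes b)$ under the Yoneda embedding composed with the monoidal structure isomorphism $\mathbf{a\otimes b}\cong\mathbf{a}\otimes\mathbf{b}$, and $\xi\colon\mathbf{a}\Rightarrow\bfV$, $\eta\colon\mathbf{b}\Rightarrow\bfW$ are the natural transformations corresponding under \eqref{eq:YonedaDiagram} to $\xi\in\bfV(a)$ and $\eta\in\bfW(b)$, with $\xi\otimes\eta$ the tensor product of natural transformations as in the definition of $\Vec(\cC)$.

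Then I would evaluate at $c$ on $\id_c$, reading left to right. The Yoneda transformation underlying $\widehat{\alpha}$ sends $\id_c$ to $\alpha\in\mathbf{a\otimes b}(c)=\cC(c,a\otimes b)$. Because $a,b\in\Irr(\cC)$, we have $\cC(x,a)=0$ for $x\in\Irr(\cC)\setminus\{a\}$ and $\cC(a,a)=\bbC\id_a$, so the fiber $(\mathbf{a}\otimes\mathbf{b})(c)=\bigoplus_{x,y\in\Irr(\cC)}\cC(x,a)\otimes\cC(c,x\otimes y)\otimes\cC(y,b)$ collapses to the single summand $\bbC\id_a\otimes\cC(c,a\otimes b)\otimes\bbC\id_b$, and the monoidal structure isomorphism sends $\alpha$ to $\id_a\otimes\alpha\otimes\id_b$. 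Finally, by the definition of $\xi\otimes\eta$ in $\Vec(\cC)$, $(\xi\otimes\eta)_c=\bigoplus_{x,y\in\Irr(\cC)}\xi_x\otimes\id_{\cC(c,x\otimes y)}\otimes\eta_y$, hence $(\xi\otimes\eta)_c(\id_a\otimes\alpha\otimes\id_b)=\xi_a(\id_a)\otimes\alpha\otimes\eta_b(\id_b)$; and by \eqref{eq:NaturalityOfYoneda1}, $\xi_a(\id_a)=\bfV(\id_a)(\xi)=\xi$ and likewise $\eta_b(\id_b)=\eta$. Thus $\Theta_c(\id_c)=\xi\otimes\alpha\otimes\eta$, as claimed.

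The only point requiring care is making the monoidal structure isomorphism $\mathbf{a\otimes b}\cong\mathbf{a}\otimes\mathbf{b}$ on fibers explicit enough to confirm it carries $\alpha$ to $\id_a\otimes\alpha\otimes\id_b$ with no hidden scalar; since here we compare only underlying vector spaces, not inner products, the $\boxtimes$-normalization of Definition \ref{defn:BoxtimesInnerProduct} is irrelevant and this follows directly from the bare definition of the tensor product in $\Vec(\cC)$ together with semisimplicity. One can avoid even this by first establishing the statement for representable $\bfV$ and $\bfW$ --- where it is essentially the assertion that the Yoneda embedding is monoidal --- and then obtaining the general case by naturality in $\bfV$ and $\bfW$ via \eqref{eq:NaturalityOfYoneda2}, using that $\xi$ and $\eta$ are themselves natural transformations out of $\mathbf{a}$ and $\mathbf{b}$.
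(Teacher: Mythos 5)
Your proof is correct and follows essentially the same route as the paper's: both arguments amount to unwinding the diagram as the composite $\mathbf{c}\Rightarrow\mathbf{a}\otimes\mathbf{b}\Rightarrow\bfV\otimes\bfW$ and applying the Yoneda bijection (evaluation at the identity) together with the componentwise definition of $\xi\otimes\eta$. The only difference is organizational --- the paper first reduces via naturality to the case $\alpha=\id_{a\otimes b}$ and evaluates there, whereas you evaluate directly at $\id_c$ and track $\alpha$ through the single surviving summand; this is an equivalent computation.
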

\begin{proof}
By the naturality condition in the Yoneda lemma \eqref{eq:NaturalityOfYoneda1}, it suffices to show that the natural transformation corresponding to 
$$
\xi\otimes \id_{a\otimes b}\otimes \eta
\in
(\bfV\otimes \bfW)(a\otimes b)
=
\bfV(a)\otimes \cC(a\otimes b, a\otimes b) \otimes \bfW(b)
$$
is given by
$$
\begin{tikzpicture}[baseline = -.1cm]
    \draw (0,-.6) -- (0,.6);
    \draw (1,-.6) -- (1,.6);
    \roundNbox{unshaded}{(0,0)}{.3}{0}{0}{$\xi$}
    \roundNbox{unshaded}{(1,0)}{.3}{0}{0}{$\eta$}
    \node at (.15,-.5) {\scriptsize{$\mathbf{a}$}};
    \node at (1.15,-.5) {\scriptsize{$\mathbf{b}$}};
    \node at (.2,.5) {\scriptsize{$\bfV$}};
    \node at (1.2,.5) {\scriptsize{$\bfW$}};
\end{tikzpicture}
\in\Hom_{\Vec(\cC)}(\mathbf{a}\otimes \mathbf{b}, \bfV\otimes \bfW)
$$
But this picture is represented by the natural transformation in $\Hom_{\Vec(\cC)}(\mathbf{a}\otimes \mathbf{b}, \bfV\otimes \bfW)$ defined by sending 
$$
\cC(c, a\otimes b)
\cong
(\mathbf{a}\otimes \mathbf{b})(c)
\ni
f
\longmapsto
\xi\otimes f\otimes \eta
\in 
\bfV(a)\otimes \cC(c,a\otimes b)\otimes \bfW(b)\subseteq (\bfV\otimes \bfW)(c).
$$  
From the Yoneda lemma, the canonical bijection between natural transformations from $\mathbf{a}\otimes\mathbf{b}$ to $\bfV\otimes \bfW$  and $(\bfV\otimes \bfW)(a\otimes b)$ is given by evaluating on the identity morphism $\id_{a\otimes b}\in \cC(a\otimes b, a\otimes b)$.
\end{proof}

\begin{rem}
Using the string diagram notation, we easily see why the matrix representation $(U_{\alpha,\beta}^{\gamma,\delta})$ of the operator $U$ from \eqref{eq:UnitaryFrom6j} appears in the associator for $\Vec(\cC)$:
\begin{equation}
\label{eq:Associativity}
\begin{tikzpicture}[baseline = -.4cm, xscale=-1]
    \draw (-1,0) -- (-1,.6);
    \draw (0,0) -- (0,.6);
    \draw (1,0) -- (1,.6);
    \draw (.25,-1.2) -- (.25,-1.5);
    \draw (-1,-.3) arc (-180:0:.5cm);
    \draw (-.5,-.8) .. controls ++(270:.3cm) and ++(180:.3cm) ..  (.25,-1.2)  .. controls ++(0:.3cm) and ++(270:.8cm) .. (1,-.3);
    \filldraw[fill=white] (-.5,-.8) circle (.05cm) node [above] {\scriptsize{$\beta$}};
    \filldraw[fill=white] (.25,-1.2) circle (.05cm) node [above] {\scriptsize{$\alpha$}};
    \roundNbox{unshaded}{(-1,0)}{.3}{0}{0}{$\zeta$}
    \roundNbox{unshaded}{(0,0)}{.3}{0}{0}{$\xi$}
    \roundNbox{unshaded}{(1,0)}{.3}{0}{0}{$\eta$}
    \node at (-1.15,-.5) {\scriptsize{$\mathbf{e}$}};
    \node at (.15,-.5) {\scriptsize{$\mathbf{d}$}};
    \node at (1.15,-.5) {\scriptsize{$\mathbf{a}$}};
    \node at (-.5,-1.2) {\scriptsize{$\mathbf{b}$}};
    \node at (.25,-1.7) {\scriptsize{$\mathbf{c}$}};
    \node at (-1,.8) {\scriptsize{$\bfW$}};
    \node at (0,.8) {\scriptsize{$\bfV$}};
    \node at (1,.8) {\scriptsize{$\bfU$}};
\end{tikzpicture}
=
\sum_{\substack{
f\in\Irr(\cC)
\\
\gamma\in \ONB(f,a\otimes d)
\\
\delta\in \ONB(c,f\otimes e)
}}
U_{\alpha,\beta}^{\gamma,\delta}\,\,
\begin{tikzpicture}[baseline = -.4cm]
    \draw (-1,0) -- (-1,.6);
    \draw (0,0) -- (0,.6);
    \draw (1,0) -- (1,.6);
    \draw (.25,-1.2) -- (.25,-1.5);
    \draw (-1,-.3) arc (-180:0:.5cm);
    \draw (-.5,-.8) .. controls ++(270:.3cm) and ++(180:.3cm) ..  (.25,-1.2)  .. controls ++(0:.3cm) and ++(270:.8cm) .. (1,-.3);
    \filldraw[fill=white] (-.5,-.8) circle (.05cm) node [above] {\scriptsize{$\gamma$}};
    \filldraw[fill=white] (.25,-1.2) circle (.05cm) node [above] {\scriptsize{$\delta$}};
    \roundNbox{unshaded}{(-1,0)}{.3}{0}{0}{$\eta$}
    \roundNbox{unshaded}{(0,0)}{.3}{0}{0}{$\xi$}
    \roundNbox{unshaded}{(1,0)}{.3}{0}{0}{$\zeta$}
    \node at (-1.15,-.5) {\scriptsize{$\mathbf{a}$}};
    \node at (.15,-.5) {\scriptsize{$\mathbf{d}$}};
    \node at (1.15,-.5) {\scriptsize{$\mathbf{e}$}};
    \node at (-.5,-1.2) {\scriptsize{$\mathbf{f}$}};
    \node at (.25,-1.7) {\scriptsize{$\mathbf{c}$}};
    \node at (-1,.8) {\scriptsize{$\bfU$}};
    \node at (0,.8) {\scriptsize{$\bfV$}};
    \node at (1,.8) {\scriptsize{$\bfW$}};
\end{tikzpicture}
\,.
\end{equation}
\end{rem}

\subsection{The category \texorpdfstring{$\Hilb(\cC)$}{Hilb(C)}}

Ordinary operator algebras are realized as operators acting on Hilbert spaces, motivating us to study a `Hilbert space' version of of $\Vec(\cC)$.
We note that this category is equivalent to the Neshveyev-Yamashita unitary ind category \cite{MR3509018}.
We prefer to use our model to emphasize the similarities and differences between $\Vec(\cC)$ and $\Hilb(\cC)$.

\begin{defn}
Let $\cC$ be a rigid C*-tensor category.  
Let $\Hilb(\cC)$ be the tensor category whose objects are linear dagger functors $\bfH:\cC^{\op}\rightarrow \Hilb$ and whose morphisms from $\bfH$ to $\bfK$ are given by bounded natural transformations.
(Recall a natural transformation $\theta=(\theta_{c})_{c\in \cC}$ is bounded if $\sup_{c\in \cC} \|\theta_c\|<\infty$.)

We define a dagger structure on $\Hilb(\cC)$ by defining the adjoint of a natural transformation locally.
If $\theta: \bfH \Rightarrow \bfK$ is a natural transformation, $\theta^*: \bfK\Rightarrow \bfH$ is defined by $(\theta^*)_c =(\theta_c)^*$ for $c\in \cC$.
Similar to Remark \ref{rem:VecCNaturalTransformations}, we see that for all $\bfH,\bfK\in \Hilb(\cC)$,
$$
\Hom_{\Hilb(\cC)}(\bfH, \bfK) \cong \bigoplus_{c\in\Irr(\cC)} B(\bfH(c), \bfK(c))
$$
as von Neumann algebras, which witnesses that $\Hilb(\cC)$ is in fact a W*-category.
\end{defn}

Similar to $\Vec(\cC)$, using the Yoneda embedding, 
each $\xi \in \bfH(a)$ can be viewed as a bounded natural transformation $\Theta_\xi\in \Hom_{\Hilb(\cC)}(\mathbf{a}, \bfH)$ by
defining for $\psi\in \cC(b,a)$, $(\Theta_\xi)_b(\psi) = \bfH(\psi)(\xi)$ as in \eqref{eq:NaturalityOfYoneda1}.
Again, we denote $\Theta_\xi$ by a coupon with an $\mathbf{a}$ string on the bottom and an $\bfH$ string on the top as in \eqref{eq:YonedaDiagram}.
Notice this means that $\xi = (\Theta_\xi)_a(\id_a)$.
We denote the adjoint $\Theta_\xi^* : \bfH \Rightarrow \mathbf{a}$ graphically by
\begin{equation}
\label{eq:YonedaDiagram2}
\Theta_\xi^*
=
\begin{tikzpicture}[baseline = -.1cm]
    \draw (0,-.6) -- (0,.6);
    \roundNbox{unshaded}{(0,0)}{.3}{0}{0}{$\xi^*$}
    \node at (.2,.5) {\scriptsize{$\mathbf{a}$}};
    \node at (.2,-.5) {\scriptsize{$\bfH$}};
\end{tikzpicture}
\in
\Hom_{\Vec(\cC)}(\bfH,\mathbf{a}).
\end{equation}
By the second naturality criterion of the Yoneda embedding \eqref{eq:NaturalityOfYoneda2},
for $\xi\in \bfH(a)\cong \Hom_{\Hilb(\cC)}(\mathbf{a}, \bfH)$ and $\eta\in \bfH(b)\cong \Hom_{\Hilb(\cC)}(\mathbf{b}, \bfH)$, we have
$$
\begin{tikzpicture}[baseline = -.1cm]
    \draw (0,-.6) -- (0,.6);
    \roundNbox{unshaded}{(0,0)}{.3}{.5}{.5}{$(\Theta_\eta)_a^*(\xi)$}
    \node at (.2,-.5) {\scriptsize{$\mathbf{a}$}};
    \node at (.2,.5) {\scriptsize{$\mathbf{b}$}};
\end{tikzpicture}
=
\begin{tikzpicture}[baseline = -.6cm]
    \draw (0,-1.6) -- (0,.6);
    \roundNbox{unshaded}{(0,-1)}{.3}{0}{0}{$\xi$}
    \roundNbox{unshaded}{(0,0)}{.3}{0}{0}{$\eta^*$}
    \node at (.2,-1.5) {\scriptsize{$\mathbf{a}$}};
    \node at (.2,-.5) {\scriptsize{$\bfV$}};
    \node at (.2,.5) {\scriptsize{$\mathbf{b}$}};
\end{tikzpicture}
\,.
$$

Recall that for all $a,b\in\cC$, $\cC(a,b)$ carries the Hilbert space structure from \eqref{eq:InnerProductInC}.
Thus for $\eta, \xi \in \bfH(a)$, since $\bfH(a)\ni \eta = (\Theta_\eta)_a(\id_a)$ for $\id_a\in \mathbf{a}(a)=\cC(a,a)$,
we obtain the following identity between the inner product on $\bfH(a)$ and the graphical calculus:
\begin{equation}
\label{eq:DiagrammaticInnerProduct}
\langle \eta| \xi\rangle_{\bfH(a)} =
\langle (\Theta_{\eta})_a(\id_a)| \xi\rangle_{\bfH(a)} =
\langle \id_a | (\Theta_{\eta}^*)_a(\xi)\rangle_{\cC(a,a)}=
\begin{tikzpicture}[baseline=-.1cm]
	\draw (0,1.3) arc (180:0:.3cm) -- (.6,-1.3) arc (0:-180:.3cm) -- (0,1.3);
	\roundNbox{unshaded}{(0,1)}{.3}{0}{0}{$\varphi_a$}
	\roundNbox{unshaded}{(0,0)}{.3}{0}{0}{$\eta^*$}
	\roundNbox{unshaded}{(0,-1)}{.3}{0}{0}{$\xi$}
	\node at (-.2,1.5) {\scriptsize{$\overline{\overline{\mathbf{a}}}$}};
	\node at (-.2,.5) {\scriptsize{$\mathbf{a}$}};
	\node at (.8,0) {\scriptsize{$\overline{\mathbf{a}}$}};
	\node at (-.2,-.5) {\scriptsize{$\bfH$}};
	\node at (-.2,-1.5) {\scriptsize{$\mathbf{a}$}};
\end{tikzpicture}
\,.
\end{equation}

\begin{rem}
We demonstrate the utility and consistency of the graphical calculus by the following calculation, which verifies that $\bfH$ is a dagger functor.
For all $\psi\in \cC(b,a)$, $\xi\in \bfH(a)$, and  $\eta\in \bfH(b)$, we have
$$
\langle \eta|\bfH(\psi)\xi\rangle_{\bfH(b)}
=
\begin{tikzpicture}[baseline=-.1cm]
	\draw (0,1.8) arc (180:0:.3cm) -- (.6,-1.8) arc (0:-180:.3cm) -- (0,1.8);
	\roundNbox{unshaded}{(0,1.5)}{.3}{0}{0}{$\varphi_b$}
	\roundNbox{unshaded}{(0,.5)}{.3}{0}{0}{$\eta^*$}
	\roundNbox{unshaded}{(0,-.5)}{.3}{0}{0}{$\xi$}
	\roundNbox{unshaded}{(0,-1.5)}{.3}{0}{0}{$\psi$}
	\node at (-.2,2) {\scriptsize{$\overline{\overline{\mathbf{b}}}$}};
	\node at (-.2,1) {\scriptsize{$\mathbf{b}$}};
	\node at (.8,0) {\scriptsize{$\overline{\mathbf{b}}$}};
	\node at (-.2,0) {\scriptsize{$\bfH$}};
	\node at (-.2,-1) {\scriptsize{$\mathbf{a}$}};
	\node at (-.2,-2) {\scriptsize{$\mathbf{b}$}};
\end{tikzpicture}
=
\begin{tikzpicture}[baseline=-.1cm]
	\draw (0,1.3) arc (180:0:.5cm) -- (1,-1.3) arc (0:-180:.5cm) -- (0,1.3);
	\roundNbox{unshaded}{(0,1)}{.3}{0}{0}{$\varphi_b$}
	\roundNbox{unshaded}{(0,0)}{.3}{0}{0}{$\eta^*$}
	\roundNbox{unshaded}{(0,-1)}{.3}{0}{0}{$\xi$}
	\roundNbox{unshaded}{(1,0)}{.3}{0}{0}{$\psi^{\vee}$}
	\node at (-.2,1.5) {\scriptsize{$\overline{\overline{\mathbf{b}}}$}};
	\node at (-.2,.5) {\scriptsize{$\mathbf{b}$}};
	\node at (1.2,.5) {\scriptsize{$\overline{\mathbf{b}}$}};
	\node at (1.2,-.5) {\scriptsize{$\overline{\mathbf{a}}$}};
	\node at (-.2,-.5) {\scriptsize{$\bfH$}};
	\node at (-.2,-1.5) {\scriptsize{$\mathbf{a}$}};
\end{tikzpicture}
=
\begin{tikzpicture}[baseline=-.1cm]
	\draw (0,1.8) arc (180:0:.3cm) -- (.6,-1.8) arc (0:-180:.3cm) -- (0,1.8);
	\roundNbox{unshaded}{(0,1.5)}{.3}{.1}{.1}{$\psi^{\vee\vee}$}
	\roundNbox{unshaded}{(0,.5)}{.3}{0}{0}{$\varphi_b$}
	\roundNbox{unshaded}{(0,-.5)}{.3}{0}{0}{$\eta^*$}
	\roundNbox{unshaded}{(0,-1.5)}{.3}{0}{0}{$\xi$}
	\node at (-.2,2) {\scriptsize{$\overline{\overline{\mathbf{a}}}$}};
	\node at (-.2,1) {\scriptsize{$\overline{\overline{\mathbf{b}}}$}};
	\node at (.8,0) {\scriptsize{$\overline{\mathbf{a}}$}};
	\node at (-.2,0) {\scriptsize{$\mathbf{b}$}};
	\node at (-.2,-1) {\scriptsize{$\bfH$}};
	\node at (-.2,-2) {\scriptsize{$\mathbf{a}$}};
\end{tikzpicture}
=
\begin{tikzpicture}[baseline=-.1cm]
	\draw (0,1.8) arc (180:0:.3cm) -- (.6,-1.8) arc (0:-180:.3cm) -- (0,1.8);
	\roundNbox{unshaded}{(0,1.5)}{.3}{0}{0}{$\varphi_a$}
	\roundNbox{unshaded}{(0,.5)}{.3}{0}{0}{$\psi$}
	\roundNbox{unshaded}{(0,-.5)}{.3}{0}{0}{$\eta^*$}
	\roundNbox{unshaded}{(0,-1.5)}{.3}{0}{0}{$\xi$}
	\node at (-.2,2) {\scriptsize{$\overline{\overline{\mathbf{a}}}$}};
	\node at (-.2,1) {\scriptsize{$\mathbf{a}$}};
	\node at (.8,0) {\scriptsize{$\overline{\mathbf{a}}$}};
	\node at (-.2,0) {\scriptsize{$\mathbf{b}$}};
	\node at (-.2,-1) {\scriptsize{$\bfH$}};
	\node at (-.2,-2) {\scriptsize{$\mathbf{a}$}};
\end{tikzpicture}
=
\langle \bfH(\psi^*) \eta| \xi\rangle_{\bfH(a)}.
$$ 
\end{rem}

We now endow $\Hilb(\cC)$ with the structure of a C*-tensor category.

\begin{defn}
The tensor product of dagger functors $\bfH,\bfK\in \Hilb(\cC)$ is given similarly to that in $\Vec(\cC)$ by
$$
(\bfH\otimes \bfK)(\, \cdot\, )
:=
\bigoplus_{a, b\in \Irr(\cC)}\bfH(a)\boxtimes \cC(\, \cdot\, , a\otimes b)\boxtimes \bfK(b),
$$ 
where we take direct sum in $\Hilb$.
Here, we use the $\boxtimes$ symbol as in Definition \eqref{defn:BoxtimesInnerProduct} above.
The underlying space is the ordinary tensor product of vector spaces, but we equip it with a new inner product, which is not the ordinary Hilbert space tensor product inner product.
For $c\in\cC$ and $a,b\in\Irr(\cC)$, we define the inner product on the space $\bfH(a)\boxtimes \cC(c,a\otimes b) \boxtimes \bfK(b)$ using the graphical calculus \eqref{eq:SimpleTensors} as a guide:
$$
\langle
\xi_2\boxtimes \alpha_2\boxtimes \eta_2|
\xi_1\boxtimes \alpha_1\boxtimes \eta_1
\rangle
:=
\begin{tikzpicture}[baseline = -.1cm]
    \draw (0,-.5) -- (0,.5);
    \draw (1,-.5) -- (1,.5);
    \draw (.5,1.3) -- (.5,1.7);
    \draw (0,.8) arc (180:0:.5cm);
    \draw (0,-.8) arc (-180:0:.5cm);
    \draw (.5,2.3)  .. controls ++(90:.5cm) and ++(90:.5cm) .. (1.7, 2.3) -- (1.7,-1.3)  .. controls ++(270:.5cm) and ++(270:.5cm) .. (.5,-1.3);
    \filldraw[fill=white] (.5,1.3) circle (.05cm) node [below] {\scriptsize{$\alpha^*_2$}};
    \filldraw[fill=white] (.5,-1.3) circle (.05cm) node [above] {\scriptsize{$\alpha_1$}};
    \roundNbox{unshaded}{(0,-.5)}{.3}{0}{0}{$\xi_1$}
    \roundNbox{unshaded}{(1,-.5)}{.3}{0}{0}{$\eta_1$}
    \roundNbox{unshaded}{(0,.5)}{.3}{0}{0}{$\xi_2^*$}
    \roundNbox{unshaded}{(1,.5)}{.3}{0}{0}{$\eta_2^*$}
    \roundNbox{unshaded}{(.5,2)}{.3}{0}{0}{$\varphi_c$}
    \node at (-.15,1) {\scriptsize{$\mathbf{a}$}};
    \node at (1.15,1) {\scriptsize{$\mathbf{b}$}};
    \node at (-.15,-1) {\scriptsize{$\mathbf{a}$}};
    \node at (1.15,-1) {\scriptsize{$\mathbf{b}$}};
    \node at (.35,1.5) {\scriptsize{$\mathbf{c}$}};
    \node at (.35,2.5) {\scriptsize{$\overline{\overline{\mathbf{c}}}$}};
    \node at (1.85,0) {\scriptsize{$\overline{\mathbf{c}}$}};
    \node at (.35,-1.5) {\scriptsize{$\mathbf{c}$}};
    \node at (.2,0) {\scriptsize{$\bfH$}};
    \node at (1.2,0) {\scriptsize{$\bfK$}};
\end{tikzpicture}
=
\frac{1}{d_ad_b} \langle \xi_2|\xi_1\rangle_{\bfH(a)} \langle \eta_2|\eta_1\rangle_{\bfK(b)}\langle \alpha_2|\alpha_1\rangle_{\cC(c,a\otimes b)}.
$$
As before, each $\boxtimes$ symbol requires a \emph{balancing} factor determined by the simple object in $\Irr(\cC)$ appearing on either side of the $\boxtimes$.

As the operator $U$ from \eqref{eq:UnitaryFrom6j} is unitary, we see that the associator isomorphism from $\Vec(\cC)$ depicted in \eqref{eq:Associativity} is unitary.
We suppress the rest of the definition as it is parallel to that of $\Vec(\cC)$.
\end{defn}

We endow $\Hilb(\cC)$ with an involutive structure the same way we endowed $\Vec(\cC)$ with an involutive structure.
One checks that $\Hilb(\cC)$ is actually bi-involutive under this involutive structure.
(Recall from Definition \ref{defn:bi-involutive} that being bi-involutive is a property of an involutive tensor category which is also a dagger category.)
There is a similar notion of a real structure for a Hilbert space object $\bfH\in \Hilb(\cC)$.
Notice that the full subcategory of compact objects is again equivalent to $\cC$ as a bi-involutive tensor category via $a\rightarrow \mathbf{a}:=\cC(\,\cdot\,, a)$.

The following results will be useful in what follows.

\begin{lem}
\label{lem:ZeroInHilbC}
Suppose $f\in \Hom_{\Hilb(\cC)}(\mathbf{a}\otimes \bfH , \bfH)$.
We have $f=0$ if and only if $f_c (\alpha\boxtimes \xi) = 0$ for all $c\in \cC$, $\alpha\in \cC(c, a\otimes b)$,  and $\xi\in \bfH(b)$.
\end{lem}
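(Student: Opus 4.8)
The ``only if'' direction is immediate, so I would focus on the converse. My plan is to reduce to checking $f_c = 0$ for a single $c$, then to identify a dense set of test vectors. First, since a morphism in $\Hilb(\cC)$ is zero exactly when all of its components vanish, $f = 0$ is equivalent to $f_c = 0$ for every $c\in\cC$ (by Remark~\ref{rem:VecCNaturalTransformations} one could even restrict to $c\in\Irr(\cC)$). Fix $c$. Because $f_c\colon (\mathbf{a}\otimes\bfH)(c)\to\bfH(c)$ is a bounded linear map of Hilbert spaces, it suffices to show that $f_c$ annihilates a subset of $(\mathbf{a}\otimes\bfH)(c)$ with dense linear span, and I would take that subset to be $\{\alpha\boxtimes\xi\}$ as $b$ ranges over $\Irr(\cC)$, $\alpha$ over $\cC(c, a\otimes b)$, and $\xi$ over $\bfH(b)$.

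So the crux is the density claim: that these $\alpha\boxtimes\xi$ span a dense subspace of $(\mathbf{a}\otimes\bfH)(c)$. Here I would argue from the defining formula $(\mathbf{a}\otimes\bfH)(c) = \bigoplus_{x,b\in\Irr(\cC)}\cC(x,a)\boxtimes\cC(c, x\otimes b)\boxtimes\bfH(b)$ (Hilbert-space direct sum), in which the elementary tensors $u\boxtimes\beta\boxtimes\xi$ (with $x,b\in\Irr(\cC)$, $u\in\cC(x,a)$, $\beta\in\cC(c, x\otimes b)$, $\xi\in\bfH(b)$) have dense span by construction. Using the graphical calculus for the Yoneda embedding --- Lemma~\ref{lem:Yoneda} and~\eqref{eq:SimpleTensors}, together with the naturality relation~\eqref{eq:NaturalityOfYoneda1} applied to $u$ and the monoidality of Yoneda --- I would identify $u\boxtimes\beta\boxtimes\xi$ with the vector $\alpha\boxtimes\xi$ for $\alpha := (u\otimes\id_b)\circ\beta\in\cC(c, a\otimes b)$, i.e.\ simply absorb the coupon $u\in\cC(x,a)$ into $\beta$ along the $\mathbf{a}$-strand. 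Hence every elementary tensor is one of our $\alpha\boxtimes\xi$, and density follows. (Conversely, writing $\id_a = \sum_{x,i} u_{x,i}\circ u_{x,i}^{*}$ over isometries $u_{x,i}\in\cC(x,a)$, which exist since $\cC$ is semisimple with finite direct sums and subobjects, exhibits each $\alpha\boxtimes\xi$ as a finite combination of elementary tensors, so the two families have the same span --- but I only need the one inclusion above.)

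The proof needs no analysis beyond boundedness of $f_c$. The step I expect to require the most care --- and the only real content --- is that identification of $\alpha\boxtimes\xi$ with elementary tensors: one must check that the abstract simple-tensor notation of Lemma~\ref{lem:Yoneda} (where the first tensor factor is represented by a \emph{simple} object) is compatible with decomposing the first tensor factor $\mathbf{a}$ of $\mathbf{a}\otimes\bfH$ into its simple summands, so that $\alpha\boxtimes\xi$ for general $\alpha\in\cC(c,a\otimes b)$ genuinely lies in, and spans, the correct space. This compatibility is exactly what~\eqref{eq:NaturalityOfYoneda1} and~\eqref{eq:SimpleTensors} give, but it is what I would write out in detail.
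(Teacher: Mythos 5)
Your proof is correct and follows essentially the same route as the paper's: the forward direction is trivial, and the converse reduces to the density of the span of the vectors $\alpha\boxtimes\xi$ in each fiber $(\mathbf{a}\otimes\bfH)(c)$ together with boundedness of $f_c$. The paper states this density in one line; your unpacking of it via the identification $u\boxtimes\beta\boxtimes\xi\leftrightarrow((u\otimes\id_b)\circ\beta)\boxtimes\xi$ is exactly the isomorphism the authors record in Definition~\ref{defn:CreationNaturalTransformations}, so you are filling in the same argument rather than taking a different one.
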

\begin{proof}
The forward direction is trivial.
For the reverse direction, just note that linear combinations of vectors of the form $\alpha\boxtimes \xi$ are dense in $(\mathbf{a}\otimes \bfH)(c)$.
Thus $f_c=0$ for all $c\in \cC$, and $f=0$.
\end{proof}

There is also a Hilbert space object version of Definition \ref{defn:EasyConstructionOfVectorSpaceObjects}.

\begin{defn}
\label{defn:EasyConstructionOfHilbertSpaceObject}
Suppose we have a family of Hilbert spaces $\set{H_c}{c\in\Irr(\cC)}$.
We may construct a canonical Hilbert space object $\bfH\in \Hilb(\cC)$ (which depends on $\Irr(\cC)$) by first setting $\bfH(c) = H_c$ for all $c\in\Irr(\cC)$.
For arbitrary $b\in \cC$, we define $\bfH(b) = \bigoplus_{a\in \Irr(\cC)} \bfH(a)\boxtimes \cC(b,a)$, the direct sum of Hilbert spaces.

Given a map $\psi\in \cC(c,b)$ for $b,c\in\cC$, for all $a\in \Irr(\cC)$, the map $\mathbf{a}(\psi): \cC(b,a) \to \cC(c,a)$ which precomposes with $\psi$ has norm bounded above by the same universal constant $\|\psi\circ \psi^*\|^{1/2}_{\infty}$ in the C*-algebra $\cC(b,b)$, which does not depend on $a\in \Irr(\cC)$.
Thus we get a bounded map $\bfH(\psi): \bfH(b)\to \bfH(c)$ by $\bfH(\psi) = \bigoplus_{a\in \Irr(\cC)} \id_{\bfH(a)}\boxtimes \mathbf{a}(\psi)$.
It is now easy to verify that $\bfH$ preserves identity maps, composition, and adjoints, so $\bfH\in\Hilb(\cC)$.
\end{defn}

\begin{cor}
\label{cor:CompletionVecToHilb}
Suppose we have a $\bfV\in \Vec(\cC)$ together with positive semi-definite sesquilinear forms $\langle \,\cdot\,,\,\cdot\,\rangle_c$ on $\bfV(c)$ for all $c\in \Irr(\cC)$.
Define $\bfH(c)$ to be the Hilbert space completion in $\|\,\cdot\,\|_2$ of $\bfV(c)/N_{\langle \,\cdot\,,\,\cdot\,\rangle_c}$, where $N_{\langle \,\cdot\,,\,\cdot\,\rangle_c}$ is the length zero vectors in $\bfV(c)$.
There is a canonical extension of $\bfH$ to an element of $\Hilb(\cC)$.
\end{cor}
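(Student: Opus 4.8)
The plan is to deduce this directly from Definition~\ref{defn:EasyConstructionOfHilbertSpaceObject}. Writing $N_c := N_{\langle\,\cdot\,,\,\cdot\,\rangle_c}$, the form $\langle\,\cdot\,,\,\cdot\,\rangle_c$ descends to a genuine inner product on $\bfV(c)/N_c$; I take $\bfH(c)$ to be its Hilbert space completion, with canonical linear map $q_c\colon\bfV(c)\to\bfH(c)$ given by the quotient followed by the dense inclusion. Feeding the family $\{\bfH(c)\}_{c\in\Irr(\cC)}$ into Definition~\ref{defn:EasyConstructionOfHilbertSpaceObject} then produces a Hilbert space object, which I again denote $\bfH\in\Hilb(\cC)$, with $\bfH(b)=\bigoplus_{a\in\Irr(\cC)}\bfH(a)\boxtimes\cC(b,a)$ for general $b\in\cC$ and $\bfH(\psi)=\bigoplus_{a\in\Irr(\cC)}\id_{\bfH(a)}\boxtimes\mathbf{a}(\psi)$ for $\psi\in\cC(c,b)$. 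The boundedness of each $\bfH(\psi)$ (via the uniform estimate $\|\mathbf{a}(\psi)\|\le\|\psi\circ\psi^*\|^{1/2}_{\infty}$), functoriality, and compatibility with adjoints are exactly the verifications carried out in that definition, which I would simply quote. Since $\bfH$ agrees on $\Irr(\cC)$ with the prescribed completions, it is the asserted extension, and no serious obstacle arises here.

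Next I would pin down the sense in which this extension is \emph{canonical} and, along the way, record the comparison map that gets used downstream. For any linear functor $\bfW\colon\cC^{\op}\to\Vec$ and any $b\in\cC$, semi-simplicity makes the evaluation map $\bigoplus_{a\in\Irr(\cC)}\bfW(a)\otimes\cC(b,a)\to\bfW(b)$, $\xi\otimes f\mapsto\bfW(f)\xi$, a natural isomorphism involving no choices --- concretely, one checks bijectivity on each simple $b\in\Irr(\cC)$ (where $\cC(b,a)=\mathbbm{1}\,\id_b$ only for $a=b$) and extends by additivity --- so that $\bfW\cong\bigoplus_{a\in\Irr(\cC)}\bfW(a)\otimes\mathbf{a}$ canonically in $\Vec(\cC)$, while $\bfH=\bigoplus_{a\in\Irr(\cC)}\bfH(a)\boxtimes\mathbf{a}$ by construction. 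Then $q:=\bigoplus_{a\in\Irr(\cC)}q_a\otimes\id_{\mathbf{a}}$ is a natural transformation $q\colon\bfV\Rightarrow\bfH$ in $\Vec(\cC)$ (viewing $\bfH$ along the forgetful functor $\Hilb\to\Vec$) whose $c$-component is $q_c$ for each $c\in\Irr(\cC)$; naturality in $\psi\in\cC(c,b)$ is immediate from the two decompositions, since on the $a$-summand both composites equal $q_a\otimes\mathbf{a}(\psi)$. Passing to a different set of representatives $\Irr(\cC)'$ yields a canonically isomorphic $\bfH$ together with a compatible isomorphism of the maps $q$, exactly as in Remark~\ref{rem:TensorProductCanonical}; this is precisely what ``canonical'' should mean here.

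The one bookkeeping point I would be careful to record is internal to Definition~\ref{defn:EasyConstructionOfHilbertSpaceObject}: the sesquilinear form it effectively places on the summand $\bfV(a)\otimes\cC(b,a)$ of $\bfV(b)$ is $d_a^{-1}\langle\,\cdot\,,\,\cdot\,\rangle_a\otimes\langle\,\cdot\,,\,\cdot\,\rangle_{\cC(b,a)}$, with $\langle\,\cdot\,,\,\cdot\,\rangle_{\cC(b,a)}$ the inner product of \eqref{eq:InnerProductInC}. Since that inner product is positive definite, this tensor form is again positive semi-definite with null space exactly $N_a\otimes\cC(b,a)$, so its completion is indeed $\bfH(a)\boxtimes\cC(b,a)$, and for $b\in\Irr(\cC)$ it reduces to $\langle\,\cdot\,,\,\cdot\,\rangle_b$ because $\langle\id_b,\id_b\rangle_{\cC(b,b)}=d_b$. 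This is exactly where ``positive semi-definite'' rather than ``positive definite'' enters, and it is the only step needing comment; everything else is a routine transcription of Definition~\ref{defn:EasyConstructionOfHilbertSpaceObject}, so the whole corollary should take only a few lines.
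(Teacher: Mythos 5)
Your proposal is correct and is exactly the intended argument: the paper offers no separate proof, treating the statement as an immediate application of Definition~\ref{defn:EasyConstructionOfHilbertSpaceObject} to the family of completions $\{\bfH(c)\}_{c\in\Irr(\cC)}$, which is precisely what you do. Your added bookkeeping — that the radical of the balanced form on $\bfV(a)\otimes\cC(b,a)$ is $N_a\otimes\cC(b,a)$ and that the form reduces to $\langle\,\cdot\,,\,\cdot\,\rangle_b$ on simples — matches the parenthetical consistency check the authors make later in the proof of Theorem~\ref{thm:GelfandNaimark}, so nothing is missing.
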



\section{\texorpdfstring{$*$}{*}-Algebras in \texorpdfstring{$\Vec(\cC)$}{Vec(C)} }
\label{sec:AlgebrasInC}

The aim of this section is to define the notion of a $*$-algebra in $\Vec(\cC)$ for $\cC$ a rigid C*-tensor category with simple unit object.
As a reminder, we suppress the associators and unitors of $\cC$ whenever possible.

First, we review the notions of algebras and module categories and the correspondence between them.
We then define a $*$-algebra, and we discuss the correspondence between $*$-algebras and dagger module categories.
This correspondence affords an elegant definition of a C*-algebra object as a $*$-algebra object whose corresponding dagger module category is a C*-category.
W*-algebra objects are defined similarly.

\subsection{Algebras and module categories}

\begin{defn} 
An \textit{algebra object} in a tensor category $\cT$ is an object $A\in \cT$, and a pair of maps $m:A\otimes A\rightarrow A$ and $i: 1_\cC\rightarrow A$ such that
\begin{enumerate}[(1)]
\item
$m\circ (\id_{A}\otimes m)=m\circ(m\otimes \id_{A})$ as maps $A\otimes A\otimes A\rightarrow A$, and
\item
$m\circ (i\otimes \id_{A})=\id_{A}=m\circ(\id_{A}\otimes i)$ as maps $A\to A$.
\end{enumerate}

Given two algebra objects $(A,m_A,i_A)$ and $(B,m_B,i_B)$ in $\cT$, an algebra homomorphism $\theta: A\to B$ is a map in $\cT(A,B)$ that is compatible with the units and multiplications of $A$ and $B$, i.e.,
$m_B\circ(\theta\otimes \theta) = \theta\circ m_A$
and
$\theta\circ i_A = i_B$.
\end{defn}

\begin{nota}
Given an algebra $(A,m,i)$, we represent $m$ and $i$ by a trivalent and univalent vertex respectively:
$$
m = 
\begin{tikzpicture}[baseline=-.1cm]
	\coordinate (a) at (0,0);
	\draw (a) -- (0,.3);
    \draw (-.3,-.3) arc (180:0:.3cm);
    \filldraw (a) circle (.05cm);
    \node at (-.3,-.5) {\scriptsize{$A$}};
	\node at (.3,-.5) {\scriptsize{$A$}};
	\node at (0,.5) {\scriptsize{$A$}};
\end{tikzpicture}
\qquad\qquad
i =
\begin{tikzpicture}[baseline=-.1cm]
	\coordinate (a) at (0,-.1);
	\draw (a) -- (0,.3);
    \filldraw (a) circle (.05cm);
    \node at (0,.5) {\scriptsize{$A$}};
\end{tikzpicture}\,.
$$
\end{nota}

The following proposition is straightforward, and we leave the details to the reader.

\begin{prop}
\label{prop:AlgebrasAndLaxTensorFunctors}
We have an equivalence of categories
\[
\left\{\,\text{\rm Algebra objects in $\Vec(\cC)$}\,\right\}
\,\,\cong\,\,
\left\{\,\text{\rm Lax tensor functors $\cC^{\op} \to \Vec$}\,\right\}.
\]
\end{prop}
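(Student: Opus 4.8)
The plan is to note that passing between an algebra object $(\bfA,m,i)$ in $\Vec(\cC)$ and a lax tensor functor $\cC^{\op}\to\Vec$ does not change the underlying functor $\bfA\colon\cC^{\op}\to\Vec$ at all; it merely repackages the structure morphisms. So the real content is a ``currying'' identity for the tensor product of $\Vec(\cC)$, followed by a check that the coherence axioms correspond, and a check that morphisms correspond.

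\emph{Step 1 (the currying bijection).} For $\bfV,\bfW,\bfG\in\Vec(\cC)$ I would first establish a bijection, natural in all three, between $\Hom_{\Vec(\cC)}(\bfV\otimes\bfW,\bfG)$ and families of bilinear maps $\mu_{a,b}\colon\bfV(a)\otimes\bfW(b)\to\bfG(a\otimes b)$ natural in $a,b\in\cC$. From $m$, for $a,b\in\Irr(\cC)$ put $\mu_{a,b}(\xi\otimes\eta):=m_{a\otimes b}(\xi\otimes\id_{a\otimes b}\otimes\eta)$, using the summand $\bfV(a)\otimes\cC(a\otimes b,a\otimes b)\otimes\bfW(b)$ of $(\bfV\otimes\bfW)(a\otimes b)$ as in Lemma~\ref{lem:Yoneda}, and extend $\mu$ additively to all $a,b\in\cC$; naturality of $(\mu_{a,b})$ is naturality of $m$. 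From $(\mu_{a,b})$, define $m_c$ on the summand $\bfV(a)\otimes\cC(c,a\otimes b)\otimes\bfW(b)$ (for $a,b\in\Irr(\cC)$) by $\xi\otimes\alpha\otimes\eta\mapsto\bfG(\alpha)(\mu_{a,b}(\xi\otimes\eta))$, which is natural in $c$ since $\bfG$ is a functor. Lemma~\ref{lem:Yoneda}, which realizes $\xi\otimes\alpha\otimes\eta$ as $(\bfV\otimes\bfW)(\alpha)$ applied to $\xi\otimes\id_{a\otimes b}\otimes\eta$, shows these two assignments are mutually inverse. This is precisely the defining universal property of the Day convolution product, and could in fact be imported ``for free'' from the equivalence $\Vec(\cC)\simeq\ind$-$\cC$; I prefer to verify it in the explicit model, since the associativity bookkeeping below lives there. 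Finally, $\Hom_{\Vec(\cC)}(\mathbf{1},\bfG)\cong\bfG(1_\cC)$ is the Yoneda lemma.

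\emph{Step 2 (matching the axioms).} Taking $\bfV=\bfW=\bfG=\bfA$, a multiplication $m$ is the same data as a natural family $\mu_{a,b}\colon\bfA(a)\otimes\bfA(b)\to\bfA(a\otimes b)$, and a unit $i\colon\mathbf{1}\to\bfA$ is the same as $\iota\in\bfA(1_\cC)$, i.e.\ a linear map $\mathbf{1}\to\bfA(1_\cC)$; together this is exactly the structure of a (linear) lax tensor functor $(\bfA,\mu,\iota)$ on $\cC^{\op}$, the underlying functor being $\bfA$ itself. To see that associativity corresponds to associativity, I would evaluate $m\circ(\id_\bfA\otimes m)$ and $m\circ(m\otimes\id_\bfA)\circ\alpha_{\bfA,\bfA,\bfA}$ on a vector $\xi\otimes\alpha\otimes(\eta\otimes\beta\otimes\zeta)$ with $a,b,d,e\in\Irr(\cC)$, $\alpha\in\cC(c,a\otimes b)$, $\beta\in\cC(b,d\otimes e)$, $\xi\in\bfA(a)$, $\eta\in\bfA(d)$, $\zeta\in\bfA(e)$. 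Using $m_c(\xi\otimes\alpha\otimes\eta)=\bfA(\alpha)\mu(\xi\otimes\eta)$ from Step 1, naturality of $\mu$ in each variable, and the explicit associator \eqref{eq:Associativity}, one side reduces to $\bfA((\id_a\otimes\beta)\circ\alpha)\,\mu_{a,d\otimes e}(\xi\otimes\mu_{d,e}(\eta\otimes\zeta))$ and the other to $\bfA((\id_a\otimes\beta)\circ\alpha)\,\bfA((\alpha^\cC_{a,d,e})^{-1})\,\mu_{a\otimes d,e}(\mu_{a,d}(\xi\otimes\eta)\otimes\zeta)$, where $\alpha^\cC$ is the associator of $\cC$ and one uses that the matrix $U$ of \eqref{eq:UnitaryFrom6j} is exactly the change of basis identifying $(\id_a\otimes\beta)\circ\alpha$ with $\alpha^\cC_{a,d,e}\circ\sum U^{\gamma,\delta}_{\alpha,\beta}(\gamma\otimes\id_e)\circ\delta$ in \eqref{eq:BoxtimesTensorProduct}. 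Since the morphisms $(\id_a\otimes\beta)\circ\alpha$ span every space $\cC(c,a\otimes d\otimes e)$ (again by \eqref{eq:BoxtimesTensorProduct}) and $\bfA$ is linear on morphisms with $\bfA(\id)=\id$, equality of the two sides for all $\alpha,\beta$ is precisely the lax-monoidal associativity $\mu_{a,d\otimes e}\circ(\id\otimes\mu_{d,e})=\bfA((\alpha^\cC_{a,d,e})^{-1})\circ\mu_{a\otimes d,e}\circ(\mu_{a,d}\otimes\id)$, with the associator of $\cC^{\op}$ (which is the inverse of that of $\cC$) reinstated. A completely parallel evaluation on simple tensors, now using the unitors of $\Vec(\cC)$ and of $\cC$, shows the unit axioms $m\circ(i\otimes\id_\bfA)=\id_\bfA=m\circ(\id_\bfA\otimes i)$ are equivalent to $\mu_{1,a}\circ(\iota\otimes\id)=\id_{\bfA(a)}=\mu_{a,1}\circ(\id\otimes\iota)$.

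\emph{Step 3 (morphisms, conclusion, obstacle).} An algebra homomorphism $\theta\colon(\bfA,m_\bfA,i_\bfA)\to(\bfB,m_\bfB,i_\bfB)$ is a $\theta\in\Hom_{\Vec(\cC)}(\bfA,\bfB)$ with $m_\bfB\circ(\theta\otimes\theta)=\theta\circ m_\bfA$ and $\theta\circ i_\bfA=i_\bfB$; evaluating these identities on simple tensors exactly as in Step 2 turns them into $\theta_{a\otimes b}\circ(\mu_\bfA)_{a,b}=(\mu_\bfB)_{a,b}\circ(\theta_a\otimes\theta_b)$ and $\theta_{1_\cC}\circ\iota_\bfA=\iota_\bfB$, i.e.\ $\theta$ is a monoidal natural transformation of the associated lax tensor functors, and conversely. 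Since the object-level passages of Steps 1--2 are mutually inverse and the morphism-level passage leaves the underlying natural transformation literally unchanged, this yields the asserted equivalence of categories (in fact an isomorphism). The one place genuine care is needed, and the reason the proposition is ``straightforward'' rather than ``trivial'', is the associativity comparison in Step 2: one must track precisely how the $6j$-matrix $U$ of \eqref{eq:UnitaryFrom6j} encodes the associator of $\cC$, bearing in mind that $U$ is unitary but \emph{not} a unitary matrix because the families in \eqref{eq:NotONBs} are not orthonormal, so the bookkeeping of associators and unitors on the two sides must be carried out deliberately rather than waved through. Everything else, including the unit axioms and functoriality, is formal.
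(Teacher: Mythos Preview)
Your proposal is correct and is precisely the argument the paper has in mind: the paper does not give a proof at all, declaring the proposition ``straightforward'' and leaving the details to the reader, with only the remark that the multiplication $m$ corresponds to the laxitor $\mu$ and the unit $i$ to the unit map $\iota$. You have simply supplied those details, carefully tracking the Day-convolution currying bijection and the $6j$-symbol bookkeeping in the associativity check, which is exactly the right way to make the paper's one-sentence explanation rigorous.
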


The content of the above proposition is that the data of a multiplication map $m$ for algebra object $\bfA \in \Vec(\cC)$ is equivalent to the data of a laxitor $\mu$ for $\bfA: \cC^{\op}\to \Vec$, and the unit $i \in \bfA(1_\cC)$ can be identified with the image of $1_\bbC$ under the unit map $\iota: 1_\Vec \to \bfA(1_\cC)$.

We will use the equivalence of categories in Proposition \ref{prop:AlgebrasAndLaxTensorFunctors} to freely pass between algebra objects in $\Vec(\cC)$ and lax tensor functors $\cC^{\op}\to \Vec$.

\begin{defn} 
\label{defn:NonStarBaseAlgebra}
The \textit{base algebra} associated to $(\bfA, m, i)$ is the unital associative algebra $\bfA(1_\cC)$ with multiplication $\mu_{1_\cC,1_\cC}:\bfA(1_\cC)\otimes \bfA(1_\cC)\to \bfA(1_\cC)$ and unit $i\in \bfA(1_\cC)$.
\end{defn}

We now discuss $\cC$-module categories and $\cC$-module functors.
In Section \ref{sec:AlgebrasAndModuleCategories} below, we will review the equivalence between algebra objects and cyclic $\cC$-module categories, whose objects are labeled by the objects of $\cC$.

\begin{defn}
A $\cC$-module category is a linear category $\cM$ with a bilinear bifunctor $-\otimes -: \cC\times \cM \to \cM$ together with unitor natural isomorphisms $\lambda_m : 1_\cC\otimes m \to m$ for $m\in \cM$ and associator natural isomorphisms $\alpha_{c,d,m}: c\otimes (d\otimes m ) \to (c\otimes_\cC d)\otimes m$ satisfying 
the pentagon axiom
$$
\alpha_{a,b\otimes_\cC c, m}\circ \alpha_{a,b,c\otimes m}
=
(\alpha_{a,b,c}\otimes \id_m)\circ \alpha_{a\otimes_\cC b, c, m}\circ (\id_a \otimes \alpha_{b,c,m})
\qquad
a,b,c\in \cC; m\in \cM
$$
and the triangle axioms
$$
(\rho_c \otimes \id_m) \circ \alpha_{c,1_\cC,m}
=
\id_c \otimes \lambda_m
\qquad\text{ and }\qquad
(\lambda_c \otimes \id_m) \circ \alpha_{1_\cC,c,m}
=
\lambda_{c\otimes m}
\qquad c\in\cC; m\in \cM.
$$
As with tensor categories, we will suppress the associator and unitor isomorphisms for module categories whenever possible to ease the notation.

A $\cC$-module category $\cM$ is called \emph{cyclic} with basepoint $m\in\cM$ if the objects of $\cM$ are exactly the $c\otimes m$ for $c\in \cC$.

A $\cC$-\emph{module dagger category} is a $\cC$-module category which is also a dagger category such that the isomorphisms $\alpha_{a,b,m}$ and $\lambda_m$ are unitary isomorphisms, and for all $\psi\in \cC(a,b)$ and $\phi\in\cM(m,n)$, we have $(\psi\otimes \phi)^* = \psi^*\otimes \phi^*$, where $\psi^*\in \cC(b,a)$ comes from the dagger structure of $\cC$.
A $\cC$-\emph{module} C*-\emph{category} is a $\cC$-module dagger category whose underlying dagger category is a C*-category.
A $\cC$-module W*-category is defined similarly, but we must require that the action maps $\phi\mapsto \psi\otimes \phi$ are weak*-continuous.
\end{defn}

\begin{rem}
Some of our readers may want to take idempotent completions of cyclic $\cC$-module categories, but we will not need it for our purposes.
We note that one can freely pass back and forth between a cyclic $\cC$-module category and its idempotent completion, since we are remembering the basepoint.
\end{rem}

\begin{defn}
Suppose $\cM,\cN$ are two $\cC$-module categories.
A $\cC$-\emph{module functor} $(\bfF,\omega): \cM\to \cN$ is a functor $\bfF: \cM\to \cN$ together with a family $\omega$ of natural isomorphisms $(\omega_{c,m}: c\otimes \bfF(m)\xrightarrow{\cong} \bfF(c\otimes m))_{c\in\cC, m\in\cM}$ such that the following axioms hold:
\begin{itemize}
\item 
(assocaitivity)
For all $c\in \cC$ and $m\in\cM$, the following diagram commutes:
$$
\xymatrix{
c\otimes (d \otimes \bfF(m)) 
\ar[rr]^{\id_c \otimes \omega_{d,m}}
\ar[d]^\alpha
&&
c\otimes \bfF(d\otimes m)
\ar[rr]^{\omega_{c,d\otimes m}}
&&
\bfF(c\otimes (d\otimes m))
\ar[d]^{\bfF(\alpha)}
\\
(c\otimes d) \otimes \bfF(m)
\ar[rrrr]^{\omega_{c\otimes d,m}}
&&&&
\bfF((c\otimes d)\otimes m)
}
$$
\item
(unitality)
For all $c\in \cC$ and $m\in \cM$, the following diagram commutes:
\[
\xymatrix{
1_\cC \otimes \bfF(m) 
\ar[dr]_{\lambda_{\bfF(m)}}
\ar[rr]^{\omega_{c,m}}
&&
\bfF(1_\cC \otimes m)
\ar[dl]^{\bfF(\lambda_m)}
\\
&\bfF(m)
}
\]
\end{itemize}
A \emph{cyclic} $\cC$-module functor between cyclic $\cC$-module categories $(\bfF, \omega, \varpi):(\cM,m) \to (\cN,n)$ is a $\cC$-module functor $(\bfF,\omega)$ together with an isomorphism $\varpi: n \to \bfF(m)$.

If $\cM, \cN$ are $\cC$-module dagger categories, we call a $\cC$-module functor $(\bfF,\omega)$ a $\cC$-module \emph{dagger} functor if all the isomorphisms $\omega_{c,m}$ are unitary, and $\bfF(f^*)=\bfF(f)^*$ for all morphisms $f\in\cM(m,n)$.
If $\cM,\cN$ are $\cC$-module W*-categories, we call a $\cC$-module dagger functor \emph{normal} if each map $\cM(m, n) \to \cN(\bfF(m), \bfF(n))$ is continuous with respect to the weak* topologies.
Finally, if $(\cM,m)$ and $(\cN,n)$ are cyclic $\cC$-module dagger categories, $(\bfF,\omega, \varpi)$ is called a \emph{cyclic} $\cC$-module dagger functor if $(\bfF,\omega)$ is a dagger $\cC$-module functor and $\varpi$ is unitary.
\end{defn}

\subsection{Equivalence between algebras and cyclic module categories}
\label{sec:AlgebrasAndModuleCategories}

The following theorem is well-known to experts, so we will only sketch the proof.

\begin{thm}
\label{thm:AlgebraModuleCategoryCorrespondence}
There is an equivalence of categories
$$
\left\{\,\text{\rm Algebra objects in $\Vec(\cC)$}\,\right\}
\,\,\cong\,\,
\left\{\,\text{\rm Cyclic $\cC$-module categories}\,\right\}.
$$
\end{thm}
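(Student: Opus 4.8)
The plan is to exhibit explicit quasi-inverse functors $\Phi$ and $\Psi$ between the two categories, together with natural isomorphisms $\Psi\Phi\cong\id$ and $\Phi\Psi\cong\id$.

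First I would define $\Phi$ on an algebra object $\bfA\in\Vec(\cC)$ by $\Phi(\bfA):=(\FreeMod_\cC(\bfA),\bfA)$, where $\FreeMod_\cC(\bfA)$ carries the left $\cC$-module structure $d\otimes(\mathbf{c}\otimes\bfA):=\mathbf{d}\otimes\mathbf{c}\otimes\bfA$ (using that the Yoneda embedding is monoidal, so $\mathbf{d}\otimes\mathbf{c}\cong\mathbf{d\otimes c}$), with associator inherited from $\Vec(\cC)$, and with basepoint the free module $\bfA=\mathbf{1}\otimes\bfA$; this module category is manifestly cyclic. On an algebra homomorphism $\theta\colon\bfA\to\bfB$ I would take $\Phi(\theta)$ to be the base-change module functor $-\otimes_\bfA\bfB\colon\FreeMod_\cC(\bfA)\to\FreeMod_\cC(\bfB)$, which sends $\mathbf{c}\otimes\bfA\mapsto\mathbf{c}\otimes\bfB$ and is cyclic with $\varpi=\id_\bfB$. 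Conversely, for a cyclic $\cC$-module category $(\cM,m)$ I would define $\Psi(\cM,m)=\bfA$ by $\bfA(c):=\Hom_\cM(c\otimes m,m)$; this is a functor $\cC^{\op}\to\Vec$ (a morphism $\psi\in\cC(c,c')$ acts by precomposition with $\psi\otimes\id_m$), and it becomes a lax tensor functor, hence an algebra object by Proposition~\ref{prop:AlgebrasAndLaxTensorFunctors}, with laxitor $\mu_{c,d}(f\otimes g):=f\circ(\id_c\otimes g)$ (inserting the module associator) and unit $\iota(1):=\lambda_m$. Here the associativity and unitality axioms for $(\mu,\iota)$ are precisely the pentagon and triangle axioms for $\cM$. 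On a cyclic $\cC$-module functor $(\bfF,\omega,\varpi)\colon(\cM,m)\to(\cN,n)$, set $\Psi(\bfF)_c(f):=\varpi^{-1}\circ\bfF(f)\circ\omega_{c,m}\circ(\id_c\otimes\varpi)$.

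For $\Psi\Phi\cong\id$: given $\bfA$, the fiber $\Psi\Phi(\bfA)(c)=\Hom_{\FreeMod_\cC(\bfA)}(\mathbf{c}\otimes\bfA,\bfA)$ is canonically isomorphic to $\Hom_{\Vec(\cC)}(\mathbf{c},\bfA)\cong\bfA(c)$ by the free-module adjunction followed by the Yoneda lemma — this is exactly the canonical isomorphism quoted in the introduction. It then remains to check that this isomorphism is natural in $c$ and intertwines the laxitors: the multiplication on $\FreeMod_\cC(\bfA)$ is composition of right $\bfA$-module maps, which under the adjunction unwinds to the laxitor $\mu$ of $\bfA$, so this is a direct computation.

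For $\Phi\Psi\cong\id$: given $(\cM,m)$ and $\bfA:=\Psi(\cM,m)$, I would construct a cyclic $\cC$-module functor $\bfG\colon\FreeMod_\cC(\bfA)\to\cM$ sending $\mathbf{c}\otimes\bfA\mapsto c\otimes m$, with the evident module-functor structure and $\varpi=\id_m$. Essential surjectivity is immediate since $\cM$ is cyclic, so the crux is full faithfulness: one must produce a natural isomorphism $\Hom_{\FreeMod_\cC(\bfA)}(\mathbf{c}\otimes\bfA,\mathbf{d}\otimes\bfA)\cong\Hom_\cM(c\otimes m,d\otimes m)$ compatible with composition and with the left $\cC$-actions. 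Since $\mathbf{d}$ is dualizable in $\Vec(\cC)$ (rigidity of $\cC$, with dual $\overline{\mathbf{d}}=\mathbf{\overline d}$), the left-hand side is $\cong\Hom_{\FreeMod_\cC(\bfA)}(\overline{\mathbf{d}}\otimes\mathbf{c}\otimes\bfA,\bfA)\cong\bfA(\overline d\otimes c)$ via the previous step, while $\Hom_\cM(c\otimes m,d\otimes m)\cong\Hom_\cM(\overline d\otimes c\otimes m,m)=\bfA(\overline d\otimes c)$ by the module-category analogue of the tensor–hom adjunction; chasing the two chains of identifications shows they agree. I expect the main obstacle to be exactly this verification — confirming that the two chains of canonical isomorphisms are simultaneously compatible with composition of morphisms and with the left $\cC$-module structures, i.e.\ that $\bfG$ is genuinely a $\cC$-module equivalence and not merely an equivalence of the underlying categories — together with the routine but lengthy bookkeeping showing that $\Phi$ and $\Psi$ are functorial and that the two families of isomorphisms are natural in the algebra object, respectively the cyclic module category.
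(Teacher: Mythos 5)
Your proposal follows essentially the same route as the paper: the free-module functor $\bfA\mapsto(\FreeMod_\cC(\bfA),\bfA)$ in one direction, the internal hom $\bfA(c)=\cM(c\otimes m,m)$ in the other, with the quasi-inverse identifications coming from the free-module adjunction, Yoneda, and Frobenius reciprocity (rigidity of $\cC$). The paper likewise only sketches these verifications (Theorem \ref{thm:AlgebraModuleCategoryCorrespondence} together with Constructions \ref{construction:Module} and \ref{construction:Algebra}), so your outline, including the identification $\Hom_{\cM_\bfA}(\mathbf{c}\otimes\bfA,\mathbf{d}\otimes\bfA)\cong\bfA(\overline{d}\otimes c)$, matches the intended argument.
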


\begin{rem}
In certain situations, subject to appropriate adjectives, the above correspondence is usually stated as a correspondence between algebra objects up to Morita equivalence and $\cC$-module categories without basepoints.
(In the semi-simple case, see \cite[Rem.\,3.5(ii)]{MR1976459}.)
Picking a basepoint for $\cM$ allows us to recover the algebra, and not just its Morita class.
\end{rem}

\begin{proof}[Sketch of proof]
To go from algebras objects to cyclic $\cC$-module categories, 
we take the category $\FreeMod_{\Vec(\cC)}(\bfA)$ of free right $\bfA$-modules in $\Vec(\cC)$, whose objects are of the form
$\mathbf{c}\otimes \bfA$ for $c\in\cC$.
(Note this is the image of the free module functor described in \cite{MR1936496,MR2863377,1509.02937}.)

To go from cyclic $\cC$-module categories to algebras, we get an algebra $\bfA\in \Vec(\cC)$ by taking the $\Vec(\cC)$-valued internal hom: $\bfA:=\underline{\Hom}_{\Vec(\cC)}(m,m)$, which satisfies the universal property
$
\Hom_{\Vec(\cC)}(\mathbf{c},\bfA) \cong \cM( c\otimes m , m)
$
for all $c\in \cC$.
\end{proof}

We discuss in more detail the correspondence between algebra objects $\bfA\in \Vec(\cC)$ and cyclic $\cC$-module categories, as it will be useful for our discussion of operator algebras in $\Vec(\cC)$, 

\begin{construction}[Module $\cM_\bfA$ from algebra $\bfA$]
\label{construction:Module}
Suppose that $\bfA\in \Vec(\cC)$ is an algebra object.
The cyclic left $\cC$-module category $\cM_\bfA$ has objects given by $\mathbf{c}\otimes \bfA \in \Vec(\cC)$ for $c\in \cC$, and morphisms given by right $\bfA$-module maps.
Recall from \cite[Lem.\,3.2]{MR1976459} (see also \cite[Fig.\,4]{MR1936496}) that there is a natural equivalence
$$
\Hom_{\cM_\bfA}(\mathbf{a}\otimes \bfA , \mathbf{b}\otimes \bfA) \cong \Hom_{\Vec(\cC)}(\mathbf{a}, \mathbf{b}\otimes \bfA).
$$
Using this natural equivalence, we give a concrete, equivalent description of $\cM_\bfA$ which is even easier to work with.

The objects are the symbols
$c_\bfA$ for $c\in \cC$, and the morphism spaces are defined by
$$
\cM_\bfA(a_\bfA,b_\bfA):=
\Hom_{\Vec(\cC)}(\mathbf{a},\mathbf{b}\otimes \bfA)
\cong
\bfA(\overline{b}\otimes a)\in\Vec
$$
by Frobenius reciprocity.
Graphically, we denote our morphisms as follows:
$$
\begin{tikzpicture}[baseline = -.1cm]
    \draw (0,0) -- (.7,0);
    \draw (0,-.7) -- (0,.7);
    \roundNbox{unshaded}{(0,0)}{.3}{0}{0}{$f$}
    \node at (-.2,.5) {\scriptsize{$\mathbf{b}$}};
    \node at (.5,.15) {\scriptsize{$\bfA$}};
    \node at (-.2,-.5) {\scriptsize{$\mathbf{a}$}};
\end{tikzpicture}
:=
\begin{tikzpicture}[baseline = -.1cm]
    \draw (.15,0) -- (.15,.7);
    \draw (-.15,0) -- (-.15,.7);
    \draw (0,-.7) -- (0,0);
    \roundNbox{unshaded}{(0,0)}{.3}{0}{0}{$f$}
    \node at (.3,.5) {\scriptsize{$\bfA$}};
    \node at (.15,-.5) {\scriptsize{$\mathbf{a}$}};
    \node at (-.3,.5) {\scriptsize{$\mathbf{b}$}};
\end{tikzpicture}
$$
Composition is defined by stacking and using the multiplication map for $\bfA$:
$$
g\circ f
:=
\begin{tikzpicture}[baseline = .4cm]
    \draw (.3,1) arc (90:-90:.5cm);
    \draw (.8,.5) -- (1.2,.5);
    \draw (0,-.7) -- (0,1.7);
    \filldraw (.8,.5) circle (.05cm);
    \roundNbox{unshaded}{(0,0)}{.3}{0}{0}{$f$}
    \roundNbox{unshaded}{(0,1)}{.3}{0}{0}{$g$}
    \node at (-.2,1.5) {\scriptsize{$\mathbf{c}$}};
    \node at (-.2,.5) {\scriptsize{$\mathbf{b}$}};
    \node at (1,.7) {\scriptsize{$\bfA$}};
    \node at (-.2,-.5) {\scriptsize{$\mathbf{a}$}};
\end{tikzpicture}
.
$$
It is straightforward to check that composition is associative.
The identity morphism for $a_\bfA$ is given by 
$$
\id_{a_\bfA} =
\begin{tikzpicture}[baseline = -.1cm]
    \draw (0,-.7) -- (0,.7);
    \draw (.3,0) -- (1,0);
    \filldraw (.3,0) circle (.05cm);
    \roundNbox{dashed}{(0,0)}{.4}{0}{.3}{}
    \node at (-.15,0) {\scriptsize{$\mathbf{a}$}};
    \node at (.5,.15) {\scriptsize{$\bfA$}};
\end{tikzpicture}
\,.
$$

The $\cC$-module structure on $\cM_\bfA$ is defined as follows.
The object $c\in\cC$ acts on $a_\bfA$ by $(c\otimes a)_\bfA$.
Given morphisms $\psi\in \cC(c, d)$ and $f\in \cM_\bfA(a_\bfA,b_\bfA)=\Hom_{\Vec(\cC)}(\mathbf{a},\mathbf{b}\otimes \bfA)$, we define
$$
\psi\otimes f 
=
\begin{tikzpicture}[baseline = -.1cm]
    \draw (-.8,-.7) -- (-.8,.7);
    \draw (0,0) -- (.7,0);
    \draw (0,-.7) -- (0,.7);
    \roundNbox{unshaded}{(-.8,0)}{.3}{0}{0}{$\psi$}
    \roundNbox{unshaded}{(0,0)}{.3}{0}{0}{$f$}
    \node at (-.2,.5) {\scriptsize{$\mathbf{b}$}};
    \node at (.5,.15) {\scriptsize{$\bfA$}};
    \node at (-.2,-.5) {\scriptsize{$\mathbf{a}$}};
    \node at (-1,-.5) {\scriptsize{$\mathbf{c}$}};
    \node at (-1,.5) {\scriptsize{$\mathbf{d}$}};
\end{tikzpicture}
\in 
\Hom_{\Vec(\cC)}(\mathbf{c}\otimes \mathbf{a},\mathbf{d}\otimes \mathbf{b}\otimes \bfA)
=
\cM_\bfA((c\otimes a)_\bfA, (d\otimes b)_\bfA).
$$
It is straightforward to show this defines a bifunctor using the graphical calculus as a guide.
We see that $\cM_\bfA$ is cyclic, with basepoint $(1_\cC)_\bfA$.

If $\theta: \bfA\Rightarrow \bfB$ is an algebra natural transformation, we define $\check{\theta}:\cM_\bfA \to \cM_\bfB$ by
$$
\begin{tikzpicture}[baseline = -.1cm]
    \draw (0,0) -- (.7,0);
    \draw (0,-.7) -- (0,.7);
    \roundNbox{unshaded}{(0,0)}{.3}{0}{0}{$f$}
    \node at (-.2,.5) {\scriptsize{$\mathbf{b}$}};
    \node at (.5,.15) {\scriptsize{$\bfA$}};
    \node at (-.2,-.5) {\scriptsize{$\mathbf{a}$}};
\end{tikzpicture}
\longmapsto
\begin{tikzpicture}[baseline = -.1cm]
    \draw (0,0) -- (1.7,0);
    \draw (0,-.7) -- (0,.7);
    \roundNbox{unshaded}{(0,0)}{.3}{0}{0}{$f$}
    \roundNbox{unshaded}{(1,0)}{.3}{0}{0}{$\theta$}
    \node at (-.2,.5) {\scriptsize{$\mathbf{b}$}};
    \node at (.5,.15) {\scriptsize{$\bfA$}};
    \node at (-.2,-.5) {\scriptsize{$\mathbf{a}$}};
    \node at (1.5,.15) {\scriptsize{$\bfB$}};
\end{tikzpicture}
.
$$
It is straightforward to verify that $\check{\theta}$ is a functor.
The fact that $\theta$ preserves units means $\check{\theta}$ preserves identity morphisms, and the fact that $\theta$ intertwines the multiplication means that $\check{\theta}$ preserves composition.
\end{construction}

\begin{construction}[Algebra $\bfA_{m}$ from module $(\cM,m)$]
\label{construction:Algebra}
Suppose that $(\cM,m)$ is a cyclic left $\cC$-module category.
Concretely, for $a\in \cC^{\text{op}}$, we define $\bfA(a) := \cM(a\otimes m, m) \in \Vec$.
We use the usual diagrammatic calculus to denote $f\in \bfA(a)$ as
$$
\begin{tikzpicture}[baseline = -.1cm]
    \draw (0,0) -- (-.7,0);
    \draw (0,-.7) -- (0,.7);
    \roundNbox{unshaded}{(0,0)}{.3}{0}{0}{$f$}
    \node at (.2,-.5) {\scriptsize{$m$}};
    \node at (.2,.5) {\scriptsize{$m$}};
    \node at (-.5,.2) {\scriptsize{$a$}};
\end{tikzpicture}
:=
\begin{tikzpicture}[baseline = -.1cm]
    \draw (0,0) -- (0,.7);
    \draw (-.15,-.7) -- (-.15,0);
    \draw (.15,-.7) -- (.15,0);
    \roundNbox{unshaded}{(0,0)}{.3}{0}{0}{$f$}
    \node at (.4,-.5) {\scriptsize{$m$}};
    \node at (.2,.5) {\scriptsize{$m$}};
    \node at (-.4,-.5) {\scriptsize{$a$}};
\end{tikzpicture}
$$
Given $\psi\in \cC(b,a)$, we get a map $\bfA(\psi): \bfA(a) \to \bfA(b)$ by
$$
\begin{tikzpicture}[baseline = -.1cm]
    \draw (0,0) -- (-.7,0);
    \draw (0,-.7) -- (0,.7);
    \roundNbox{unshaded}{(0,0)}{.3}{0}{0}{$f$}
    \node at (.2,-.5) {\scriptsize{$m$}};
    \node at (.2,.5) {\scriptsize{$m$}};
    \node at (-.5,.2) {\scriptsize{$a$}};
\end{tikzpicture}
\mapsto 
\begin{tikzpicture}[baseline = -.1cm]
    \draw (0,0) -- (-1.7,0);
    \draw (0,-.7) -- (0,.7);
    \roundNbox{unshaded}{(-1,0)}{.3}{0}{0}{$\psi$}
    \roundNbox{unshaded}{(0,0)}{.3}{0}{0}{$f$}
    \node at (.2,-.5) {\scriptsize{$m$}};
    \node at (.2,.5) {\scriptsize{$m$}};
    \node at (-.5,.2) {\scriptsize{$a$}};
    \node at (-1.5,.2) {\scriptsize{$b$}};
\end{tikzpicture}
\,.
$$
We define the laxitor $\mu_{a,b}: \bfA(a)\otimes \bfA(b) \to \bfA(a\otimes b)$ for $a,b\in\cC$ on $f\otimes g\in \bfA(a)\otimes \bfA(b)$ by
$$
\mu_{a,b}(f\otimes g)
:=
f\circ (\id_{a}\otimes g)
=
\begin{tikzpicture}[baseline = .4cm]
    \draw (-.4,1.3) -- (-.4,1.7);
    \draw (0,0) -- (0,.7);
    \draw (-.65,-.7) -- (-.65,1);
    \draw (-.15,-.7) -- (-.15,0);
    \draw (.15,-.7) -- (.15,0);
    \roundNbox{unshaded}{(-.4,1)}{.3}{.3}{.3}{$f$}
    \roundNbox{unshaded}{(0,0)}{.3}{0}{0}{$g$}
    \node at (.4,-.5) {\scriptsize{$m$}};
    \node at (.2,.5) {\scriptsize{$m$}};
    \node at (-.2,1.5) {\scriptsize{$m$}};
    \node at (-.3,-.5) {\scriptsize{$b$}};
    \node at (-.8,-.5) {\scriptsize{$a$}};
\end{tikzpicture}
=
\begin{tikzpicture}[baseline = .4cm]
    \draw (0,0) -- (-.7,0);
    \draw (0,1) -- (-.7,1);
    \draw (0,-.7) -- (0,1.7);
    \roundNbox{unshaded}{(0,0)}{.3}{0}{0}{$g$}
    \roundNbox{unshaded}{(0,1)}{.3}{0}{0}{$f$}
    \node at (.2,-.5) {\scriptsize{$m$}};
    \node at (.2,.5) {\scriptsize{$m$}};
    \node at (.2,1.5) {\scriptsize{$m$}};
    \node at (-.5,1.2) {\scriptsize{$a$}};
    \node at (-.5,.2) {\scriptsize{$b$}};
\end{tikzpicture}
\in\bfA(a\otimes b).
$$
The unit $i\in \bfA(1_\cC)=\cM(1_\cC\otimes m, m)$ is given by the identity $\id_m$.

Now suppose $\bfA$ and $\bfB$ are the algebras corresponding to the cyclic $\cC$-module categories $(\cM,m)$ and $(\cN,n)$.
Given a cyclic $\cC$-module functor $(\Phi,\omega, \varpi): (\cM,m) \to (\cN,n)$, we get a natural transformation $\hat{\Phi}: \bfA\Rightarrow \bfB$ by defining $\hat{\Phi}_a$ on $f\in \bfA(a)=\cM(a\otimes m ,m)$ by
$$
\hat{\Phi}_a (f)
=
\begin{tikzpicture}[baseline=-.1cm]
	\draw (0,.5) -- (0,2);
	\draw[double] (0,.5) -- (0,-.5);
	\draw (.2,-.5) -- (.2,-2);
	\draw (-.2,-.5) -- (-.2,-2);
	\roundNbox{unshaded}{(0,1.5)}{.25}{.2}{.2}{$\varpi^{-1}$}
	\roundNbox{unshaded}{(0,.5)}{.3}{.2}{.2}{$\Phi(f)$}
	\roundNbox{unshaded}{(0,-.5)}{.3}{.2}{.2}{$\omega_a$}
	\roundNbox{unshaded}{(.2,-1.5)}{.25}{0}{0}{$\varpi$}
	\node at (.2,1.9) {\scriptsize{$n$}};
	\node at (.4,1) {\scriptsize{$\Phi(m)$}};
	\node at (.7,0) {\scriptsize{$\Phi(a\otimes m)$}};
	\node at (.6,-1) {\scriptsize{$\Phi(m)$}};
	\node at (.4,-1.9) {\scriptsize{$n$}};
	\node at (-.4,-1.5) {\scriptsize{$a$}};
\end{tikzpicture}
\in\cN(a\otimes n, n)
=\bfB(a).
$$
Since $\Phi(\id_m) = \id_{\Phi(m)}$, $\hat{\Phi}(i_\bfA) = i_\bfB$.
Since $\Phi$ preserves composition, we see $\hat{\Phi}$ intertwines $\mu^\bfA$ and $\mu^\bfB$.
\end{construction}

\begin{rem}
\label{rem:RightVersionOfConstructionAlgebra}
Just as we defined the left cyclic $\cC$-module category $\cM_\bfA$ from our algebra object $\bfA\in \Vec(\cC)$, we can also define the right cyclic $\cC$-module category ${}_\bfA\cM$ as follows.
The objects are symbols of the form ${}_\bfA c$ for $c\in\cC$, and 
$$
\Hom_{{}_\bfA\cM}({}_\bfA a,{}_\bfA b)
=
\Hom_{\Hilb(\cC)}(\mathbf{a}, \bfH\otimes \mathbf{b})
\cong
\bfA(a\otimes \overline{b}).
$$
We have a similar diagrammatic calculus for the morphisms:
$$
\begin{tikzpicture}[baseline = -.1cm, xscale=-1]
    \draw (0,0) -- (.7,0);
    \draw (0,-.7) -- (0,.7);
    \roundNbox{unshaded}{(0,0)}{.3}{0}{0}{$f$}
    \node at (-.2,.5) {\scriptsize{$\mathbf{b}$}};
    \node at (.5,.15) {\scriptsize{$\bfA$}};
    \node at (-.2,-.5) {\scriptsize{$\mathbf{a}$}};
\end{tikzpicture}
:=
\begin{tikzpicture}[baseline = -.1cm]
    \draw (.15,0) -- (.15,.7);
    \draw (-.15,0) -- (-.15,.7);
    \draw (0,-.7) -- (0,0);
    \roundNbox{unshaded}{(0,0)}{.3}{0}{0}{$f$}
    \node at (-.3,.5) {\scriptsize{$\bfA$}};
    \node at (.15,-.5) {\scriptsize{$\mathbf{a}$}};
    \node at (.3,.5) {\scriptsize{$\mathbf{b}$}};
\end{tikzpicture}
$$
There is a similar version of Theorem \ref{thm:AlgebraModuleCategoryCorrespondence} which gives an equivalence of categories between cyclic right $\cC$-module categories and algebra objects in $\Vec(\cC)$.
\end{rem}

\begin{defn}
Given a tensor category $\cC$, we define $\cC^{\text{op}}$ to be the opposite category with the reverse composition.
We define $\cC^{\text{mp}}$ to be the category with the reverse monoidal structure.
We define $\cC^{\text{mop}}$ to be the tensor category with both the reverse composition and reverse monoidal structure.
(This notation is from \cite[p.~26]{1312.7188}.)
When $\cC$ is rigid, we get monoidal equivalences $\cC\cong \cC^{\text{mop}}$ and $\cC^{\text{op}} \cong \cC^{\text{mp}}$.
\end{defn}

\begin{rem}
The data of $\cM$ as a left $\cC$-module category can also be viewed as:
\begin{itemize}
\item
$\cM$ as a right $\cC^{\text{mp}}$-module category,
\item
$\cM^{\op}$ as a left $\cC^{\op}$ module category, and
\item
$\cM^{\op}$ as a right $\cC^{\text{mop}}$ module category.
\end{itemize}
When $\cM$ is cyclic with basepoint $m$, these four module categories correspond to the following four algebras:
$\bfA\in \Vec(\cC)$, $\bfA^{\text{mp}}\in \Vec(\cC^{\text{mp}})$, $\bfA^{\op}\in \Vec(\cC^{\op})$, and $\bfA^{\text{mop}}\in \Vec(\cC^{\text{mop}})$.

One shows that on objects $a\in \cC$, $\bfA^{\text{mp}}(a) = \bfA(a)$ and $\bfA^{\op}(a) = \bfA^{\text{mop}}(a) = \bfA(a^{\vee})$.
The multiplication maps for $\bfA^{\text{mp}}$,  $\bfA^{\op}$, and $\bfA^{\text{mop}}$ are given in terms of the multiplication map  $\mu$ for $\bfA$ respectively as follows:
\begin{align*}
\bfA^{\text{mp}}(a) \otimes \bfA^{\text{mp}}(b) &= \bfA(a) \otimes \bfA(b) \xrightarrow{\beta^\Vec} \bfA(b)\otimes \bfA(a) \xrightarrow{\mu} \bfA(b\otimes a) = \bfA^{\text{mp}}(a\otimes_{\cC^{\text{mp}}} b)
\\
\bfA^{\text{op}}(a) \otimes \bfA^{\text{op}}(b) &= \bfA(a^\vee) \otimes \bfA(b^\vee) \xrightarrow{\beta^\Vec} \bfA(b^\vee) \otimes \bfA(a^\vee)  \xrightarrow{\mu} \bfA(b^\vee\otimes a^\vee) \cong  \bfA^{\text{op}}(a\otimes_{\cC^{\text{op}}} b)
\\
\bfA^{\text{mop}}(a) \otimes \bfA^{\text{mop}}(b) &= \bfA(a^\vee) \otimes \bfA(b^\vee) \xrightarrow{\mu} \bfA(a^\vee\otimes b^\vee) \cong \bfA((b\otimes a)^\vee)  =  \bfA^{\text{mop}}(a\otimes_{\cC^{\text{mop}}} b)
\end{align*}
where $\beta^\Vec$ is the braiding in $\Vec$.
We see that the monoidal equivalence $\cC\to \cC^{\text{mop}}$ given by $a\mapsto a^\vee$ takes the algebra $\bfA^{\text{mop}}\in \Vec(\cC^{\text{mop}})$ to the algebra $\bfA\in \Vec(\cC)$, and similarly for $\bfA^{\text{op}}$ and $\bfA^{\text{mp}}$.
\end{rem}

\subsection{\texorpdfstring{$*$}{star}-Algebra objects and dagger module categories}

We now introduce the new notion of a $*$-algebra object in $\Vec(\cC)$.
We will give three equivalent definitions.

Let $(\cC, \varphi, \nu, r)$ be a rigid C*-tensor category with its canonical bi-involutive structure.
We warn the reader that $\cC^{\op}$ has the bi-involutive structure defined in Remark \ref{rem:CopInvolutive}.
In particular, for the material in this section, the coherence axioms for the involutive structure on $\Vec(\cC)$
have inverses on all instances of $\varphi, \nu, r$.

\begin{defn}
\label{defn:ConjugateLinearNaturalTransformation}
Let $\bfF,\bfG\in \Vec(\cC)$.
A conjugate linear natural transformation $\theta : \bfF \Rightarrow \bfG$ is a family of conjugate linear maps $\theta_a : \bfF(a)\to \bfG(\overline{a})$ for $a\in \cC$ which is conjugate natural, i.e., for all $\psi\in \cC(b,a)$, the following diagram commutes:
\[
\xymatrix{
\bfF(a) 
\ar[d]^{\bfF(\psi)}
\ar[rr]^{\theta_a}
&&
\bfG(\overline{a})
\ar[d]^{\bfF(\overline{\psi})}
\\
\bfF(b)
\ar[rr]^{\theta_b}
&&
\bfG(\overline{b})
}
\]
Given two conjugate linear natural transformations $\theta : \bfF \Rightarrow \bfG$ and $\rho: \bfG \Rightarrow \bfH$, their composite $\rho \circ \theta :\bfF \Rightarrow \bfH$ is the \emph{linear} natural transformation defined by
$$
(\rho\circ \theta)_a :=\bfH(\varphi_a) \circ \rho_{\overline{a}} \circ \theta_a : \bfF(a) \to \bfH(a).
$$
\end{defn}

\begin{rem}
The reader should be careful not to confuse a conjugate linear natural transformation with the conjugate natural transformation from Definition \ref{defn:VecCInvolutive}, which is linear!
\end{rem}

\begin{defn} 
\label{def:StarAlgebra}
A $*$-\emph{structure} on an algebra object $\bfA\in \Vec(\cC)$ is a conjugate linear natural transformation $j: \bfA \Rightarrow \bfA$ which satisfies the following axioms:
\begin{itemize}
\item 
(involutive) 
$j\circ j = \id_\bfA$, i.e., $\id_{\bfA(a)} = \bfA(\varphi_a) \circ j_{\overline{a}} \circ j_a$ for all $a\in \cC$
\item
(unital)
$j_{1_\cC} = \bfA(r^{-1})$
\item
(monoidal)
$j_{a\otimes b}(\mu_{a,b}(f, g))=\bfA(\nu_{b,a}^{-1})(\mu_{\overline{b},\overline{a}}(j_b(g), j_a(f)))$ for all $a,b\in\cC$.
\end{itemize}
A $*$-algebra object in $\Vec(\cC)$ is an algebra object together with a $*$-structure.

Suppose $(\bfA,j^\bfA),(\bfB,j^\bfB)\in\Vec(\cC)$ are $*$-algebras.
We call a natural transformation $\theta: \bfA \Rightarrow \bfB$ a $*$-\emph{natural transformation} if
$\theta_{\overline{a}}(j_a^\bfA(f))=j_a^\bfB(\theta_a(f))$ for all $f\in\bfA(a)$.
\end{defn}

We already saw in Definition \ref{defn:VecCInvolutive} that $\Vec(\cC)$ has the structure of an involutive category.
We now define an involutive structure on the category of algebra objects in $\Vec(\cC)$.

\begin{defn}
The \emph{conjugate algebra} $(\overline{\bfA},\overline{\mu}, \overline{\iota})\in \Vec(\cC)$ is defined by first taking the conjugate functor $\overline{\bfA}\in \Vec(\cC)$ from Definition \ref{defn:VecCInvolutive}.
The laxitor $\overline{\mu}_{a,b}: \overline{\bfA}(a) \otimes \overline{\bfA}(b) \to \overline{\bfA}(a\otimes b)$ is given by
$$
\overline{\bfA(\overline{a})}\otimes \overline{\bfA(\overline{b})}
\xrightarrow{\nu_{\bfA(\overline{a}), \bfA(\overline{b})}}
\overline{\bfA(\overline{b}) \otimes \bfA(\overline{a})}
\xrightarrow{\overline{\mu_{\overline{b}, \overline{a}}}}
\overline{\bfA(\overline{b} \otimes \overline{a})}
\xrightarrow{\overline{\bfA(\nu_{a,b}^{-1})}}
\overline{\bfA(\overline{a\otimes b})}.
$$
The unit map $\overline{\iota} : 1_{\cC} \to \overline{\bfA}(1_\cC)$ is given by  
$$
1_\cC 
\xrightarrow{r}
\overline{1_\cC}
\xrightarrow{\overline{\iota}}
\overline{\bfA(1_\cC)}
\xrightarrow{\overline{\bfA(r^{-1})}}
\overline{\bfA(\overline{1_\cC})}.
$$
It is straightforward to check that this defines a lax monoidal functor.

Suppose now $\theta: \bfA\Rightarrow \bfB$ is an algebra natural transformation, 
we let $\overline{\theta} : \overline{\bfA} \Rightarrow \overline{\bfB}$ be the conjugate natural transformation from Definition \ref{defn:VecCInvolutive}.
We see that $\overline{\theta}$ is an algebra natural transformation.

It is straightforward to verify that the category of algebra objects in $\Vec(\cC)$ with algebra natural transformations is involutive.
\end{defn}

\begin{prop}
\label{prop:StarAlgebraEquivalentDefinitions}
The following pieces of data are equivalent for an algebra object $(\bfA, \mu, \iota) \in \Vec(\cC)$:
\begin{enumerate}[(1)]
\item
a $*$-structure $j$ on $\bfA$,
\item
an involutive structure $\chi$ on $\bfA$ (thought of as a lax monoidal functor c.f.~Prop.~\ref{prop:AlgebrasAndLaxTensorFunctors}), and
\item
an algebra natural isomorphism $\sigma: \bfA \Rightarrow \overline{\bfA}$ which is involutive, i.e., $\overline{\sigma}\circ \sigma = \varphi_\bfA$.
\end{enumerate}
\end{prop}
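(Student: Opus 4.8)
The plan is to pivot everything through item (3) and produce, at the level of the underlying families of linear maps, mutually inverse translations to (1) and to (2), and then to check that the three lists of axioms correspond term by term. Since $\overline{\bfA}(a)=\overline{\bfA(\overline a)}$ by Definition \ref{defn:VecCInvolutive}, a linear map $\sigma_a\colon\bfA(a)\to\overline{\bfA}(a)$ is literally the same datum as a conjugate-linear map $j_a\colon\bfA(a)\to\bfA(\overline a)$, via $\sigma_a(f)=\overline{j_a(f)}$; and a linear family $(\sigma_a)$ is the same datum as a family $\chi_a\colon\bfA(\overline a)\to\overline{\bfA(a)}$ after replacing $a$ by $\overline a$ and twisting by the pivotal isomorphism, e.g.\ $\sigma_a=\chi_{\overline a}\circ\bfA(\varphi_a^{-1})$ (with inverse $\chi_a=\overline{\bfA(\varphi_a)}\circ\sigma_{\overline a}$). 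Working componentwise via Remark \ref{rem:VecCNaturalTransformations}, these are honest bijections of the relevant data, so the only thing to verify is that they carry axioms to axioms.

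\textbf{(1)$\Leftrightarrow$(3).} Under $j\leftrightarrow\sigma$ one checks, in order: conjugate-naturality of $j$ (the square in Definition \ref{defn:ConjugateLinearNaturalTransformation}) is exactly naturality of $\sigma\colon\bfA\Rightarrow\overline{\bfA}$, using $\overline{\bfA}(\psi)=\overline{\bfA(\overline\psi)}$; the involutive axiom $j\circ j=\id_\bfA$, i.e.\ $\bfA(\varphi_a)\circ j_{\overline a}\circ j_a=\id_{\bfA(a)}$, unwinds via $\overline{\sigma}_a=\overline{\sigma_{\overline a}}$ and $(\varphi_\bfA)_a=\bfA(\varphi_a^{-1})$ to the condition $\overline{\sigma}\circ\sigma=\varphi_\bfA$; the unital axiom $j_{1_\cC}=\bfA(r^{-1})$ matches $\sigma_{1_\cC}\circ\iota=\overline\iota$ after expanding the definition of $\overline\iota$; and the monoidal axiom $j_{a\otimes b}(\mu_{a,b}(f,g))=\bfA(\nu_{b,a}^{-1})(\mu_{\overline b,\overline a}(j_b(g),j_a(f)))$ matches the statement that $\sigma$ intertwines $\mu$ and $\overline\mu$, once the definition of $\overline\mu$ (the composite through $\nu_{\bfA(\overline a),\bfA(\overline b)}$, $\overline{\mu_{\overline b,\overline a}}$ and $\overline{\bfA(\nu_{a,b}^{-1})}$) is expanded. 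This last point is the bulk of the computation, but it is essentially forced, since $\overline\mu$ was reverse-engineered from precisely this requirement.

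\textbf{(2)$\Leftrightarrow$(3).} Under $\chi\leftrightarrow\sigma$ I would match the three clauses of Definition \ref{defn:InvolutiveFunctor} for the involutive lax tensor functor $(\bfA,\mu,\iota,\chi)\colon\cC^{\op}\to\Vec$ against the conditions on $\sigma$: the involutive clause $\varphi_{\bfA(a)}=\overline{\chi_a}\circ\chi_{\overline a}\circ\bfA(\varphi_a)$ becomes $\overline{\sigma}\circ\sigma=\varphi_\bfA$; the unital clause becomes $\sigma_{1_\cC}\circ\iota=\overline\iota$; and the monoidal clause $\chi_{a\otimes b}\circ\bfA(\nu_{b,a})\circ\mu_{\overline b,\overline a}=\overline{\mu_{a,b}}\circ\nu_{\bfA(b),\bfA(a)}\circ(\chi_b\otimes\chi_a)$ becomes exactly the assertion that $\sigma$ intertwines $\mu$ with the laxitor $\overline\mu$ of the conjugate algebra. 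Here one must remember, as flagged just before the proposition, that the source carries the involutive structure of $\cC^{\op}$, so the source coherence isomorphisms $\varphi,\nu,r$ all appear inverted; the target $\Vec$ is unaffected.

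\textbf{Main obstacle.} The difficulty is bookkeeping rather than ideas: keeping straight the two conjugations $\overline{\,\cdot\,}^{\Vec}$ and $\overline{\,\cdot\,}^{\cC}$, the shifts by $\varphi$ among $\bfA(a)$, $\bfA(\overline a)$ and $\bfA(\overline{\overline a})$, and the inverted coherence isomorphisms on $\cC^{\op}$. The only genuinely substantive verification is the monoidal compatibility in each of the two directions, and even there the content reduces to the associativity and unitality axioms for $\nu$ together with the compatibility of $\varphi$ with $\nu$ and $r$ — that is, to coherence in the involutive category $\cC$, plus the corresponding coherence in $\Vec$. No input beyond the definitions is needed, which is why the statement may fairly be called straightforward.
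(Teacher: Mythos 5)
Your proposal is correct and matches the paper's proof in substance: the paper likewise just records the translation formulas ($\chi_a(f)=\overline{j_a^{-1}(f)}$, $\sigma_a=\chi_a\circ\bfA(\varphi_a^{-1})$, $j_a(f)=\overline{\sigma_a(f)}$, arranged as a cycle $(1)\to(2)\to(3)\to(1)$ rather than pivoting through (3)) and explicitly leaves the axiom-by-axiom verification to the reader, which is exactly the bookkeeping you sketch. Your componentwise identifications and the matching of the involutive/unital/monoidal clauses are consistent with the paper's conventions, so nothing is missing.
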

\begin{proof}
We tell how to construct the maps, and we leave the details to the reader.
To go from (1) to (2), we define $\chi_a(f) = \overline{j_a^{-1}(f)}$ for $f\in \bfA(\overline{a})$.
To go from (2) to (3), we define $\sigma_a := \chi_a \circ \bfA(\varphi_a^{-1})$.
To go from (3) to (1), we define $j_a(f) := \overline{\sigma_a(f)}$ for all $f\in \bfA(a)$.
\end{proof}

Similar to Theorem \ref{thm:AlgebraModuleCategoryCorrespondence}, we have the following equivalence.
Again, we only sketch the proof, as the details are similar to that of Theorem \ref{thm:AlgebraModuleCategoryCorrespondence}.
We do, however, provide explicit details on the correspondence between the $*$-structure on an algebra object $\bfA
\in \Vec(\cC)$ and a dagger structure on a cyclic $\cC$-module category $(\cM,m)$.

\begin{thm}
\label{thm:StarAlgebraDaggerModuleCategoryCorrespondence}
There is an equivalence of categories
$$
\left\{\,\text{\rm $*$-Algebra objects in $\Vec(\cC)$}\,\right\}
\,\,\cong\,\,
\left\{\,\text{\rm Cyclic $\cC$-module dagger categories}\,\right\}.
$$
\end{thm}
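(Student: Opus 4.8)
The plan is to promote the equivalence of Theorem~\ref{thm:AlgebraModuleCategoryCorrespondence} to the $*$-setting: it suffices to check that, under the assignment $\bfA \mapsto (\cM_\bfA, (1_\cC)_\bfA)$ of Construction~\ref{construction:Module}, the $*$-structures on $\bfA$ correspond bijectively and naturally to dagger structures on the cyclic $\cC$-module category $\cM_\bfA$, and that $*$-natural transformations correspond to $\cC$-module dagger functors.

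First I would build, from a $*$-structure $j$ on $\bfA$, an adjoint on $\cM_\bfA$. Using the concrete description $\cM_\bfA(a_\bfA, b_\bfA) \cong \bfA(\overline{b}\otimes a)$ coming from Frobenius reciprocity, I set $f^*$ to be the image of $f$ under
$$\bfA(\overline b\otimes a) \xrightarrow{\ j_{\overline b\otimes a}\ } \bfA\bigl(\overline{\overline b\otimes a}\bigr) \xrightarrow{\ \bfA(\kappa_{a,b})\ } \bfA(\overline a\otimes b) \cong \cM_\bfA(b_\bfA, a_\bfA),$$
where $\kappa_{a,b}\colon \overline a\otimes b \to \overline{\overline b\otimes a}$ is the canonical coherence isomorphism assembled from $\nu$, $\varphi$ and their inverses (recalling that $\cC^{\op}$ carries the inverted involutive data of Remark~\ref{rem:CopInvolutive}); equivalently, in the graphical calculus of Construction~\ref{construction:Module} this reflects the coupon, bends the $\mathbf a$ and $\mathbf b$ legs around using $\ev$/$\coev$, and applies $j$ to the $\bfA$ leg. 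The operation $f\mapsto f^*$ is anti-linear since $j$ is conjugate linear and $\bfA(\kappa_{a,b})$ is linear. Then I would verify the dagger axioms: $f^{**}=f$ follows from the involutive axiom of $j$ together with the coherence identity $\overline{\kappa_{a,b}}\circ\kappa_{b,a}=\varphi_{\overline b\otimes a}$; $\id_{a_\bfA}^* = \id_{a_\bfA}$ follows from the unital axiom $j_{1_\cC}=\bfA(r^{-1})$; and the crucial identity $(g\circ f)^* = f^*\circ g^*$ is exactly where the monoidal axiom of $j$ is used, since composition in $\cM_\bfA$ is built from $\mu$ and the order-reversal $f,g\mapsto j_b(g),j_a(f)$ in that axiom is what turns $\ast$ into an anti-homomorphism of composition. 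Finally, $(\psi\otimes f)^* = \psi^*\otimes f^*$ for the $\cC$-module action uses conjugate-naturality of $j$ and the pivotal identity $\overline{\psi}=(\psi^*)^\vee$, while unitarity of the associator and unitors of $\cM_\bfA$ follows from unitarity of the $6j$-isomorphisms $U$ of~\eqref{eq:UnitaryFrom6j}.

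Conversely, given a cyclic $\cC$-module dagger category $(\cM,m)$, with $\bfA(a)=\cM(a\otimes m,m)$ as in Construction~\ref{construction:Algebra}, I would define $j_a\colon\bfA(a)\to\bfA(\overline a)$ by sending $f\colon a\otimes m\to m$ to the morphism $\overline a\otimes m\to m$ obtained from $f^*\colon m\to a\otimes m$ by a zig-zag, i.e.\ $j_a(f):=(\ev_a\otimes\id_m)\circ(\id_{\overline a}\otimes f^*)$. Conjugate-naturality of $j$ and the involutive, unital, and monoidal $*$-axioms then follow from $f^{**}=f$, the zig-zag axioms, the compatibility $(\psi\otimes\phi)^*=\psi^*\otimes\phi^*$ in the dagger module category, and the balancing condition~\eqref{eq:BalancingCondition}. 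A routine check shows these two constructions are mutually inverse and compatible with the equivalence of Theorem~\ref{thm:AlgebraModuleCategoryCorrespondence}. For morphisms, $\theta\colon\bfA\Rightarrow\bfB$ is a $*$-natural transformation iff the induced $\cC$-module functor $\check\theta\colon\cM_\bfA\to\cM_\bfB$ of Construction~\ref{construction:Module} satisfies $\check\theta(f^*)=\check\theta(f)^*$, which is immediate from the defining formula for $\check\theta$ (post-composition with $\theta$ on the $\bfA$ leg) once the adjoints are described as above; conversely a dagger functor between the module categories yields a $\ast$-natural transformation by the same formula.

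The main obstacle is the coherence bookkeeping: tracking the composites of $\nu^{\pm1}$, $\varphi^{\pm1}$ and the Frobenius-reciprocity isomorphisms so that everything lands in the correct fiber of $\bfA$, establishing the precise identity for $\kappa$ needed in $f^{**}=f$, and verifying $(g\circ f)^*=f^*\circ g^*$ — the one place where the order-reversing monoidal $*$-axiom genuinely does the work. The graphical calculus of Construction~\ref{construction:Module} makes both computations manageable, so I would carry out these verifications diagrammatically.
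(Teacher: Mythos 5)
Your proposal is correct and follows essentially the same route as the paper's proof: the formula $f^* = \bfA(\kappa_{a,b})\circ j_{\overline b\otimes a}(f)$ with $\kappa_{a,b}$ assembled from $\nu$ and $\varphi$ is exactly the paper's $(\bfA(\id_{\overline a}\otimes\varphi_b)\circ\bfA(\nu_{a,\overline b})\circ j_{\overline b\otimes a})(f)$, the inverse construction $j_a(f)=(\ev_a\otimes\id_m)\circ(\id_{\overline a}\otimes f^*)$ is identical, and the division of labor among the axioms (involutivity for $f^{**}=f$, monoidality of $j$ for anti-multiplicativity of $*$, naturality for compatibility with the $\cC$-action) matches the paper's diagrammatic verifications.
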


\begin{proof}[Sketch of proof]
If $(\bfA,\mu,\iota, j) :\cC^{\op}\to \Vec$ is a $*$-algebra object, we define a dagger structure on $\cM_\bfA$ from Construction \ref{construction:Module} as follows.
For $f\in \bfA(\overline{b}\otimes a)\cong \cM_\bfA(a_\bfA,b_\bfA)$, we define 
$$
f^* 
=
(\bfA(\id_{\overline{a}}\otimes \varphi_b) \circ \bfA(\nu_{a,\overline{b}})\circ j_{\overline{b}\otimes a}) (f)
 \in\bfA(\overline{a}\otimes b)\cong \cM_\bfA(b_\bfA,a_\bfA).
$$
To prove that this defines a dagger structure on $\cM_\bfA$, we must show $f^{**}=f$ and $(g\circ f)^* = f^*\circ g^*$ for composable $f$ and $g$.
The first of these properties is verified as follows.
Suppose $f\in \bfA(\overline{b}\otimes a)\cong \cM_\bfA(a_\bfA, b_\bfA)$. 
Using naturality of $j$, followed by the compatibility of $\nu$ and $\varphi$ in an involutive tensor category, we have
\begin{align*}
f^{**} = 
\begin{tikzpicture}[baseline=-.1cm]
	\draw (-.3,-3.1) -- (-.3,-1.2);
	\draw (.3,-3.1) -- (.3,-1.2);
	\draw[double] (0,-1.2) -- (0,2.4);
	\draw (0,2.4) -- (0,3.1);
	\roundNbox{unshaded}{(0,2.4)}{.3}{1.2}{1.2}{$(j_{\overline{\overline{b}\otimes a}}\circ j_{\overline{b}\otimes a}) (f)$}
	\roundNbox{unshaded}{(0,1.2)}{.3}{.2}{.2}{$\overline{\nu_{a, \overline{b}}}$}
	\roundNbox{unshaded}{(0,0)}{.3}{.6}{.6}{$\overline{\id_{\overline{a}}\otimes \varphi_b}$}
	\roundNbox{unshaded}{(0,-1.2)}{.3}{.2}{.2}{$\nu_{b,\overline{a}}$}
	\roundNbox{unshaded}{(.3,-2.4)}{.3}{0}{0}{$\varphi_a$}
	\node at (.2,2.9) {\scriptsize{$\bfA$}};
	\node at (.5,1.8) {\scriptsize{$\overline{\overline{\overline{\mathbf{b}}\otimes \mathbf{a}}}$}};
	\node at (.5,.6) {\scriptsize{$\overline{\overline{\mathbf{a}}\otimes \overline{\overline{\mathbf{b}}}}$}};
	\node at (.5,-.6) {\scriptsize{$\overline{\overline{\mathbf{a}}\otimes \mathbf{b}}$}};
	\node at (.5,-1.8) {\scriptsize{$\overline{\overline{\mathbf{a}}}$}};
	\node at (-.5,-2.4) {\scriptsize{$\overline{\mathbf{b}}$}};
	\node at (.5,-2.9) {\scriptsize{$\mathbf{a}$}};
\end{tikzpicture}
=
\begin{tikzpicture}[baseline=-.1cm]
	\draw (-.4,-3.1) -- (-.4,-1.2);
	\draw (.4,-3.1) -- (.4,-1.2);
	\draw[double] (0,-1.2) -- (0,1.2);
	\draw (.15,1.2) -- (.15,2.4);
	\draw (-.15,1.2) -- (-.15,2.4);
	\draw (0,2.4) -- (0,3.1);
	\roundNbox{unshaded}{(0,2.4)}{.3}{0}{0}{$f$}
	\roundNbox{unshaded}{(0,1.2)}{.35}{.2}{.2}{$\varphi_{\overline{b}\otimes a}^{-1}$}
	\roundNbox{unshaded}{(0,0)}{.3}{.2}{.2}{$\overline{\nu_{a, \overline{b}}}$}
	\roundNbox{unshaded}{(0,-1.2)}{.3}{.3}{.3}{$\nu_{\overline{\overline{b}},\overline{a}}$}
	\roundNbox{unshaded}{(.4,-2.4)}{.3}{0}{0}{$\varphi_a$}
	\roundNbox{unshaded}{(-.4,-2.4)}{.3}{0}{0}{$\varphi_{\overline{b}}$}
	\node at (.2,2.9) {\scriptsize{$\bfA$}};
	\node at (-.35,1.8) {\scriptsize{$\overline{\mathbf{b}}$}};
	\node at (.35,1.8) {\scriptsize{$\mathbf{a}$}};
	\node at (.5,.6) {\scriptsize{$\overline{\overline{\overline{\mathbf{b}}\otimes \mathbf{a}}}$}};
	\node at (.5,-.6) {\scriptsize{$\overline{\overline{\mathbf{a}}\otimes \overline{\overline{\mathbf{b}}}}$}};
	\node at (.6,-1.8) {\scriptsize{$\overline{\overline{\mathbf{a}}}$}};
	\node at (-.6,-1.8) {\scriptsize{$\overline{\overline{\overline{\mathbf{b}}}}$}};
	\node at (-.6,-2.9) {\scriptsize{$\overline{\mathbf{b}}$}};
	\node at (.6,-2.9) {\scriptsize{$\mathbf{a}$}};
\end{tikzpicture}
=
f.
\end{align*}
Suppose now $f\in \bfA(\overline{b}\otimes a)$ and $g\in \bfA(\overline{c}\otimes b)$.
Using naturality and monoidality of $j$,  
$$
(g\circ f)^* = 
\begin{tikzpicture}[baseline=-.1cm]
	\draw (1,2.7) .. controls ++(90:.4cm) and ++(0:.3cm) .. (0,3.2);
	\draw (-1,2.7) .. controls ++(90:.4cm) and ++(180:.3cm) .. (0,3.2);
	\filldraw (0,3.2) circle (.05cm);
	\draw (0,3.2) -- (0,3.6);
	\draw[double] (1,2.4) -- (1,1.2);
	\draw[double] (-1,2.4) -- (-1,1.2);
	\draw[quadruple={[line width=.15mm,white] in
      [line width=.45mm,black] in
      [line width=.75mm,white] in
      [line width=1.05mm,black]}] (0,0) -- (0,1.2);
	\draw[double] (0,-1.2) -- (0,0);
	\draw (.5,-3.1) -- (.5,-1.2);
	\draw (-.5,-3.1) -- (-.5,-1.2);
	\roundNbox{unshaded}{(-1,2.4)}{.3}{.4}{.4}{$j_{\overline{b}\otimes a}(f)$}
	\roundNbox{unshaded}{(1,2.4)}{.3}{.4}{.4}{$j_{\overline{c}\otimes b}(g)$}
	\roundNbox{unshaded}{(0,1.2)}{.35}{1.3}{1.3}{$\nu^{-1}_{(\overline{b}\otimes a), (\overline{c}\otimes b)}$}
	\roundNbox{unshaded}{(0,0)}{.3}{1.3}{1.3}{$\overline{\id_{\overline{c}}\otimes \coev_b \otimes \id_a}$}
	\roundNbox{unshaded}{(0,-1.2)}{.3}{1.2}{1.2}{$\nu_{a,\overline{c}}$}
	\roundNbox{unshaded}{(.5,-2.4)}{.3}{0}{0}{$\varphi_c$}
	\node at (.2,3.4) {\scriptsize{$\bfA$}};
	\node at (1.2,2.9) {\scriptsize{$\bfA$}};
	\node at (-1.2,2.9) {\scriptsize{$\bfA$}};
	\node at (1.5,1.8) {\scriptsize{$\overline{\overline{\mathbf{c}}\otimes\mathbf{b}}$}};
	\node at (-1.5,1.8) {\scriptsize{$\overline{\overline{\mathbf{b}}\otimes\mathbf{a}}$}};
	\node at (1,.6) {\scriptsize{$\overline{\overline{\mathbf{c}}\otimes \mathbf{b} \otimes \overline{\mathbf{b}} \otimes \mathbf{a}}$}};
	\node at (.5,-.6) {\scriptsize{$\overline{\overline{\mathbf{c}}\otimes \mathbf{a}}$}};
	\node at (.7,-1.8) {\scriptsize{$\overline{\overline{\mathbf{c}}}$}};
	\node at (-.7,-2.4) {\scriptsize{$\overline{\mathbf{a}}$}};
	\node at (.7,-2.9) {\scriptsize{$\mathbf{c}$}};
\end{tikzpicture}
=
\begin{tikzpicture}[baseline=-.1cm]
	\draw (1,2.7) .. controls ++(90:.4cm) and ++(0:.3cm) .. (0,3.2);
	\draw (-1,2.7) .. controls ++(90:.4cm) and ++(180:.3cm) .. (0,3.2);
	\filldraw (0,3.2) circle (.05cm);
	\draw (0,3.2) -- (0,3.6);
	\draw[double] (1,2.4) -- (1,1.2);
	\draw[double] (-1,2.4) -- (-1,1.2);
	\draw[quadruple={[line width=.15mm,white] in
      [line width=.45mm,black] in
      [line width=.75mm,white] in
      [line width=1.05mm,black]}] (0,0) -- (0,1.2);
	\draw[double] (0,-1.2) -- (0,0);
	\draw (1,-1.9) -- (1,0);
	\draw (-1,-1.9) -- (-1,0);
	\roundNbox{unshaded}{(-1,2.4)}{.3}{.4}{.4}{$j_{\overline{b}\otimes a}(f)$}
	\roundNbox{unshaded}{(1,2.4)}{.3}{.4}{.4}{$j_{\overline{c}\otimes b}(g)$}
	\roundNbox{unshaded}{(0,1.2)}{.35}{1.3}{1.3}{$\nu^{-1}_{(\overline{b}\otimes a), (\overline{c}\otimes b)}$}
	\roundNbox{unshaded}{(0,0)}{.3}{1.3}{1.3}{$\nu$}
	\roundNbox{unshaded}{(0,-1.2)}{.3}{.3}{.3}{$\overline{\coev_b}$}
	\roundNbox{unshaded}{(1,-1.2)}{.3}{0}{0}{$\varphi_c$}
	\node at (.2,3.4) {\scriptsize{$\bfA$}};
	\node at (1.2,2.9) {\scriptsize{$\bfA$}};
	\node at (-1.2,2.9) {\scriptsize{$\bfA$}};
	\node at (1.5,1.8) {\scriptsize{$\overline{\overline{\mathbf{c}}\otimes\mathbf{b}}$}};
	\node at (-1.5,1.8) {\scriptsize{$\overline{\overline{\mathbf{b}}\otimes\mathbf{a}}$}};
	\node at (1,.6) {\scriptsize{$\overline{\overline{\mathbf{c}}\otimes \mathbf{b} \otimes \overline{\mathbf{b}} \otimes \mathbf{a}}$}};
	\node at (.5,-.6) {\scriptsize{$\overline{\mathbf{b}\otimes \overline{\mathbf{b}}}$}};
	\node at (1.2,-.6) {\scriptsize{$\overline{\overline{\mathbf{c}}}$}};
	\node at (-1.2,-1.2) {\scriptsize{$\overline{\mathbf{a}}$}};
	\node at (1.2,-1.7) {\scriptsize{$\mathbf{c}$}};
\end{tikzpicture}
$$
Here, since $\nu$ is associative, we simply write one $\nu$ with three inputs for a composite of $\nu$'s to simplify the notation.
Again using associativity of $\nu$, followed by \eqref{eq:Bi-involutiveStructureOnC}, the right hand side above is equal to 
$$
\begin{tikzpicture}[baseline=.5cm]
	\draw (1,2.7) .. controls ++(90:.4cm) and ++(0:.3cm) .. (0,3.2);
	\draw (-1,2.7) .. controls ++(90:.4cm) and ++(180:.3cm) .. (0,3.2);
	\filldraw (0,3.2) circle (.05cm);
	\draw (0,3.2) -- (0,3.6);
	\draw[double] (1,2.4) -- (1,1.2);
	\draw[double] (-1,2.4) -- (-1,1.2);
	\draw[double] (0,-1.2) -- (0,0);
	\draw (1.2,-1.9) -- (1.2,1.2);
	\draw (-1.2,-1.9) -- (-1.2,1.2);
	\draw (.6,.3) -- (.6,.9);
	\draw (-.6,.3) -- (-.6,.9);
	\roundNbox{unshaded}{(-1,2.4)}{.3}{.4}{.4}{$j_{\overline{b}\otimes a}(f)$}
	\roundNbox{unshaded}{(1,2.4)}{.3}{.4}{.4}{$j_{\overline{c}\otimes b}(g)$}
	\roundNbox{unshaded}{(-1,1.2)}{.3}{.4}{.4}{$\nu_{a\otimes \overline{b}}$}
	\roundNbox{unshaded}{(1,1.2)}{.3}{.4}{.4}{$\nu_{b\otimes \overline{c}}$}
	\roundNbox{unshaded}{(0,0)}{.35}{.5}{.5}{$\nu^{-1}_{\overline{b}\otimes b}$}
	\roundNbox{unshaded}{(0,-1.2)}{.3}{.3}{.3}{$\overline{\coev_b}$}
	\roundNbox{unshaded}{(1.2,-1.2)}{.3}{0}{0}{$\varphi_c$}
	\node at (.2,3.4) {\scriptsize{$\bfA$}};
	\node at (1.2,2.9) {\scriptsize{$\bfA$}};
	\node at (-1.2,2.9) {\scriptsize{$\bfA$}};
	\node at (1.5,1.8) {\scriptsize{$\overline{\overline{\mathbf{c}}\otimes\mathbf{b}}$}};
	\node at (-1.5,1.8) {\scriptsize{$\overline{\overline{\mathbf{b}}\otimes\mathbf{a}}$}};
	\node at (-.8,.6) {\scriptsize{$\overline{\overline{\mathbf{b}}}$}};
	\node at (.8,.6) {\scriptsize{$\overline{\mathbf{b}}$}};
	\node at (.5,-.6) {\scriptsize{$\overline{\mathbf{b}\otimes \overline{\mathbf{b}}}$}};
	\node at (1.4,-.6) {\scriptsize{$\overline{\overline{\mathbf{c}}}$}};
	\node at (-1.4,-1.2) {\scriptsize{$\overline{\mathbf{a}}$}};
	\node at (1.4,-1.8) {\scriptsize{$\mathbf{c}$}};
\end{tikzpicture}
=
\begin{tikzpicture}[baseline=.5cm]
	\draw (1,2.7) .. controls ++(90:.4cm) and ++(0:.3cm) .. (0,3.2);
	\draw (-1,2.7) .. controls ++(90:.4cm) and ++(180:.3cm) .. (0,3.2);
	\filldraw (0,3.2) circle (.05cm);
	\draw (0,3.2) -- (0,3.6);
	\draw[double] (1,2.4) -- (1,1.2);
	\draw[double] (-1,2.4) -- (-1,1.2);
	\draw (1.2,-1) -- (1.2,1.2);
	\draw (-1.2,-1) -- (-1.2,1.2);
	\draw (-.6,.9) -- (-.6,-.3) .. controls ++(270:.6cm) and ++(270:.6cm) .. (.6,-.3) -- (.6,.9);
	\roundNbox{unshaded}{(-1,2.4)}{.3}{.4}{.4}{$j_{\overline{b}\otimes a}(f)$}
	\roundNbox{unshaded}{(1,2.4)}{.3}{.4}{.4}{$j_{\overline{c}\otimes b}(g)$}
	\roundNbox{unshaded}{(-1,1.2)}{.3}{.4}{.4}{$\nu_{a\otimes \overline{b}}$}
	\roundNbox{unshaded}{(1,1.2)}{.3}{.4}{.4}{$\nu_{b\otimes \overline{c}}$}
	\roundNbox{unshaded}{(-.6,0)}{.3}{0}{0}{$\varphi_b$}
	\roundNbox{unshaded}{(1.2,0)}{.3}{0}{0}{$\varphi_c$}
	\node at (.2,3.4) {\scriptsize{$\bfA$}};
	\node at (1.2,2.9) {\scriptsize{$\bfA$}};
	\node at (-1.2,2.9) {\scriptsize{$\bfA$}};
	\node at (-1.5,1.8) {\scriptsize{$\overline{\overline{\mathbf{b}}\otimes\mathbf{a}}$}};
	\node at (1.5,1.8) {\scriptsize{$\overline{\overline{\mathbf{c}}\otimes\mathbf{b}}$}};
	\node at (-1.4,0) {\scriptsize{$\overline{\mathbf{a}}$}};
	\node at (1.4,.6) {\scriptsize{$\overline{\overline{\mathbf{c}}}$}};
	\node at (1.4,-.6) {\scriptsize{$\mathbf{c}$}};
	\node at (-.8,.6) {\scriptsize{$\overline{\overline{\mathbf{b}}}$}};
	\node at (-.8,-.5) {\scriptsize{$\mathbf{b}$}};
	\node at (.4,0) {\scriptsize{$\overline{\mathbf{b}}$}};
\end{tikzpicture}
=
f^*\circ g^*.
$$
It is similarly easy to prove $(\psi\otimes f)^* = \psi^*\otimes f^*$ for all $\psi\in \cC(c,d)$ and $f\in \cM_\bfA(a_\bfA, b_\bfA)$.

When $\theta: \bfA \Rightarrow \bfB$ is a $*$-algebra natural transformation between $*$-algebras, the induced cyclic $\cC$-module functor $\check{\theta}$ from Construction \ref{construction:Module} is a dagger functor.

For the other direction, suppose $(\cM, m)$ is a cyclic $\cC$-module dagger category.
Taking $\bfA=\bfA_m\in \Vec(\cC)$ corresponding to $(\cM,m)$ as in Construction \ref{construction:Algebra}, we get a $*$-structure on $\bfA$ as follows.
For $f\in \bfA(a)=\cM(a\otimes m, m)$, we define 
$$
\bfA(\overline{a})
\ni
\begin{tikzpicture}[baseline = -.1cm]
    \draw (0,0) -- (-.9,0);
    \draw (0,-.7) -- (0,.7);
    \roundNbox{unshaded}{(0,0)}{.3}{.25}{.25}{$j_a(f)$}
    \node at (.2,-.5) {\scriptsize{$m$}};
    \node at (.2,.5) {\scriptsize{$m$}};
    \node at (-.7,.2) {\scriptsize{$\overline{a}$}};
\end{tikzpicture}
:=
(\ev_{a}\otimes \id_{m})\circ (\id_{\overline{a}}\otimes f^{*})
=
\begin{tikzpicture}[baseline = -.1cm]
    \draw (0,0) -- (0,-.7);
    \draw (-.15,.3) arc (0:180:.15cm) -- (-.45,-.7);
    \draw (.15,.7) -- (.15,0);
    \roundNbox{unshaded}{(0,0)}{.3}{0}{0}{$f^*$}
    \node at (.4,.5) {\scriptsize{$m$}};
    \node at (.2,-.5) {\scriptsize{$m$}};
    \node at (-.6,-.5) {\scriptsize{$\overline{a}$}};
\end{tikzpicture}
\in\cM(\overline{a}\otimes m, m).
$$
It is straightforward to show $j$ satisfies the appropriate axioms.

If $\Phi: (\cM,m) \to (\cN,n)$ is a cyclic $\cC$-module dagger functor, then $\hat{\Phi}$ defined as in Construction \ref{construction:Algebra} is a $*$-algebra natural transformation.
\end{proof}

\begin{defn} 
Given a $*$-algebra $(\bfA, \mu, \iota, j)$, the \emph{base algebra} is the $*$-algebra $\bfA(\overline{1_\cC}\otimes 1_\cC)\cong \End_{\cM_\bfA}(1_\bfA)$.
(Here, the multiplication is the same as in Definition \ref{defn:NonStarBaseAlgebra}, but to define the $*$-structure, we must think of $1_\cC \cong \overline{1_\cC}\otimes 1_\cC$.)
\end{defn}

\subsection{C* and W*-algebra objects}

The equivalence between $*$-algebras objects in $\Vec(\cC)$ and $\cC$-module dagger categories afforded by Theorem \ref{thm:StarAlgebraDaggerModuleCategoryCorrespondence} gives us an elegant definition of a C*-algebra object in $\Vec(\cC)$.

\begin{defn} 
A $*$-algebra object $\bfA\in \Vec(\cC)$ is a C*-\emph{algebra object} if the $\cC$-module dagger category $\cM_\bfA$ is a C*-category.
Similarly, a $*$-algebra object is a W*-algebra object if $\cM_\bfA$ is a W*-category. 
\end{defn}

\begin{rem}
For C*-algebras, any $*$-algebra homomorphism is automatically continuous.
Given any $*$-algebra natural transformation $\theta: \bfA\Rightarrow \bfB$,
the components $\theta_{\overline{c}\otimes c}$ give $*$-algebra homomorphisms between C*-algebras $\bfA(\overline{c}\otimes c) \to \bfB(\overline{c}\otimes c)$, and are thus always bounded.
When $\bfA$ and $\bfB$ are W*-algebra objects, we should consider \emph{normal} $*$-algebra natural transformations.
This means each component $\theta_c : \bfA(c)\to \bfB(c)$ is continuous with respect to the weak*-topology coming from the identification $\bfA(c) \cong \cM_\bfA(c_\bfA, 1_\bfA)$ and $\bfB(c)\cong \cM_\bfB(c_\bfB, 1_\bfB)$.
\end{rem}

The equivalence of categories for $*$-algebra objects in Theorem \ref{thm:StarAlgebraDaggerModuleCategoryCorrespondence} gives us the following result for W*-algebra objects.

\begin{thm}
\label{thm:W*-algebraCorrespondence}
There is an equivalence of categories
$$
\left\{\,\text{\rm W*-algebra objects in $\Vec(\cC)$}\,\right\}
\,\,\cong\,\,
\left\{\,\text{\rm Cyclic $\cC$-module W*-categories}\,\right\}
$$
where the morphisms on both sides are required to be normal/weak* continuous.
\end{thm}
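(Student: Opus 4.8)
The plan is to obtain the statement as a formal consequence of Theorem~\ref{thm:StarAlgebraDaggerModuleCategoryCorrespondence} by restricting the equivalence established there to suitable full subcategories. Recall that that equivalence is implemented by Construction~\ref{construction:Module}, sending a $*$-algebra object $\bfA$ to the cyclic $\cC$-module dagger category $(\cM_\bfA,(1_\cC)_\bfA)$, together with the assignment $\theta\mapsto\check\theta$ on morphisms, and by Construction~\ref{construction:Algebra}, sending $(\cM,m)$ to $\bfA_m$ with $\Phi\mapsto\hat\Phi$; these are mutually inverse up to the natural isomorphisms exhibited in the proof sketch there. I would check that this equivalence carries W*-algebra objects to cyclic $\cC$-module W*-categories and back, and matches normal $*$-algebra natural transformations with normal cyclic $\cC$-module dagger functors.

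On objects, one direction is a tautology: by definition a $*$-algebra object $\bfA$ is a W*-algebra object precisely when $\cM_\bfA$ is a W*-category. For the converse, given a cyclic $\cC$-module W*-category $(\cM,m)$, Theorem~\ref{thm:StarAlgebraDaggerModuleCategoryCorrespondence} provides a $\cC$-module dagger equivalence $\cM_{\bfA_m}\simeq\cM$; since being a W*-category is an intrinsic property of a dagger category, it transports along this equivalence, so $\cM_{\bfA_m}$ is a W*-category and $\bfA_m$ is a W*-algebra object. The one point here requiring an actual argument is that $\cM_\bfA$ is not merely a W*-category but a $\cC$-module W*-category, i.e.\ the action maps $\phi\mapsto\psi\otimes\phi$ are weak*-continuous. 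Using the description of the action in Construction~\ref{construction:Module}, one factors $\psi\otimes f$ as a composite of $\id_c\otimes(-)$ with post-composition by the fixed morphism $\psi\otimes\id$; realizing the relevant hom spaces as corners of endomorphism von Neumann algebras of $\cM_\bfA$ via Roberts' $2\times2$ trick, each of these is a restriction of a left/right multiplication operator (respectively of $\bfA$ applied to a fixed morphism of $\cC$), hence weak*-continuous.

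On morphisms, the key computation is that for a $*$-algebra natural transformation $\theta\colon\bfA\Rightarrow\bfB$, the induced cyclic $\cC$-module dagger functor $\check\theta\colon\cM_\bfA\to\cM_\bfB$ acts on the hom space $\cM_\bfA(a_\bfA,b_\bfA)\cong\bfA(\overline b\otimes a)$ simply as the component $\theta_{\overline b\otimes a}$, after the Frobenius reciprocity identifications; and those identifications, as well as $\bfA(c)\cong\cM_\bfA(c_\bfA,1_\bfA)$ and $\End_{\cM_\bfA}(c_\bfA)\cong\bfA(\overline c\otimes c)$, are weak*-homeomorphisms. Granting this, normality of $\check\theta$ — equivalently, by the discussion of normal functors in Section~\ref{sec:Categories}, normality of each induced $*$-homomorphism $\bfA(\overline c\otimes c)\to\bfB(\overline c\otimes c)$ — is equivalent to weak*-continuity of $\theta_{\overline c\otimes c}$ for all $c$. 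Replacing $c$ by $c'\oplus 1_\cC$ and compressing to the corner between the $c'$- and $1_\cC$-summands, one sees this is in turn equivalent to weak*-continuity of $\theta_{c'}$ for all $c'$, which is exactly the definition of $\theta$ being normal; conversely a $*$-homomorphism of von Neumann algebras that is weak*-continuous is normal. Hence the equivalence of Theorem~\ref{thm:StarAlgebraDaggerModuleCategoryCorrespondence} restricts to the claimed equivalence of the full subcategories.

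I expect the main obstacle to be precisely the bookkeeping around the weak*-topologies: one must verify that all the Banach-space identifications in play ($\bfA(c)\cong\cM_\bfA(c_\bfA,1_\bfA)$, Frobenius reciprocity, and the realizations as corners of $\End_{\cM_\bfA}(\,\cdot\,)$) are weak*-homeomorphisms, so that ``normal'' is unambiguous on both sides, and that the $\cC$-action on $\cM_\bfA$ is automatically weak*-continuous. Once these are in place, the result is a direct restriction of a correspondence already established.
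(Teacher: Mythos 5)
Your proposal is correct and follows exactly the route the paper intends: the paper states Theorem~\ref{thm:W*-algebraCorrespondence} as an immediate restriction of Theorem~\ref{thm:StarAlgebraDaggerModuleCategoryCorrespondence} and gives no further proof. Your elaboration of the two points the paper leaves implicit — weak*-continuity of the $\cC$-action on $\cM_\bfA$ via the conditional-expectation/corner structure, and the matching of the two notions of normality through the Frobenius reciprocity and $2\times 2$ identifications — supplies exactly the missing bookkeeping.
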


The following fact gives an abstract characterization of C*/W*-algebra objects which does not explicitly use cyclic $\cC$-module dagger categories.

\begin{prop}
A $*$-algebra $\bfA\in \Vec(\cC)$ is a \emph{C*}/\emph{W*}-algebra object if and only if for every $c\in\cC$, the $*$-algebra $\bfA(\overline{c}\otimes c)$ is a \emph{C*}/\emph{W*}-algebra.
Here, the multiplication and $*$-structure are those pulled back from identifying $\bfA(\overline{c}\otimes c) \cong \cM_\bfA(c_\bfA, c_\bfA)$.
\end{prop}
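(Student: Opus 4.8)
The plan is to reduce the statement directly to the characterization of C*- and W*-categories among dagger categories that admit direct sums, recalled right after the definition of C*-category in Section~\ref{sec:Categories}. By definition, $\bfA$ is a C*-algebra object (resp.\ W*-algebra object) exactly when the cyclic $\cC$-module dagger category $\cM_\bfA$ of Construction~\ref{construction:Module} is a C*-category (resp.\ W*-category). So it suffices to: (a) check that $\cM_\bfA$ admits finite direct sums, and (b) observe that $\End_{\cM_\bfA}(c_\bfA)$ is, as a $*$-algebra, precisely $\bfA(\overline c\otimes c)$ with the multiplication and $*$-structure described in the statement. Granting these, the cited result of \cite{MR808930} gives: $\cM_\bfA$ is a C*-category iff $\End_{\cM_\bfA}(c_\bfA)\cong\bfA(\overline c\otimes c)$ is a C*-algebra for every $c\in\cC$, and a C*-category $\cM_\bfA$ is a W*-category iff each such endomorphism algebra is a von Neumann algebra; unwinding the definitions yields the proposition in both the C* and the W* cases.

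For (b) there is essentially nothing to prove: by Construction~\ref{construction:Module} the morphism space $\cM_\bfA(c_\bfA,c_\bfA)$ is identified with $\bfA(\overline c\otimes c)$, and the statement of the proposition defines the $*$-algebra structure on $\bfA(\overline c\otimes c)$ to be the one transported along this very identification; its compatibility with the dagger of $\cM_\bfA$ is part of Theorem~\ref{thm:StarAlgebraDaggerModuleCategoryCorrespondence}. For (a), recall that the objects of $\cM_\bfA$ are the symbols $c_\bfA$, $c\in\cC$, with $\cM_\bfA(a_\bfA,b_\bfA)\cong\Hom_{\Vec(\cC)}(\mathbf a,\mathbf b\otimes\bfA)$. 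Since $\cC$ has finite direct sums by our standing assumption and the Yoneda embedding $\cC\to\Vec(\cC)$ is additive, we have $\cC(\,\cdot\,,a\oplus b)\cong\mathbf a\oplus\mathbf b$ in $\Vec(\cC)$; tensoring with $\bfA$ on the right and applying $\Hom_{\Vec(\cC)}(\,\cdot\,,\,\cdot\,)$, one sees that $(a\oplus b)_\bfA$ is a biproduct of $a_\bfA$ and $b_\bfA$ in $\cM_\bfA$, with structure morphisms induced by the inclusions and projections of $a\oplus b$ in $\cC$. Because the dagger structure of $\cM_\bfA$ is defined compatibly with that of $\cC$, and inclusions and projections of a direct sum in a dagger category are mutually adjoint, this biproduct is an orthogonal direct sum, so $\cM_\bfA$ admits finite direct sums as a dagger category.

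The step requiring the most care is (a): although $\cM_\bfA$ is built in a fairly combinatorial way, with objects only labelled by $\Ob(\cC)$, one must verify that it genuinely inherits honest biproducts interacting correctly with the adjoint, so that the criterion of \cite{MR808930} applies verbatim. Once that is in place, the remainder is bookkeeping already carried out in Construction~\ref{construction:Module} and in the proof of Theorem~\ref{thm:StarAlgebraDaggerModuleCategoryCorrespondence}. I would therefore devote the bulk of the write-up to spelling out (a), and simply cite the earlier results for (b) and for the passage between $\cM_\bfA$ being a C*/W*-category and $\bfA$ being a C*/W*-algebra object.
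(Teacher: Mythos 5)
Your proposal is correct and follows essentially the same route as the paper's own proof: both reduce to the criterion of \cite{MR808930} for dagger categories admitting direct sums, note that $a_\bfA\oplus b_\bfA\cong(a\oplus b)_\bfA$ furnishes those direct sums in $\cM_\bfA$, and identify $\End_{\cM_\bfA}(c_\bfA)$ with $\bfA(\overline c\otimes c)$. Your write-up merely spells out the direct-sum verification in more detail than the paper does.
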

\begin{proof}
Recall from Section \ref{sec:Categories} that a dagger category that admits direct sums is a C*-category if and only if each endomorphism $*$-algebra is a C*-algebra. 
Note $\cM_\bfA$ admits direct sums, with $a_\bfA \oplus b_\bfA \cong (a\oplus b)_\bfA$.
Thus we need only check $\cM_\bfA(c_\bfA, c_\bfA)\cong \bfA(\overline{c}\otimes c)$ is a C*-algebra for each $c\in \cC$.
The result for W*-algebra objects is similar.
\end{proof}

It is important to note that as in Remark \ref{rem:PropertyStarAlgebraC*}, being a C*/W*-algebra object is a property of a $*$-algebra object, not extra structure.
\bigskip

We now give examples of C* and W*-algebra objects in $\Vec(\cC)$ for various rigid C*-tensor categories $\cC$.  Recall that to define a C*/W*-algebra object in $\cC$, we simply need to produce a $\cC$-module C*/W*-category, and choose a distinguished object. 

\begin{ex}[C*/W*-algebras]
Let $\cC = \fdHilb$, the category of finite dimensional Hilbert spaces.
Then $\Vec(\cC) \cong \Vec$, the category of vector spaces (not necessarly finite dimensional).
To see this, we just note that any $\bfF\in \Vec(\cC)$ is completely determined by $\bfF(1_\cC=\bbC)\in \Vec$, which is some vector space.
Hence if $H\in \fdHilb$, we have $\bfF(H)=\bfF(\bbC)\otimes_\bbC H$.

We then see that *-algebras $\bfA\in\Vec(\cC)$ are exactly $*$-algebras, C*-algebras in $\Vec(\cC)$ are just C*-algebras, and W*-algebras in $\Vec(\cC)$ are just W*-algebras.

Given a $*$-algebra $\bfA\in \Vec(\cC)$, we see that $\cM_\bfA$ has finite dimensional Hilbert spaces as objects, and the morphism space $\cM_\bfA(H,K)=\bfA(\bbC)\otimes B(H,K)$ with composition given by
$(f\otimes S) \circ (g\otimes T) = \mu(f\otimes g)\otimes  (S\circ T)$.
\end{ex}

\begin{ex}[Discrete groups]
Suppose $G$ is a discrete group.
The group algebra $\bfG=\bbC[G]$ is a connected W*-algebra object in the rigid C*-tensor category $\fdHilb(G)$ of finite dimensional $G$-graded Hilbert spaces. 
Given an outer action of $G$ on a ${\rm II}_1$ factor $N$, we get a tensor functor $\bfF: \fdHilb(G) \to \bfBim(N)$, and we may view the crossed product as a W*-algebra object in $\Vec(\Hilb_{f.d.}(G))\cong \Vec(G)$, the category of (not neccessarily finite dimensional) G-graded vector spaces.
\end{ex}

\begin{ex}
Let $\cC$ be a rigid C*-tensor category, $\cD$ be a W*-tensor category, and  $\bfF:\cC\rightarrow \cD$ be a dagger tensor functor.  
Then we can view $\cD$ as
\begin{enumerate}[(1)]
\item
A $\cC$-module category. 
Every object $m\in \cD$ gives a W*-algebra $\bfA_{\bfF,m}\in \cC$.  
\item
A $\cC\boxtimes \cC^{\text{mp}}$ module category.  
Every $m\in \cD$ gives a W*-algebra $\bfD(\bfA_{\bfF,m})$ called the \textit{quantum double} of $\bfA_{\bfF,m}$.
\end{enumerate}
Below, we mention classes of sub-examples that we feel are particularly interesting, and merit individual study.
\end{ex}

\begin{sub-ex}[Symmetric enveloping algebra]
Let $\mathbf{id}: \cC\rightarrow \cC$ be the identity functor.  
Then the algebra $\bfA_{\mathbf{id}, 1_\cC}$ is trivial, while the quantum double $\bfD(\bfA_{\mathbf{id}, 1_\cC})$ is called the \textit{symmetric enveloping algebra} \cite{MR1729488,MR1966525}.  
We will give more details on this algebra in Section \ref{sec:AnalyticProperties}.
\end{sub-ex}

\begin{sub-ex}[Quantum group]
Given a rigid C*-tensor category $\cC$ and a dagger tensor functor $\bfF:\cC\rightarrow \fdHilb$, one can apply the Tannaka-Krein-Woronowicz construction to obtain a \textit{discrete quantum group} $\bbG$, whose representation category is canonically equivalent to $\cC$, and whose forgetful functor is equivalent to the original functor $\bfF$.  
We discuss this construction in greater detail in Section \ref{sec:AnalyticProperties}.  
Let $\bbG$ be a discrete quantum group and let $\cC=\Rep(\bbG)$ be its associated rigid C*-tensor category, with forgetful functor $\bfF: \cC\rightarrow \fdHilb$.
Then $\bfG=\bfA_{\bfF.\bbC}$ is called the \textit{quantum group algebra object}.  
We call $\bfD(\bfG)$ the \textit{Drinfeld double algebra object}, which is actually Mortia equivalent to $\bfD(\bfA_{\mathbf{id},1_{\cC}})$ in an appropriate sense, though explaining this would take us too far afield for now.   
We will investigate the quantum group algebras further in Section \ref{sec:AnalyticProperties}.
\end{sub-ex}

\begin{sub-ex}[{${\rm II}_1$ factor bimodules}]
Let $\bfF:\cC\rightarrow \Bim(N)$ be a dagger tensor functor where $N$ is a ${\rm II}_{1}$ factor. 
If $\bfF$ is full, then choosing $m$ to be $L^{2}(N)$ yields the trivial algebra: $\bfA_{\bfF,L^2(N)}\cong \bfA_{\mathbf{id},1_{\cC}}$.  
In general, choosing an bifinite bimodule $H\in \bfBim(N)$ necessarily yields a locally finite algebra, and if $H$ is irreducible and $\bfF$ is full, $\bfA_{\bfF,H}$ will be connected.  
\end{sub-ex}

\begin{ex}[Quasi-regular subfactors]
An irredicible inclusion of ${\rm II}_{1}$ factors $N\subseteq M$ is called \textit{quasi-regular} if the $N-N$ bimodule ${}_{N} L^{2}(M)_{N}$ decomposes as $L^2(N)\oplus \bigoplus_{i\geq 1} n_i H_i$ where for each $i\in \bbN$, $n_i\in \bbN$ and $H_i \in \bfBim(N)$ is bifinite.
(See \cite{MR1622812,MR1729488,1511.07329} for more details.)
We define $\bfM \in \Vec(\bfBim(N))$ by $\bfM(H):=\Hom_{N-N}(H, {}_N L^{2}(M)_{N})$ for $H\in \bfBim(N)$.
Since $M$ is a von Neumann algebra, $\bfM$ is a W*-algebra object, and since $N\subseteq M$ is irreducible, $\bfM$ is connected.
\end{ex}

\begin{ex}[Planar algebras and graphs]
Consider $\cT\cL\cJ_\delta$, the Temperley-Lieb-Jones category with loop parameter $\delta$.
Recall that $\Ad(\cT\cL\cJ_\delta) \cong \Ad(\Rep(SU_{q}(2)))$ for $\delta=q+q^{-1}$, where $\Ad$ denotes taking the adjoint subcategory, a.k.a., the even half.

A planar algebra $\cP_\bullet$ is an algebra object in $\Vec(\Ad(\cT\cL\cJ_\delta))$ which lifts to a commutative algebra in the center $\cZ(\Vec(\Ad(\cT\cL\cJ_\delta)))$.
The corresponding cyclic module category for $\cP_\bullet$ is the projection category ${\sf Proj}(\cP_\bullet)$ from \cite{MR2559686,MR3405915} with basepoint the empty diagram.

This is really a special case of De Commer and Yamashita's classification of module categories of $\Rep(SU_{q}(2))$ by weighted graphs with balanced cost functions \cite{MR3420332}.  
A planar algebra with its principal graph and dimension function gives such a module category when we restrict to the adjoint subcategory $\Ad(\Rep(SU_{q}(2)))$.
\end{ex}

\begin{ex}[{$\bfB(\bfH)$}]
\label{ex:BofH}
Note that $\Hilb(\cC)$ is a $\cC$-module W*-category, since $\cC$ embeds in $\Hilb(\cC)$.
Given $\bfH\in \Hilb(\cC)$, take $(\cM_\bfH, \bfH)$ to be the cyclic left  $\cC$-module W*-category generated by $\bfH$.
We define $\bfB(\bfH)$ to be the algebra in $\Vec(\cC)$ obtained from Construction \ref{construction:Algebra} applied to $(\cM_\bfH,\bfH)$.
Notice that by Theorem \ref{thm:AlgebraModuleCategoryCorrespondence}, 
$$
\Hom_{\cM_{\bfB(\bfH)}}(a, b) \cong \bfB(\bfH)(\overline{b}\otimes a)
=
\Hom_{\Hilb(\cC)}(\overline{\mathbf{b}}\otimes \mathbf{a}\otimes \bfH, \bfH)
\cong
\Hom_{\Hilb(\cC)}(\mathbf{a}\otimes \bfH, \mathbf{b}\otimes\bfH).
$$
Similarly, we define ${}_\bfH\cM$ to be the cyclic right $\cC$-module W*-category generated by $\bfH$. 
Note that $\cM_\bfH \cong \cM_{\bfB(\bfH)}$, but in general,  ${}_{\bfB(\bfH)}\cM\ncong {}_\bfH\cM$, as we see from the following proposition.
\end{ex}

\begin{prop}
We have the following correspondence between algebras in $\Vec(\cC)$ and left and right $\cC$-module \emph{W*}-categories:
\begin{align*}
\bfB(\bfH) &\longleftrightarrow \cM_\bfH
&
\overline{\bfB(\overline{\bfH})} &\longleftrightarrow {}_\bfH\cM
\\
\bfB(\overline{\bfH}) &\longleftrightarrow \cM_{\overline{\bfH}}
&
\overline{\bfB(\bfH)} &\longleftrightarrow {}_{\overline{\bfH}}\cM
\end{align*}
\end{prop}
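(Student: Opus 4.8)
The goal is to establish four correspondences between algebra objects in $\Vec(\cC)$ and cyclic module W*-categories, two of them left-module and two right-module. The first entry, $\bfB(\bfH) \longleftrightarrow \cM_\bfH$, is essentially the definition of $\bfB(\bfH)$ given in Example \ref{ex:BofH}, so there is nothing to prove there. Replacing $\bfH$ by $\overline{\bfH}$ immediately gives the second left-module entry $\bfB(\overline{\bfH}) \longleftrightarrow \cM_{\overline{\bfH}}$. So the real content is the two right-hand entries: identifying which algebra object corresponds to the \emph{right} cyclic $\cC$-module W*-category ${}_\bfH\cM$ generated by $\bfH$, and similarly for ${}_{\overline{\bfH}}\cM$.

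\textbf{Step 1: reduce right modules to left modules via conjugation.} Recall from the remarks following Construction \ref{construction:Algebra} that a right $\cC$-module category is the same data as a left $\cC^{\mathrm{mp}}$-module category, equivalently (via the rigid monoidal equivalence $\cC \cong \cC^{\mathrm{mop}}$ and hence $\cC^{\mathrm{mp}}\cong\cC^{\op}$) a left $\cC^{\op}$-module category after passing to the opposite. The cleaner route for $\Hilb(\cC)$-subcategories is to use the conjugate functor: the bi-involutive structure $\overline{\,\cdot\,}$ on $\Hilb(\cC)$ restricts to a contravariant-in-the-module-action equivalence which sends a \emph{left} cyclic module category with basepoint $\bfK$ to a \emph{right} cyclic module category with basepoint $\overline{\bfK}$. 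Concretely, I would check that $\overline{\,\cdot\,}$ carries $\cM_{\overline{\bfH}}$ (a left $\cC$-module W*-category with basepoint $\overline{\bfH}$) onto ${}_\bfH\cM$ (a right $\cC$-module W*-category with basepoint $\bfH$), using that $\overline{\overline{\bfH}}\cong \bfH$ via $\varphi$, and that $\overline{\mathbf{c}\otimes \overline{\bfH}} \cong \bfH \otimes \overline{\mathbf{c}}$ via $\nu$ and $\varphi$, so the objects ${}_\bfH(\,\cdot\,)$ match up correctly.

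\textbf{Step 2: transport the algebra along the equivalence.} Under the correspondence of Theorem \ref{thm:AlgebraModuleCategoryCorrespondence} (and its right-module analogue in Remark \ref{rem:RightVersionOfConstructionAlgebra}), applying the conjugate functor to a cyclic module category corresponds on the algebra side to the conjugate-algebra construction of Definition \ref{defn:VecCInvolutive}/the paragraph defining $\overline{\bfA}$. That is: if $(\cM,m)$ has associated algebra $\bfA$, then $(\overline{\cM},\overline{m})$ has associated algebra $\overline{\bfA}$ — this is exactly the statement that the algebra-module equivalence is compatible with the involutive structures on both sides. Since $\cM_{\overline{\bfH}}$ corresponds to $\bfB(\overline{\bfH})$, its image ${}_\bfH\cM$ under $\overline{\,\cdot\,}$ corresponds to $\overline{\bfB(\overline{\bfH})}$, giving the first right-module entry. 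Replacing $\bfH$ by $\overline{\bfH}$ throughout and using $\overline{\overline{\bfH}}\cong\bfH$ gives ${}_{\overline{\bfH}}\cM \longleftrightarrow \overline{\bfB(\bfH)}$, the last entry. One should also remark that all four categories are genuinely W*-categories: $\Hilb(\cC)$ is a W*-category, the conjugate functor is a bi-involutive dagger equivalence, and the module actions remain normal.

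\textbf{Main obstacle.} The routine bookkeeping is fine; the one place demanding care is Step 1, namely writing down the object- and morphism-level equivalence $\overline{\,\cdot\,}: \cM_{\overline{\bfH}} \xrightarrow{\ \sim\ } {}_\bfH\cM$ and verifying it intertwines the \emph{left} $\cC$-action on the source with the \emph{right} $\cC$-action on the target. On morphisms this means checking that for $f \in \Hom_{\Hilb(\cC)}(\mathbf{a}\otimes\overline{\bfH}, \mathbf{b}\otimes\overline{\bfH}) \cong \cM_{\overline{\bfH}}(a,b)$, the conjugate $\overline{f}$, after inserting the coherence isomorphisms $\nu$ and $\varphi$ identifying $\overline{\mathbf{a}\otimes\overline{\bfH}}$ with $\bfH\otimes\overline{\mathbf{a}}\cong\bfH\otimes\mathbf{a}^\vee$, lands in $\Hom_{\Hilb(\cC)}(\bfH\otimes\overline{\mathbf{b}}, \bfH\otimes\overline{\mathbf{a}})$ and that composition and the $\boxtimes$-balancing factors are respected — these are the same kind of coherence manipulations as in the proof of Theorem \ref{thm:StarAlgebraDaggerModuleCategoryCorrespondence}, and I would present the key diagram and leave the remaining verifications to the reader, as the paper does elsewhere.
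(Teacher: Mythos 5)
Your proposal is correct and follows essentially the same route as the paper: the paper also reduces to a single entry (it proves ${}_{\overline{\bfH}}\cM \longleftrightarrow \overline{\bfB(\bfH)}$) and implements the left/right flip via the conjugation functor together with the coherences $\nu$ and $\varphi$, writing the resulting $*$-algebra isomorphism fiberwise as a chain $\overline{\bfB(\bfH)}(c)=\overline{\Hom(\overline{\mathbf{c}}\otimes\bfH,\bfH)}\cong\Hom(\overline{\bfH}\otimes\mathbf{c},\overline{\bfH})$ rather than first building the module-category equivalence and transporting, and likewise leaves the remaining coherence checks to the reader.
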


Note that $\overline{\bfB(\bfH)} \cong \bfB(\bfH)$ as algebras in $\Vec(\cC)$, but is helpful to think of these two algebras as four algebras to see the above correspondences more easily.

\begin{proof}
It suffices to prove ${}_{\overline{\bfH}} \cM$ corresponds to $\overline{\bfB(\bfH)}$.
Let ${}_{\overline{\bfH}}\bfA\in \Vec(\cC)$ be the algebra corresponding to ${}_{\overline{\bfH}}\cM$.
We see we have a $*$-algebra natural transformation $\theta:\overline{\bfB(\bfH)}\Rightarrow {}_{\overline{\bfH}}\bfA$, where for $c\in \cC$, $\theta_c$ is the composite map
\begin{align*}
\overline{\bfB(\bfH)}(c) 
&= 
\overline{\bfB(\bfH)(\overline{c})} 
=
\overline{\Hom_{\Vec(\cC)}(\overline{\mathbf{c}} \otimes \bfH, \bfH) }
=
\Hom_{\Vec(\cC)}(\overline{\overline{\mathbf{c}} \otimes \bfH}, \overline{\bfH})
\\&\overset{\nu\circ -}{\cong}
\Hom_{\Vec(\cC)}(\overline{\bfH}\otimes\overline{\overline{\mathbf{c}}}, \overline{\bfH})
\overset{(\id\otimes \varphi)\circ -}{\cong}
\Hom_{\Vec(\cC)}(\overline{\bfH}\otimes\mathbf{c}, \overline{\bfH})
=
{}_{\overline{\bfH}}\bfA(c).
\end{align*}
We leave the rest of the details to the reader.
\end{proof}


\section{C*-algebra objects in \texorpdfstring{$\Vec(\cC)$}{Vec(C)}}
\label{sec:CStarAlgebras}

We now focus our attention on C*-algebra objects in $\Vec(\cC)$.
We prove analogues of theorems for ordinary C*-algebras in $\fdHilb$.

From this point on, whenever possible, we suppress the involutive structure $(\nu, \varphi, r)$ on $\Vec(\cC)$ and $\Hilb(\cC)$ to ease the notation.
As a consequence, we will consider $\bfA(1_\cC)$ as the base algebra instead of $\bfA(\overline{1_\cC}\otimes 1_\cC)$.

\subsection{Conditional expectations}

Let $a\in\cC$ and consider a $\cC$-module C*-category $\cM$. 
For every $m\in\cM$, the map  $\iota_{a}:\End_{\cM}(m)\rightarrow \End_\cM(a\otimes m)$ given by $\iota_{a}(f)=\id_{a}\otimes f$ is a unital C*-algebra isometry.
The standard left inverse for $m\in \cM$, denoted $E_{a}: \End_{\cM}(a\otimes m)\rightarrow \End_{\cM}(m)$, is given by 
\begin{equation}
\label{eq:ConditionalExpectation}
E_{a}(f)
=
\frac{1}{d_a}\,
\begin{tikzpicture}[baseline = -.1cm]
    \draw (.15,-.7) -- (.15,.7);
    \draw (-.15,.3) arc (0:180:.15cm) -- (-.45,-.3) arc (-180:0:.15cm);
    \roundNbox{unshaded}{(0,0)}{.3}{0}{0}{$f$}
    \node at (.4,-.5) {\scriptsize{$m$}};
    \node at (.4,.5) {\scriptsize{$m$}};
    \node at (-.6,0) {\scriptsize{$\overline{a}$}};
\end{tikzpicture}
\end{equation}
An easy calculation shows $E_{a}\circ {\iota_{a}}=\id_m$.  
The map $E_a$ is also called a \emph{partial trace} or a \emph{conditional expectation}.

We  make use of the following result, due to Longo-Roberts in the abstract setting \cite{MR1444286}, which is a version of the Pimsner-Popa inequality for subfactors \cite{MR860811}.
We give an easy diagrammatic proof for the convenience of the reader.

\begin{lem}
\label{lem:PimsnerPopa}
If $a\in\cC$ and $m\in \cM$, then
$f\leq d_a^2 (\id_a\otimes E_a(f))$
for all $f\in \End_\cM(a\otimes m)^+$.
\end{lem}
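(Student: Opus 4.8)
The plan is to reduce the inequality to a single positive element and a positive map, exploiting the fact that both $f$ and $\id_a \otimes E_a(f)$ are positive elements of the C*-algebra $\End_\cM(a\otimes m)$. First I would recall that for a positive element $f \in \End_\cM(a\otimes m)^+$ we may write $f = g^*\circ g$ for some $g \in \cM(a\otimes m, n)$ (after passing to a suitable object $n$, or directly using $n = a\otimes m$), so it suffices to bound $g^*\circ g$ by $d_a^2(\id_a \otimes E_a(g^*\circ g))$. Writing everything out diagrammatically, $\id_a \otimes E_a(g^*\circ g)$ has the cup-cap on the $\overline{a}$ strand with normalization $1/d_a$, and I would aim to exhibit the difference $d_a^2(\id_a\otimes E_a(g^*\circ g)) - g^*\circ g$ as a manifestly positive element, i.e.\ of the form $h^*\circ h$.

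The key computational step is to consider the morphism built from $g$, the coevaluation $\coev_a$ (or $\ev_a^*$), and an appropriate scalar, arranged so that its square-length equals $d_a^2\,\id_a\otimes E_a(g^*\circ g)$ up to a cross term that reconstructs $g^*\circ g$. Concretely, I would set $h = \id_a\otimes g - (\text{something involving } \coev_a, \ev_a, g)$ acting on $a\otimes m$ — roughly, consider the vector in $\cC(a\to a\otimes \overline a\otimes a)$-flavored combination $\bigl(\frac{1}{d_a}\ev_a^*\otimes\id\bigr)$ against $g$, form the difference with $\id_a\otimes g$ appropriately scaled, and compute $h^*\circ h \geq 0$. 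Expanding $h^*\circ h$ using the zig-zag relations and the defining formula \eqref{eq:ConditionalExpectation} for $E_a$, the diagonal terms produce $g^*\circ g$ plus $d_a^2(\id_a\otimes E_a(g^*\circ g))$ (the factor $d_a^2$ arising from $\ev_a^*\circ\ev_a$ contributing $d_a$ against the $1/d_a$ in $E_a$, or from the normalization of $\coev_a$), while the off-diagonal cross terms cancel exactly $2\,g^*\circ g$ (or combine to give $-g^*\circ g$ after the right choice of coefficient), leaving $d_a^2(\id_a\otimes E_a(f)) - f = h^*\circ h\geq 0$.

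The main obstacle I anticipate is getting the diagrammatic bookkeeping of the normalization factors exactly right: the conditional expectation carries a $1/d_a$, the balanced coevaluation/evaluation pair $\ev_a,\coev_a$ satisfies $\ev_a\circ\ev_a^* = d_a\,\id_{1_\cC}$ (by the balancing condition \eqref{eq:BalancingCondition} with $\psi=\id_a$), and one must track how these combine through the zig-zag moves so that the cross terms cancel with the correct sign and the surviving term has precisely the coefficient $d_a^2$. There is also a subtlety in choosing the object $n$ and the decomposition $f = g^*\circ g$ — here one uses that $\cM$ is a C*-category so such a $g$ exists into some object, and that the argument is insensitive to that choice since only $g^*\circ g$ appears. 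Once the right $h$ is identified (it is essentially $h = d_a(\id_a\otimes \text{``$E_a$-corrected $g$''})$ minus $g$ suitably coevaluated), the verification is a routine application of the zig-zag axioms and the formula for $E_a$, which I would present as a short chain of labeled string diagrams rather than in symbols.
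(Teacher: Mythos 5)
Your approach is sound and would succeed, but it takes a genuinely different (and more laborious) route than the paper's. The paper never decomposes $f$ as $g^*\circ g$: it uses the zig-zag identity to rewrite $f$ as a conjugate of $(\coev_a\circ\coev_a^*)\otimes f$, notes that the unnormalized Jones projection satisfies $\coev_a\circ\coev_a^*\leq d_a\,\id_{a\otimes\overline{a}}$ (because $\coev_a^*\circ\coev_a=d_a\,\id_{1_\cC}$), and invokes exactly one positivity fact --- the tensor product of positive operators is positive, so $\bigl(d_a\id_{a\otimes\overline{a}}-\coev_a\coev_a^*\bigr)\otimes f\geq 0$ --- to replace the cup--cap pair by $d_a$ times the identity; the resulting diagram is $d_a^{2}(\id_a\otimes E_a(f))$ on the nose. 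Your sum-of-squares plan is the ``squared'' version of the same inequality: the $h$ you are looking for is, up to the scalar $\sqrt{d_a}$, the composite of $V=\id_a\otimes\ev_a^*\otimes\id_m$ with $(\id_{a\otimes\overline{a}}-e)\otimes g$, where $e=d_a^{-1}\coev_a\coev_a^*$ is the Jones projection. Since $\id-e$ is a projection, one gets $h^*\circ h=d_a(\id_a\otimes E_a(f))-d_a^{-1}f$ directly, which resolves the coefficient ambiguity you flag: the cross terms contribute $-2d_a^{-1}f$ and the remaining square contributes $+d_a^{-1}f$. What your route buys is an explicit positive witness for the difference $d_a^{2}(\id_a\otimes E_a(f))-f$; what it costs is the auxiliary decomposition $f=g^*\circ g$ (take $g=f^{1/2}$, so your object $n$ is just $a\otimes m$) and the diagrammatic bookkeeping you anticipate, all of which the paper sidesteps. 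Once you observe that your $h$ factors through the projection $\id-e$, your argument collapses into the paper's one-line proof.
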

\begin{proof}
For all $f\geq 0$, since the tensor product of positive operators is positive,
\begin{align*}
\begin{tikzpicture}[baseline = -.1cm]
    \draw (.15,-.7) -- (.15,.7);
    \draw (-.15,-.7) -- (-.15,.7);
    \roundNbox{unshaded}{(0,0)}{.3}{0}{0}{$f$}
    \node at (.4,-.5) {\scriptsize{$m$}};
    \node at (.4,.5) {\scriptsize{$m$}};
    \node at (-.4,-.5) {\scriptsize{$a$}};
    \node at (-.4,.5) {\scriptsize{$a$}};
\end{tikzpicture}
&=
\begin{tikzpicture}[baseline=-.1cm]
    \draw (.15,-.9) -- (.15,.9);
    \draw (-.15,-.3) -- (-.15,-.6) arc (0:-180:.15cm) -- (-.45,-.3) arc (0:180:.15cm) -- (-.75,-.9);
    \draw (-.15,.3) -- (-.15,.6) arc (0:180:.15cm) -- (-.45,.3) arc (0:-180:.15cm) -- (-.75,.9);
    \roundNbox{unshaded}{(0,0)}{.3}{0}{0}{$f$}
    \roundNbox{dotted}{(-.3,0)}{.5}{.2}{.3}{}
    \node at (.4,-.8) {\scriptsize{$m$}};
    \node at (.4,.8) {\scriptsize{$m$}};
    \node at (-.9,-.8) {\scriptsize{$a$}};
    \node at (-.9,.8) {\scriptsize{$a$}};
\end{tikzpicture}
\leq
d_a\,
\begin{tikzpicture}[baseline=-.1cm]
    \draw (.15,-.9) -- (.15,.9);
    \draw (-.15,-.3) -- (-.15,-.6) arc (0:-180:.15cm) --(-.45,.6) arc (180:0:.15cm) -- (-.15,.3);
    \draw (-.75,-.9) -- (-.75,.9);
    \roundNbox{unshaded}{(0,0)}{.3}{0}{0}{$f$}
    \roundNbox{dotted}{(-.3,0)}{.5}{.2}{.3}{}
    \node at (.4,-.8) {\scriptsize{$m$}};
    \node at (.4,.8) {\scriptsize{$m$}};
    \node at (-.9,-.8) {\scriptsize{$a$}};
    \node at (-.9,.8) {\scriptsize{$a$}};
\end{tikzpicture}
=
d_a^2 (\id_a\otimes E_a(f)).
\qedhere
\end{align*}
\end{proof}

\begin{cor}
\label{cor:NormInequalities}
For all $f\in \End_\cM(a\otimes m)^+$, we have
$$
d_a^{-2}\|f\| \leq  \| E_a(f)\| \leq \|f\|.
$$
In particular, $E_a$ is faithful.
\end{cor}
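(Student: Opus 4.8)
The plan is to derive both inequalities directly from Lemma \ref{lem:PimsnerPopa} together with the elementary identity $E_a \circ \iota_a = \id_m$ and basic facts about norms and order in C*-algebras. Throughout, fix $a\in\cC$ and $m\in\cM$, and recall that $\iota_a$ is a unital C*-algebra isometry, so in particular $\|\iota_a(g)\| = \|g\|$ for all $g\in\End_\cM(m)$.

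\textbf{Upper bound.} First I would establish $\|E_a(f)\|\le\|f\|$ for $f\in\End_\cM(a\otimes m)^+$. Since $f\ge 0$, Lemma \ref{lem:PimsnerPopa} gives $0\le f\le d_a^2(\id_a\otimes E_a(f)) = d_a^2\,\iota_a(E_a(f))$; but this is not quite what I want to compare against. Instead, the cleanest route is to observe that $E_a$ is a completely positive map (it is a positive multiple of a partial trace), and in fact $E_a$ is the composition of the unital inclusion's left inverse. Concretely, note $0\le f \le \|f\|\id_{a\otimes m}$, and apply $E_a$, which is positive and satisfies $E_a(\id_{a\otimes m}) = \frac{1}{d_a}\cdot d_a\cdot\id_m = \id_m$ (the partial trace of the identity, using $\coev_a^*\circ\coev_a = d_a$); positivity of $E_a$ then yields $E_a(f)\le\|f\|\id_m$, hence $\|E_a(f)\|\le\|f\|$ since $E_a(f)\ge 0$.

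\textbf{Lower bound.} For $d_a^{-2}\|f\|\le\|E_a(f)\|$, I would take norms in the inequality $f\le d_a^2\,\iota_a(E_a(f))$ from Lemma \ref{lem:PimsnerPopa}. Since $0\le f\le d_a^2\,\iota_a(E_a(f))$ in the C*-algebra $\End_\cM(a\otimes m)$, the norm is order-preserving on positive elements, so $\|f\| \le d_a^2\,\|\iota_a(E_a(f))\| = d_a^2\,\|E_a(f)\|$, using that $\iota_a$ is isometric. Rearranging gives $d_a^{-2}\|f\|\le\|E_a(f)\|$.

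\textbf{Faithfulness.} Finally, if $E_a(f) = 0$ for some $f\in\End_\cM(a\otimes m)^+$, the lower bound forces $\|f\| = 0$, so $f = 0$; since every positive element generating the kernel must vanish and $E_a$ is positive and linear, an arbitrary $f$ with $E_a(f^*f) = 0$ has $f^*f = 0$, hence $f = 0$ — but for the stated corollary it suffices to note $E_a$ is faithful in the sense that it does not kill nonzero positive elements, which is immediate from $d_a^{-2}\|f\|\le\|E_a(f)\|$. I do not anticipate a serious obstacle here; the only mild subtlety is making sure $E_a$ is genuinely positive (not merely "the graphical cup-cap formula") and that $E_a(\id_{a\otimes m}) = \id_m$, both of which follow from the zig-zag axioms and the normalization $\coev_a^*\circ\coev_a = d_a\id_{1_\cC}$ built into the standard solution of the conjugate equations; once those are in hand the two inequalities are one line each.
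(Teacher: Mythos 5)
Your proof is correct and follows essentially the same route as the paper: the lower bound is obtained by taking norms in the Pimsner--Popa inequality $f\le d_a^2(\id_a\otimes E_a(f))$ and using that $\iota_a$ is isometric, and the upper bound by applying the (complete) positivity of the conditional expectation to $f\le\|f\|\,\id_{a\otimes m}$ together with unitality $E_a(\id_{a\otimes m})=\id_m$. The only cosmetic difference is that the paper applies $\id_a\otimes E_a$ where you apply $E_a$ directly; faithfulness then follows from the first inequality exactly as you say.
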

\begin{proof}
The first inequality follows from Lemma \ref{lem:PimsnerPopa} by dividing by $d_a^2$.
The second inequality follows from the fact that $\id_a\otimes E_a$ is positive, together with $f\leq \|f\|(\id_a\otimes \id_m)$.
\end{proof}

We now discuss the probabilistic index of a conditional expectation due to \cite{MR860811}.
For a broader discussion of index of inclusions, conditional expectations, and dualizability, see \cite{MR3342166}.

\begin{defn}  
Suppose $A\subseteq B$ is a unital inclusion of C*-algebras. 
A conditional expectation $E:B\rightarrow A\subseteq B$ is said to have \emph{finite index} if there exists a number $K\ge1$ such that $K\cdot E- 1_{B}$ is a positive map.   
The \textit{index} $\ind(E)$ of $E$ is the infimum of such $K$.
\end{defn}

Suppose we have a $\cC$-module C*-category $\cM$.
Lemma \ref{lem:PimsnerPopa} above shows that the conditional expectation $E_{a}: \End_\cM(a\otimes m)\rightarrow \End_\cM(m)$ has finite index, and in particular $1\le \ind(E_{a})\le d^{2}_{a}$.  
Using the following proposition, this inequality has particularly nice consequences when $\End_\cM(m)$ is finite dimensional.

\begin{prop}[{\cite[Cor.\,4.4]{MR1642530}}]
If $E:B\rightarrow A\subseteq B$ is a faithful conditional expectation and $A$ is finite dimensional, then $B$ is finite dimensional if and only if $\ind(E)<\infty$.
\end{prop}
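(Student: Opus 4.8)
The plan is to prove the two implications separately; the reverse implication, $\ind(E)<\infty\Rightarrow B$ finite dimensional, is the one used in the sequel and is the one with real content. Throughout I would use that the condition $\ind(E)\le K$ means precisely that $E(b)\ge K^{-1}b$ for every $b\in B_+$, and that a conditional expectation $E$ is unital and positive, so that $0\le E(b)\le\|b\|\cdot 1_B$ for $b\in B_+$ and $\|E\|=1$.

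For the forward implication, suppose $\dim_{\mathbb C}B<\infty$. Since every element of $B_+$ is a nonnegative linear combination of finitely many mutually orthogonal projections of $B$, it is enough to find one $\epsilon>0$ with $E(q)\ge\epsilon q$ for all projections $q\in B$. Fix a nonzero projection $q$; let $p\in A$ be the support projection of $E(q)$ (which exists because $A$ is finite dimensional) and $r:=1_A-p$, so that $rE(q)=E(q)r=0$. Then $E(rqr)=rE(q)r=0$, and faithfulness of $E$ forces $rqr=0$, hence $qr=0$, that is, $q\le p$. Writing $\delta_q>0$ for the least nonzero eigenvalue of $E(q)$ we get $E(q)\ge\delta_q p\ge\delta_q q$. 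Finally, the projections of $B$ form a compact set (a closed subset of the unit ball of the finite dimensional space $B$), and $q\mapsto\sup\{t\ge0:E(q)\ge tq\}$ is lower semicontinuous there with values in $(0,\infty]$, hence bounded below by some $\epsilon>0$; therefore $\ind(E)\le\epsilon^{-1}<\infty$. (Alternatively, this direction is immediate from the structure theory of inclusions of multi-matrix algebras, where $\ind(E)$ is an explicit finite element of the center of $A$.)

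For the reverse implication I would argue by contraposition: assuming $\dim_{\mathbb C}B=\infty$, I show $\ind(E)=\infty$. An infinite dimensional C*-algebra contains an infinite sequence $(b_n)_{n\ge1}$ of pairwise orthogonal positive norm-one elements (take a self-adjoint element with infinite spectrum and apply continuous functions supported on disjoint neighbourhoods of distinct spectral points); note $\|\sum_{n\in F}b_n\|=1$ for every finite $F$. Suppose toward a contradiction that $K:=\ind(E)<\infty$, so $E(b_n)\ge K^{-1}b_n\ge0$ and hence $K^{-1}\le\|E(b_n)\|\le1$. Since $A$ is finite dimensional the set $\{a\in A_+:\|a\|\le1\}$ is compact, so after passing to a subsequence $E(b_n)\to a_*\in A_+$ with $\|a_*\|\ge K^{-1}$, in particular $a_*\ne0$. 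Let $p\in A$ be the support projection of $a_*$ and $\delta>0$ its least nonzero eigenvalue, so $a_*\ge\delta p$; for all large $n$, $\|E(b_n)-a_*\|<\delta/2$, whence $pE(b_n)p\ge a_*-(\delta/2)p\ge(\delta/2)p$. Now fix any $N$, pick large indices $n_1<\dots<n_N$, and set $b:=\sum_{j=1}^N b_{n_j}\in B_+$, so $\|b\|=1$; then $pE(b)p=\sum_{j=1}^N pE(b_{n_j})p\ge N(\delta/2)p$, giving $N\delta/2\le\|pE(b)p\|\le\|E(b)\|\le\|b\|=1$. As $N$ was arbitrary this is impossible, so $\ind(E)=\infty$.

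The only genuinely external ingredient is the standard fact that an infinite dimensional C*-algebra has an infinite orthogonal family of nonzero positive elements; everything else is a routine interplay of compactness, support projections, and the order structure. I expect the main conceptual point, rather than a technical obstacle, is turning the index bound into a contradiction in the reverse direction: knowing only $\|E(b_n)\|\ge K^{-1}$ is not enough, since the $E(b_n)$ could a priori stay mutually ``far apart''; the idea is that finite dimensionality of $A$ forces a convergent subsequence $E(b_n)\to a_*\ne0$, after which compressing by the support projection of $a_*$ makes the partial sums $\sum_j pE(b_{n_j})p$ grow linearly in the number of terms while $\|E(\sum_j b_{n_j})\|$ remains bounded by $1$. (A less elementary alternative is to extract a finite quasi-basis from a finite-index expectation and write $B$ as a sum of finitely many translates of the finite dimensional space $A$, but the contraposition above is shorter and self-contained.)
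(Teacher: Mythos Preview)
The paper does not prove this proposition; it is quoted verbatim from \cite[Cor.\,4.4]{MR1642530} and used as a black box, so there is no proof in the paper to compare against. Your argument is correct and self-contained: the forward direction via support projections and the compactness/lower-semicontinuity argument on the (compact) set of projections of the finite dimensional algebra $B$ is clean, and the contrapositive for the reverse direction---extracting a convergent subsequence $E(b_n)\to a_*\neq 0$ in the finite dimensional $A$, compressing by the support projection of $a_*$, and forcing $\|E(\sum_j b_{n_j})\|$ to grow without bound while $\|\sum_j b_{n_j}\|=1$---is exactly the right mechanism.

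One small remark: your parenthetical ``take a self-adjoint element with infinite spectrum'' hides a (standard but genuine) fact, namely that an infinite dimensional C*-algebra always contains an infinite dimensional abelian C*-subalgebra; the existence of such an element is not automatic from infinite dimensionality alone without this or an equivalent observation. You flag this correctly as the one external ingredient, but it is worth being explicit that the parenthetical is itself the nontrivial step. The quasi-basis alternative you mention at the end is essentially how the cited reference proceeds, so your contraposition is a genuinely different and more elementary route.
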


\begin{cor} 
Let $\cM$ be a $\cC$-module \emph{C*}-category.
Suppose we have $m\in \cM$ such that $\dim(\End_\cM(m))<\infty$.
Then $\End_\cM(a\otimes m)$ is a finite dimensional \emph{C*}-algebra for all $a\in \cC$.
In particular, the cyclic $\cC$-module \emph{C*}-category generated by $m$ is pre-semi-simple.\footnote{Here, pre-semi-simple is used in the sense of \cite{Enriched}, meaning that all the endomorphism algebras are finite dimensional semi-simple algebras.}
\end{cor}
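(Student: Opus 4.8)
The plan is to apply the cited result \cite[Cor.\,4.4]{MR1642530} to the unital inclusion $\iota_a\colon \End_\cM(m)\hookrightarrow \End_\cM(a\otimes m)$ together with the conditional expectation $E_a$ from \eqref{eq:ConditionalExpectation}. First I would assemble the three hypotheses needed for that proposition: (i) $\End_\cM(m)$ is finite dimensional, which is exactly the standing assumption; (ii) $E_a$ is a faithful conditional expectation onto $\End_\cM(m)$, which is Corollary~\ref{cor:NormInequalities}; and (iii) $E_a$ has finite index. For (iii) I would invoke Lemma~\ref{lem:PimsnerPopa}, which shows $f\le d_a^2(\id_a\otimes E_a(f))$ for all positive $f\in\End_\cM(a\otimes m)$, i.e.\ $d_a^2(\iota_a\circ E_a)-\id$ is a positive map on $\End_\cM(a\otimes m)$. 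With these in hand, \cite[Cor.\,4.4]{MR1642530} immediately yields that $\End_\cM(a\otimes m)$ is finite dimensional; since it is the endomorphism algebra of an object in a C*-category it is automatically a C*-algebra, hence a finite dimensional C*-algebra, and therefore semisimple.

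For the ``in particular'' clause I would unwind the definition of the cyclic $\cC$-module C*-category generated by $m$: its objects are the $a\otimes m$ for $a\in\cC$, and for $a,b\in\cC$ we have $(a\otimes m)\oplus(b\otimes m)\cong(a\oplus b)\otimes m$. By the first part, $\End_\cM((a\oplus b)\otimes m)$ is a finite dimensional C*-algebra, and Roberts' $2\times 2$ trick (as used in Section~\ref{sec:Categories}) realizes $\cM(a\otimes m,b\otimes m)$ as a closed subspace of it, hence finite dimensional. Thus every hom space is finite dimensional and every endomorphism algebra is finite dimensional semisimple, which is precisely pre-semi-simplicity in the sense of \cite{Enriched}.

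The only point requiring a little care — and the closest thing to an obstacle here — is matching the index condition to the hypothesis of \cite[Cor.\,4.4]{MR1642530}: Lemma~\ref{lem:PimsnerPopa} gives positivity of $d_a^2(\iota_a\circ E_a)-\id$ as a map on the larger algebra $\End_\cM(a\otimes m)$, whereas the cited proposition is phrased in terms of $d_a^2 E_a - 1$ being positive on the subalgebra. I would bridge this by noting that $\iota_a$ is an injective unital $*$-homomorphism, so it reflects positivity, and hence positivity of $d_a^2 E_a-1_{\End_\cM(m)}$ follows at once. Everything else in the argument is routine bookkeeping.
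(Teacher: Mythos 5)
Your proof is correct and is exactly the argument the paper intends: the corollary follows immediately from the preceding discussion, where Lemma \ref{lem:PimsnerPopa} and Corollary \ref{cor:NormInequalities} establish that $E_a$ is a faithful finite-index conditional expectation, so \cite[Cor.\,4.4]{MR1642530} applies. Your worry about translating the index condition is a non-issue, since the paper's definition of finite index already asks that $K\cdot E - 1_B$ be positive as a map on the \emph{larger} algebra $B$ (with $E$ viewed as $B\to A\subseteq B$), which is precisely what Lemma \ref{lem:PimsnerPopa} provides with $K=d_a^2$.
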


\begin{cor}\label{cor:FiniteDimension} 
Suppose $\bfA\in \Vec(\cC)$ is a \emph{C*}-algebra object and $\dim(\bfA(1_{\cC}))<\infty$.
Then $\dim(\bfA(a))<\infty$ for all $a\in \cC$.
\end{cor}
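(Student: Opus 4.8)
The plan is to reduce \cref{cor:FiniteDimension} to the preceding corollary about $\cC$-module C*-categories by passing through the dictionary of \cref{thm:StarAlgebraDaggerModuleCategoryCorrespondence}. First I would recall that a C*-algebra object $\bfA \in \Vec(\cC)$ is, by definition, a $*$-algebra object whose associated cyclic $\cC$-module dagger category $\cM_\bfA$ is a C*-category. Under the identification from \cref{construction:Module}, the endomorphism algebra $\End_{\cM_\bfA}(1_\bfA)$ is isomorphic to $\bfA(\overline{1_\cC}\otimes 1_\cC) \cong \bfA(1_\cC)$ (the latter using the suppressed involutive structure, i.e.\ $1_\cC \cong \overline{1_\cC}\otimes 1_\cC$). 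So the hypothesis $\dim(\bfA(1_\cC)) < \infty$ says exactly that $\dim(\End_{\cM_\bfA}(1_\bfA)) < \infty$, with basepoint $m = 1_\bfA$.

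Next I would apply the corollary immediately preceding this one: since $\cM_\bfA$ is a $\cC$-module C*-category and $\dim(\End_{\cM_\bfA}(1_\bfA)) < \infty$, we conclude that $\End_{\cM_\bfA}(a \otimes 1_\bfA)$ is a finite-dimensional C*-algebra for every $a \in \cC$. The chain of reasoning behind that corollary is: $E_a : \End_{\cM_\bfA}(a\otimes 1_\bfA) \to \End_{\cM_\bfA}(1_\bfA)$ is a faithful conditional expectation (\cref{cor:NormInequalities}) of finite index ($\ind(E_a) \le d_a^2$ by \cref{lem:PimsnerPopa}), and then \cite[Cor.\,4.4]{MR1642530} gives that the larger algebra is finite dimensional because the smaller one is.

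Finally I would translate back: using again the identification $\End_{\cM_\bfA}(a\otimes 1_\bfA) \cong \bfA(\overline{a}\otimes a) \cong \bfA(a)$ — wait, more carefully, $\cM_\bfA(a_\bfA, b_\bfA) \cong \bfA(\overline{b}\otimes a)$, so $\End_{\cM_\bfA}(a_\bfA) \cong \bfA(\overline{a}\otimes a)$. This shows $\dim(\bfA(\overline{a}\otimes a)) < \infty$ for all $a\in\cC$. To get $\dim(\bfA(a))<\infty$ for an arbitrary object $a$, I would use Frobenius reciprocity (the natural iso $\bfA(a) \cong \Hom_{\cM_\bfA}(a_\bfA, 1_\bfA) = \cM_\bfA(a_\bfA, 1_\bfA)$ from \cref{construction:Module}) together with the fact that $\cM_\bfA$ admits direct sums and $a_\bfA \oplus b_\bfA \cong (a\oplus b)_\bfA$ (Roberts' $2\times 2$ trick): $\cM_\bfA(a_\bfA, 1_\bfA)$ embeds as a closed subspace of $\End_{\cM_\bfA}((a\oplus 1_\cC)_\bfA) \cong \bfA(\overline{(a\oplus 1_\cC)}\otimes (a\oplus 1_\cC))$, which is finite dimensional by the previous step, hence $\dim(\bfA(a)) < \infty$.

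The only mildly delicate point — and the one I'd want to state carefully rather than wave at — is the bookkeeping between $\bfA(a)$, $\bfA(\overline{a}\otimes a)$, and the relevant $\End$ and $\Hom$ spaces in $\cM_\bfA$; everything genuinely substantive (Pimsner–Popa, finite index implies finite dimensional) is already imported. So a clean writeup is: invoke the preceding corollary with $m = 1_\bfA$ to get $\End_{\cM_\bfA}(a_\bfA)$ finite dimensional for all $a$, then note $\bfA(a) \cong \cM_\bfA(a_\bfA, 1_\bfA)$ sits inside a finite-dimensional endomorphism algebra via the $2\times 2$ trick. No step should require more than the results quoted above.
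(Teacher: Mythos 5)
Your proof is correct and follows exactly the route the paper intends: the corollary is stated without proof precisely because it is the preceding corollary applied to the cyclic $\cC$-module C*-category $\cM_\bfA$ with basepoint $1_\bfA$, using $\End_{\cM_\bfA}(1_\bfA)\cong\bfA(1_\cC)$ and $\End_{\cM_\bfA}(a_\bfA)\cong\bfA(\overline{a}\otimes a)$. Your final bookkeeping step — realizing $\bfA(a)\cong\cM_\bfA(a_\bfA,1_\bfA)$ as a corner of $\End_{\cM_\bfA}((a\oplus 1_\cC)_\bfA)$ via direct sums — is the right way to pass from endomorphism algebras to arbitrary fibers and is exactly what the paper leaves implicit.
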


\begin{defn}
A C*-algebra object $\bfA\in\Vec(\cC)$ is called \emph{locally finite} if either of the following equivalent conditions hold.
\begin{enumerate}[(1)]
\item
$\dim(\bfA(c))<\infty$ for all $c\in \cC$.
\item
$\dim(\bfA(1_{\cC}))<\infty$
\end{enumerate}
\end{defn}

Locally finite algebra objects bear many similarities to finite dimensional algebras.  
If $\cC=\fdHilb$, then locally finite algebras are just finite dimensional C*-algebras.
Moreover, each locally finite C*-algebra object is also a W*-algebra object, which generalizes the fact that finite dimensional C*-algebras are also W*-algebras.

Among locally finite algebras are a distinguished class: the so-called connected algebras.

\begin{defn} 
An algebra object $\bfA\in \Vec(\cC)$ is \emph{connected} if $\dim( \bfA(1_{\cC}))=1$.
\end{defn}

Note that every connected C*/W*-algebra has a unique state.

\begin{rem}
Given a C*/W*-algebra object $\bfA\in \Vec(\cC)$, there is a \emph{forgetful} functor which maps $\bfA$ to the C*/W*-algebra $\bfA(1_\cC)$.
This functor has a left adjoint; the \emph{free} functor maps a C*/W*-algebra $A$ to the C*/W*-algebra object in $\Vec(\cC)$ which has $\bfA(1_\cC) = A$, and $\bfA(c) = (0)$ for all $c\in \Irr(\cC)$ with $c\ncong 1_\cC$.

These free C*-algebra objects contain the least categorical content, since they have nothing to do with the category $\cC$.
On the other hand, connected algebras only contain categorical content, since looking at the fiber over $1_\cC$ tells us nothing about the algebra.
The unit fiber is trivial, while globally, the structure can be highly non-trivial.
\end{rem}

The following facts are straightforward.

\begin{facts}
Under the correspondence between \emph{W*}-algebra objects and cyclic $\cC$-module \emph{W*}-categories from Theorem \ref{thm:StarAlgebraDaggerModuleCategoryCorrespondence}, we have the following correspondences.
\begin{itemize}
\item
Locally finite algebras correspond to pre-semi-simple cyclic $\cC$-module \emph{W*}-categories.
\item
Connected algebra objects correspond to cyclic $\cC$-module \emph{W*}-categories with irreducible basepoint.
\end{itemize}
\end{facts}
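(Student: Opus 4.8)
The plan is to reduce both statements to a single identification built into the equivalence of Theorem~\ref{thm:W*-algebraCorrespondence}: the base algebra of a W*-algebra object is the endomorphism algebra of the basepoint of the corresponding $\cC$-module W*-category. Concretely, from Construction~\ref{construction:Module} a W*-algebra object $\bfA$ corresponds to $(\cM_\bfA,1_\bfA)$ with $\End_{\cM_\bfA}(c_\bfA)\cong\bfA(\overline c\otimes c)$ \emph{as von Neumann algebras} (this uses that $\cM_\bfA$ is a W*-category), and with the convention of Section~\ref{sec:CStarAlgebras} we have $\bfA(1_\cC)\cong\bfA(\overline{1_\cC}\otimes 1_\cC)\cong\End_{\cM_\bfA}(1_\bfA)$; conversely, from Construction~\ref{construction:Algebra} a cyclic $\cC$-module W*-category $(\cM,m)$ yields $\bfA_m$ with $\bfA_m(1_\cC)=\cM(1_\cC\otimes m,m)\cong\End_\cM(m)$.

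With this in hand the connectedness statement is immediate: $\bfA$ is connected, i.e.\ $\dim\bfA(1_\cC)=1$, if and only if $\dim\End_{\cM_\bfA}(1_\bfA)=1$, which says precisely that the basepoint $1_\bfA$ is irreducible (an object of a C*-category is simple exactly when its endomorphism algebra is $\bbC$); and $\dim\bfA_m(1_\cC)=\dim\End_\cM(m)$ gives the reverse direction.

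For the first bullet the one observation worth spelling out is that ``pre-semi-simple'' asks only that every $\End_\cM(x)$ be finite-dimensional, semisimplicity then being automatic because a finite-dimensional C*-algebra (here a finite-dimensional von Neumann algebra) is semisimple. Since every object of $\cM_\bfA$ has the form $c_\bfA$, pre-semi-simplicity of $\cM_\bfA$ is equivalent to $\dim\bfA(\overline c\otimes c)<\infty$ for all $c\in\cC$. If $\bfA$ is locally finite this is clear; conversely, specializing to $c=1_\cC$ gives $\dim\bfA(1_\cC)<\infty$, whereupon Corollary~\ref{cor:FiniteDimension} forces $\dim\bfA(a)<\infty$ for every $a\in\cC$, i.e.\ $\bfA$ is locally finite. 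Hence under the equivalence locally finite W*-algebra objects correspond to pre-semi-simple cyclic $\cC$-module W*-categories.

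None of this involves a real difficulty: the content is the bookkeeping that matches the fibers $\bfA(\overline c\otimes c)$ with the endomorphism algebras of $\cM_\bfA$ — including the compatibility of their von Neumann-algebra structures, which is part of the proof of Theorem~\ref{thm:W*-algebraCorrespondence} — together with the single genuine input, Corollary~\ref{cor:FiniteDimension}, which lets finiteness of the unit fiber propagate to all fibers. The only potential pitfall is conflating ``semisimple'' in the abstract-algebra sense with what the C*-structure already provides for free, so I would be explicit that for C*-categories pre-semi-simplicity is just finite-dimensionality of all hom-spaces.
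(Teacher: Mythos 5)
Your proof is correct and follows exactly the route the paper intends when it calls these facts ``straightforward'': identify $\End_{\cM_\bfA}(c_\bfA)\cong\bfA(\overline c\otimes c)$ (so in particular $\bfA(1_\cC)\cong\End_{\cM_\bfA}(1_\bfA)$), note that pre-semi-simplicity for a W*-category reduces to finite-dimensionality of endomorphism algebras, and invoke Corollary~\ref{cor:FiniteDimension} to propagate finiteness from the unit fiber to all fibers. Nothing is missing.
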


\subsection{Operator valued inner products and equivalence of norms}
\label{sec:OperatorValuedInnerProducts}

We now use our previous discussion on conditional expectations in the special case of the cyclic $\cC$-module C*-category $\cM_\bfA$ associated to a C*-algebra object $\bfA\in \Vec(\cC)$.

\begin{defn}
Given a C*-algebra object $\bfA\in \Vec(\cC)$, we have a left $\bfA(1_\cC)$-linear inner product
${}_a\langle\,\cdot\, ,\,\cdot\,\rangle: \bfA(a)\times \bfA(a)\rightarrow \bfA(1_\cC)$ 
and a right $\bfA(1_\cC)$-linear inner product
$\langle\,\cdot\, |\,\cdot\,\rangle_{a}: \bfA(a)\times \bfA(a)\rightarrow \bfA(1_\cC)$ defined by
$$
{}_a\langle f, g\rangle
=
\begin{tikzpicture}[baseline = -.1cm]
    \draw (-.5,-.3) arc (-180:0:.5cm);
    \draw (-.5,.3) arc (180:0:.5cm);
    \filldraw (0,.8) circle (.05cm);
    \draw (0,.8) -- (0,1.2);
    \roundNbox{unshaded}{(-.5,0)}{.3}{0}{0}{$f$}
    \roundNbox{unshaded}{(.5,0)}{.3}{.2}{.2}{$j_a(g)$}
    \node at (-.65,-.5) {\scriptsize{$\mathbf{a}$}};
    \node at (-.65,.5) {\scriptsize{$\bfA$}};
    \node at (.65,-.5) {\scriptsize{$\overline{\mathbf{a}}$}};
    \node at (.65,.5) {\scriptsize{$\bfA$}};
    \node at (.15,1) {\scriptsize{$\bfA$}};
\end{tikzpicture}
\qquad\qquad
\langle f| g\rangle_{a}
=
\begin{tikzpicture}[baseline = -.1cm, xscale=-1]
    \draw (-.5,-.3) arc (-180:0:.5cm);
    \draw (-.5,.3) arc (180:0:.5cm);
    \filldraw (0,.8) circle (.05cm);
    \draw (0,.8) -- (0,1.2);
    \roundNbox{unshaded}{(-.5,0)}{.3}{0}{0}{$g$}
    \roundNbox{unshaded}{(.5,0)}{.3}{.2}{.2}{$j_a(f)$}
    \node at (-.65,-.5) {\scriptsize{$\mathbf{a}$}};
    \node at (-.65,.5) {\scriptsize{$\bfA$}};
    \node at (.65,-.5) {\scriptsize{$\overline{\mathbf{a}}$}};
    \node at (.65,.5) {\scriptsize{$\bfA$}};
    \node at (.15,1) {\scriptsize{$\bfA$}};
\end{tikzpicture}
$$
Notice that for $f\in \bfA(1_\cC)$ and $g,h\in \bfA(a)$, 
${}_a\langle \mu_{1_\cC,a}(f \otimes g), h\rangle= \mu_{1_\cC, 1_\cC}(f\otimes {}_a\langle  g, h\rangle)$ 
and similarly 
$\langle g| \mu_{a, 1_\cC}(h\otimes f)\rangle_{a}=\mu_{1_\cC, 1_\cC}(\langle g| h\rangle_{a} \otimes f)$.
These forms are anti-symmetric, and they are positive definite by Corollary \ref{cor:NormInequalities}.
\end{defn}

\begin{lem}
\label{lem:CompletelyPositive}
Suppose $\bfA\in \Vec(\cC)$ is a \emph{C*}-algebra object.
The right $\bfA(1_\cC)$-valued inner product is completely positive in the sense that for all $f_1,\dots, f_n\in \bfA(a)$, the matrix $(\langle f_i | f_j\rangle_{a})_{i,j}$ is positive in $M_n(\bfA(1_\cC))$.
A similar result holds for the left $\bfA(1_\cC)$-valued inner product.
\end{lem}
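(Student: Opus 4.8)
The plan is to reduce the claim about the $n\times n$ matrix $(\langle f_i\mid f_j\rangle_a)_{i,j}$ to the single-variable positivity we already know, namely that $\langle f\mid f\rangle_a \geq 0$ in $\bfA(1_\cC)$, by passing to the category of free modules and applying the amplification trick. First I would recall from Corollary \ref{cor:NormInequalities} and the surrounding discussion that each $\langle f\mid g\rangle_a$ is, under the identification $\bfA(1_\cC)\cong \End_{\cM_\bfA}(1_\bfA)$ and $\bfA(a)\cong \cM_\bfA(a_\bfA,1_\bfA)$, literally the composite $j_a(f)^{\text{-twisted}}$ against $g$, i.e. $\langle f\mid g\rangle_a$ corresponds to $g^* \circ f$ (up to the conditional-expectation normalization built into the definition of composition in $\cM_\bfA$) where $f,g$ are viewed as morphisms $a_\bfA \to 1_\bfA$. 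The key point is that the $\bfA(1_\cC)$-valued inner product is nothing but the ``inner product'' $\langle f, g\rangle := g^* f$ in a C*-module-like picture internal to the C*-category $\cM_\bfA$.

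Once that translation is in hand, the matrix positivity is the standard fact that in any C*-category with direct sums, for morphisms $f_1,\dots,f_n\colon x \to y$ the matrix $(f_i^* f_j)_{i,j}$ is positive in $M_n(\End(x)) \cong \End(x^{\oplus n})$. The reason: assemble the $f_i$ into a single morphism $F = (f_1,\dots,f_n)^T \colon x^{\oplus n} \to y$ (using that $\cM_\bfA$ admits finite direct sums, with $a_\bfA^{\oplus n} \cong (a^{\oplus n})_\bfA$, as noted just before Corollary \ref{cor:FiniteDimension}). Then $F^* F \in \End_{\cM_\bfA}(x^{\oplus n})$ is manifestly positive, being of the form $G^*G$, and its matrix of components is precisely $(f_i^* f_j)_{i,j} = (\langle f_i \mid f_j\rangle_a)_{i,j}$ under $\End_{\cM_\bfA}((a^{\oplus n})_\bfA) \cong M_n(\End_{\cM_\bfA}(a_\bfA)) \supseteq M_n(\bfA(1_\cC))$ — wait, more precisely we land in $M_n(\bfA(1_\cC))$ because each entry is a morphism $1_\bfA \to 1_\bfA$. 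So I would: (i) set $x = a_\bfA$, $y = 1_\bfA$; (ii) form $F\colon x^{\oplus n} \to y$; (iii) observe $F^*F \geq 0$ in the C*-algebra $\End_{\cM_\bfA}(x^{\oplus n})$; (iv) identify this with $(\langle f_i\mid f_j\rangle_a)_{i,j}\in M_n(\bfA(1_\cC))$ via the canonical $*$-isomorphism $\End_{\cM_\bfA}(x^{\oplus n}) \cong M_n(\End_{\cM_\bfA}(x))$ and then restrict attention to the corner of morphisms factoring through $y = 1_\bfA$, which carries the $\bfA(1_\cC)$-bimodule structure. The left-$\bfA(1_\cC)$-valued version is obtained by the mirror argument, using instead the right cyclic module category ${}_\bfA\cM$ from Remark \ref{rem:RightVersionOfConstructionAlgebra}, or equivalently by replacing $\bfA$ with $\bfA^{\op}$ and invoking the result already proved.

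The main obstacle I anticipate is bookkeeping around the balancing/dimension normalizations: the composition in $\cM_\bfA$ and the definitions of ${}_a\langle\,\cdot\,,\,\cdot\,\rangle$ and $\langle\,\cdot\mid\cdot\,\rangle_a$ involve the graphical calculus with the $j_a$ twist and (implicitly, via the $\boxtimes$ conventions and the conditional expectation $E_a$) factors of $d_a$. One must check that the identification $\langle f\mid g\rangle_a = g^* \circ f$ in $\cM_\bfA$ holds on the nose — matching the cap in the definition of $\langle f\mid g\rangle_a$ with the formula for $f^*$ given in the proof of Theorem \ref{thm:StarAlgebraDaggerModuleCategoryCorrespondence} and with the composition rule of Construction \ref{construction:Module}. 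Since the same normalization appears on both sides of the desired matrix inequality (it is an overall conjugation by a scalar-matrix, or a uniform rescaling), positivity is unaffected; but I would want to state this cleanly rather than wave at it. Beyond that, everything is the routine C*-category amplification argument, so the write-up should be short: one paragraph setting up the dictionary between $\bfA(a)$-elements and morphisms $a_\bfA\to 1_\bfA$ in the C*-category $\cM_\bfA$, and one paragraph running the $F^*F\geq0$ argument and reading off the matrix entries.
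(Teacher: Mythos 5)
Your reduction to ``$F^*F\geq 0$ in a C*-category with direct sums'' is half of the paper's argument, but the dictionary you set up in step (iv) is wrong, and the error is not a normalization issue. If $f,g\in\bfA(a)$ are viewed as morphisms $a_\bfA\to 1_\bfA$ in $\cM_\bfA$, then $f^*\circ g$ is an \emph{endomorphism of} $a_\bfA$, i.e.\ an element of $\bfA(\overline{a}\otimes a)$, not of $\bfA(1_\cC)=\End_{\cM_\bfA}(1_\bfA)$. So $F^*F$ for $F\colon a_\bfA^{\oplus n}\to 1_\bfA$ is positive in $M_n(\End_{\cM_\bfA}(a_\bfA))$, and your attempted patch (``each entry is a morphism $1_\bfA\to 1_\bfA$'' / ``restrict to the corner factoring through $1_\bfA$'') does not move it into $M_n(\bfA(1_\cC))$: factoring through $1_\bfA$ does not make an endomorphism of $a_\bfA$ into an endomorphism of $1_\bfA$. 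The correct relation is $\langle f|g\rangle_a=d_a\,E_a(f^*\circ g)$, where $E_a\colon\End_{\cM_\bfA}(a_\bfA)\to\End_{\cM_\bfA}(1_\bfA)\cong\bfA(1_\cC)$ is the partial-trace conditional expectation of \eqref{eq:ConditionalExpectation}. That map is the ingredient you dismissed as ``a uniform rescaling'': it is an honest completely positive map (it has the form $\tfrac{1}{d_a}V^*(\,\cdot\,)V$ with $V=\ev_a^*\otimes\id$), and its complete positivity is precisely what the paper's one-line proof invokes after the trivial observation that $(f_i^*\circ f_j)_{i,j}=F^*F\geq 0$. With that insertion --- apply $\id_{M_n}\otimes E_a$ to $F^*F$ --- your argument closes and coincides with the paper's.

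Two smaller remarks. First, your direct $\End(1_\bfA)$ argument \emph{does} work verbatim for the \emph{left} inner product, since ${}_a\langle f,g\rangle$ really is the ``ket--bra'' composite $1_\bfA\xrightarrow{g^*}a_\bfA\xrightarrow{f}1_\bfA$, so the matrix $({}_a\langle f_i,f_j\rangle)_{i,j}=G^*G$ for $G=(f_1^*,\dots,f_n^*)\colon 1_\bfA^{\oplus n}\to a_\bfA$ is positive in $M_n(\bfA(1_\cC))$ with no expectation needed. Second, if you insist on avoiding $E_a$ for the \emph{right} inner product, the correct mirror is to run the ket--bra argument in the right module category ${}_\bfA\cM$ of Remark \ref{rem:RightVersionOfConstructionAlgebra} (where $\langle\,\cdot\,|\,\cdot\,\rangle_a$ becomes the composite through ${}_\bfA a$); you have the two inner products paired with the wrong module categories.
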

\begin{proof}
Let $\cM_{\bf{A}}$ be the cyclic $\cC$-module C*-category from Construction \ref{construction:Module}.  
The conditional expectation 
$
E_a:\bfA(\overline{a}\otimes a)\cong \End_{\cM_{\bfA}}(a_\bfA)\to \End_{\cM_\bfA}(1_\bfA)\cong  \bfA(1_{\cC})
$
from \eqref{eq:ConditionalExpectation} is given by 
$$
E_{a}(f)
=
\frac{1}{d_a}\,
\begin{tikzpicture}[baseline = -.1cm]
    \draw (0,0) -- (.7,0);
    \draw (0,.3) arc (0:180:.3cm) -- (-.6,-.3) arc (-180:0:.3cm);
    \roundNbox{unshaded}{(0,0)}{.3}{0}{0}{$f$}
    \node at (.15,.5) {\scriptsize{$\mathbf{a}$}};
    \node at (.5,.15) {\scriptsize{$\bfA$}};
    \node at (.15,-.5) {\scriptsize{$\mathbf{a}$}};
\end{tikzpicture}\,.
$$
The right $\bfA(1_\cC)$-valued inner product is exactly given by $\langle f|g\rangle_a = d_a E_a(\mu_{\overline{a},a}(j_{a}(f)\otimes g))$, and $E_a$ is completely positive.
\end{proof}

\begin{lem}
\label{lem:PositiveOperator}
Suppose $f_1,\dots, f_n\in \bfA(a)$ and $\psi\in \cC(b,a)$. 
As operators in $M_n(\bfA(1_\cC))$,
$$
0
\leq
\bigg(
\langle
\bfA(\psi)(f_k)|\bfA(\psi)(f_i)
\rangle_a
\bigg)_{i,k}
\leq
\| \psi\|_\cC^2 
\bigg(
\langle f_k|f_i \rangle_a
\bigg)_{i,k}.
$$
\end{lem}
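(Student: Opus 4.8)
The plan is to reduce both inequalities to statements about the conditional expectation $E_a$ acting on positive matrices over the C*-algebra $\bfA(\overline{a}\otimes a)\cong\End_{\cM_\bfA}(a_\bfA)$, using the identity $\langle f|g\rangle_a = d_a\, E_a(\mu_{\overline{a},a}(j_a(f)\otimes g))$ established in the proof of Lemma \ref{lem:CompletelyPositive}. First I would translate the hypothesis: for $\psi\in\cC(b,a)$ the map $\bfA(\psi):\bfA(a)\to\bfA(b)$ corresponds, under the identification $\bfA(a)\cong\cM_\bfA(a_\bfA,1_\bfA)=\Hom_{\Vec(\cC)}(\mathbf a,\bfA)$, to precomposition with $\mathbf{\psi}$, i.e.\ to right multiplication by the morphism $\psi_\bfA\in\cM_\bfA(b_\bfA,a_\bfA)$ (the image of $\psi$ under $\cC\to\cM_\bfA$). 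Consequently $\mu_{\overline b,b}(j_b(\bfA(\psi)f_k)\otimes \bfA(\psi)f_i)$ is, in the $\cM_\bfA$ picture, the composite $(\bfA(\psi)f_k)^*\circ(\bfA(\psi)f_i) = T^*\, f_k^*\, f_i\, T$ where $T=\psi_\bfA\in\cM_\bfA(b_\bfA,a_\bfA)$ and I write $f^*=j_a(\cdot)$-adjoint in the dagger category $\cM_\bfA$ (Theorem \ref{thm:StarAlgebraDaggerModuleCategoryCorrespondence}). Thus the matrix in question is $d_a\big(E_a(T^* f_k^* f_i T)\big)_{i,k}$ — wait, more precisely it is $d_b\,(E_b(\cdots))$ over $a$ replaced by $b$; I would be careful here and instead work directly: $\langle \bfA(\psi)f_k|\bfA(\psi)f_i\rangle_a$ should read with subscript $b$, matching the statement's intent that these live in $\bfA(1_\cC)$.

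The lower bound $0\le(\langle \bfA(\psi)f_k|\bfA(\psi)f_i\rangle)_{i,k}$ is then immediate from Lemma \ref{lem:CompletelyPositive} applied to the vectors $\bfA(\psi)f_1,\dots,\bfA(\psi)f_n\in\bfA(b)$ (the right $\bfA(1_\cC)$-valued inner product is completely positive), so no work is needed there. For the upper bound, the key step is the operator inequality at the level of $M_n$ of the endomorphism C*-algebra: writing $F=(f_k^* f_i)_{i,k}\in M_n(\End_{\cM_\bfA}(a_\bfA))$, which is positive since $F=G^*G$ for $G=(\delta_{i1}f_i)$ appropriately arranged, I want
$$
\big(T^* f_k^* f_i T\big)_{i,k} \;\le\; \|\psi\|_\cC^2\,\big(\id_b\otimes E_?\big)\text{-type bound}\,,
$$
but cleanly: $T^*\,(f_k^* f_i)\,T \le \|T\|^2\, \big(\text{``compression'' of }F\big)$. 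The honest route is: since $F\ge0$ in $M_n(\End_{\cM_\bfA}(a_\bfA))$, the $n\times n$ "congruence" map $X\mapsto (\diag T)^* X (\diag T)$ is completely positive, hence $(T^* f_k^* f_i T)_{i,k}\ge 0$ and moreover $F\le \|F\|\cdot(\id)$ is not quite what I want; instead use $\psi^*\psi\le\|\psi\|_\cC^2\id_a$ in $\cC(a,a)\subset\End_{\cM_\bfA}(a_\bfA)$, tensor/compose suitably, and the fact that $E_b$ (conditional expectation) together with the congruence is completely positive and unital-ish. Concretely: for each fixed positive $F\in M_n(\End_{\cM_\bfA}(a_\bfA))$ one has $(\diag T)^* F (\diag T) \le (\diag T)^*\big(\|F\|\,\id\big)(\diag T)$ — too lossy. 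The right identity is $T^* f_k^* f_i T = (f_iT)^*(f_kT)^{**}$... I would instead exploit that $\psi_\bfA^*\psi_\bfA = \psi^*\psi$-as-endomorphism-of-$b_\bfA$ and apply the module-category inequality directly in $\End_{\cM_\bfA}(b_\bfA)$: for each $i,k$, $f_iT\in\cM_\bfA(b_\bfA,1_\bfA)$, and $((f_iT)^*(f_kT))_{i,k}\le \|\psi\|_\cC^2\,((f_i')^*(f_k'))_{i,k}$ where $f_i'=f_i$ "with $T$ replaced by a partial isometry–normalized version" — this is exactly the statement that multiplying on the right by a contraction $\|\psi\|_\cC^{-1}T$ is a complete contraction in the operator-space sense. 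Applying the completely positive map $E_b\otimes\id_{M_n}$ and multiplying by $d_b$ then yields the claimed matrix inequality with the right side $\|\psi\|_\cC^2\,d_b\big(E_b((f_i)^*(f_k))\big)$; but since $E_a$ and $E_b$ are compatible via $E_a(f) = E_b(T^* f T)$-type relations one checks the right-hand matrix is $\|\psi\|_\cC^2(\langle f_k|f_i\rangle_a)_{i,k}$.

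The main obstacle I anticipate is bookkeeping the difference between $E_a$ and $E_b$: replacing $f_i$ by $\bfA(\psi)f_i$ changes the "internal" object from $a$ to $b$, so the conditional expectation and the balancing factor $d_a$ vs $d_b$ must be tracked carefully, and one must verify that $\langle\bfA(\psi)f_k|\bfA(\psi)f_i\rangle = d_b E_b\big((\bfA(\psi)f_k)^*(\bfA(\psi)f_i)\big)$ collapses correctly against $\langle f_k|f_i\rangle_a = d_a E_a(f_k^* f_i)$ after using $\psi\psi^*\le\|\psi\|_\cC^2\id$. I would handle this by choosing an explicit isometric decomposition or, more robustly, by arguing entirely inside the single C*-algebra $M_n(\End_{\cM_\bfA}(b_\bfA\oplus a_\bfA))$ where both $E_a,E_b$ are corners of one conditional expectation onto $M_n(\End_{\cM_\bfA}(1_\bfA))$, reducing everything to the single inequality $X^*(\psi^*\psi)X\le\|\psi\|_\cC^2 X^*X$ in a C*-algebra together with positivity of that conditional expectation. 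Once phrased that way the lemma is a one-line consequence; the real content is Lemmas \ref{lem:PimsnerPopa}--\ref{lem:CompletelyPositive}, which I may cite directly.
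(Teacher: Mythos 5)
Your treatment of the positivity (lower bound) half is correct and is exactly what the paper does: it is Lemma \ref{lem:CompletelyPositive} applied to $\bfA(\psi)(f_1),\dots,\bfA(\psi)(f_n)\in\bfA(b)$ (and you are right that the subscript in the statement should be read as the inner product on $\bfA(b)$). You have also correctly isolated the engine of the upper bound, namely $\psi\circ\psi^*\le\|\psi\|_\cC^2\,\id_a$, and the need to reconcile $d_aE_a$ with $d_bE_b$.

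The gap is in how you propose to transport that inequality across the two conditional expectations. Your final reduction --- ``$X^*(\psi^*\psi)X\le\|\psi\|_\cC^2X^*X$ in a C*-algebra together with positivity of that conditional expectation'' --- cannot work as stated: the two sides of the target inequality, $\bigl(d_bE_b(T^*f_k^*f_iT)\bigr)_{i,k}$ and $\|\psi\|_\cC^2\bigl(d_aE_a(f_k^*f_i)\bigr)_{i,k}$, are images under $E_{a\oplus b}$ of elements living in the two \emph{different} corners $\End_{\cM_\bfA}(b_\bfA)$ and $\End_{\cM_\bfA}(a_\bfA)$ of $\End_{\cM_\bfA}((a\oplus b)_\bfA)$, and no operator inequality between elements of distinct corners is available to push through a merely positive map (a self-adjoint block-diagonal element with one block $\le 0$ is positive only if that block vanishes). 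The intermediate relation you invoke, ``$E_a(f)=E_b(T^*fT)$-type relations,'' is also false (take $\psi=0$). What is genuinely needed is the \emph{tracial}/spherical property of the partial trace, i.e.\ the ability to rotate $\psi$ and $\psi^*$ around the closed loop onto the conjugate string. That is precisely the paper's one diagrammatic move: after rotation the matrix becomes the Gram matrix $(\langle f_k|f_i\rangle_a)_{i,k}$ with $(\psi^*)^\vee\circ\psi^\vee=(\psi\circ\psi^*)^\vee\in\cC(\overline{a},\overline{a})$ inserted on the $\overline{\mathbf{a}}$ string; since inserting a positive element $c=e^*e$ there produces another Gram matrix (so the insertion map is positive, hence monotone in $c$), the bound $(\psi\circ\psi^*)^\vee\le\|\psi^\vee\|_\cC^2\,\id_{\overline{a}}$ yields the claim, and finally $\|\psi^\vee\|_\cC=\|\psi\|_\cC$ because $\psi\mapsto\psi^\vee$ is an isometry. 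If you add the rotation identity $d_bE_b(ST)=d_aE_a(TS)$ for $S\in\cM_\bfA(a_\bfA,b_\bfA)$ and $T\in\cM_\bfA(b_\bfA,a_\bfA)$ (valid for standard solutions), your $\End((a\oplus b)_\bfA)$ framework does close up; without it the argument does not.
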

\begin{proof}
Using the module category as in Lemma \ref{lem:CompletelyPositive}, we see that
\begin{align*}
\bigg(
\langle 
\bfA(\psi)(f_k)|\bfA(\psi)(f_i)
\rangle_a
\bigg)_{i,k}
&=
\left(\,
\begin{tikzpicture}[baseline = -.1cm]
    \draw (-.8,-1.8) arc (0:-180:.3cm) -- (-1.4,1.8) arc (180:0:.3cm);
    \draw (.6,.5) arc (90:-90:.5cm) -- (.3,-.5);
    \filldraw (1.1,0) circle (.05cm);
    \draw (1.1,0) -- (1.5,0);
    \draw (-.3,-.5) arc (90:180:.5cm) -- (-.8,-1.2);
    \draw (-.5,.5) arc (270:180:.3cm) -- (-.8,1.2);
    \roundNbox{unshaded}{(0,.5)}{.3}{.2}{.3}{$j_a(f_k)$}
    \roundNbox{unshaded}{(0,-.5)}{.3}{0}{0}{$f_i$}
    \roundNbox{unshaded}{(-.8,-1.5)}{.3}{0}{0}{$\psi$}
    \roundNbox{unshaded}{(-.8,1.5)}{.3}{0}{0}{$\psi^*$}
    \node at (-.95,-1) {\scriptsize{$\mathbf{a}$}};
    \node at (-.95,1) {\scriptsize{$\mathbf{a}$}};
    \node at (-.65,-2) {\scriptsize{$\mathbf{b}$}};
    \node at (-.65,2) {\scriptsize{$\mathbf{b}$}};
    \node at (1.3,.15) {\scriptsize{$\bfA$}};
    \node at (.8,.65) {\scriptsize{$\bfA$}};
    \node at (.5,-.65) {\scriptsize{$\bfA$}};
\end{tikzpicture}
\right)_{i,j}
\\&=
\left(
\begin{tikzpicture}[baseline = -.1cm]
    \draw (-.8,-.8) -- (-.8,-1) arc (0:-180:.3cm) -- (-1.4,1) arc (180:0:.3cm) -- (-.8,.8);
    \draw (.6,.5) arc (90:-90:.5cm) -- (.3,-.5);
    \filldraw (1.1,0) circle (.05cm);
    \draw (1.1,0) -- (1.5,0);
    \draw (-.3,-.5) -- (-.5,-.5) arc (90:180:.3cm) -- (-.8,-.8);
    \draw (-.5,.5) arc (270:180:.3cm) -- (-.8,.8);
    \roundNbox{unshaded}{(0,.5)}{.3}{.2}{.3}{$j_a(f_k)$}
    \roundNbox{unshaded}{(0,-.5)}{.3}{0}{0}{$f_i$}
    \roundNbox{unshaded}{(-1.4,-.5)}{.3}{0}{0}{$\psi^\vee$}
    \roundNbox{unshaded}{(-1.4,.5)}{.3}{.2}{.2}{$(\psi^*)^\vee$}
    \node at (-.65,-1) {\scriptsize{$\mathbf{a}$}};
    \node at (-.65,1) {\scriptsize{$\mathbf{a}$}};
    \node at (-1.55,-1) {\scriptsize{$\overline{\mathbf{a}}$}};
    \node at (-1.55,1) {\scriptsize{$\overline{\mathbf{a}}$}};
    \node at (-1.55,0) {\scriptsize{$\overline{\mathbf{b}}$}};
    \node at (1.3,.15) {\scriptsize{$\bfA$}};
    \node at (.8,.65) {\scriptsize{$\bfA$}};
    \node at (.5,-.65) {\scriptsize{$\bfA$}};
\end{tikzpicture}
\right)_{i,k}
\leq
\|\psi^\vee\|_\cC^2
\bigg(
\langle f_k|f_i \rangle_a
\bigg)_{i,k}.
\end{align*}
Now we use that fact that $\psi\mapsto \psi^\vee$ is an isometry.
\end{proof}

\begin{rem}
A priori, we get two different norms on each $\bfA(a)$ from ${}_a\langle \,\cdot\,,\,\cdot\,\rangle$ and $\langle \,\cdot\,|\,\cdot\,\rangle_a$ by composing with $\|\,\cdot\,\|_{\bfA(1_\cC)}$, and $\bfA(a)$ is complete with respect to both of them.

Suppose $\bfA\in \Vec(\cC)$ is a C*-algebra object and $c\in\cC$.
An element $f\in\bfA(c)$ can be embedded into $\cM_\bfA$ in many ways.
Any time we write $c\cong\overline{b}\otimes a$ for $a,b\in\cC$, we get a vector space isomorphism $\bfA(c)\cong \bfA(\overline{b}\otimes a)$.
We get many norms on $\bfA(c)$ from these identifications.
\end{rem}

\begin{defn}
When we can write $c\cong \overline{b}\otimes a$, we define  $\|\,\cdot\,\|_{a,b}$ on $\bfA(c)$ by $\|\,\cdot\,\|_{\cM_\bfA(a_\bfA,b_\bfA)}$.
\end{defn}

All these norms induce the same topology on $\bfA(c)$.

\begin{prop}
Suppose $c\cong \overline{b}\otimes a$.  
Then $d_b^{-1} \|f\|_{a,b}\le \|f\|_{c,1_\cC}\le d_{b} \|f\|_{a,b}$.
\end{prop}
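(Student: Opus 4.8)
The plan is to reinterpret $\|f\|_{a,b}$ and $\|f\|_{c,1_\cC}$ as operator norms in two different presentations of the same element of $\cM_\bfA$, and then push $f$ back and forth between them using $\cC$-module structure maps whose norms I can control. Concretely, $f\in\bfA(c)$ with $c\cong\overline{b}\otimes a$ corresponds, via Frobenius reciprocity, to an element of $\cM_\bfA(a_\bfA,b_\bfA)\cong\Hom_{\Vec(\cC)}(\mathbf a,\mathbf b\otimes\bfA)$, with norm $\|f\|_{a,b}$; the same $f$, viewed through $c\cong\overline{c}\!\,^{\vee}\otimes\cdots$, no — more simply, $\|f\|_{c,1_\cC}$ is the norm of $f$ seen as an element of $\cM_\bfA(c_\bfA,(1_\cC)_\bfA)\cong\bfA(\overline{1_\cC}\otimes c)\cong\bfA(c)$. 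So the two norms are the norms of $f$ as a morphism $c_\bfA\to 1_\bfA$ versus as a morphism $a_\bfA\to b_\bfA$ in the C*-category $\cM_\bfA$.

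First I would write down explicit morphisms in $\cM_\bfA$ realizing the passage between the two pictures. The key maps are built from $\coev_b,\ev_b$ (equivalently $\coev_{\overline b}$, $\ev_{\overline b}$): tensoring a morphism $a_\bfA\to b_\bfA$ on the left by $\overline b$ and composing with $\ev_{\overline b}$-type caps turns it into a morphism $(\overline b\otimes a)_\bfA\to (1_\cC)_\bfA$, i.e.\ $c_\bfA\to 1_\bfA$; conversely, capping with $\coev$ goes the other way. Since $\cM_\bfA$ is a $\cC$-module C*-category, the functor $-\otimes\id$ by an object of $\cC$ is a $*$-functor, hence norm non-increasing, and composition is submultiplicative, so I get inequalities of the shape $\|f\|_{c,1_\cC}\le \|{\rm some}\ \cC\text{-morphism}\|\cdot\|f\|_{a,b}$ and vice versa. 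The scalar bounds come from the norms of $\coev_b$ and $\ev_b$: in a rigid C*-tensor category these satisfy $\|\coev_b\|=\|\ev_b\|=d_b^{1/2}$ (with the standard normalization from the balancing condition \eqref{eq:BalancingCondition}), so each "capping" move contributes a factor $d_b^{1/2}$, and two such moves (one for the isomorphism $c\cong\overline b\otimes a$ in each direction, or cap then a compensating cup) give the factor $d_b$ on each side.

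The cleanest execution: show $\|f\|_{c,1_\cC}\le d_b\|f\|_{a,b}$ by exhibiting $f$-as-$c_\bfA\to1_\bfA$ as a composite of $f$-as-$a_\bfA\to b_\bfA$ (tensored up by $\overline b$ and/or capped) sandwiched between morphisms of norm $\le d_b^{1/2}$ and $\le d_b^{1/2}$; and symmetrically $\|f\|_{a,b}\le d_b\|f\|_{c,1_\cC}$ using the zig-zag identities to recover $f$ in the $(a,b)$-picture from the $(c,1_\cC)$-picture, again inserting $\coev_b,\ev_b$. Taking reciprocals and combining yields $d_b^{-1}\|f\|_{a,b}\le\|f\|_{c,1_\cC}\le d_b\|f\|_{a,b}$. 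I expect the main obstacle to be bookkeeping: getting the $\coev_b$/$\ev_b$ normalizations and their norms exactly right (it is easy to be off by a power of $d_b$), and making sure the module-category action maps used are genuinely unitary/$*$-preserving so that submultiplicativity and norm non-increase actually apply; the graphical calculus of Construction \ref{construction:Module} together with Corollary \ref{cor:NormInequalities} (which already controls $E_a$ in norm and is essentially this estimate in the special case $b=1_\cC$) should make both of these routine once the diagrams are drawn.
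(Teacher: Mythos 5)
Your proposal is correct and takes essentially the same route as the paper, whose one-line proof just invokes Corollary \ref{cor:NormInequalities} (the Pimsner--Popa-type estimate for the conditional expectation $E_{\overline b}$) together with the diagrammatic translation between the two presentations of $f$; your version via $\|\ev_b\|=\|\coev_b\|=d_b^{1/2}$ and submultiplicativity of the C*-norm is the same estimate in slightly different packaging. In fact a single cap (resp.\ cup) in each direction already gives the sharper bounds $d_b^{-1/2}\|f\|_{a,b}\le\|f\|_{c,1_\cC}\le d_b^{1/2}\|f\|_{a,b}$, so the stated factors $d_b^{\pm1}$ are comfortably covered.
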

\begin{proof}
This follows directly from Corollary \ref{cor:NormInequalities} by drawing diagrams.
\end{proof}

\subsection{The Gelfand-Naimark theorem in \texorpdfstring{$\cC$}{C}}

First, we prove the analog of the Gelfand-Naimark theorem: every C*-algebra is a norm closed C*-subalgebra of $B(H)$.
To do so, we need to discuss what an embedding of C*-algebra objects should be.

\begin{defn}
An embedding of $\cC$-module C*-categories $\cM \to \cN$ is a faithful $\cC$-module $*$-functor $\Phi:\cM \to \cN$ which is norm closed on the level of hom spaces.
An embedding of C*-algebra objects $\theta:\bfA\to \bfB$ is a C*-algebra natural transformation $\theta: \bfA\Rightarrow \bfB$ such that $\check{\theta}: \cM_\bfA \to \cM_\bfB$ is an embedding of $\cC$-module C*-categories.
\end{defn}

\begin{thm}[Gelfand-Naimark in $\cC$]
\label{thm:GelfandNaimark}
A \emph{C*}-algebra object $\bfA\in \Vec(\cC)$ has an embedding into  $\mathbf{B(H)}$ for some object $\bfH\in \Hilb(\cC)$.
\end{thm}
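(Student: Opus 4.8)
The plan is to mimic the classical GNS construction: build a Hilbert space object $\bfH \in \Hilb(\cC)$ out of states on $\bfA$, represent $\bfA$ on it, and show the representation is faithful. First I would recall that $\bfA$ corresponds to the cyclic $\cC$-module C*-category $\cM_\bfA$ from Construction \ref{construction:Module}, so the task reduces to producing a $\cC$-module C*-category embedding $\cM_\bfA \to \Hilb(\cC)$. The natural candidate is the ``reduced atomic representation'' of $\cM_\bfA$: for each $c \in \Irr(\cC)$, the finite-dimensional C*-algebra $\End_{\cM_\bfA}(c_\bfA) \cong \bfA(\overline{c}\otimes c)$ acts on the Hilbert space $\bfA(\overline{c}\otimes c)$ via the right $\bfA(1_\cC)$-valued inner product $\langle\,\cdot\,|\,\cdot\,\rangle$ from Section \ref{sec:OperatorValuedInnerProducts}, followed by a faithful state on $\bfA(1_\cC)$. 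More precisely, one uses the operator-valued inner products ${}_a\langle\,\cdot\,,\,\cdot\,\rangle$ and $\langle\,\cdot\,|\,\cdot\,\rangle_a$ on the fibers $\bfA(a)$, which are positive definite by Corollary \ref{cor:NormInequalities}, to define, for a choice of faithful state $\varphi$ on the (finite-dimensional or at least C*-) base algebra $\bfA(1_\cC)$, a family of genuine Hilbert-space inner products; Corollary \ref{cor:CompletionVecToHilb} then packages the completions into a genuine object $\bfH \in \Hilb(\cC)$.

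The key steps, in order: (1) Fix a faithful state $\varphi$ on $\bfA(1_\cC)$ — if $\bfA$ is connected this is the unique state; in general one builds $\varphi$ as a faithful state on the finite-dimensional C*-algebra, or if $\bfA(1_\cC)$ is infinite-dimensional, works with a separating family of states and takes a direct sum. (2) Define, for each $c \in \Irr(\cC)$, the sesquilinear form $\langle f, g\rangle_c := \varphi(\langle g | f\rangle_c)$ on $\bfA(c)$; positive semidefiniteness (in fact definiteness, given $\varphi$ faithful) follows from Lemma \ref{lem:CompletelyPositive} together with faithfulness of the conditional expectations. (3) Apply Corollary \ref{cor:CompletionVecToHilb} to obtain $\bfH \in \Hilb(\cC)$ with $\bfH(c)$ the completion of $\bfA(c)$. (4) The left multiplication maps $\mu_{c,a}: \bfA(c)\otimes \bfA(a) \to \bfA(c\otimes a)$ assemble, via the internal-hom adjunction, into a $*$-natural transformation $\theta: \bfA \Rightarrow \bfB(\bfH)$; equivalently, the left regular action of $\cM_\bfA$ on itself gives a $\cC$-module dagger functor $\check\theta: \cM_\bfA \to \Hilb(\cC)$ sending $1_\bfA$ to $\bfH$. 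The boundedness required for landing in $\Hilb(\cC)$ (uniform bound of $\|\theta_c\|$ over $c$) is exactly what Lemma \ref{lem:PositiveOperator} and the norm-equivalence Proposition in Section \ref{sec:OperatorValuedInnerProducts} are set up to provide: left multiplication by a fixed $f\in\bfA(a)$ has operator norm controlled uniformly by $\|f\|_{a,1_\cC}$, independent of the fiber it acts on. (5) Faithfulness: since each $\End_{\cM_\bfA}(c_\bfA)$ is a C*-algebra and the action on $\bfA(\overline{c}\otimes c)$ with the $\varphi$-twisted inner product contains a cyclic vector (the identity, i.e.\ the copy of $\id_{c_\bfA}$), faithfulness of $\varphi$ plus the C*-identity gives that $\check\theta$ is isometric on each $\End_{\cM_\bfA}(c_\bfA)$, hence faithful and norm-closed on all hom spaces by Roberts' $2\times 2$ trick. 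This is the content of Theorem \ref{thm:GelfandNaimark}.

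The main obstacle I anticipate is step (4): verifying that the left-multiplication natural transformation is genuinely \emph{bounded} as a morphism in $\Hilb(\cC)$ — that is, that $\sup_{c}\|\theta_c\| < \infty$ — and simultaneously that it is $*$-natural with respect to the involutive structures on $\bfA$ and $\bfB(\bfH)$, keeping track of the $\nu,\varphi,r$ coherences (which the paper has agreed to suppress, but which must secretly be consistent). The boundedness is where the balanced $\boxtimes$ inner products and the dimension normalizations $d_a^{-1}$ are essential; the estimate should come down to Lemma \ref{lem:PimsnerPopa}/Corollary \ref{cor:NormInequalities} applied uniformly. A secondary subtlety is the choice of $\varphi$ when $\bfA(1_\cC)$ is not finite-dimensional: one must ensure the representation separates points of every $\End_{\cM_\bfA}(c_\bfA)$, which may require replacing a single state by a direct sum of GNS representations indexed by a separating family, and then taking the corresponding direct sum Hilbert space object in $\Hilb(\cC)$. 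Once boundedness and faithfulness on endomorphism algebras are in hand, the rest is the standard $2\times 2$-trick bookkeeping to promote this to an embedding of $\cC$-module C*-categories.
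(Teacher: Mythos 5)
Your proposal is correct and follows essentially the same route as the paper: the paper fixes a faithful nondegenerate representation $(\pi,K)$ of $\bfA(1_\cC)$, completes the fibers $\bfA(a)\otimes K$ with respect to $\langle f\otimes\xi, g\otimes\eta\rangle = \langle\pi(\langle g|f\rangle_a)\xi,\eta\rangle_K$, and represents by left multiplication, proving boundedness from $f^*\circ f\le\|f\|^2_{\cM_\bfA}\id_a$ together with complete positivity of the operator-valued inner product (Lemma \ref{lem:CompletelyPositive}), and faithfulness by testing against the vector $(\id_a\otimes i_\bfA)\otimes\xi$. The only substantive difference is that by using a faithful \emph{representation} of the base algebra rather than a faithful \emph{state}, the paper sidesteps the separating-family/direct-sum workaround you correctly flag as necessary when $\bfA(1_\cC)$ admits no faithful state.
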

\begin{proof}
Let $(\pi, K)$ be a faithful, non-degenrate representation of the C*-algebra $\bfA(1_\cC)$.
(Such a representation exists by the usual Gelfand-Naimark theorem.)
We'll use Definition \ref{defn:EasyConstructionOfHilbertSpaceObject} to define a Hilbert space object $\bfH\in \Hilb(\cC)$.
First, we define the vector space object $\bfV\in \Vec(\cC)$ by $\bfV(a)=\bfA(a)\otimes_\bbC K$ for $a\in\cC$.  
We define a sesquilinear form on each $\bfV(a)$ by 
\begin{equation}
\label{eq:SesquilinearFormForGN}
\langle f\otimes \xi, g\otimes \eta\rangle_{\bfV(a)} 
=
\langle \pi(\langle g|f  \rangle_{a}) \xi, \eta\rangle_{K}.
\end{equation}
It is clear that $\langle \,\cdot\,, \,\cdot\,\rangle_{\bfV(a)}$ is positive by Lemma \ref{lem:CompletelyPositive} and anti-symmetric.
We may now take the quotient by the length zero vectors and complete to obtain a Hilbert space $\bfH(a)$.
By Definition \ref{defn:EasyConstructionOfHilbertSpaceObject}, we get a canonical Hilbert space object $\bfH\in \Hilb(\cC)$ by only considering simple objects $a\in \Irr(\cC)$.
(Note that for arbitrary $a\in \cC$, $\bfH(a)$ from Definition \ref{defn:EasyConstructionOfHilbertSpaceObject} is isomorphic to the completion of $\bfV(a)$ with the sesquilinear form from \eqref{eq:SesquilinearFormForGN}.)

We now construct an embedding $\bfA \to \bfB(\bfH)$.
To do so, we construct a cyclic $\cC$-module dagger functor $\Phi:(\cM_\bfA,1_\bfA)\to (\cM_\bfH, \bfH)$.
For each $c\in \cC$, we define $\Phi(c_\bfA) = \mathbf{c}\otimes \bfH$.
Given a map $f\in \cM_\bfA(a,b)\cong \bfA(\overline{b}\otimes a)$, we construct a bounded natural transformation $\Phi(f): \mathbf{a}\otimes \bfH \Rightarrow \mathbf{b}\otimes \bfH$ as follows.

Suppose $c\in \cC$.
First, using the diagrammatic calculus for simple tensors \eqref{eq:SimpleTensors}, we see
$$
\spann\set{
\begin{tikzpicture}[baseline = -.4cm]
    \draw (0,.6) -- (0,-.3);
    \draw (1,0) -- (1,.6);
    \draw (.5,-.8) -- (.5,-1.2);
    \draw (0,-.3) arc (-180:0:.5cm);
    \filldraw[fill=white] (.5,-.8) circle (.05cm) node [above] {\scriptsize{$\alpha$}};
    \roundNbox{unshaded}{(1,0)}{.3}{0}{0}{$g$}
    \node at (-.15,-.5) {\scriptsize{$\mathbf{a}$}};
    \node at (1.15,-.5) {\scriptsize{$\mathbf{d}$}};
    \node at (.35,-1) {\scriptsize{$\mathbf{c}$}};
    \node at (1.2,.5) {\scriptsize{$\bfA$}};
\end{tikzpicture}
\otimes \xi
}{
d\in \Irr(\cC),\,\,
g\in \bfA(d),\,\,
\xi\in K,
\text{ and }
\alpha\in \cC(c, a\otimes d)
}
$$
is dense in $(\mathbf{a}\otimes \bfH)(c)$.
The inner product of two such vectors is given by
\begin{equation}
\label{eq:GelfandNaimarkInnerProduct}
\left\langle
\begin{tikzpicture}[baseline = -.4cm]
    \draw (0,.6) -- (0,-.3);
    \draw (1,0) -- (1,.6);
    \draw (.5,-.8) -- (.5,-1.2);
    \draw (0,-.3) arc (-180:0:.5cm);
    \filldraw[fill=white] (.5,-.8) circle (.05cm) node [above] {\scriptsize{$\alpha$}};
    \roundNbox{unshaded}{(1,0)}{.3}{0}{0}{$g$}
    \node at (-.15,-.5) {\scriptsize{$\mathbf{a}$}};
    \node at (1.15,-.5) {\scriptsize{$\mathbf{d}$}};
    \node at (.35,-1) {\scriptsize{$\mathbf{c}$}};
    \node at (1.2,.5) {\scriptsize{$\bfA$}};
\end{tikzpicture}
\otimes \xi
,
\begin{tikzpicture}[baseline = -.4cm]
    \draw (0,.6) -- (0,-.3);
    \draw (1,0) -- (1,.6);
    \draw (.5,-.8) -- (.5,-1.2);
    \draw (0,-.3) arc (-180:0:.5cm);
    \filldraw[fill=white] (.5,-.8) circle (.05cm) node [above] {\scriptsize{$\beta$}};
    \roundNbox{unshaded}{(1,0)}{.3}{0}{0}{$h$}
    \node at (-.15,-.5) {\scriptsize{$\mathbf{a}$}};
    \node at (1.15,-.5) {\scriptsize{$\mathbf{e}$}};
    \node at (.35,-1) {\scriptsize{$\mathbf{c}$}};
    \node at (1.2,.5) {\scriptsize{$\bfA$}};
\end{tikzpicture}
\otimes \eta
\right\rangle_{(\textbf{a}\otimes\bfH)(c)}
=
\delta_{d=e}
\left\langle
\pi\left(
\begin{tikzpicture}[baseline = -.1cm]
    \draw (.5,.5) arc (90:-90:.5cm) -- (.3,-.5);
    \draw (-1,.5) -- (0,.5);
    \draw (-1,-.5) -- (0,-.5);
    \draw (-1,-.5) -- (-1,.5);
    \draw (-1,.5) arc (90:270:.5cm);
    \draw (1,0) -- (1.5,0);
    \filldraw[fill=white] (-1,-.5) circle (.05cm);
    \filldraw[fill=white] (-1,.5) circle (.05cm);
    \filldraw(1,0) circle (.05cm);
    \roundNbox{unshaded}{(0,.5)}{.3}{.2}{.2}{$j_d(h)$}
    \roundNbox{unshaded}{(0,-.5)}{.3}{0}{0}{$g$}
    \node at (-.8,0) {\scriptsize{$\overline{\mathbf{a}}$}};
    \node at (-1.7,0) {\scriptsize{$\overline{\mathbf{c}}$}};
    \node at (-.75,-.7) {\scriptsize{$\mathbf{d}$}};
    \node at (-.75,.7) {\scriptsize{$\mathbf{d}$}};
    \node at (1.25,.15) {\scriptsize{$\bfA$}};
    \node at (.7,.65) {\scriptsize{$\bfA$}};
    \node at (.5,-.65) {\scriptsize{$\bfA$}};
\end{tikzpicture}
\right)
\xi,
\eta 
\right\rangle_K.
\end{equation}
Now we define our map $\Phi$ on $f\in \cM_\bfA(a_\bfA, b_\bfA)$ by 
$$
\Phi(f)\left(
\begin{tikzpicture}[baseline = -.4cm]
    \draw (0,.6) -- (0,-.3);
    \draw (1,0) -- (1,.6);
    \draw (.5,-.8) -- (.5,-1.2);
    \draw (0,-.3) arc (-180:0:.5cm);
    \filldraw[fill=white] (.5,-.8) circle (.05cm) node [above] {\scriptsize{$\alpha$}};
    \roundNbox{unshaded}{(1,0)}{.3}{0}{0}{$h$}
    \node at (-.15,-.5) {\scriptsize{$\mathbf{a}$}};
    \node at (1.15,-.5) {\scriptsize{$\mathbf{d}$}};
    \node at (.35,-1) {\scriptsize{$\mathbf{c}$}};
    \node at (1.2,.5) {\scriptsize{$\bfA$}};
\end{tikzpicture}
\otimes \xi
\right)
=
\begin{tikzpicture}[baseline = -.1cm]
    \draw (-.15,.3) -- (-.15,1.3);
    \draw (.15,.3) arc (180:0:.425cm);
    \draw (.575,.725) -- (.575,1.3);
    \draw (.5,-.8) -- (.5,-1.2);
    \draw (0,-.3) arc (-180:0:.5cm);
    \filldraw (.575,.725) circle (.05cm);
    \filldraw[fill=white] (.5,-.8) circle (.05cm) node [above] {\scriptsize{$\alpha$}};
    \roundNbox{unshaded}{(0,0)}{.3}{0}{0}{$f$}
    \roundNbox{unshaded}{(1,0)}{.3}{0}{0}{$h$}
    \node at (-.35,1.1) {\scriptsize{$\mathbf{b}$}};
    \node at (-.15,-.5) {\scriptsize{$\mathbf{a}$}};
    \node at (1.15,-.5) {\scriptsize{$\mathbf{d}$}};
    \node at (.35,-1) {\scriptsize{$\mathbf{c}$}};
    \node at (1.2,.5) {\scriptsize{$\bfA$}};
    \node at (.375,1.1) {\scriptsize{$\bfA$}};
    \node at (.1,.725) {\scriptsize{$\bfA$}};
\end{tikzpicture}
\otimes \xi\,.
$$
We see that this map is well-defined and bounded by the following calculation, where we suppress some labels to ease the notation:
\begin{equation}
\label{eq:LeftActionBounded}
\begin{split}
\left\|
\Phi(f)\left(
\sum_{k=1}^n
\begin{tikzpicture}[baseline = -.4cm]
    \draw (0,.6) -- (0,-.3);
    \draw (1,0) -- (1,.6);
    \draw (.5,-.8) -- (.5,-1.2);
    \draw (0,-.3) arc (-180:0:.5cm);
    \filldraw[fill=white] (.5,-.8) circle (.05cm) node [above] {\scriptsize{$\alpha_j$}};
    \roundNbox{unshaded}{(1,0)}{.3}{0}{0}{$h_k$}
    \node at (.15,.5) {\scriptsize{$\mathbf{a}$}};
    \node at (1.15,-.5) {\scriptsize{$\mathbf{d}$}};
    \node at (.35,-1) {\scriptsize{$\mathbf{c}$}};
    \node at (1.2,.5) {\scriptsize{$\bfA$}};
\end{tikzpicture}
\otimes \xi_k
\right)
\right\|^2_{\bfH(b)}
&=
\sum_{i,k}
\left\langle
\pi\left(
\begin{tikzpicture}[baseline=-.1cm]
    \draw (-.5,1.5) arc (90:270:.45cm);
    \draw (-.5,.4) .. controls ++(180:.4cm) and ++(180:.6cm) .. (-.2,-.4);
    \draw (-.3,-.6) arc (90:270:.45cm);
    \draw (.5,1.5) arc (90:-90:.5cm);
    \draw (.3,-.5) arc (90:-90:.5cm);
    \draw (1,1) .. controls ++(0:.5cm) and ++(90:.5cm) .. (1.6,0) .. controls ++(270:.5cm) and ++(0:.5cm) .. (.8,-1);
    \draw (1.6,0) -- (2,0);
    \draw (-.75,-1) .. controls ++(180:.8cm) and ++(180:.6cm) .. (-.95,1);
    \filldraw[fill=white] (-.75,-1) circle (.05cm) node [right] {\scriptsize{$\alpha_i$}};
    \filldraw[fill=white] (-.95,1) circle (.05cm) node [right] {\scriptsize{$\alpha_k^*$}};
    \filldraw (1.6,0) circle (.05cm);
    \filldraw (1,1) circle (.05cm);
    \filldraw (.8,-1) circle (.05cm);
    \roundNbox{unshaded}{(0,1.5)}{.3}{.2}{.2}{$j(h_k)$}
    \roundNbox{unshaded}{(0,.5)}{.3}{.2}{.2}{$j(f)$}
    \roundNbox{unshaded}{(0,-.5)}{.3}{0}{0}{$f$}
    \roundNbox{unshaded}{(0,-1.5)}{.3}{0}{0}{$h_i$}
    \node at (-1.5,0) {\scriptsize{$\overline{\mathbf{c}}$}};
    \node at (-.9,0) {\scriptsize{$\overline{\mathbf{b}}$}};
    \node at (1.8,.2) {\scriptsize{$\bfA$}};
\end{tikzpicture}
\right)
\xi_i,\xi_k\right\rangle
\\&= 
\sum_{i,k}
\left\langle
\pi\left(
\begin{tikzpicture}[baseline=-.1cm]
    \draw (-.5,1.5) arc (90:270:.45cm);
    \draw (-.5,.4) .. controls ++(180:.4cm) and ++(180:.6cm) .. (-.2,-.4);
    \draw (-.3,-.6) arc (90:270:.45cm);
    \draw (.5,.5) arc (90:-90:.5cm) -- (.3,-.5);
    \draw (.5,1.5) .. controls ++(0:.5cm) and ++(90:.4cm) .. (1.5,.75) .. controls ++(270:.3cm) and ++(0:.3cm) .. (1,0);
    \draw (1.5,.75) .. controls ++(0:.3cm) and ++(90:.3cm) .. (2,0) .. controls ++(270:.8cm) and ++(0:.8cm) .. (.3,-1.5);
    \draw (2,0) -- (2.4,0);
    \draw (-.75,-1) .. controls ++(180:.8cm) and ++(180:.6cm) .. (-.95,1);
    \filldraw[fill=white] (-.75,-1) circle (.05cm) node [right] {\scriptsize{$\alpha_i$}};
    \filldraw[fill=white] (-.95,1) circle (.05cm) node [right] {\scriptsize{$\alpha_k^*$}};
    \filldraw (1.5,.75) circle (.05cm);
    \filldraw (1,0) circle (.05cm);
    \filldraw (2,0) circle (.05cm);
    \roundNbox{unshaded}{(0,1.5)}{.3}{.2}{.2}{$j(h_k)$}
    \roundNbox{unshaded}{(0,.5)}{.3}{.2}{.2}{$j(f)$}
    \roundNbox{unshaded}{(0,-.5)}{.3}{0}{0}{$f$}
    \roundNbox{unshaded}{(0,-1.5)}{.3}{0}{0}{$h_i$}
    \node at (-1.5,0) {\scriptsize{$\overline{\mathbf{c}}$}};
    \node at (-.9,0) {\scriptsize{$\overline{\mathbf{b}}$}};
    \node at (2.2,.2) {\scriptsize{$\bfA$}};
\end{tikzpicture}
\right)
\xi_i,\xi_k\right\rangle
\\&\leq 
\|f\|_{\cM_\bfA}^2 
\left\|
\sum_{k=1}^n
\begin{tikzpicture}[baseline = -.4cm]
    \draw (0,.6) -- (0,-.3);
    \draw (1,0) -- (1,.6);
    \draw (.5,-.8) -- (.5,-1.2);
    \draw (0,-.3) arc (-180:0:.5cm);
    \filldraw[fill=white] (.5,-.8) circle (.05cm) node [above] {\scriptsize{$\alpha_j$}};
    \roundNbox{unshaded}{(1,0)}{.3}{0}{0}{$h_k$}
    \node at (.15,.5) {\scriptsize{$\mathbf{a}$}};
    \node at (1.15,-.5) {\scriptsize{$\mathbf{d}$}};
    \node at (.35,-1) {\scriptsize{$\mathbf{c}$}};
    \node at (1.2,.5) {\scriptsize{$\bfA$}};
\end{tikzpicture}
\otimes \xi_k
\right\|^2_{\bfH(a)}
\end{split}
\end{equation}
where the final inequality holds since for $f\in \cM_\bfA(a_\bfA,b_\bfA)$, $f^*\circ f \leq \|f\|_{\cM_\bfA}^2 \id_a$.

To prove functoriality of $\Phi$, if $f\in \cM_\bfA(a_\bfA,b_\bfA)$, $g\in \cM_\bfA(b_\bfA, c_\bfA)$, we have
\begin{align*}
\Phi(g)\Phi(f)\left(
\begin{tikzpicture}[baseline = -.4cm]
    \draw (0,.6) -- (0,-.3);
    \draw (1,0) -- (1,.6);
    \draw (.5,-.8) -- (.5,-1.2);
    \draw (0,-.3) arc (-180:0:.5cm);
    \filldraw[fill=white] (.5,-.8) circle (.05cm) node [above] {\scriptsize{$\alpha$}};
    \roundNbox{unshaded}{(1,0)}{.3}{0}{0}{$h$}
    \node at (.15,.5) {\scriptsize{$\mathbf{a}$}};
    \node at (1.15,-.5) {\scriptsize{$\mathbf{d}$}};
    \node at (.35,-1) {\scriptsize{$\mathbf{e}$}};
    \node at (1.2,.5) {\scriptsize{$\bfA$}};
\end{tikzpicture}
\otimes \xi
\right)
&=
\begin{tikzpicture}[baseline = -.1cm]
    \draw (-.15,.3) -- (-.15,1.3);
    \draw (.15,.3) arc (180:0:.425cm);
    \draw (.575,.725) -- (.575,1.3);
    \draw (.5,-.8) -- (.5,-1.2);
    \draw (0,-.3) arc (-180:0:.5cm);
    \filldraw (.575,.725) circle (.05cm);
    \filldraw[fill=white] (.5,-.8) circle (.05cm) node [above] {\scriptsize{$\alpha$}};
    \draw (0,1.3) arc (180:0:.2875cm);
    \draw (.2875,1.5875) -- (.2875,1.9);
    \draw (-.3,1.3) -- (-.3,1.9);
    \filldraw (.2875,1.5875) circle (.05cm);
    \roundNbox{unshaded}{(0,0)}{.3}{0}{0}{$f$}
    \roundNbox{unshaded}{(-.15,1)}{.3}{0}{0}{$g$}
    \roundNbox{unshaded}{(1,0)}{.3}{0}{0}{$h$}
    \node at (-.35,.5) {\scriptsize{$\mathbf{b}$}};
    \node at (-.15,-.5) {\scriptsize{$\mathbf{a}$}};
    \node at (1.15,-.5) {\scriptsize{$\mathbf{d}$}};
    \node at (-.1,1.75) {\scriptsize{$\mathbf{c}$}};
    \node at (.35,-1) {\scriptsize{$\mathbf{e}$}};
    \node at (1.2,.5) {\scriptsize{$\bfA$}};
    \node at (.775,1.1) {\scriptsize{$\bfA$}};
    \node at (.5,1.75) {\scriptsize{$\bfA$}};
\end{tikzpicture}
\otimes \xi
=
\begin{tikzpicture}[baseline = -.1cm]
    \draw (-.15,.3) -- (-.15,1.3);
    \draw (.5,-.8) -- (.5,-1.2);
    \draw (0,-.3) arc (-180:0:.5cm);
    \filldraw (.575,1.875) circle (.05cm);
    \draw (.575,1.875) -- (.575,2.2);
    \filldraw[fill=white] (.5,-.8) circle (.05cm) node [above] {\scriptsize{$\alpha$}};
    \draw (0,1.3) arc (180:0:.15cm);
    \draw (.3,.3) -- (.3,1.3);
    \draw (-.3,1.3) -- (-.3,2.2);
    \filldraw (.15,1.45) circle (.05cm);
    \draw (.15,1.45) arc (180:0:.425) -- (1,.3);
    \roundNbox{unshaded}{(0,0)}{.3}{.15}{.25}{$f$}
    \roundNbox{unshaded}{(-.15,1)}{.3}{0}{0}{$g$}
    \roundNbox{unshaded}{(1,0)}{.3}{0}{0}{$h$}
    \node at (-.35,.5) {\scriptsize{$\mathbf{b}$}};
    \node at (-.15,-.5) {\scriptsize{$\mathbf{a}$}};
    \node at (1.15,-.5) {\scriptsize{$\mathbf{d}$}};
    \node at (-.1,2.1) {\scriptsize{$\mathbf{c}$}};
    \node at (.35,-1) {\scriptsize{$\mathbf{e}$}};
    \node at (.775,2.1) {\scriptsize{$\bfA$}};
\end{tikzpicture}
\otimes \xi
\\&=
\begin{tikzpicture}[baseline = -.1cm]
    \draw (-.15,.3) -- (-.15,1.3);
    \draw (.15,.3) arc (180:0:.425cm);
    \draw (.575,.725) -- (.575,1.3);
    \draw (.5,-.8) -- (.5,-1.2);
    \draw (0,-.3) arc (-180:0:.5cm);
    \filldraw (.575,.725) circle (.05cm);
    \filldraw[fill=white] (.5,-.8) circle (.05cm) node [above] {\scriptsize{$\alpha$}};
    \roundNbox{unshaded}{(0,0)}{.3}{.2}{.2}{$g\circ f$}
    \roundNbox{unshaded}{(1,0)}{.3}{0}{0}{$h$}
    \node at (-.35,1.1) {\scriptsize{$\mathbf{b}$}};
    \node at (-.15,-.5) {\scriptsize{$\mathbf{a}$}};
    \node at (1.15,-.5) {\scriptsize{$\mathbf{d}$}};
    \node at (.35,-1) {\scriptsize{$\mathbf{c}$}};
    \node at (1.2,.5) {\scriptsize{$\bfA$}};
    \node at (.375,1.1) {\scriptsize{$\bfA$}};
    \node at (.1,.725) {\scriptsize{$\bfA$}};
\end{tikzpicture}
\otimes \xi
=
\Phi(g\circ f) \left(
\begin{tikzpicture}[baseline = -.4cm]
    \draw (0,.6) -- (0,-.3);
    \draw (1,0) -- (1,.6);
    \draw (.5,-.8) -- (.5,-1.2);
    \draw (0,-.3) arc (-180:0:.5cm);
    \filldraw[fill=white] (.5,-.8) circle (.05cm) node [above] {\scriptsize{$\alpha$}};
    \roundNbox{unshaded}{(1,0)}{.3}{0}{0}{$h$}
    \node at (.15,.5) {\scriptsize{$\mathbf{a}$}};
    \node at (1.15,-.5) {\scriptsize{$\mathbf{d}$}};
    \node at (.35,-1) {\scriptsize{$\mathbf{e}$}};
    \node at (1.2,.5) {\scriptsize{$\bfA$}};
\end{tikzpicture}
\otimes \xi
\right).
\end{align*}
To show that $\Phi$ is a dagger functor, if $f\in \cM_\bfA(a_\bfA, b_\bfA)$, then
$$
\Phi(f^*)
\left(
\begin{tikzpicture}[baseline = -.4cm]
    \draw (0,.6) -- (0,-.3);
    \draw (1,0) -- (1,.6);
    \draw (.5,-.8) -- (.5,-1.2);
    \draw (0,-.3) arc (-180:0:.5cm);
    \filldraw[fill=white] (.5,-.8) circle (.05cm) node [above] {\scriptsize{$\alpha$}};
    \roundNbox{unshaded}{(1,0)}{.3}{0}{0}{$h$}
    \node at (.15,.5) {\scriptsize{$\mathbf{b}$}};
    \node at (1.15,-.5) {\scriptsize{$\mathbf{d}$}};
    \node at (.35,-1) {\scriptsize{$\mathbf{c}$}};
    \node at (1.2,.5) {\scriptsize{$\bfA$}};
\end{tikzpicture}
\otimes \xi
\right)
=
\begin{tikzpicture}[baseline = -.1cm]
    \draw (-.15,.3) -- (-.15,1.3);
    \draw (.15,.3) arc (180:0:.425cm);
    \draw (.575,.725) -- (.575,1.3);
    \draw (.5,-.8) -- (.5,-1.2);
    \draw (0,-.3) arc (-180:0:.5cm);
    \filldraw (.575,.725) circle (.05cm);
    \filldraw[fill=white] (.5,-.8) circle (.05cm) node [above] {\scriptsize{$\alpha$}};
    \roundNbox{unshaded}{(0,0)}{.3}{0}{0}{$f^*$}
    \roundNbox{unshaded}{(1,0)}{.3}{0}{0}{$h$}
    \node at (-.35,1.1) {\scriptsize{$\mathbf{a}$}};
    \node at (-.15,-.5) {\scriptsize{$\mathbf{b}$}};
    \node at (1.15,-.5) {\scriptsize{$\mathbf{d}$}};
    \node at (.35,-1) {\scriptsize{$\mathbf{c}$}};
    \node at (1.2,.5) {\scriptsize{$\bfA$}};
    \node at (.375,1.1) {\scriptsize{$\bfA$}};
    \node at (.1,.725) {\scriptsize{$\bfA$}};
\end{tikzpicture}
\otimes \xi
=
\begin{tikzpicture}[baseline = -.1cm]
    \draw (-.15,.3) -- (-.15,1.3);
    \draw (.15,.3) arc (180:0:.425cm);
    \draw (.575,.725) -- (.575,1.3);
    \draw (.5,-.8) -- (.5,-1.2);
    \draw (0,-.3) arc (-180:0:.5cm);
    \filldraw (.575,.725) circle (.05cm);
    \filldraw[fill=white] (.5,-.8) circle (.05cm) node [above] {\scriptsize{$\alpha$}};
    \roundNbox{unshaded}{(0,0)}{.3}{.3}{.3}{\scriptsize{$j_{\overline{b}\otimes a}(f)$}}
    \roundNbox{unshaded}{(1,0)}{.3}{0}{0}{$h$}
    \node at (-.35,1.1) {\scriptsize{$\mathbf{a}$}};
    \node at (-.15,-.5) {\scriptsize{$\mathbf{b}$}};
    \node at (1.15,-.5) {\scriptsize{$\mathbf{d}$}};
    \node at (.35,-1) {\scriptsize{$\mathbf{c}$}};
    \node at (1.2,.5) {\scriptsize{$\bfA$}};
    \node at (.375,1.1) {\scriptsize{$\bfA$}};
    \node at (.1,.725) {\scriptsize{$\bfA$}};
\end{tikzpicture}
\otimes \xi
\,,
$$
which is easily seen to be $\Phi(f)^*$ using the inner product \eqref{eq:GelfandNaimarkInnerProduct} above.

Finally, we want to show that $\Phi$ is an isometry on the level of hom spaces.
It suffices to check that $\Phi$ is injective on the endomorphism C*-algebras $\End_{\cM_\bfA}(a_\bfA)$ for $a\in\cC$, since injective C*-algebra homomorphisms are isometric.
To do so, let $f\in \End_{\cM_\bfA}(a_\bfA)\cong \bfA(\overline{a}\otimes a)$ such that $f\neq 0$.
Choose a vector $\xi\in K$ such that $\langle \pi\langle f | f\rangle_{a}\xi, \xi\rangle_{K}\ne 0$, which is possible since $\pi$ is faithful.
Then consider the vector 
$$
(\id_a\otimes i_\bfA)\otimes \xi
=
\begin{tikzpicture}[baseline=-.1cm]
    \draw (0,-.7) -- (0,-.3);
    \draw (-.25,.3) -- (-.25,.7);
    \draw (.25,0) -- (.25,.7);
    \draw (0,-.3) .. controls ++(90:.2cm) and ++(270:.2cm) .. (-.25,.3);
    \filldraw (.25,0) circle (.05cm);
    \roundNbox{dashed}{(0,0)}{.3}{.2}{.2}{}
    \node at (-.2,-.5) {\scriptsize{$\mathbf{a}$}};
    \node at (-.45,.5) {\scriptsize{$\mathbf{a}$}};
    \node at (.45,.5) {\scriptsize{$\bfA$}};
\end{tikzpicture}
\otimes \xi
\in
(\mathbf{a}\otimes \bfH)(a)
$$
where $i_\bfA \in \bfA(1_\cC)$ is the unit morphism.
Then $\Phi(f)((\id_a\otimes i_\bfA)\otimes \xi)\ne 0$, so $\Phi(f)\neq 0$.
Thus $\Phi|_{\End_{\cM_{\bf{A}}}(a_\bfA)}$ has trivial kernel.
\end{proof}

From our version of the Gelfand-Naimak theorem, we get  notion of a representation of a C*-algebra object.

\begin{defn}
A representation of the C*-algebra object $(\bfA,m,i) \in \Vec(\cC)$ is a Hilbert space object $\bfH\in \Hilb(\cC)$ and a $*$-algebra natural transformation $\theta: \bfA\Rightarrow \bfB(\bfH)$.

Notice that for all $c\in\cC$, we get a $*$-algebra homomorphism $\theta_{\overline{c}\otimes c}: \bfA(\overline{c}\otimes c) \to \bfB(\bfH)(\overline{c}\otimes c)$, which is automatically bounded.
A representation $\theta: \bfA\Rightarrow \bfB(\bfH)$ is called \emph{faithful} if each $\theta_{\overline{c}\otimes c}$ is injective.
\end{defn}

\subsection{Completely positive maps and Stinespring dilation}

Just as completely positive maps between C*-algebras are positive $*$-maps which are not necessarily algebra homomorphisms, completely positive maps between C*-algebra objects in $\Vec(\cC)$ are positive $*$-natural transformations which are not necessarily algebra natural transformations.
The major difference is that since $\cC$ admits finite direct sums, positivity is sufficient for complete positivity!

\begin{defn}
Suppose $\bfA, \bfB\in \Vec(\cC)$ are C*-algebra objects.
A $*$-natural transformation $\theta : \bfA \Rightarrow \bfB$ is called a \emph{completely positive map} (cp map) if for all $c\in \cC$, the bounded linear transformation $\theta_{\overline{c}\otimes c}:\bfA(\overline{c}\otimes c) \to \bfB(\overline{c}\otimes c)$ maps positive operators in the C*-algebra $\bfA(\overline{c}\otimes c)\cong \cM_\bfA(c_\bfA,c_\bfA)$ to positive operators in the C*-algebra $\bfB(\overline{c}\otimes c)\cong \cM_\bfB(c_\bfB,c_\bfB)$.

A completely positive map $\theta: \bfA \Rightarrow \bfB$ is called \emph{unital} (ucp) if $\theta_{1_\cC}(i_\bfA) = i_\bfB$.
\end{defn}

\begin{ex}
A $*$-algebra natural transformation $\theta : \bfA \Rightarrow \bfB$ is completely positive, since every $\theta_{\overline{c}\otimes c}:\cM_\bfA(c_\bfA,c_\bfA) \to \cM_\bfB(c_\bfB,c_\bfB)$ is a $*$-algebra homomorphism.
\end{ex}

\begin{ex}
Notice that when $\cC=\fdHilb$, a positive $*$-natural transformation $\theta: \bfA \Rightarrow \bfB$ is equivalent to an ordinary completely positive map $\theta_{1_\cC} : \bfA(1_\cC) \to \bfB(1_\cC)$.
Notice that the positivity of the $n$-fold amplification of $\theta_{1_\cC}$ follows from the positivity of $\theta_{\overline{(\oplus_{i=1}^n 1_\cC)}\otimes (\oplus_{i=1}^n 1_\cC)}$, since $M_n(\bfA(1_\cC))\cong \bfA(\overline{(\oplus_{i=1}^n 1_\cC)}\otimes (\oplus_{i=1}^n 1_\cC))$.
\end{ex}

We now include a formulation of cp-multiplier on the module category side which corresponds to completely positive $*$-natural transformations between algebra objects.

\begin{defn}
\label{defn:multiplier} 
Let $(\cM, m)$ and $(\cN, n)$ be cyclic $\cC$-modules categories.  
A \textit{multiplier} is a collection of maps $\Theta_{a,b}: \cM(a\otimes m,b\otimes m)\rightarrow \cN(a\otimes n,b\otimes n)$ such that for all $\psi\in \cC(a,b)$, $\phi\in \cC(c,d)$, $f\in \cM(b\otimes m,c\otimes m)$, and $e\in\cC$,
$$
\Theta_{a,d}\left(
\begin{tikzpicture}[baseline=-.1cm]
	\draw (-.6,-1.5) -- (-.6,1.5);
	\draw (-.2,-1.5) -- (-.2,1.5);
	\draw (.2,-1.5) -- (.2,1.5);
	\roundNbox{unshaded}{(-.2,1)}{.25}{0}{0}{$\phi$}
	\roundNbox{unshaded}{(-.2,-1)}{.25}{0}{0}{$\psi$}
	\roundNbox{unshaded}{(0,0)}{.3}{.1}{.1}{$f$}
	\node at (-.4,-1.4) {\scriptsize{$a$}};
	\node at (-.4,-.5) {\scriptsize{$b$}};
	\node at (-.4,.5) {\scriptsize{$c$}};
	\node at (-.4,1.4) {\scriptsize{$d$}};
	\node at (-.8,0) {\scriptsize{$e$}};
	\node at (.4,1) {\scriptsize{$m$}};
	\node at (.4,-1) {\scriptsize{$m$}};
\end{tikzpicture}
\right)
=
\begin{tikzpicture}[baseline=-.1cm]
	\draw (-1,-1.5) -- (-1,1.5);
	\draw (-.2,-1.5) -- (-.2,1.5);
	\draw (.2,-1.5) -- (.2,1.5);
	\roundNbox{unshaded}{(-.2,1)}{.25}{0}{0}{$\phi$}
	\roundNbox{unshaded}{(-.2,-1)}{.25}{0}{0}{$\psi$}
	\roundNbox{unshaded}{(0,0)}{.3}{.4}{.4}{$\Theta_{b,c}(f)$}
	\node at (-.4,-1.4) {\scriptsize{$a$}};
	\node at (-.4,-.5) {\scriptsize{$b$}};
	\node at (-.4,.5) {\scriptsize{$c$}};
	\node at (-.4,1.4) {\scriptsize{$d$}};
	\node at (-1.2,0) {\scriptsize{$e$}};
	\node at (.4,1) {\scriptsize{$m$}};
	\node at (.4,-1) {\scriptsize{$m$}};
\end{tikzpicture}
\,.
$$
If $(\cM,m)$ and $(\cN,n)$ are $\cC$-module C*-categories, we define a \emph{cp-multiplier} as a multiplier $\{\Theta_{a,b}\}$ such that every $\Theta_{c,c}$ maps positive elements to positive elements.

In Example \ref{ex:SymEA}, we compare our definition of (cp-)multiplier with \cite[Def.~3.4]{MR3406647}.
\end{defn}

Recall that a natural transformation $\theta: \bfA\Rightarrow\bf B$ can be defined by a collection of maps $(\theta_{c}:\bfA(c)\rightarrow \bfB(c))_{c\in\Irr(\cC)}$, which corresponds to a collection of maps $(\check{\theta}_{c}: \cM_{\bfA}(c_\bfA, 1_\bfA)\rightarrow \cM_{\bfB}(c_\bfB, 1_\bfB))_{c\in\Irr(\cC)}$.  
Conversely, any such collection of maps $\cM_\bfA\to \cM_\bfB$ induces a natural transformation $\bfA\Rightarrow \bfB$.

\begin{defn} 
Suppose $\bfA,\bfB\in \Vec(\cC)$ are algebra objects, and let $\theta: \bfA\Rightarrow\bf B$ be a natural transformation (which is not necessarily an algebra natural transformation).
We define the \textit{amplification} of $\theta$ to be the multiplier $\set{\Theta_{a,b}}{ a,b\in \cC}: \cM_{\bfA}\rightarrow \cM_{\bfB}$ given by 
\begin{equation}
\label{eq:AmplifiedMultiplier}
\Theta_{a,b}\left(
\begin{tikzpicture}[baseline=-.1cm]
	\draw (0,-.6) -- (0,.6);
	\roundNbox{unshaded}{(0,0)}{.3}{0}{0}{$f$}
	\node at (.3,-.5) {\scriptsize{$a_\bfA$}};
	\node at (.3,.5) {\scriptsize{$b_\bfA$}};
\end{tikzpicture}
\right)
=
\sum_{\substack{c\in \Irr(\cC)\\ 
\alpha\in \ONB(c,\overline{b}\otimes a)}} 
d_c
\begin{tikzpicture}[baseline=-.4cm]
	\draw (0,.3) arc (0:180:.3cm) -- (-.6,-.3) arc (-180:0:.3cm);
	\draw (-.3,-.6) -- (-.3,-1.2);
	\draw (0,-2.2) -- (0,-1.5) arc (0:180:.3cm) arc (0:-180:.6cm) -- (-1.8,1.1);
	\filldraw[fill=white] (-.3,-.6) circle (.05cm) node [above] {\scriptsize{$\alpha$}};
	\filldraw[fill=white] (-.3,-1.2) circle (.05cm) node [below] {\scriptsize{$\alpha^*$}};
	\roundNbox{}{(0,0)}{.8}{.8}{0}{}
	\roundNbox{unshaded}{(0,0)}{.3}{0}{0}{$f$}
	\node at (-1.2,0) {$\theta_c\Bigg($};
	\node at (.55,0) {$\Bigg)$};
	\node at (-2,.9) {\scriptsize{$b_\bfB$}};
	\node at (.2,.5) {\scriptsize{$b_\bfA$}};
	\node at (.2,-.5) {\scriptsize{$a_\bfA$}};
	\node at (.3,-2) {\scriptsize{$a_\bfB$}};
	\node at (0,-1) {\scriptsize{$c_\bfB$}};
	\node at (-.8,0) {\scriptsize{$\overline{b_\bfA}$}};
	\node at (-.9,-1.5) {\scriptsize{$\overline{b_\bfB}$}};
\end{tikzpicture}\,.
\end{equation}
\end{defn}

It is straightforward to check that $\{\Theta_{a,b}\}$ is a multiplier.  
The word ``amplification" is motivated by the case $\cC=\fdHilb$, in which algebra objects are ordinary associative algebras $A,B\in \Vec$, and natural transformations are simply characterized by linear maps $A\to B$.
A map $\theta: A\rightarrow B$ induces amplified maps $M_{n\times m}(A)\rightarrow M_{n\times m}(B)$.
It is easy to check that our definition of amplification above coincides with the usual notion when $\cC=\fdHilb$.

In fact, all multipliers arise uniquely as amplifications.
The proof of the following proposition is straightforward.

\begin{prop} 
Let $\Theta=\set{\Theta_{a,b}}{ a,b\in \cC}:\cM_{\bfA}\to \cM_{\bfB} $ be a multiplier.  
Then $\Theta$ is the amplification of the natural transformation $\theta:\bfA\Rightarrow \bfB$ corresponding to the family $\set{\Theta_{c,1}: \cM_{\bfA}(c_\bfA, 1_\bfA)\rightarrow \cM_{\bfB}(c_\bfB, 1_\bfB)}{c\in\Irr(\cC)}$.
\end{prop}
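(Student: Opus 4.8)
The plan is to prove the sharper statement that \emph{every} multiplier $\Theta\colon\cM_\bfA\to\cM_\bfB$ is uniquely determined by the maps $\Theta_{c,1_\cC}$ for $c\in\Irr(\cC)$, and that this determination is implemented precisely by the amplification formula \eqref{eq:AmplifiedMultiplier}. Granting this, the proposition follows at once. The statement makes sense because, by Remark \ref{rem:VecCNaturalTransformations} together with the identifications $\cM_\bfA(c_\bfA,1_\bfA)\cong\bfA(c)$ and $\cM_\bfB(c_\bfB,1_\bfB)\cong\bfB(c)$ of Construction \ref{construction:Module}, the family $(\Theta_{c,1_\cC})_{c\in\Irr(\cC)}$ does define a unique natural transformation $\theta\colon\bfA\Rightarrow\bfB$; its amplification $\widetilde\Theta$ is a multiplier (as remarked after Definition \ref{defn:multiplier}), and $\widetilde\Theta$ has the same $(\,\cdot\,,1_\cC)$-components as $\Theta$, so $\Theta=\widetilde\Theta$.

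For the reduction, fix $a,b\in\cC$ and $f\in\cM_\bfA(a_\bfA,b_\bfA)$. Using the module-category Frobenius reciprocity $\cM_\bfA(a_\bfA,b_\bfA)\cong\cM_\bfA((\overline b\otimes a)_\bfA,1_\bfA)$, write $f=(\id_b\otimes\tilde f)\circ((\coev_b\otimes\id_a)\otimes\id_m)$, where $\tilde f\colon(\overline b\otimes a)_\bfA\to 1_\bfA$, where $m=1_\bfA$ is the basepoint, and where the outer factor is the action on $m$ of a morphism of $\cC$. The two clauses of the multiplier axiom (Definition \ref{defn:multiplier}) then give $\Theta_{a,b}(f)=(\id_b\otimes\Theta_{\overline b\otimes a,1_\cC}(\tilde f))\circ((\coev_b\otimes\id_a)\otimes\id_m)$. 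Next, by semisimplicity we have the resolution of the identity $\id_{\overline b\otimes a}=\sum_{c\in\Irr(\cC)}\sum_{\alpha\in\ONB(c,\overline b\otimes a)}d_c\,\alpha\circ\alpha^{*}$, the factors $d_c$ appearing because $\alpha^{*}\circ\alpha=d_c^{-1}\id_c$ for a unit vector $\alpha$ of the graphical inner product \eqref{eq:InnerProductInC} (cf.\ Remark \ref{rem:BasisAndAdjoint}). Inserting this below $\tilde f$ and again invoking the multiplier axiom to move each $\alpha^{*}\otimes\id_m$ through $\Theta$, one obtains
\[
\Theta_{\overline b\otimes a,1_\cC}(\tilde f)=\sum_{c,\alpha}d_c\,\big(\Theta_{c,1_\cC}(\tilde f\circ(\alpha\otimes\id_m))\big)\circ(\alpha^{*}\otimes\id_m).
\]
Substituting back and redrawing the composite, one recognizes the right-hand side of \eqref{eq:AmplifiedMultiplier} with $\theta_c:=\Theta_{c,1_\cC}$, after identifying $\tilde f\circ(\alpha\otimes\id_m)$ with $\bfA(\alpha)(f)\in\bfA(c)=\cM_\bfA(c_\bfA,1_\bfA)$ through the graphical calculus of Constructions \ref{construction:Module} and \ref{construction:Algebra}. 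Thus $\Theta$ is the amplification of $(\Theta_{c,1_\cC})_{c\in\Irr(\cC)}$.

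I expect the only genuine obstacle to be bookkeeping: tracking the balancing factors $d_c$ coming from the graphical orthonormal bases, and checking that the Frobenius-reciprocity rewriting of $f$ really is of the form on which the multiplier axiom applies (a $\cC$-action map precomposed with a left-tensored $\cM$-morphism, as in Definition \ref{defn:multiplier}). If recognizing \eqref{eq:AmplifiedMultiplier} exactly at the last step proves awkward, one may instead stop at the weaker conclusion that a multiplier is determined by its components $\Theta_{c,1_\cC}$, $c\in\Irr(\cC)$, and finish by the one-line remark that specializing \eqref{eq:AmplifiedMultiplier} to $a=c\in\Irr(\cC)$ and $b=1_\cC$ collapses the sum to the single term $c'=c$, $\alpha=d_c^{-1/2}\id_c$, giving $\widetilde\Theta_{c,1_\cC}=\Theta_{c,1_\cC}$; then $\Theta$ and $\widetilde\Theta$ are multipliers with equal components, hence equal.
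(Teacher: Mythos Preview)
Your proposal is correct and is precisely the straightforward argument the paper has in mind (the paper omits the proof entirely, declaring it ``straightforward''). Your two-step reduction---first Frobenius reciprocity to reduce $\Theta_{a,b}$ to $\Theta_{\overline b\otimes a,1_\cC}$, then the semisimple resolution of $\id_{\overline b\otimes a}$ to reduce further to the components $\Theta_{c,1_\cC}$ for $c\in\Irr(\cC)$---is exactly how one unwinds the multiplier axiom, and the resulting formula is indeed \eqref{eq:AmplifiedMultiplier}. Your fallback argument (show uniqueness from the $\Theta_{c,1_\cC}$ and then verify that the amplification has the correct components at simples) is equally valid and perhaps cleaner to write down.

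One small remark: you tacitly use that the multiplier axiom of Definition~\ref{defn:multiplier} decomposes into two independent compatibilities---with pre/post-composition by $\cC$-morphisms (take $e=1_\cC$) and with left-tensoring by $\id_e$ (take $\psi,\phi$ to be identities). This is immediate from the diagram but worth stating once, since your Frobenius step uses both in succession. Also, linearity of each $\Theta_{a,b}$ is being assumed throughout (it is implicit in the paper's linear-category setting, though not written in the definition); your resolution-of-identity step needs it.
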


\begin{cor} 
A multiplier $\cM_\bfA \to \cM_\bfB$ is completely positive if and only if it is the amplification of a completely positive map $\bfA\Rightarrow \bfB$.
\end{cor}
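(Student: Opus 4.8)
The plan is to leverage the bijection between multipliers $\cM_\bfA\to\cM_\bfB$ and natural transformations $\bfA\Rightarrow\bfB$ from the preceding Proposition. The one extra ingredient needed is the identification of the diagonal components of an amplification with the components $\theta_{\overline c\otimes c}$ that appear in the definition of a completely positive map. Precisely, I claim that if $\theta\colon\bfA\Rightarrow\bfB$ has amplification $\Theta=\{\Theta_{a,b}\}$, then for every $c\in\cC$ the map
$$\Theta_{c,c}\colon\End_{\cM_\bfA}(c_\bfA)\longrightarrow\End_{\cM_\bfB}(c_\bfB)$$
agrees with $\theta_{\overline c\otimes c}\colon\bfA(\overline c\otimes c)\to\bfB(\overline c\otimes c)$ under the $*$-algebra isomorphisms $\End_{\cM_\bfA}(c_\bfA)\cong\bfA(\overline c\otimes c)$ and $\End_{\cM_\bfB}(c_\bfB)\cong\bfB(\overline c\otimes c)$ coming from Theorem~\ref{thm:StarAlgebraDaggerModuleCategoryCorrespondence}.

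To prove the claim I would avoid unwinding \eqref{eq:AmplifiedMultiplier} directly and instead check that the family obtained by reading $\theta_{\overline b\otimes a}$ through the Frobenius reciprocity isomorphism $\cM_\bfA(a_\bfA,b_\bfA)\cong\bfA(\overline b\otimes a)$ is itself a multiplier: the defining identity of Definition~\ref{defn:multiplier} becomes, under Frobenius reciprocity, an instance of the naturality of $\theta$ with respect to the $\cC$-morphisms $\psi$ and $\phi$ (and their duals). Its $(c,1_\cC)$-component is $\theta_c$, so by the uniqueness clause of the preceding Proposition this family must be the amplification of $\theta$, i.e.\ it equals $\Theta$; taking $a=b=c$ gives the claim. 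One must check here that the Frobenius isomorphism $\End_{\cM_\bfA}(c_\bfA)\cong\bfA(\overline c\otimes c)$ intertwines the adjoint of the C*-category $\cM_\bfA$ with the $*$-structure that $\bfA(\overline c\otimes c)$ carries in Theorem~\ref{thm:StarAlgebraDaggerModuleCategoryCorrespondence}, so that positivity means the same thing on both sides; this is exactly what that theorem provides, so it is bookkeeping rather than new content.

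Granting the claim, the Corollary is formal. If $\theta$ is a cp map then, by definition, each $\theta_{\overline c\otimes c}$ sends positive elements to positive elements, hence so does each $\Theta_{c,c}$, so the amplification $\Theta$ is a cp-multiplier. Conversely, if $\Theta$ is a cp-multiplier, let $\theta$ be the natural transformation with $\theta_c=\Theta_{c,1_\cC}$, so that $\Theta$ is its amplification; each $\Theta_{c,c}=\theta_{\overline c\otimes c}$ is a positive linear map between C*-algebras, hence automatically $*$-linear, and since every object of $\cC$ is a direct summand of some $\overline c\otimes c$ (take $c=a\oplus 1_\cC$) this forces $\theta$ to be a $*$-natural transformation; positivity of the maps $\theta_{\overline c\otimes c}$ then says precisely that $\theta$ is a cp map. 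As noted before the statement, the quantifier ``for all $c\in\cC$'' already ranges over finite direct sums, so the word ``completely'' requires no separate matrix-amplification argument. I expect the only step with real content to be the identification $\Theta_{c,c}=\theta_{\overline c\otimes c}$ and, within it, the compatibility of the two $*$-structures; everything else is routine.
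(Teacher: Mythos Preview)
Your proposal is correct. The paper states this corollary without proof, treating it as immediate from the preceding proposition; your argument supplies exactly the details that make it immediate, namely the identification $\Theta_{a,b}=\theta_{\overline b\otimes a}$ via uniqueness and the observation that $*$-naturality of $\theta$ is forced by positivity of the diagonal components together with the summand trick $a\prec\overline{(a\oplus 1_\cC)}\otimes(a\oplus 1_\cC)$.
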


We now prove the analog of the Stinespring dilation theorem.
We begin with a lemma.

\begin{lem}
Suppose $\bfH, \bfK\in \Hilb(\cC)$ and $v\in \Hom_{\Hilb(\cC)}(\bfH, \bfK)$.
Then the map $\Ad(v): \bfB(\bfK)\to \bfB(\bfH)$ defined by
$$
\Ad(v)(f) 
= 
(\id_c\otimes v^*)\circ f \circ (\id_c\otimes v)
=
\begin{tikzpicture}[baseline=-.1cm]
	\draw (-.2,-1.5) -- (-.2,1.5);
	\draw (.2,-1.5) -- (.2,1.5);
	\roundNbox{unshaded}{(.25,1)}{.25}{0}{0}{$v^*$}
	\roundNbox{unshaded}{(0,0)}{.3}{.1}{.1}{$f$}
	\roundNbox{unshaded}{(.25,-1)}{.25}{0}{0}{$v$}
    \node at (-.2,-1.7) {\scriptsize{$\mathbf{c}$}};
    \node at (-.2,1.7) {\scriptsize{$\mathbf{c}$}};
    \node at (.2,-1.7) {\scriptsize{$\bfH$}};
    \node at (.2,1.7) {\scriptsize{$\bfH$}};
    \node at (.35,-.525) {\scriptsize{$\bfK$}};
    \node at (.35,.525) {\scriptsize{$\bfK$}};
\end{tikzpicture}
\in
\Hom_{\Hilb(\cC)}(\mathbf{c}\otimes \bfH, \mathbf{c}\otimes \bfH)
\cong
\bfB(\bfH)(\overline{c}\otimes c)
$$
for $f\in \bfB(\bfK)(\overline{c}\otimes c)$ is completely positive.
\end{lem}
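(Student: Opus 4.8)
The statement is the expected ``conjugation by a morphism is completely positive'' lemma needed to set up Stinespring dilation. The plan is to show directly that for each $c\in\cC$ the map $\Ad(v)_{\overline c\otimes c}:\bfB(\bfK)(\overline c\otimes c)\to\bfB(\bfH)(\overline c\otimes c)$ is positive as a map of C*-algebras, and that the collection $\{\Ad(v)_{\overline c\otimes c}\}_c$ assembles into a $*$-natural transformation $\bfB(\bfK)\Rightarrow\bfB(\bfH)$; completely positivity is then automatic, as noted in the paper, because $\FreeMod_\cC$ admits finite direct sums, so positivity on all $\overline c\otimes c$ already gives complete positivity.

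First I would pass through the identification $\bfB(\bfH)(\overline c\otimes c)\cong\End_{\Hilb(\cC)}(\mathbf c\otimes\bfH)$ from Example \ref{ex:BofH}, under which $\Ad(v)$ becomes the map $g\mapsto (\id_{\mathbf c}\otimes v^*)\circ g\circ(\id_{\mathbf c}\otimes v)$ drawn in the statement. Positivity is then immediate from the C*-category structure of $\Hilb(\cC)$: if $g\ge 0$ in the C*-algebra $\End_{\Hilb(\cC)}(\mathbf c\otimes\bfK)$, write $g=h^*\circ h$ with $h\in\End_{\Hilb(\cC)}(\mathbf c\otimes\bfK)$ (or more carefully $h\in\Hom_{\Hilb(\cC)}(\mathbf c\otimes \bfK,\,?)$ using the C*-category axiom from Section \ref{sec:Categories}); then
\[
\Ad(v)(g)=(\id_{\mathbf c}\otimes v^*)\circ h^*\circ h\circ(\id_{\mathbf c}\otimes v)=\bigl(h\circ(\id_{\mathbf c}\otimes v)\bigr)^*\circ\bigl(h\circ(\id_{\mathbf c}\otimes v)\bigr)\ge 0,
\]
since $(\psi\otimes\phi)^*=\psi^*\otimes\phi^*$ and adjoints reverse composition in the dagger category $\Hilb(\cC)$. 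One must check that $\id_{\mathbf c}\otimes v:\mathbf c\otimes\bfH\to\mathbf c\otimes\bfK$ is a genuine (bounded) morphism in $\Hilb(\cC)$, which follows because $\Hilb(\cC)$ is a C*-tensor category and $\id_{\mathbf c}$, $v$ are both bounded, and that the displayed graphical expression for $\Ad(v)(f)$ really computes this composite — a routine unwinding of the $\boxtimes$-tensor-product conventions, which I would indicate but not belabor.

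Next I would check naturality, i.e.\ that $\Ad(v)$ is a well-defined natural transformation $\bfB(\bfK)\Rightarrow\bfB(\bfH)$: for $\psi\in\cC(b,a)$ one verifies $\bfB(\bfH)(\psi)\circ\Ad(v)_a=\Ad(v)_b\circ\bfB(\bfK)(\psi)$, which amounts to the functoriality of precomposing the strand labelled $\mathbf c$ and commutes past the coupons for $v,v^*$ since those act only on the $\bfH$/$\bfK$ strand. Then I would check that $\Ad(v)$ is a $*$-natural transformation: using the description of the $*$-structure on $\bfB(\bfH)$ coming from the dagger structure on $\cM_\bfH$ (Theorem \ref{thm:StarAlgebraDaggerModuleCategoryCorrespondence}), the identity $j^{\bfB(\bfH)}\circ\Ad(v)=\Ad(v)\circ j^{\bfB(\bfK)}$ reduces to $\bigl((\id\otimes v^*)\circ f\circ(\id\otimes v)\bigr)^*=(\id\otimes v^*)\circ f^*\circ(\id\otimes v)$, again just the dagger-functoriality of $\otimes$. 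I do not expect a serious obstacle here; the only delicate point is bookkeeping: making sure the graphical calculus manipulations are carried out with the balancing factors of Definition \ref{defn:BoxtimesInnerProduct} correctly tracked, and that ``$f\in\bfB(\bfK)(\overline c\otimes c)$'' is consistently read as ``$f\in\End_{\Hilb(\cC)}(\mathbf c\otimes\bfK)$'' throughout. Since each $\Ad(v)_{\overline c\otimes c}$ is positive and the module categories admit finite direct sums, we conclude $\Ad(v)$ is completely positive, as claimed.
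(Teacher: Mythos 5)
Your positivity argument is exactly the paper's proof: write a positive $f$ as $g^*\circ g$, observe $\Ad(v)(f)=\bigl(g\circ(\id_{\mathbf c}\otimes v)\bigr)^*\circ\bigl(g\circ(\id_{\mathbf c}\otimes v)\bigr)$, and invoke the C*-category axiom of $\Hilb(\cC)$ to rewrite this as $k^*\circ k$ for an endomorphism $k$ of $\mathbf c\otimes\bfH$, which is the delicate point you correctly flag in your parenthetical. The additional naturality and $*$-naturality checks you propose are harmless but not part of the paper's (shorter) proof, which only establishes positivity of each $\Ad(v)_{\overline c\otimes c}$.
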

\begin{proof}
Let $c\in \cC$ and let $f=g^*\circ g$ for some $g\in \bfB(\bfK)(\overline{c}\otimes c) \cong \Hom_{\Hilb(\cC)}(\mathbf{c}\otimes \bfK, \mathbf{c}\otimes \bfK)$.
Then we have $\Ad(v)(f) = h^*\circ h$ for $h=g\circ (\id_c\otimes v) \in \Hom_{\Hilb(\cC)}(\mathbf{c}\otimes \bfH, \mathbf{c}\otimes \bfK)$. 
Now since $\Hilb(\cC)$ is a C*-category, we may write $h^*\circ h = k^*\circ k$ for some $k \in \Hom_{\Hilb(\cC)}(\mathbf{c}\otimes \bfH, \mathbf{c}\otimes \bfH) \cong \bfB(\bfH)(\overline{c}\otimes c)$.
\end{proof}

\begin{thm}[Stinespring dilation in $\cC$]
Let $\bfH\in \Hilb(\cC)$, and let $\bfB(\bfH)\in \Vec(\cC)$ be the corresponding \emph{C*}-algebra object.
Let $\bfA\in \Vec(\cC)$ be another \emph{C*}-algebra object, and suppose we have a unital completely positive map $\theta:\bfA \to \bfB(\bfH)$.
Then there is a $\bfK\in \Hilb(\cC)$, a $*$-algebra natural transformation $\pi: \bfA \to \bfB(\bfK)$, and an isometry $v\in \Hom_{\Hilb(\cC)}(\bfH, \bfK)$ such that $\theta = \Ad(v)\circ \pi$ as $*$-natural transformations.
\end{thm}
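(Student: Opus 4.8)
The plan is to mimic the classical GNS-type construction for Stinespring dilation, but carried out fiberwise and then bundled into an object of $\Hilb(\cC)$ using Corollary \ref{cor:CompletionVecToHilb}. First I would form the algebraic tensor product vector space object $\bfV \in \Vec(\cC)$ given fiberwise by $\bfV(c) = \bigoplus_{a \in \Irr(\cC)} \bfA(\overline a \otimes c)_{\text{alg}} \otimes_{\bbC} \bfH(a)$ — more precisely, the object whose ``$c$-fiber'' records formal sums of diagrams built from an element $f \in \cM_\bfA(a_\bfA, c_\bfA)$ composed with a vector $\xi \in \bfH(a)$, exactly paralleling the dense spanning sets used in the proof of Theorem \ref{thm:GelfandNaimark}. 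On $\bfV(c)$ I would define a sesquilinear form using $\theta$ as the ``operator-valued inner product'': on generators the form is
\[
\left\langle f_1 \otimes \xi_1 \,,\, f_2 \otimes \xi_2 \right\rangle
=
\big\langle \theta_{\overline{c}\otimes c}\big(\langle f_2 \,|\, f_1\rangle\big)\, \xi_1 \,,\, \xi_2 \big\rangle_{\bfH}
\]
where $\langle f_2 | f_1 \rangle$ is the appropriate composition $f_2^* \circ f_1$ landing in $\cM_\bfA(c_\bfA,c_\bfA) \cong \bfA(\overline c \otimes c)$, and $\theta_{\overline c \otimes c}$ maps into $\bfB(\bfH)(\overline c \otimes c) \cong \Hom_{\Hilb(\cC)}(\mathbf c\otimes\bfH,\mathbf c\otimes\bfH)$, which acts on $\bfH$. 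Complete positivity of $\theta$ (in the sense that each $\theta_{\overline c \otimes c}$ is positive, which by the remarks in Section \ref{sec:CStarAlgebras} is genuinely complete positivity since $\cC$ has finite direct sums) guarantees this form is positive semidefinite. I would then quotient by length-zero vectors and complete, invoking Corollary \ref{cor:CompletionVecToHilb} to obtain $\bfK \in \Hilb(\cC)$.

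Next I would define the representation $\pi: \bfA \Rightarrow \bfB(\bfK)$ by left multiplication: for $g \in \bfA(\overline b \otimes a) \cong \cM_\bfA(a_\bfA, b_\bfA)$, the operator $\pi(g)$ on $\bfK$ is induced by $f \otimes \xi \mapsto (g \circ f) \otimes \xi$ (with the requisite bookkeeping of $\boxtimes$ factors and simple-object decompositions), exactly as $\Phi$ was defined in the proof of Theorem \ref{thm:GelfandNaimark}. The estimate showing $\pi(g)$ is bounded is the same Pimsner–Popa-type inequality used there: $f^* \circ f \le \|f\|^2 \id$, pushed through $\theta_{\overline c \otimes c}$, which is positive and hence order-preserving. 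Checking that $\pi$ is a $*$-algebra natural transformation — that it respects the multiplication $\mu$ and the $*$-structure $j$ — is the same diagrammatic verification as functoriality and the dagger property of $\Phi$ in Theorem \ref{thm:GelfandNaimark}, so I would cite that computation rather than repeat it.

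Then I would construct the isometry $v: \bfH \Rightarrow \bfK$. Fiberwise, $v_c$ sends $\eta \in \bfH(c)$ to the class of $i_\bfA \otimes \eta$, where $i_\bfA \in \bfA(1_\cC)$ is the unit (embedded as $\id_{c_\bfA} \in \cM_\bfA(c_\bfA,c_\bfA)$ when needed, i.e.\ using $\id_c \otimes i_\bfA$ in the $c$-fiber as in the injectivity argument of Theorem \ref{thm:GelfandNaimark}). That $v$ is bounded with $\sup_c \|v_c\| < \infty$ and in fact isometric is precisely where unitality of $\theta$ enters: $\langle v_c \eta, v_c \eta \rangle = \langle \theta_{\overline c \otimes c}(\id) \eta, \eta\rangle = \langle \eta, \eta \rangle$ since $\theta$ unital forces $\theta_{\overline c \otimes c}(\id_{c_\bfA}) = \id$. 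Finally, the identity $\theta = \Ad(v) \circ \pi$ is checked on each $\bfA(\overline c \otimes c)$ by evaluating against generators: $\langle v^* \pi(g) v\, \eta, \zeta\rangle = \langle \pi(g)(i_\bfA \otimes \eta), i_\bfA \otimes \zeta\rangle = \langle \theta_{\overline c\otimes c}(g)\eta, \zeta\rangle$, directly from the definition of the inner product on $\bfK$; since $\Hilb(\cC)$ is a W*-category these local identities assemble to an equality of $*$-natural transformations.

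The main obstacle I anticipate is the same one that makes the proof of Theorem \ref{thm:GelfandNaimark} lengthy: verifying that all of these maps are \emph{well-defined} on the quotient-and-completion (i.e.\ descend past the length-zero vectors and extend continuously), are \emph{natural} in $c \in \cC$ (the naturality squares involve the $6j$-symbols / associator unitaries $U$ of \eqref{eq:UnitaryFrom6j} and must be checked diagrammatically), and that $\pi$ genuinely lands in $\bfB(\bfK)$ rather than merely giving a family of fiberwise operators — this requires the boundedness estimate to be \emph{uniform} over $\Irr(\cC)$, which is exactly what Lemma \ref{lem:PositiveOperator} and the Pimsner–Popa inequality of Lemma \ref{lem:PimsnerPopa} are designed to deliver. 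The algebraic identities ($\pi$ multiplicative, $*$-preserving, $v$ isometric, $\theta = \Ad(v)\circ\pi$) are then routine repetitions of computations already in the paper.
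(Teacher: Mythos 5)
Your proposal follows essentially the same route as the paper's proof: form $\bfA\otimes\bfH$ with the $\theta$-twisted positive semi-definite form, complete via Corollary \ref{cor:CompletionVecToHilb}, let $\bfA$ act by left multiplication with boundedness coming from $h^*\circ h\le\|h\|^2\id$ pushed through the positive, natural map $\theta$, take $v$ to be insertion of the unit (isometric by unitality), and verify $\theta=\Ad(v)\circ\pi$ on generators using Lemma \ref{lem:ZeroInHilbC}. Your fiberwise description of $\bfV(c)$ is just a Frobenius-reciprocity repackaging of $(\bfA\otimes\bfH)(c)$, and you correctly flag the one genuinely delicate point (the uniform boundedness of $\pi$, which in the paper requires shuffling 6j symbols and naturality of $\theta$ before the Pimsner--Popa-type estimate applies), so the plan is sound.
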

\begin{proof}
We follow the usual proof of the Stinespring dilation theorem. 
We begin by defining $\bfV=\bfA\otimes \bfH\in \Vec(\cC)$.  
We define a sesquilinear form on $\bfV(c)=(\bfA\otimes \bfH)(c)=\bigoplus_{a,b\in \Irr(\cC)} \bfA(a)\otimes \cC(c, a\otimes b)\otimes \bfH(b)$ for $c\in \Irr(\cC)$ by
$$
\langle
f\otimes \alpha \otimes \xi, 
g\otimes \beta \otimes \eta
\rangle
\id_{\mathbf{c}}
=
\begin{tikzpicture}[baseline=-.1cm]
	\draw (.25,1.35) arc (0:180:.25cm) -- (-.25,-1.35) arc (-180:0:.25cm) -- (.25,1.35);
	\draw (0,1.6) -- (0,2.6)  .. controls ++(90:.5cm) and ++(90:.5cm) .. (1.4,2.6) -- (1.4,-2)  .. controls ++(270:.5cm) and ++(270:.5cm) .. (0,-2) -- (0,-1.6);
	\roundNbox{unshaded}{(.25,1)}{.25}{0}{0}{$\eta^*$}
	\roundNbox{unshaded}{(0,0)}{.3}{.6}{.6}{$\theta(g^*\circ f)$}
	\roundNbox{unshaded}{(0,2.3)}{.3}{0}{0}{$\varphi_c$}
	\roundNbox{unshaded}{(.25,-1)}{.25}{0}{0}{$\xi$}
	\filldraw[fill=white] (0,1.6) circle (.05cm) node [below] {\scriptsize{$\beta^*$}};
	\filldraw[fill=white] (0,-1.6) circle (.05cm) node [above] {\scriptsize{$\alpha$}};
	\node at (-.2,1.8) {\scriptsize{$\mathbf{c}$}};
	\node at (-.2,-1.8) {\scriptsize{$\mathbf{c}$}};
	\node at (-.2,2.8) {\scriptsize{$\overline{\overline{\mathbf{c}}}$}};
	\node at (1.6,0) {\scriptsize{$\overline{\mathbf{c}}$}};
	\node at (.4,-1.4) {\scriptsize{$\mathbf{b}$}};
	\node at (-.4,-1.4) {\scriptsize{$\mathbf{a}$}};
	\node at (.4,1.4) {\scriptsize{$\mathbf{d}$}};
	\node at (-.4,1.4) {\scriptsize{$\mathbf{e}$}};
	\node at (.45,-.5) {\scriptsize{$\bfH$}};
	\node at (.45,.5) {\scriptsize{$\bfH$}};
\end{tikzpicture}
\,,
$$
Here, for $f\in \bfA(a)$ and $g\in \bfA(e)$,  $g^*\circ f\in \bfA(\overline{e}\otimes a)\cong \cM_\bfA(a_\bfA, e_\bfA)$ is the composite of $f\in \cM_\bfA(a_\bfA, 1_\cC)$ and $g^*\in \cM_\bfA(1_\cC, e_\bfA)$. 
Equivalently, $g^*\circ f = \mu_{1_\cC, 1_\cC}(j_e(g)\otimes f)$, where we view $f\in \bfA(1_\cC\otimes a)$ and $j_e(g)\in \bfA(e\otimes 1_\cC)$.
Thus $\theta(g^*\circ f)\in \Hom_{\Hilb(\cC)}(\mathbf{a}\otimes \bfH, \mathbf{e}\otimes \bfH)$.

Note that this sesquilinear form is positive semi-definite since $\theta$ is completely positive.
We now define $\bfK\in \Hilb(\cC)$ as the completion of $\bfV$ (modulo the length zero vectors) using Corollary \ref{cor:CompletionVecToHilb}.

There is an obvious map $v:\bfH \Rightarrow\bfA\otimes \bfH=\bfV$ given for $c\in \Irr(\cC)$ by
$$
\bfH(c) \ni \xi \longmapsto i_\bfA\otimes \id_{1_\cC} \otimes \xi \in \bfA(1_\cC) \otimes \cC(c, 1_\cC\otimes c) \otimes \bfH(c).
$$
Note that $v$ is an isometry, since $i_\bfA^*\circ i_\bfA = \mu_{1_\cC, 1_\cC}(j_{1_\cC}(i_\bfA)\otimes i_\bfA) = i_\bfA$ and $\theta$ is unital.
A straightforward computation shows that $v^*: \bfK \Rightarrow \bfH$ is given on $(\bfA\otimes \bfH)(c)$ by
$$
\bfA(a) \otimes \cC(c, a\otimes b) \otimes \bfH(b)
\ni
g\otimes \alpha \otimes \eta \longmapsto 
\begin{tikzpicture}[baseline=-.8cm]
	\draw (-.25,0) -- (-.25,-1.35) arc (-180:0:.25cm) -- (.25,0);
	\draw (0,-1.6) -- (0,-2);
	\draw (0,.3) -- (0,.7);
	\roundNbox{unshaded}{(0,0)}{.3}{.2}{.2}{$\theta(g)$}
	\roundNbox{unshaded}{(.25,-1)}{.25}{0}{0}{$\eta$}
	\filldraw[fill=white] (0,-1.6) circle (.05cm) node [above] {\scriptsize{$\alpha$}};
	\node at (.4,-1.4) {\scriptsize{$\mathbf{b}$}};
	\node at (-.4,-1.4) {\scriptsize{$\mathbf{a}$}};
	\node at (.45,-.5) {\scriptsize{$\bfH$}};
	\node at (.2,.5) {\scriptsize{$\bfH$}};
	\node at (.2,-1.8) {\scriptsize{$\mathbf{c}$}};
\end{tikzpicture}
\,.
$$

We now get a $*$-representation $\pi:\bfA \Rightarrow \bfB(\bfK)$ as follows.
If  $h\in \bfA(d)$ for $d\in\Irr(\cC)$, we define the map $\pi_d(h)\in \bfB(\bfK)(d)=\Hom_{\Hilb(\cC)}(\mathbf{d}\otimes \bfK , \bfK)$ by defining its $c$-component $(\pi_d(h))_c$ for $\beta \otimes g\otimes \alpha\otimes \xi \in \cC(c, d\otimes e)\otimes \bfA(a)\otimes \cC(e, a\otimes b)\otimes \bfH(b)\subset (\mathbf{d}\otimes (\bfA\otimes \bfH))(c)$ by
$$
\beta\otimes g\otimes \alpha\otimes \xi  \longmapsto 
\begin{tikzpicture}[baseline = -.4cm]
    \draw (0,.3) arc (0:180:.5cm);
    \draw (1,0) -- (1,1.2);
    \draw (.5,-.8) arc (0:-180:.75cm) -- (-1,-.3);
    \draw (0,-.3) arc (-180:0:.5cm);
    \draw (-.25,-1.55) -- (-.25,-1.9);
    \draw (-.5,.8) -- (-.5,1.2);
    \filldraw[fill=white] (.5,-.8) circle (.05cm) node [above] {\scriptsize{$\alpha$}};
    \filldraw[fill=white] (-.25,-1.55) circle (.05cm) node [above] {\scriptsize{$\beta$}};
    \filldraw (-.5,.8) circle (.05cm);
    \roundNbox{unshaded}{(-1,0)}{.3}{0}{0}{$h$}
    \roundNbox{unshaded}{(0,0)}{.3}{0}{0}{$g$}
    \roundNbox{unshaded}{(1,0)}{.3}{0}{0}{$\xi$}
    \node at (-.15,-.5) {\scriptsize{$\mathbf{a}$}};
    \node at (1.15,-.5) {\scriptsize{$\mathbf{b}$}};
    \node at (.65,-1) {\scriptsize{$\mathbf{e}$}};
    \node at (-1.2,-.5) {\scriptsize{$\mathbf{d}$}};
    \node at (0,-1.7) {\scriptsize{$\mathbf{c}$}};
    \node at (-1.2,.5) {\scriptsize{$\bfA$}};
    \node at (.2,.5) {\scriptsize{$\bfA$}};
    \node at (-.7,1) {\scriptsize{$\bfA$}};
    \node at (1.2,.5) {\scriptsize{$\bfH$}};
\end{tikzpicture}
\in 
(\bfA\otimes \bfH)(c).
$$
The fact that this map is bounded and thus well-defined is similar to \eqref{eq:LeftActionBounded} above, but is a bit more involved as $\theta$ appears in the inner product.
First, we use the 6j symbols of $\cC$ to rewrite
$
\beta \otimes g\otimes \alpha\otimes \xi
\in
\cC(c, d\otimes e)\otimes \bfA(a)\otimes \cC(e, a\otimes b)\otimes \bfH(b)
$
as
$$
\sum_{\substack{
f\in\Irr(\cC)
\\
\gamma\in \ONB(f,a\otimes d)
\\
\delta\in \ONB(c,f\otimes b)
}}
U_{\gamma,\delta}\,
\begin{tikzpicture}[baseline = -.4cm]
    \draw (0,.3) arc (0:180:.5cm);
    \draw (1,0) -- (1,1.2);
    \draw (-.5,-.8) arc (-180:0:.75cm) -- (1,-.3);
    \draw (0,-.3) arc (0:-180:.5cm);
    \draw (.25,-1.55) -- (.25,-1.9);
    \draw (-.5,.8) -- (-.5,1.2);
    \filldraw[fill=white] (-.5,-.8) circle (.05cm) node [above] {\scriptsize{$\gamma$}};
    \filldraw[fill=white] (.25,-1.55) circle (.05cm) node [above] {\scriptsize{$\delta$}};
    \filldraw (-.5,.8) circle (.05cm);
    \roundNbox{dashed}{(-.5,0)}{1.2}{-.2}{-.2}{}
    \roundNbox{unshaded}{(-1,0)}{.3}{0}{0}{$h$}
    \roundNbox{unshaded}{(0,0)}{.3}{0}{0}{$g$}
    \roundNbox{unshaded}{(1,0)}{.3}{0}{0}{$\xi$}
    \node at (.15,-.5) {\scriptsize{$\mathbf{a}$}};
    \node at (1.15,-.5) {\scriptsize{$\mathbf{b}$}};
    \node at (-.65,-1) {\scriptsize{$\mathbf{f}$}};
    \node at (-1.2,-.5) {\scriptsize{$\mathbf{d}$}};
    \node at (0,-1.7) {\scriptsize{$\mathbf{c}$}};
    \node at (-1.2,.5) {\scriptsize{$\bfA$}};
    \node at (.2,.5) {\scriptsize{$\bfA$}};
    \node at (-.7,1) {\scriptsize{$\bfA$}};
    \node at (1.2,.5) {\scriptsize{$\bfH$}};
\end{tikzpicture}
\,.
$$
Treating the dashed box above as a single element $k_{f,\gamma}\in \bfA(f)$, by definition,
$$
\|(\pi_d(h))_c(\beta \otimes f\otimes \alpha\otimes \xi)\|_{\bfK(c)}^2 
=
\sum_{\substack{
f\in\Irr(\cC)
\\
\gamma\in \ONB(f,a\otimes d)
\\
\delta\in \ONB(c,f\otimes b)
}}
\sum_{\substack{
f'\in\Irr(\cC)
\\
\gamma'\in \ONB(f',a\otimes d)
\\
\delta'\in \ONB(c,f'\otimes b)
}}
U_{\gamma,\delta}
U_{\gamma',\delta'}
\begin{tikzpicture}[baseline=-.1cm]
	\draw (.5,1.35) .. controls ++(90:.34cm) and ++(90:.34cm) .. (-.5,1.35) -- (-.5,-1.35) .. controls ++(270:.34cm) and ++(270:.34cm) .. (.5,-1.35) -- (.5,1.35);
	\draw (0,1.6) -- (0,2.6)  .. controls ++(90:.5cm) and ++(90:.5cm) .. (1.6,2.6) -- (1.6,-2)  .. controls ++(270:.5cm) and ++(270:.5cm) .. (0,-2) -- (0,-1.6);
	\roundNbox{unshaded}{(0,2.3)}{.3}{0}{0}{$\varphi_c$}
	\node at (-.2,2.8) {\scriptsize{$\overline{\overline{\mathbf{c}}}$}};
	\node at (1.8,0) {\scriptsize{$\overline{\mathbf{c}}$}};
	\roundNbox{unshaded}{(.5,1)}{.25}{0}{0}{$\xi^*$}
	\roundNbox{unshaded}{(0,0)}{.3}{1}{1}{$\theta(k_{f',\gamma'}^*\circ k_{f,\gamma})$}
	\roundNbox{unshaded}{(.5,-1)}{.25}{0}{0}{$\xi$}
	\filldraw[fill=white] (0,1.6) circle (.05cm) node [below] {\scriptsize{$(\delta')^*$}};
	\filldraw[fill=white] (0,-1.6) circle (.05cm) node [above] {\scriptsize{$\delta$}};
	\node at (-.2,1.8) {\scriptsize{$\mathbf{c}$}};
	\node at (-.2,-1.8) {\scriptsize{$\mathbf{c}$}};
	\node at (.7,-1.4) {\scriptsize{$\mathbf{b}$}};
	\node at (-.7,-1.4) {\scriptsize{$\mathbf{f}$}};
	\node at (.7,1.4) {\scriptsize{$\mathbf{b}$}};
	\node at (-.7,1.4) {\scriptsize{$\mathbf{f}'$}};
	\node at (.7,-.5) {\scriptsize{$\bfH$}};
	\node at (.7,.5) {\scriptsize{$\bfH$}};
\end{tikzpicture}
\,.
$$
Now using that $\theta$ is natural, we may pull the $\gamma$ and $\gamma'$ outside of the $\theta$, and again use the 6j symbols to obtain
\begin{equation}
\label{eq:Stinespring}
\begin{tikzpicture}[baseline=-.1cm]
	\draw (.5,1.35) .. controls ++(90:.34cm) and ++(90:.34cm) .. (-.5,1.35) -- (-.5,-1.35) .. controls ++(270:.34cm) and ++(270:.34cm) .. (.5,-1.35) -- (.5,1.35);
	\draw (0,1.6) .. controls ++(90:.6cm) and ++(90:.6cm) .. (-2,1.6) -- (-2,-1.6) .. controls ++(270:.6cm) and ++(270:.6cm) .. (0,-1.6);
	\draw (-1,2.05) -- (-1,3)  .. controls ++(90:.7cm) and ++(90:.7cm) .. (2,3) -- (2,-2.4)  .. controls ++(270:.7cm) and ++(270:.7cm) .. (-1,-2.4) -- (-1,-2.05);
	\roundNbox{unshaded}{(-1,2.7)}{.3}{0}{0}{$\varphi_c$}
	\node at (-1.2,3.2) {\scriptsize{$\overline{\overline{\mathbf{c}}}$}};
	\node at (2.2,0) {\scriptsize{$\overline{\mathbf{c}}$}};
	\roundNbox{unshaded}{(.5,1)}{.25}{0}{0}{$\xi^*$}
	\roundNbox{unshaded}{(-1,0)}{.3}{2.5}{2.5}{$\theta[(\mu_{d,a}(h\otimes g))^*\circ \mu_{d,a}(h\otimes g)]$}
	\roundNbox{unshaded}{(.5,-1)}{.25}{0}{0}{$\xi$}
	\filldraw[fill=white] (-1,2.05) circle (.05cm) node [below] {\scriptsize{$\beta^*$}};
	\filldraw[fill=white] (-1,-2.05) circle (.05cm) node [above] {\scriptsize{$\beta$}};
	\filldraw[fill=white] (0,1.6) circle (.05cm) node [below] {\scriptsize{$\alpha^*$}};
	\filldraw[fill=white] (0,-1.6) circle (.05cm) node [above] {\scriptsize{$\alpha$}};
	\node at (-1.2,2.2) {\scriptsize{$\mathbf{c}$}};
	\node at (-1.2,-2.2) {\scriptsize{$\mathbf{c}$}};
	\node at (-2.2,1) {\scriptsize{$\mathbf{d}$}};
	\node at (-2.2,-1) {\scriptsize{$\mathbf{d}$}};
	\node at (.2,1.8) {\scriptsize{$\mathbf{e}$}};
	\node at (.2,-1.8) {\scriptsize{$\mathbf{e}$}};
	\node at (.7,-1.4) {\scriptsize{$\mathbf{b}$}};
	\node at (-.7,-1) {\scriptsize{$\mathbf{a}$}};
	\node at (.7,1.4) {\scriptsize{$\mathbf{b}$}};
	\node at (-.7,1) {\scriptsize{$\mathbf{a}$}};
	\node at (.7,-.5) {\scriptsize{$\bfH$}};
	\node at (.7,.5) {\scriptsize{$\bfH$}};
\end{tikzpicture}
\end{equation}
Now viewing $h\in \cM_\bfA(d_\bfA, 1_\bfA)$, similar to the calculation \eqref{eq:LeftActionBounded} above, we see that
\begin{align*}
(\mu_{d,a}(h\otimes g))^*\circ \mu_{d,a}(h\otimes g) 
&=
\begin{tikzpicture}[baseline = -.1cm]
	\draw (1.15,0) -- (1.5,0);
	\draw (.3,1.2) arc (90:-90:.4cm);
	\draw (.3,-1.2) arc (-90:90:.4cm);
	\draw (.7,.8) .. controls ++(0:.6cm) and ++(0:.6cm) .. (.7,-.8);
	\filldraw (.7,.8) circle (.05cm);
	\filldraw (.7,-.8) circle (.05cm);
	\filldraw (1.15,0) circle (.05cm);
	\draw (-.3,.4) arc (-90:-180:1.2cm);
	\draw (-.3,1.2) arc (-90:-180:.4cm);
	\draw (-.3,-.4) arc (90:180:1.2cm);
	\draw (-.3,-1.2) arc (90:180:.4cm);
	\roundNbox{unshaded}{(0,1.2)}{.3}{0}{0}{$g^*$}
	\roundNbox{unshaded}{(0,.4)}{.3}{0}{0}{$h^*$}
	\roundNbox{unshaded}{(0,-.4)}{.3}{0}{0}{$h$}
	\roundNbox{unshaded}{(0,-1.2)}{.3}{0}{0}{$g$}
	\node at (1.325,.2) {\scriptsize{$\bfA$}};
	\node at (-1.2,1.5) {\scriptsize{$\mathbf{d}$}};
	\node at (-.5,1.5) {\scriptsize{$\mathbf{a}$}};
	\node at (-1.2,-1.5) {\scriptsize{$\mathbf{d}$}};
	\node at (-.5,-1.5) {\scriptsize{$\mathbf{a}$}};
\end{tikzpicture}
=
\begin{tikzpicture}[baseline = -.1cm]
	\draw (1.5,.3) -- (1.9,.3);
	\draw (.3,.4) arc (90:-90:.4cm);
	\draw (.7,0) .. controls ++(0:.3cm) and ++(90:.2cm) .. (1.1,-.6);
	\draw (.3,-1.2) .. controls ++(0:.6cm) and ++(270:.3cm) .. (1.1,-.6);
	\draw (.3,1.2) .. controls ++(0:.7cm) and ++(90:.5cm) .. (1.5,.3);
	\draw (1.1,-.6) .. controls ++(0:.3cm) and ++(270:.3cm) .. (1.5,.3);
	\filldraw (.7,0) circle (.05cm);
	\filldraw (1.1,-.6) circle (.05cm);
	\filldraw (1.5,.3) circle (.05cm);
	\draw (-.3,.4) arc (-90:-180:1.2cm);
	\draw (-.3,1.2) arc (-90:-180:.4cm);
	\draw (-.3,-.4) arc (90:180:1.2cm);
	\draw (-.3,-1.2) arc (90:180:.4cm);
	\roundNbox{unshaded}{(0,1.2)}{.3}{0}{0}{$g^*$}
	\roundNbox{unshaded}{(0,.4)}{.3}{0}{0}{$h^*$}
	\roundNbox{unshaded}{(0,-.4)}{.3}{0}{0}{$h$}
	\roundNbox{unshaded}{(0,-1.2)}{.3}{0}{0}{$g$}
	\node at (1.7,.5) {\scriptsize{$\bfA$}};
	\node at (-1.2,1.5) {\scriptsize{$\mathbf{d}$}};
	\node at (-.5,1.5) {\scriptsize{$\mathbf{a}$}};
	\node at (-1.2,-1.5) {\scriptsize{$\mathbf{d}$}};
	\node at (-.5,-1.5) {\scriptsize{$\mathbf{a}$}};
\end{tikzpicture}
\\&\leq
\|h\|^2_{\cM_\bfA}
\begin{tikzpicture}[baseline = -.1cm]
	\draw (-.9,-.8) -- (-.9,.8);
	\draw (.7,0) -- (1.1,0);
	\draw (.3,.4) arc (90:-90:.4cm);
	\filldraw (.7,0) circle (.05cm);
	\draw (-.3,.4) arc (-90:-180:.4cm);
	\draw (-.3,-.4) arc (90:180:.4cm);
	\roundNbox{unshaded}{(0,.4)}{.3}{0}{0}{$g^*$}
	\roundNbox{unshaded}{(0,-.4)}{.3}{0}{0}{$g$}
	\node at (.9,.2) {\scriptsize{$\bfA$}};
	\node at (-.5,.7) {\scriptsize{$\mathbf{a}$}};
	\node at (-.5,-.7) {\scriptsize{$\mathbf{a}$}};
	\node at (-.7,0) {\scriptsize{$\mathbf{d}$}};
\end{tikzpicture}
= 
\|h\|^2_{\cM_\bfA} \bfA(\id_{\overline{a}}\otimes \ev_d\otimes \id_a)(g^*\circ g),
\end{align*}
where we view $g\in \bfA(\overline{1_\cC}\otimes a)\cong \cM_\bfA(a_\bfA, 1_\cC)$.
Now using positivity and naturality of $\theta$, we see that \eqref{eq:Stinespring} above is bounded above by
$$
\|h\|_{\cM_\bfA}^2
\,\,
\begin{tikzpicture}[baseline=-.1cm]
	\draw (.5,1.35) .. controls ++(90:.34cm) and ++(90:.34cm) .. (-.5,1.35) -- (-.5,-1.35) .. controls ++(270:.34cm) and ++(270:.34cm) .. (.5,-1.35) -- (.5,1.35);
	\draw (0,1.6) .. controls ++(90:.6cm) and ++(90:.6cm) .. (-2,1.6) -- (-2,-1.6) .. controls ++(270:.6cm) and ++(270:.6cm) .. (0,-1.6);
	\draw (-1,2.05) -- (-1,3)  .. controls ++(90:.5cm) and ++(90:.5cm) .. (1.4,3) -- (1.4,-2.4)  .. controls ++(270:.5cm) and ++(270:.5cm) .. (-1,-2.4) -- (-1,-2.05);
	\roundNbox{unshaded}{(-1,2.7)}{.3}{0}{0}{$\varphi_c$}
	\node at (-1.2,3.2) {\scriptsize{$\overline{\overline{\mathbf{c}}}$}};
	\node at (1.6,0) {\scriptsize{$\overline{\mathbf{c}}$}};
	\roundNbox{unshaded}{(.5,1)}{.25}{0}{0}{$\xi^*$}
	\roundNbox{unshaded}{(0,0)}{.3}{.5}{.5}{$\theta(g^*g)$}
	\roundNbox{unshaded}{(.5,-1)}{.25}{0}{0}{$\xi$}
	\filldraw[fill=white] (-1,2.05) circle (.05cm) node [below] {\scriptsize{$\beta^*$}};
	\filldraw[fill=white] (-1,-2.05) circle (.05cm) node [above] {\scriptsize{$\beta$}};
	\filldraw[fill=white] (0,1.6) circle (.05cm) node [below] {\scriptsize{$\alpha^*$}};
	\filldraw[fill=white] (0,-1.6) circle (.05cm) node [above] {\scriptsize{$\alpha$}};
	\node at (-2.2,0) {\scriptsize{$\mathbf{d}$}};
	\node at (-1.2,2.2) {\scriptsize{$\mathbf{c}$}};
	\node at (-1.2,-2.2) {\scriptsize{$\mathbf{c}$}};
	\node at (.2,1.8) {\scriptsize{$\mathbf{e}$}};
	\node at (.2,-1.8) {\scriptsize{$\mathbf{e}$}};
	\node at (.7,-1.4) {\scriptsize{$\mathbf{b}$}};
	\node at (-.7,-1.4) {\scriptsize{$\mathbf{a}$}};
	\node at (.7,1.4) {\scriptsize{$\mathbf{b}$}};
	\node at (-.7,1.4) {\scriptsize{$\mathbf{a}$}};
	\node at (.7,-.5) {\scriptsize{$\bfH$}};
	\node at (.7,.5) {\scriptsize{$\bfH$}};
\end{tikzpicture}
\leq
\|h\|_{\cM_\bfA}^2
\|\beta\otimes g\otimes \alpha \otimes \xi\|^2_{(\mathbf{d}\otimes \bfK)(d)}.
$$
Hence $\pi_d(h): \mathbf{d}\otimes \bfK\Rightarrow \bfK$ is a bounded natural transformation with norm at most $\|h\|_{\cM_\bfA}$.
We omit the proof that $\pi$ is a $*$-algebra natural transformation, which is similar to the proof of the same fact in Theorem \eqref{thm:GelfandNaimark}.

Finally, we verify that $\Ad(v)\circ \pi = \theta$.
Indeed, for $h\in \bfA(a)$ and $\xi\in \bfH(b)$, we have $\pi_a(h)\in \bfB(\bfK)(a)=\Hom_{\Hilb(\cC)}(\mathbf{a}\otimes \bfK , \bfK)$ and $v(\xi)\in \bfK(b)$.
For $c\in \Irr(\cC)$ and $\alpha \in \cC(c, a\otimes b)$, we have
\begin{align*}
[(\Ad(v)\circ \pi)_a(h)]_c(\alpha\otimes \xi) 
&= 
v^*[\pi_a(h)_c (\alpha\otimes v(\xi))] 
=
v^*(\pi_a(h)(\alpha\otimes i_\bfA\otimes \id_{1_\cC}\otimes \xi))
\\&=
v^*\left(
\begin{tikzpicture}[baseline = -.4cm]
    \draw (0,.6) -- (0,-.3);
    \draw (1,0) -- (1,.6);
    \draw (.5,-.8) -- (.5,-1.2);
    \draw (0,-.3) arc (-180:0:.5cm);
    \filldraw[fill=white] (.5,-.8) circle (.05cm) node [above] {\scriptsize{$\alpha$}};
    \roundNbox{unshaded}{(0,0)}{.3}{0}{0}{$h$}
    \roundNbox{unshaded}{(1,0)}{.3}{0}{0}{$\xi$}
    \node at (-.15,-.5) {\scriptsize{$\mathbf{a}$}};
    \node at (1.15,-.5) {\scriptsize{$\mathbf{b}$}};
    \node at (.35,-1) {\scriptsize{$\mathbf{c}$}};
    \node at (1.2,.5) {\scriptsize{$\bfH$}};
    \node at (-.2,.5) {\scriptsize{$\bfA$}};
\end{tikzpicture}
\right)
=
\begin{tikzpicture}[baseline=-.8cm]
	\draw (-.25,0) -- (-.25,-1.35) arc (-180:0:.25cm) -- (.25,0);
	\draw (0,-1.6) -- (0,-2);
	\draw (0,.3) -- (0,.7);
	\roundNbox{unshaded}{(0,0)}{.3}{.2}{.2}{$\theta(h)$}
	\roundNbox{unshaded}{(.25,-1)}{.25}{0}{0}{$\xi$}
	\filldraw[fill=white] (0,-1.6) circle (.05cm) node [above] {\scriptsize{$\alpha$}};
	\node at (.4,-1.4) {\scriptsize{$\mathbf{b}$}};
	\node at (-.4,-1.4) {\scriptsize{$\mathbf{a}$}};
	\node at (.45,-.5) {\scriptsize{$\bfH$}};
	\node at (.2,.5) {\scriptsize{$\bfH$}};
	\node at (.2,-1.8) {\scriptsize{$\mathbf{c}$}};
\end{tikzpicture}
=
\theta_a(h) (\alpha\otimes \xi).
\end{align*}
By Lemma \ref{lem:ZeroInHilbC} applied to $(\Ad(v)\circ \pi)_a(h) - \theta_a(h)$, we are finished.
\end{proof}

\subsection{States and the GNS construction}
\label{sec:StateAndGNS}

\begin{defn}
A state on $\bfA$ is a ucp map $\phi:\bfA\to {\mathbf{1}}$ where ${\mathbf{1}}=\cC(\,\cdot\,, 1_\cC): \cC^{\op}\to \Hilb$.
Notice that a state on $\bfA$ correspond uniquely to states on $\bfA(1_\cC)$, since for all $c\in \Irr(\cC)$, $\mathbf{1}(c) = \delta_{c=1_\cC} \bbC$.
Thus $\phi$ is completely determined by the map $\phi_{1_\cC}:\bfA(1_\cC) \to \bbC$, which must be a state since $\phi$ is ucp.
\end{defn}

We now give two equivalent definitions of the GNS Hilbert space representation of $\bfA$ with respect to a state $\phi$.

First, given a state $\phi$ on $\bfA$, we can define Hilbert space objects $L^{2}(\mathbf{A})_\phi$ and ${}_\phi L^{2}(\mathbf{A})\in \Hilb(\cC)$ by defining the fiber Hilbert spaces at $a\in \cC$ to be the completion of $\bfA(a)$ with the inner products on $\bfA(a)$ given respecively by
$$
\langle f, g\rangle_{L^2(\bfA)_\phi}=\phi(\langle g| f\rangle_a)
\qquad\qquad
\langle f, g\rangle_{{}_\phi L^2(\bfA)}=\phi({}_a\langle f, g\rangle).
$$
Notice that we have a left and a right $L^2(\bfA)$, similar to the left and right $\cC$-module categories associated to $\bfA$.

Equivalently, when we identify a state $\phi$ on $\bfA$ with a state on $\bfA(1_\cC)$, we can first apply the GNS construction for C*-algebras to obtain a representation of $\mathbf{A}(1_\cC)$ on $L^{2}(\mathbf{A}(1_\cC),\phi)$, and then apply our Gelfand-Naimark Theorem \ref{thm:GelfandNaimark} in $\cC$ to obtain a Hilbert space object $L^{2}(\mathbf{A})_\phi\in \Hilb(\cC)$ and a representation $\mathbf{A}\Rightarrow \bfB(L^{2}(\mathbf{A}))_\phi$.
In other words, we get a cyclic $\cC$-module dagger functor $\bfF_{\phi}:\cM_{\mathbf{A}}\rightarrow \cM_{L^{2}(\mathbf{A})_\phi}$, analogous to the GNS construction. 

Notice that the second description gives an isomorphic Hilbert space object.
If $\bfH$ is as in our Gelfand-Naimark Theorem \ref{thm:GelfandNaimark} in $\cC$ for the $\bfA(1_\cC)$-module $L^2(\bfA(1_\cC), \phi)$, we have $\bfH(a)$ is the completion of $\bfA(a) \otimes_{\bfA(1_\cC)} L^2(\bfA(1_\cC), \phi)$ using the relative tensor product inner product.

\begin{rem}
The right $L^2(\bfA)_\phi$ carries a left $\bfA(1_\cC)$ action, whereas the left ${}_\phi L^2(\bfA)$ carries a right $\bfA(1_\cC)$ action.
In general, the left and right $L^2(\bfA)$ are different, and their relationship is complicated.
We anticipate they are related by a version of the Tomita-Takesaki theory in our $\cC$-graded setting.
We defer this exploration to a future article.
\end{rem}

We can, however, relate the left and right $L^2(\bfA)$ by taking conjugates.

\begin{prop}
\label{prop:StateConjugateLeftRight}
The map $\bfA(a)\ni f\mapsto \overline{j_a(f)}\in \overline{\bfA(\overline{a})}$ extends to an isomorphism of Hilbert space objects ${}_\phi L^2(\bfA) \cong \overline{L^2(\bfA)_\phi}$ in $\Hilb(\cC)$.
\end{prop}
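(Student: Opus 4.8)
The plan is to show that the proposed map $f \mapsto \overline{j_a(f)}$ intertwines the two inner products fiber by fiber, extends to a bounded natural transformation with bounded inverse, and is compatible with the Hilbert space object structures. First I would recall the two inner products in play: on ${}_\phi L^2(\bfA)$ the inner product at $a$ is $\langle f,g\rangle = \phi({}_a\langle f,g\rangle)$, while on $\overline{L^2(\bfA)_\phi}$ the fiber at $a$ is $\overline{L^2(\bfA)_\phi(\overline a)}$, i.e.\ the conjugate Hilbert space of the completion of $\bfA(\overline a)$ with inner product $\langle h,k\rangle = \phi(\langle k | h\rangle_{\overline a})$. Under complex conjugation, $\langle \overline h, \overline k\rangle_{\overline{L^2(\bfA)_\phi(\overline a)}} = \overline{\langle h,k\rangle_{L^2(\bfA)_\phi(\overline a)}} = \overline{\phi(\langle k|h\rangle_{\overline a})} = \phi(\langle k|h\rangle_{\overline a}^*)$ since $\phi$ is a state (hence $*$-preserving on the base algebra). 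So the core computation is to verify that $\phi({}_a\langle f,g\rangle) = \phi(\langle j_a(g)|j_a(f)\rangle_{\overline a}^*)$ for all $f,g\in\bfA(a)$.

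The main step is this identity, which I would establish diagrammatically. Recall ${}_a\langle f,g\rangle$ is built from $f$ and $j_a(g)$ capped with $\ev/\coev$ as in the definition in Section~\ref{sec:OperatorValuedInnerProducts}, and $\langle \,\cdot\,|\,\cdot\,\rangle_{\overline a}$ on $\bfA(\overline a)$ is built the same way but with the roles of $\mathbf{a}$ and $\overline{\mathbf{a}}$ swapped and with $j_{\overline a}$ applied. Applying $j_a$ to $f$ and $g$, using that $j$ is involutive ($\bfA(\varphi_a)\circ j_{\overline a}\circ j_a = \id$), conjugate-natural, and monoidal (the three axioms in Definition~\ref{def:StarAlgebra}), together with the bi-involutive coherence \eqref{eq:Bi-involutiveStructureOnC} relating $\nu$, $\varphi$, $\ev$, and $\coev$, one rewrites $\langle j_a(g)|j_a(f)\rangle_{\overline a}$ as (a conjugate of) ${}_a\langle f,g\rangle$ up to the appropriate $\varphi$'s. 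Since ${}_a\langle \,\cdot\,,\,\cdot\,\rangle$ is anti-symmetric (stated in that section) and $\phi$ is $*$-preserving, the two sides match after applying $\phi$. This is essentially the same style of argument as the proof that $(g\circ f)^* = f^*\circ g^*$ in Theorem~\ref{thm:StarAlgebraDaggerModuleCategoryCorrespondence}; I expect it to be the main obstacle, purely because of the bookkeeping of $\nu$'s and $\varphi$'s, not because of any conceptual difficulty.

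Once the inner products are matched on each fiber, the map descends to the quotients by length-zero vectors and extends to an isometric isomorphism $\bfH_1(a) \to \bfH_2(a)$ of the completed fibers for every $a\in\Irr(\cC)$, where $\bfH_1 = {}_\phi L^2(\bfA)$ and $\bfH_2 = \overline{L^2(\bfA)_\phi}$. Because $j_a$ is natural with respect to $\bfA(\psi)$ (conjugate-naturality) and complex conjugation of functors in $\Vec(\cC)$ is defined precisely by $\overline{\bfV}(\psi) = \overline{\bfV(\overline\psi)}$, the family $(f\mapsto \overline{j_a(f)})_{a\in\Irr(\cC)}$ is a natural transformation; being fiberwise isometric it is bounded with bounded inverse, so it is an isomorphism in $\Hilb(\cC)$ by the description of morphisms there. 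Finally I would note compatibility with the canonical extensions from Definition~\ref{defn:EasyConstructionOfHilbertSpaceObject} to all of $\cC$: since both $\bfH_1$ and $\bfH_2$ are reconstructed from their values on $\Irr(\cC)$ via the same $\boxtimes$-direct-sum recipe and the isomorphism respects the maps $\mathbf{a}(\psi)$, it extends uniquely to an isomorphism on all fibers, completing the proof.
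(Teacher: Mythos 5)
Your proposal is correct and follows essentially the same route as the paper: the whole content is the fiberwise isometry computation, which the paper also reduces to the diagrammatic identity $\langle j_a(f)\,|\,j_a(g)\rangle_{\overline{a}} = {}_a\langle f,g\rangle$ (your version, with the conjugation absorbed into $\phi$ via anti-symmetry of the operator-valued form, is equivalent). The additional remarks on naturality and on extending the fiberwise unitaries to an isomorphism in $\Hilb(\cC)$ are details the paper leaves implicit, but they do not change the argument.
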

\begin{proof}
It suffices to show the map is isometric.
Given $f,g\in \bfA(a)$, we have
\begin{align*}
\langle \overline{j_a(f)}, \overline{j_a(g)}\rangle_{\overline{L^2(\bfA)_\phi}}
&=
\langle \overline{j_a(f)}, \overline{j_a(g)}\rangle_{\overline{L^2(\bfA)_\phi(\overline{a})}}
=
\langle j_a(g), j_a(f)\rangle_{L^2(\bfA)_\phi(\overline{a})}
\\&=
\phi(\langle j_a(f)| j_a(g)\rangle_{\overline{a}})
=
\phi
\left(
\begin{tikzpicture}[baseline = .1cm]
    \draw (-.5,-.3) arc (-180:0:.5cm);
    \draw (-.5,.3) arc (180:0:.5cm);
    \filldraw (0,.8) circle (.05cm);
    \draw (0,.8) -- (0,1.2);
    \roundNbox{unshaded}{(-.5,0)}{.3}{0}{0}{$f$}
    \roundNbox{unshaded}{(.5,0)}{.3}{.2}{.2}{$j_a(g)$}
    \node at (-.65,-.5) {\scriptsize{$\mathbf{a}$}};
    \node at (-.65,.5) {\scriptsize{$\bfA$}};
    \node at (.65,-.5) {\scriptsize{$\overline{\mathbf{a}}$}};
    \node at (.65,.5) {\scriptsize{$\bfA$}};
    \node at (.15,1) {\scriptsize{$\bfA$}};
\end{tikzpicture}
\right)
=
\phi({}_{a}\langle f, g\rangle)
=
\langle f, g\rangle_{{}_\phi L^2(\bfA)}.
\qedhere
\end{align*}

\end{proof}

We now focus on tracial states.

\begin{defn}
A \emph{trace} on the C*-algebra object $\bfA\in \Vec(\cC)$ is a state $\tau$ on $\bfA$ such that for all $a\in\cC$ and $f,g\in\bfA(a)$,
$$
\tau({}_a\langle f, g \rangle)
=
\tau
\left(
\begin{tikzpicture}[baseline = .1cm]
    \draw (-.5,-.3) arc (-180:0:.5cm);
    \draw (-.5,.3) arc (180:0:.5cm);
    \filldraw (0,.8) circle (.05cm);
    \draw (0,.8) -- (0,1.2);
    \roundNbox{unshaded}{(-.5,0)}{.3}{0}{0}{$f$}
    \roundNbox{unshaded}{(.5,0)}{.3}{.2}{.2}{$j_a(g)$}
    \node at (-.65,-.5) {\scriptsize{$\mathbf{a}$}};
    \node at (-.65,.5) {\scriptsize{$\bfA$}};
    \node at (.65,-.5) {\scriptsize{$\overline{\mathbf{a}}$}};
    \node at (.65,.5) {\scriptsize{$\bfA$}};
    \node at (.15,1) {\scriptsize{$\bfA$}};
\end{tikzpicture}
\right)
=
\tau\left(
\begin{tikzpicture}[baseline = .1cm, xscale=-1]
    \draw (-.5,-.3) arc (-180:0:.5cm);
    \draw (-.5,.3) arc (180:0:.5cm);
    \filldraw (0,.8) circle (.05cm);
    \draw (0,.8) -- (0,1.2);
    \roundNbox{unshaded}{(-.5,0)}{.3}{0}{0}{$f$}
    \roundNbox{unshaded}{(.5,0)}{.3}{.2}{.2}{$j_a(g)$}
    \node at (-.65,-.5) {\scriptsize{$\mathbf{a}$}};
    \node at (-.65,.5) {\scriptsize{$\bfA$}};
    \node at (.65,-.5) {\scriptsize{$\overline{\mathbf{a}}$}};
    \node at (.65,.5) {\scriptsize{$\bfA$}};
    \node at (.15,1) {\scriptsize{$\bfA$}};
\end{tikzpicture}
\right)
=
\tau(\langle g|f \rangle_a).
$$
\end{defn}

The following corollaries are now immediate.

\begin{cor}
\label{cor:TraceLeftRight}
If $\tau$ is a trace on $\bfA$, then $L^2(\bfA)_\tau = {}_\tau L^2(\bfA)$ in $\Hilb(\cC)$, since the left and right inner products on each $\bfA(a)$ agree on the nose.
\end{cor}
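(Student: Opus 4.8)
The plan is to unwind the two constructions of the $L^2$-object and observe that the trace hypothesis forces their defining sesquilinear forms to coincide fiber-by-fiber, so that the two objects of $\Hilb(\cC)$ are literally equal, not merely isometric. Concretely, recall from Section~\ref{sec:StateAndGNS} that ${}_\tau L^2(\bfA)$ is obtained by applying Corollary~\ref{cor:CompletionVecToHilb} to the vector space object $\bfA\in\Vec(\cC)$ equipped, on each fiber $\bfA(a)$, with the positive semi-definite form $\langle f,g\rangle_{{}_\tau L^2(\bfA)}=\tau({}_a\langle f,g\rangle)$, whereas $L^2(\bfA)_\tau$ is obtained the same way but using $\langle f,g\rangle_{L^2(\bfA)_\tau}=\tau(\langle g|f\rangle_a)$.

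First I would invoke the defining property of a trace: for every $a\in\cC$ and all $f,g\in\bfA(a)$, the displayed identity in the definition of a trace gives $\tau({}_a\langle f,g\rangle)=\tau(\langle g|f\rangle_a)$. Hence the two sesquilinear forms on $\bfA(a)$ are equal on the nose; in particular they have the same null space $N_{\langle\,\cdot\,,\,\cdot\,\rangle_a}$ and the same Hilbert space completion. Applying this for every $a\in\Irr(\cC)$, the two families of fiber Hilbert spaces used to build ${}_\tau L^2(\bfA)$ and $L^2(\bfA)_\tau$ via Definition~\ref{defn:EasyConstructionOfHilbertSpaceObject} agree.

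Finally I would observe that the canonical extension of Definition~\ref{defn:EasyConstructionOfHilbertSpaceObject} is determined functorially by its values on the chosen set $\Irr(\cC)$ of simple objects (it fixes $\bfH(b)=\bigoplus_{a\in\Irr(\cC)}\bfH(a)\boxtimes\cC(b,a)$ and the action on morphisms accordingly). Since the two input families agree there, the resulting objects of $\Hilb(\cC)$ coincide, together with their actions on all morphisms, which is the assertion of Corollary~\ref{cor:TraceLeftRight}. There is essentially no obstacle; the only point requiring care is to keep track that the trace condition yields equality, rather than mere isometry, of the fiber forms, which is precisely what is claimed. (As a forward-looking remark, combining this with Proposition~\ref{prop:StateConjugateLeftRight} then also shows $\overline{L^2(\bfA)_\tau}\cong L^2(\bfA)_\tau$ in $\Hilb(\cC)$ via $f\mapsto\overline{j_a(f)}$, though this is not needed for the corollary.)
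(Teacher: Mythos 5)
Your proposal is correct and follows exactly the argument the paper intends: the defining identity of a trace, $\tau({}_a\langle f,g\rangle)=\tau(\langle g|f\rangle_a)$, makes the two sesquilinear forms on each fiber $\bfA(a)$ equal on the nose, so the null spaces, completions, and canonical extensions to all of $\cC$ coincide. The paper records this as "immediate" with precisely that justification, so there is nothing to add.
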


\begin{cor}
If $\tau$ is a trace on $\bfA$, then $L^2(\bfA)_\tau$ is symmetric with $\sigma: L^2(\bfA)_\tau \Rightarrow \overline{L^2(\bfA)_\tau}$ given by the map from Proposition \ref{prop:StateConjugateLeftRight} together with Corollary \ref{cor:TraceLeftRight}.
\end{cor}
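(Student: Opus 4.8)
The goal is to produce a real structure on $L^2(\bfA)_\tau$, i.e., a natural isomorphism $\sigma\colon L^2(\bfA)_\tau \Rightarrow \overline{L^2(\bfA)_\tau}$ in $\Hilb(\cC)$ satisfying the involutive axiom $\overline{\sigma}\circ\sigma = \varphi_{L^2(\bfA)_\tau}$. The plan is to take $\sigma$ to be the composite
$$
L^2(\bfA)_\tau \;=\; {}_\tau L^2(\bfA) \;\xrightarrow{\ \cong\ }\; \overline{L^2(\bfA)_\tau},
$$
where the equality is Corollary \ref{cor:TraceLeftRight} and the isomorphism is the one from Proposition \ref{prop:StateConjugateLeftRight} specialized to $\phi=\tau$. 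Since this is a composite of isomorphisms in the W*-category $\Hilb(\cC)$, it is automatically a bounded natural isomorphism, so boundedness and naturality come for free and the only thing left to verify is the involutive axiom.

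Next I would unwind $\sigma$ on the dense subspace $\bfA(a)\subseteq L^2(\bfA)_\tau(a)$: there $\sigma_a$ is the map $f\mapsto \overline{j_a(f)}$, landing in $\overline{\bfA(\overline a)}$, which sits densely inside $L^2(\bfA)_\tau(\overline a)=\overline{L^2(\bfA)_\tau}(a)$. Using the recipe $\overline{\sigma}_a=\overline{\sigma_{\overline a}}$ from Definition \ref{defn:VecCInvolutive} together with the fact that $\varphi_\Vec$ is the identity, one computes, for $f\in\bfA(a)$,
$$
\overline{\sigma}_a(\sigma_a(f)) \;=\; \overline{\overline{\,j_{\overline a}(j_a(f))\,}} \;=\; j_{\overline a}(j_a(f)) \;\in\; \bfA(\overline{\overline a}) \subseteq \overline{\overline{L^2(\bfA)_\tau}}(a).
$$
By the involutive axiom for the $*$-structure $j$ from Definition \ref{def:StarAlgebra}, namely $\bfA(\varphi_a)\circ j_{\overline a}\circ j_a = \id_{\bfA(a)}$, we get $j_{\overline a}(j_a(f)) = \bfA(\varphi_a^{-1})(f)$; and $\bfA(\varphi_a^{-1})$ restricted to the dense subspace is exactly $(\varphi_{L^2(\bfA)_\tau})_a$ by the definition of $\varphi_\bfV$ for vector space objects in Definition \ref{defn:VecCInvolutive}. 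Hence $\overline{\sigma}\circ\sigma$ and $\varphi_{L^2(\bfA)_\tau}$ agree on $\bfA(a)$; since both are bounded and $\bfA(a)$ is dense in each fiber, they agree on all of $L^2(\bfA)_\tau$.

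I do not expect a real obstacle here: once Corollary \ref{cor:TraceLeftRight} and Proposition \ref{prop:StateConjugateLeftRight} are available, this is a short diagram chase. The one thing requiring care is the bookkeeping of the canonical identifications $\overline{\overline W}=W$ and $\overline{\bfV}(a)=\overline{\bfV(\overline a)}$, and making sure the density-plus-continuity step is legitimate; it is, because by Corollary \ref{cor:TraceLeftRight} the left and right $\bfA(1_\cC)$-valued inner products on $\bfA(a)$ coincide on the nose, so passing to completions introduces nothing beyond the usual bounded extension. Finally, I would remark that $\sigma$ is canonical — in particular independent of the choice of $\Irr(\cC)$ — since both Corollary \ref{cor:TraceLeftRight} and Proposition \ref{prop:StateConjugateLeftRight} are.
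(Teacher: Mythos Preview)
Your proposal is correct and follows essentially the same approach as the paper's proof: both compute $\overline{\sigma}_a(\sigma_a(f)) = j_{\overline a}(j_a(f)) = \bfA(\varphi_a^{-1})(f) = (\varphi_{L^2(\bfA)_\tau})_a(f)$ on the dense subspace $\bfA(a)$, invoking the involutive axiom for $j$ and the definition of $\varphi_\bfV$ from Definition~\ref{defn:VecCInvolutive}. You supply a bit more detail on the density/continuity step and the canonicality remark, but the argument is the same.
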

\begin{proof}
It remains to show that $\sigma$ is involutive, i.e., $\varphi = \overline{\sigma}\circ \sigma$.
Indeed, since complex conjugation on $\Vec$ is strictly involutive, we see that for $f\in \bfA(a)$, 
$$
\overline{\sigma}(\sigma(f)) 
= 
\overline{\sigma}(\overline{j_a(f)}) 
=
(j_{\overline{a}}\circ j_a) (f) 
= 
\bfA(\varphi^{-1}_a)(f)
=
(\varphi_{L^2(\bfA)_\tau})_a (f),
$$
where $\varphi_{L^2(\bfA)_\tau}$ was given in Definition \ref{defn:VecCInvolutive}.
\end{proof}

Given a trace $\tau$ on $\bfA$, we get left and right GNS representations of $\bfA$ on $L^2(\bfA)$, where by Corollary \ref{cor:TraceLeftRight}, we do not need to specify the left or the right $L^2(\bfA)$.
First, for each $a\in\cC$, we denote the image of $\bfA(a)$ in $L^2(\bfA)(a)$ by $\widehat{\bfA}(a)$.
For $f\in \bfA(a)$, we get bounded natural transformations 
$\lambda(f): \mathbf{a}\otimes L^2(\bfA)\Rightarrow L^2(\bfA)$ 
and
$\rho(f): L^2(\bfA)\otimes \mathbf{a} \Rightarrow L^2(\bfA)$
given on their $c$-components for $c\in \Irr(\cC)$ by
\begin{equation}
\label{eq:LeftAndRightActionOnL2A}
\begin{split}
\cC(c, a\otimes b)\boxtimes \widehat{\bfA}(b)
\ni 
\alpha\boxtimes \xi
&\overset{\lambda(f)_c}{\longmapsto}
\begin{tikzpicture}[baseline=-.1cm]
	\draw (0,.8) -- (0,1.2);
	\draw (.5,.3) arc (0:180:.5cm);
	\draw (.5,-.3) arc (0:-180:.5cm);
	\draw (0,-.8) -- (0,-1.2);
	\filldraw (0,.8) circle (.05cm);
	\filldraw[fill=white] (0,-.8) circle (.05cm) node [above] {\scriptsize{$\alpha$}};
	\roundNbox{unshaded}{(-.5,0)}{.3}{0}{0}{$f$}
	\roundNbox{unshaded}{(.5,0)}{.3}{0}{0}{$\xi$}
	\node at (.2,1) {\scriptsize{$\widehat{\bfA}$}};
	\node at (.7,.5) {\scriptsize{$\widehat{\bfA}$}};
	\node at (-.7,.5) {\scriptsize{$\bfA$}};
	\node at (.7,-.5) {\scriptsize{$\mathbf{b}$}};
	\node at (-.7,-.5) {\scriptsize{$\mathbf{a}$}};
	\node at (.2,-1) {\scriptsize{$\mathbf{c}$}};
\end{tikzpicture}
\in \widehat{\bfA}(c)
\\
\widehat{\bfA}(b)\boxtimes \cC(c, b\otimes a)
\ni
\xi\boxtimes \beta
&\overset{\rho(f)_c}{\longmapsto}
\begin{tikzpicture}[baseline=-.1cm]
	\draw (0,.8) -- (0,1.2);
	\draw (.5,.3) arc (0:180:.5cm);
	\draw (.5,-.3) arc (0:-180:.5cm);
	\draw (0,-.8) -- (0,-1.2);
	\filldraw (0,.8) circle (.05cm);
	\filldraw[fill=white] (0,-.8) circle (.05cm) node [above] {\scriptsize{$\beta$}};
	\roundNbox{unshaded}{(-.5,0)}{.3}{0}{0}{$\xi$}
	\roundNbox{unshaded}{(.5,0)}{.3}{0}{0}{$f$}
	\node at (.2,1) {\scriptsize{$\widehat{\bfA}$}};
	\node at (-.7,.5) {\scriptsize{$\widehat{\bfA}$}};
	\node at (.7,.5) {\scriptsize{$\bfA$}};
	\node at (.7,-.5) {\scriptsize{$\mathbf{a}$}};
	\node at (-.7,-.5) {\scriptsize{$\mathbf{b}$}};
	\node at (.2,-1) {\scriptsize{$\mathbf{c}$}};
\end{tikzpicture}
\in \widehat{\bfA}(c)
\end{split}
\end{equation}

\begin{prop}
\label{prop:TwoCommutingActionsOnL2A}
The left and right actions of $\bfA$ on $L^2(\bfA)$ commute up to the associator in $\Hilb(\cC)$.
\end{prop}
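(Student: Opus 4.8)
The plan is to verify the commutation relation directly using the graphical calculus, working componentwise over $\Irr(\cC)$. Recall that $\lambda(f): \mathbf{a}\otimes L^2(\bfA)\Rightarrow L^2(\bfA)$ corresponds under Frobenius reciprocity to an element of $\Hom_{\Hilb(\cC)}(L^2(\bfA), \overline{\mathbf{a}}\otimes L^2(\bfA))$, and likewise $\rho(f)$ gives a morphism on the other side; the precise assertion ``commute up to the associator'' means that, for $f\in\bfA(a)$ and $g\in\bfA(b)$, the two natural transformations $\mathbf{a}\otimes L^2(\bfA)\otimes\mathbf{b}\Rightarrow L^2(\bfA)$ obtained by first applying $\lambda(f)$ then $\rho(g)$, versus first $\rho(g)$ then $\lambda(f)$, agree after inserting the associator isomorphism of $\Hilb(\cC)$ (equivalently, the $6j$-symbols $U$ from \eqref{eq:UnitaryFrom6j}). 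So the first step is to pin down the correct statement as an equality of morphisms in $\Hilb(\cC)$, being careful about where the $\boxtimes$ balancing factors and the associators sit.

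Next I would evaluate both sides on a simple tensor $\alpha\boxtimes \xi \boxtimes \beta$ (with $\alpha\in\cC(c,a\otimes e)$, $\xi\in\widehat{\bfA}(e)\boxtimes\cdots$, appropriate morphisms threading through) using the explicit formulas \eqref{eq:LeftAndRightActionOnL2A}. On the ``$\rho(g)$ after $\lambda(f)$'' side, one stacks the $f$-vertex on the left of $\widehat\bfA$ and then the $g$-vertex on the right, each time post-composing with the multiplication vertex of $\widehat\bfA$; on the other side the order is reversed. The key algebraic inputs are: (i) associativity of the multiplication $m$ on $\widehat\bfA$ (equivalently, associativity of the laxitor $\mu$), which lets one rebracket the triple product $f\cdot\xi\cdot g$ in $\widehat\bfA$; and (ii) the fact that the associator of $\Hilb(\cC)$ is precisely the recoupling move governed by $U$, so that moving the $\mathbf{a}$-strand past the $\mathbf{b}$-strand through the $\widehat\bfA$-vertex produces exactly the associator factor claimed. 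Since $\widehat\bfA(c)$ is a quotient-completion of $\bfA(c)$ and the multiplication on $\bfA$ descends (the left/right actions are bounded, as established in the GNS discussion and by the analog of \eqref{eq:LeftActionBounded}), these manipulations are legitimate on the dense subspace and extend by continuity. Finally, by Lemma \ref{lem:ZeroInHilbC} it suffices to check the equality on such simple tensors, so once the diagrammatic identity is verified the proof is complete.

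The main obstacle I anticipate is purely bookkeeping: tracking the associators and the $\boxtimes$-balancing dimension factors $d_e^{-1}$ correctly on both sides so that they genuinely match rather than differing by a scalar or a stray $6j$-symbol. In other words, the content is trivial (it is just associativity of an algebra), but stating ``commute up to the associator'' precisely and checking that the associator that appears is the canonical one of $\Hilb(\cC)$ — with the right normalization — is where care is needed. I would organize the computation so that the $\widehat\bfA$-multiplication associativity is applied first (purely inside the fiber over $1_\cC$, where there are no balancing subtleties), and only afterwards invoke the unitarity and pentagon identity for $U$ to identify the remaining recoupling with the associator of $\Hilb(\cC)$; this separates the two sources of potential sign/scalar errors and makes the verification tractable without grinding through every coefficient.
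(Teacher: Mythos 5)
Your proposal is correct and follows essentially the same route as the paper: both reduce the claim to the associativity of the multiplication $\mu$ applied to the triple product with the $L^2$-vector in the middle, and then identify the resulting recoupling by the $6j$-symbols $U$ with the associator of $\Hilb(\cC)$, checking everything on simple tensors and extending by density. The paper's proof is exactly this computation written out diagrammatically, so no further comparison is needed.
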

\begin{proof}
First, note that the associativity of $(\bfA,\mu, \iota)$ involves the associator $\alpha_{\bfA,\bfA,\bfA}$ in $\Vec(\cC)$ described in \eqref{eq:Associativity}, which gives a unitary $\alpha_{L^2(\bfA), L^2(\bfA), L^2(\bfA)}$ in $\Hilb(\cC)$.
This means for all $\eta\in \bfA(a)$, $\xi\in \bfA(d)$, $\zeta\in \bfA(e)$ and $\alpha\in \cC(c, a\otimes b)$, $\beta\in \cC(b, d\otimes e)$, we have
\begin{equation}
\begin{tikzpicture}[baseline = -.1cm, xscale=-1]
	\draw (-1,-.3) arc (-180:0:.5cm);
	\draw (-.5,-.8) .. controls ++(270:.6cm) and ++(270:.6cm) .. (1,-.8) -- (1,-.3);
	\draw (-1,.3) arc (180:0:.5cm);
	\draw (-.5,.8) .. controls ++(90:.6cm) and ++(90:.6cm) .. (1,.8) -- (1,.3);
	\draw (.25,-1.25) -- (.25,-1.6);
	\draw (.25,1.25) -- (.25,1.6);
	\filldraw[fill=white] (-.5,-.8) circle (.05cm) node [above] {\scriptsize{$\beta$}};
	\filldraw[fill=white] (.25,-1.25) circle (.05cm) node [above] {\scriptsize{$\alpha$}};
	\filldraw (-.5,.8) circle (.05cm);
	\filldraw (.25,1.25) circle (.05cm);
    \roundNbox{unshaded}{(-1,0)}{.3}{0}{0}{$\zeta$}
    \roundNbox{unshaded}{(0,0)}{.3}{0}{0}{$\xi$}
    \roundNbox{unshaded}{(1,0)}{.3}{0}{0}{$\eta$}
    \node at (-1.15,-.5) {\scriptsize{$\mathbf{e}$}};
    \node at (.15,-.5) {\scriptsize{$\mathbf{d}$}};
    \node at (.85,-.5) {\scriptsize{$\mathbf{a}$}};
    \node at (-1.15,.5) {\scriptsize{$\bfA$}};
    \node at (.15,.5) {\scriptsize{$\bfA$}};
    \node at (.85,.5) {\scriptsize{$\bfA$}};
    \node at (.4,1.45) {\scriptsize{$\bfA$}};
    \node at (.4,-1.45) {\scriptsize{$\mathbf{c}$}};
    \node at (-.65,1) {\scriptsize{$\bfA$}};
    \node at (-.65,-1) {\scriptsize{$\mathbf{b}$}};
\end{tikzpicture}
=
\sum_{\substack{
f\in\Irr(\cC)
\\
\gamma\in \ONB(f,a\otimes d)
\\
\delta\in \ONB(c,f\otimes e)
}}
U_{\alpha,\beta}^{\gamma,\delta}\,\,
\begin{tikzpicture}[baseline = -.1cm]
	\draw (-1,-.3) arc (-180:0:.5cm);
	\draw (-.5,-.8) .. controls ++(270:.6cm) and ++(270:.6cm) .. (1,-.8) -- (1,-.3);
	\draw (-1,.3) arc (180:0:.5cm);
	\draw (-.5,.8) .. controls ++(90:.6cm) and ++(90:.6cm) .. (1,.8) -- (1,.3);
	\draw (.25,-1.25) -- (.25,-1.6);
	\draw (.25,1.25) -- (.25,1.6);
	\filldraw[fill=white] (-.5,-.8) circle (.05cm) node [above] {\scriptsize{$\gamma$}};
	\filldraw[fill=white] (.25,-1.25) circle (.05cm) node [above] {\scriptsize{$\delta$}};
	\filldraw (-.5,.8) circle (.05cm);
	\filldraw (.25,1.25) circle (.05cm);
    \roundNbox{unshaded}{(-1,0)}{.3}{0}{0}{$\eta$}
    \roundNbox{unshaded}{(0,0)}{.3}{0}{0}{$\xi$}
    \roundNbox{unshaded}{(1,0)}{.3}{0}{0}{$\zeta$}
    \node at (-1.15,-.5) {\scriptsize{$\mathbf{a}$}};
    \node at (.15,-.5) {\scriptsize{$\mathbf{d}$}};
    \node at (.85,-.5) {\scriptsize{$\mathbf{e}$}};
    \node at (-1.15,.5) {\scriptsize{$\bfA$}};
    \node at (.15,.5) {\scriptsize{$\bfA$}};
    \node at (.85,.5) {\scriptsize{$\bfA$}};
    \node at (.4,1.45) {\scriptsize{$\bfA$}};
    \node at (.4,-1.45) {\scriptsize{$\mathbf{c}$}};
    \node at (-.65,1) {\scriptsize{$\bfA$}};
    \node at (-.65,-1) {\scriptsize{$\mathbf{f}$}};
\end{tikzpicture}\,.
\end{equation}
Consider 
$\lambda(\eta)\in \Hom_{\Hilb(\cC)}(\mathbf{a} \otimes L^2(\bfA), L^2(\bfA))$ for $\eta\in \bfA(a)$ 
and 
$\rho(\zeta)\in \Hom_{\Hilb(\cC)}(L^2(\bfA)\otimes \mathbf{e}, L^2(\bfA))$ for $\zeta\in \bfA(e)$.
On the $c$-component
$\cC(c,a\otimes b)\boxtimes (\widehat{\bfA}(d)\boxtimes \cC(b, d\otimes e))$
of $\mathbf{a}\otimes (L^2(\bfA) \otimes \mathbf{e})$
for $c\in\Irr(\cC)$, we have
$$
(\lambda(\eta)\otimes \id_{\mathbf{e}})_{c}\circ (\id_{\mathbf{a}}\otimes\rho(\eta))_c
=
(\id_{\mathbf{a}}\otimes\rho(\eta))_c \circ(\lambda(\eta)\otimes \id_{\mathbf{e}})_{c}\circ(\alpha_{L^2(\bfA), L^2(\bfA), L^2(\bfA)})_c.
$$
In string diagrams, this is exactly the commutation relation
\begin{equation}
\label{eq:Commutation}
\begin{tikzpicture}[baseline=-.1cm]
	\draw (0,-1.1) -- (0,1.1);
	\draw[thick, red] (-.6,-1.1) -- (-.6,-.5);
	\draw[thick, blue] (.6,-1.1) -- (.6,.5);
	\roundNbox{unshaded}{(.1,.5)}{.3}{0}{.4}{$\zeta$}
	\roundNbox{unshaded}{(-.1,-.5)}{.3}{.4}{0}{$\eta$}
	\node at (0,-1.3) {\scriptsize{$L^2(\bfA)$}};
	\node at (0,1.3) {\scriptsize{$L^2(\bfA)$}};
	\node at (.6,-1.3) {\scriptsize{$\mathbf{e}$}};
	\node at (-.6,-1.3) {\scriptsize{$\mathbf{a}$}};
\end{tikzpicture}
=
\begin{tikzpicture}[baseline=-.1cm]
	\draw (0,-1.1) -- (0,1.1);
	\draw[thick, red] (-.6,-1.1) -- (-.6,.5);
	\draw[thick, blue] (.6,-1.1) -- (.6,-.5);
	\roundNbox{unshaded}{(.1,-.5)}{.3}{0}{.4}{$\zeta$}
	\roundNbox{unshaded}{(-.1,.5)}{.3}{.4}{0}{$\eta$}
	\node at (0,-1.3) {\scriptsize{$L^2(\bfA)$}};
	\node at (0,1.3) {\scriptsize{$L^2(\bfA)$}};
	\node at (.6,-1.3) {\scriptsize{$\mathbf{e}$}};
	\node at (-.6,-1.3) {\scriptsize{$\mathbf{a}$}};
\end{tikzpicture}
\end{equation}
where we suppress the associator in $\Hilb(\cC)$.
\end{proof}

Now that we have commuting actions of the algebra object $\bfA$ on a Hilbert space object, we can discuss bimodules and commutants.
Formalizing these notions provides a segue into Section \eqref{sec:WStarAlgebras} on W*-algebra objects and the von Neumann bicommutant theorem in $\cC$.



\section{W*-algebra objects in \texorpdfstring{$\Vec(\cC)$}{Vec(C)}}
\label{sec:WStarAlgebras}

We now focus our attention on W*-algebra objects in $\Vec(\cC)$.
We prove analogues of theorems for ordinary W*-algebras in $\fdHilb$.
Recall from Theorem \ref{thm:W*-algebraCorrespondence} that normality/weak* continuity is required for morphisms between W*-algebra objects and between cyclic $\cC$-module W*-categories.

\subsection{Commutants in bimodule categories}

\begin{rem}
One notion of a commutant for a tensor category $\cC$ and a left $\cC$-module category $\cM$ is the category $\cC'_{\cM}=\End_\cC(\cM)$, the left $\cC$-module endofunctors of $\cM$.
Notice that this is the \emph{categorified} commutant, where we think of $\cC$ as a categorified ring, and thus the commutant $\cC'_\cM$ is again a tensor category.
This is the wrong categoricial level for this article.
We want the commutant of an algebra object $\bfA$ inside $\bfB(\bfH)$, which corresponds to the cyclic $\cC$-module C*-category $\cM_\bfA$ inside $\cM_\bfH$, to be another algebra object.
Thus we want a definition of commutant which gives us another module category, not a tensor category.
\end{rem}

The above discussion, together with the commutation relation \eqref{eq:Commutation}, motivates us to take the commutant of a cyclic $\cC$-module dagger category inside a cyclic $\cC-\cC$ bimodule W*-category.

\begin{defn}
Suppose $\bfH\in \Hilb(\cC)$.
Define the cyclic $\cC-\cC$ bimodule W*-category $\cB_\bfH$ to be the full W*-subcategory of $\Hilb(\cC)$ generated by $\bfH$, i.e., the objects are of the form $\mathbf{a}\otimes \bfH \otimes \mathbf{b}$ for $a,b\in\cC$.
\end{defn}

\begin{defn}
\label{defn:CommutantModuleCategory}
Suppose $(\cM,m)$ is a cyclic left $\cC$-module dagger category, and $\Phi:(\cM, m)\to (\cM_\bfH, \bfH)$ is a cyclic $\cC$-module dagger functor.
The right commutant $\Phi(\cM)'$ is the cyclic right $\cC$-module W*-category whose objects are the left $\cC$-module dagger functors $\bfR_c:=-\otimes \mathbf{c}:\cM_\bfH \to \cB_\bfH$ for $c\in \cC$, and whose morphisms are bounded left $\cC$-module dagger natural transformations which commute with morphisms from $\Phi(\cM)$.
Here, the basepoint is $\bfR_{1_\cC}$
These morphisms are described explicitly below in Lemma \ref{lem:NTbetweenRightCreation}.
Notice $\Phi(\cM)'$ is obviously a right $\cC$-module category with $\cC$-action given by $\bfR_a \otimes c :=\bfR_{a\otimes c}$.
It follows from Corollary \ref{cor:IdentifyMPrime} below that the right commutant is a W*-category.

Similarly, there is a notion of a left commutant of a cyclic right $\cC$-module C*-category, which is a cyclic left $\cC$-module W*-category.
This is defined using left creation functors $\bfL_c:\cM_\bfH \to \cB_\bfH$ for $c\in\cC$.

Starting with a left module $(\cM,m)$ and a cyclic $\cC$-module dagger functor $\Phi: (\cM,m) \to (\cM_\bfH, \bfH)$, the right commutant $\Phi(\cM)'$ has a canonical normal cyclic right $\cC$-module dagger functor $(\Phi(\cM)', \bfH) \rightarrow ({}_\bfH\cM, \bfH)$. 
The bicommutant $\Phi(\cM)''$ is the left commutant of the right commutant of $\Phi(\cM)'$.
There is a similar notion of bicommutant of a right module.

If $\theta: \bfA\to \bfB(\bfH)$ is a representation of a C*-algebra object in $\bfA\in \Vec(\cC)$, we define its commutant $\theta(\bfA)'\in\Vec(\cC)$ to be the W*-algebra object from Remark \ref{rem:RightVersionOfConstructionAlgebra} corresponding to the right commutant $\check{\theta}(\cM_\bfA)'$ where $\check{\theta}:(\cM_\bfA,1_\bfA) \to (\cM_\bfH, \bfH)$ is the corresponding cyclic $\cC$-module dagger functor from Construction \ref{construction:Module}.
\end{defn}

\begin{lem}
\label{lem:NTbetweenRightCreation}
Any $\cC$-module natural transformation $(f_c)_{c\in \cC}: \bfR_a\Rightarrow \bfR_b$ for $a,b\in\cC$ is of the form $f_c = \id_c\otimes f$ for some fixed $f\in \Hom_{\Hilb(\cC)}(\bfH\otimes\mathbf{a}, \bfH\otimes\mathbf{b})$ independent of $c\in\cC$.
\end{lem}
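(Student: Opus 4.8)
The plan is to unpack what a $\cC$-module natural transformation $\bfR_a \Rightarrow \bfR_b$ actually is. Recall $\bfR_c = -\otimes \mathbf{c} : \cM_\bfH \to \cB_\bfH$, so a natural transformation between these functors assigns to each object $\mathbf{d}\otimes\bfH$ of $\cM_\bfH$ a morphism $f_{\mathbf{d}\otimes\bfH} : \mathbf{d}\otimes\bfH\otimes\mathbf{a} \to \mathbf{d}\otimes\bfH\otimes\mathbf{b}$ in $\Hilb(\cC)$, and the $\cC$-module structure requires these to be compatible with the left action of $\cC$. The key point is that the module-naturality condition lets us reduce everything to the component at the generating object $\bfH$ itself (i.e.\ $d = 1_\cC$). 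First I would spell out the module-functor structure maps $\omega$ on $\bfR_c$: since $\bfR_c$ is literally $-\otimes\mathbf{c}$, the coherence isomorphism $\omega_{d, \mathbf{e}\otimes\bfH} : \mathbf{d}\otimes\bfR_c(\mathbf{e}\otimes\bfH) \to \bfR_c(\mathbf{d}\otimes\mathbf{e}\otimes\bfH)$ is just the associator in $\Hilb(\cC)$. So a $\cC$-module natural transformation $(f_{\mathbf{d}\otimes\bfH})$ must satisfy, modulo associators, $f_{\mathbf{d}\otimes\bfH} = \id_{\mathbf{d}} \otimes f_{\bfH}$ for every $d\in\cC$.

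The steps, in order: (1) Set $f := f_{\bfH} \in \Hom_{\Hilb(\cC)}(\bfH\otimes\mathbf{a}, \bfH\otimes\mathbf{b})$, which is well-defined because $\bfR_a(\bfH) = \bfH\otimes\mathbf{a}$ and $\bfR_b(\bfH) = \bfH\otimes\mathbf{b}$ (here I use $\bfH$ in place of the basepoint object $\mathbf{1}_\cC\otimes\bfH \cong \bfH$ of $\cM_\bfH$; strictly one writes the basepoint as $\bfH$). (2) Apply the module-naturality square to the object $\mathbf{d}\otimes\bfH$, using the action morphism $\id_{\mathbf{d}}$ on the generating object, to conclude $f_{\mathbf{d}\otimes\bfH} = \id_{\mathbf{d}}\otimes f$ up to the associators of $\Hilb(\cC)$ (which we are suppressing per the paper's convention). (3) Check conversely that for any $f \in \Hom_{\Hilb(\cC)}(\bfH\otimes\mathbf{a}, \bfH\otimes\mathbf{b})$, the family $(\id_{\mathbf{d}}\otimes f)_{\mathbf{d}\otimes\bfH\in\cM_\bfH}$ is genuinely natural: ordinary naturality in morphisms $g : \mathbf{d}\otimes\bfH \to \mathbf{d}'\otimes\bfH$ follows since $g$ acts only on the $\mathbf{d}\otimes\bfH$ part while $f$ acts only on the $\mathbf{a}$/$\mathbf{b}$ tail, so they commute by the interchange law $(g\otimes\id)\circ(\id\otimes f) = (\id\otimes f)\circ(g\otimes\id)$; and module-naturality holds by construction. (4) Boundedness is automatic since $\sup_{c}\|\id_c\otimes f\| = \|f\| < \infty$ by the $\boxtimes$-structure of the tensor product (the balancing factors cancel between source and target), and the dagger condition on the natural transformation corresponds exactly to $f$ being an arbitrary morphism, with adjoint computed componentwise.

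I do not expect a serious obstacle here; this is essentially the statement that $\End_{\cC\text{-mod}}$ of a free/representable-type module functor is computed by evaluation at the generator, a Yoneda-style fact. The one place requiring a little care is bookkeeping with the associators of $\Hilb(\cC)$: the morphisms $f_{\mathbf{d}\otimes\bfH}$ live in $\Hom_{\Hilb(\cC)}(\mathbf{d}\otimes\bfH\otimes\mathbf{a}, \mathbf{d}\otimes\bfH\otimes\mathbf{b})$ and the identification with $\id_{\mathbf{d}}\otimes f$ is only ``on the nose'' after conjugating by the relevant associator unitaries; since the paper has adopted the convention of suppressing associators and unitors, I would simply note this and proceed. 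The main thing to get right is therefore just the precise form of the $\cC$-module structure $\omega$ on the creation functors $\bfR_c$, which I would state explicitly before invoking the naturality square.
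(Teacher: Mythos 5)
Your forward direction rests entirely on reading ``$\cC$-module natural transformation'' as a morphism of module functors, i.e.\ as including the coherence square $\theta_{d\otimes m}\circ\omega^{\bfR_a}_{d,m}=\omega^{\bfR_b}_{d,m}\circ(\id_d\otimes\theta_m)$; evaluating at the basepoint $m=\bfH$ then gives the statement in one line. That is a defensible reading, but it is not the route the paper takes, and the difference is not cosmetic. The paper's proof never invokes the module coherence: it first shows $f_c=\id_{\mathbf{c}}\otimes f_c'$ with $f_c'$ the normalized left partial trace of $f_c$, and then shows $f_c'=f_d'$, using only \emph{ordinary} naturality of the family $(f_c)$ against morphisms of the form $\psi\otimes\id_{\bfH}$ with $\psi$ built from evaluations and coevaluations of $c$ and $d$ (a zigzag/partial-trace argument). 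This matters for how the lemma is used: in Definition \ref{defn:CommutantModuleCategory} the only morphisms against which an element of a commutant $\Phi(\cM)'$ is guaranteed to be natural, for an arbitrary cyclic $\cC$-module functor $\Phi$, are those in the image of $\cC$ inside $\Phi(\cM)$. The paper's argument therefore applies to every element of every commutant and feeds directly into Corollary \ref{cor:IdentifyMPrime}; your argument derives a weaker conclusion from a stronger hypothesis.

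Separately, your step (3) contains a genuine error. You assert that $(\id_{\mathbf{d}}\otimes f)$ is automatically natural against $g:\mathbf{d}\otimes\bfH\to\mathbf{d}'\otimes\bfH$ ``by the interchange law, since $g$ acts only on the $\mathbf{d}\otimes\bfH$ part while $f$ acts only on the $\mathbf{a}$/$\mathbf{b}$ tail.'' But $f$ lives in $\Hom_{\Hilb(\cC)}(\bfH\otimes\mathbf{a},\bfH\otimes\mathbf{b})$: it acts on the $\bfH$ strand as well, so $g$ and $f$ overlap on $\bfH$ and the interchange law does not apply. The identity $(g\otimes\id_{\mathbf{b}})\circ(\id_{\mathbf{d}}\otimes f)=(\id_{\mathbf{d}'}\otimes f)\circ(g\otimes\id_{\mathbf{a}})$ is precisely the nontrivial commutation condition displayed in Corollary \ref{cor:IdentifyMPrime}; if it held automatically, $\cM_\bfH'$ would contain all of $\Hom_{\Hilb(\cC)}(\bfH\otimes\mathbf{a},\bfH\otimes\mathbf{b})$, contradicting Proposition \ref{prop:MHprime}. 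The converse is not needed for the lemma as stated, so this does not invalidate your main claim, but it is symptomatic: the naturality you dismiss as vacuous is exactly what carries the content of the paper's proof.
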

\begin{proof}
Suppose we have a natural transformation $(f_c)_{c\in\cC}: \bfR_a\Rightarrow \bfR_b$.
We observe that $f_c = \id_{\mathbf{c}} \otimes f_c'$, where $f_c'$ is the normalized left partial trace of $f_c$:
$$
\begin{tikzpicture}[baseline=-.1cm]
	\draw[thick, DarkGreen] (-.1,-.5) -- (-.1,.5);
	\draw[thick, red] (.4,-.5) -- (.4,0);
	\draw[thick, blue] (.4,0) -- (.4,.5);
	\draw (.15,-.5) -- (.15,.5);
	\roundNbox{unshaded}{(0,0)}{.25}{0}{.3}{$f_c$}
	\node at (.4,.7) {\scriptsize{$\mathbf{b}$}};
	\node at (.4,-.7) {\scriptsize{$\mathbf{a}$}};
	\node at (-.1,-.7) {\scriptsize{$\mathbf{c}$}};
	\node at (.15,-.7) {\scriptsize{$\bfH$}};
\end{tikzpicture}
=
\begin{tikzpicture}[baseline=.4cm, xscale=-1]
	\draw (-.1,-.7) -- (-.1,1.7);
	\draw[thick, blue] (-.3,0) -- (-.3,1.7);
	\draw[thick, red] (-.3,-.7) -- (-.3,0);
	\draw[thick, DarkGreen] (.1,-.7) -- (.1,.7) arc (180:0:.15cm) -- (.4,-.4) arc (-180:0:.15cm) -- (.7,1.4) arc (0:180:.15cm) -- (.4,1.3) arc (0:-180:.15cm) -- (.1,1.7);
	\roundNbox{unshaded}{(0,0)}{.25}{.2}{0}{$f_c$}
	\roundNbox{dashed}{(0,0)}{.4}{.2}{.5}{}
	\roundNbox{dashed}{(0,1)}{.4}{.2}{.5}{}
\end{tikzpicture}
=
\begin{tikzpicture}[baseline=.4cm, xscale=-1]
	\draw (-.1,-.7) -- (-.1,1.7);
	\draw[thick, blue] (-.3,0) -- (-.3,1.7);
	\draw[thick, red] (-.3,-.7) -- (-.3,0);
	\draw[thick, DarkGreen] (.1,-.7) -- (.1,.7) arc (180:0:.15cm) -- (.4,-.4) arc (-180:0:.15cm) -- (.7,1.4) arc (0:180:.15cm) -- (.4,1.3) arc (0:-180:.15cm) -- (.1,1.7);
	\roundNbox{unshaded}{(0,0)}{.4}{.2}{.5}{$f_{c\otimes \overline{c}\otimes c}$}
	\roundNbox{dashed}{(0,1)}{.4}{.2}{.5}{}
\end{tikzpicture}
=
\dim_\cC(c)^{-1}\,
\begin{tikzpicture}[baseline=-.1cm, xscale=-1]
	\draw (-.1,-1.7) -- (-.1,1.7);
	\draw[thick, red] (-.3,-1.7) -- (-.3,0);
	\draw[thick, blue] (-.3,0) -- (-.3,1.7);
	\draw[thick, DarkGreen] (.1,.25) -- (.1,.7) arc (180:0:.15cm) -- (.4,-.7) arc (0:-180:.15cm) -- (.1,-.25);
	\draw[thick, DarkGreen] (.1,-1.7) -- (.1,-1.3) arc (180:0:.15cm) -- (.4,-1.4) arc (-180:0:.15cm) -- (.7,1.4) arc (0:180:.15cm) -- (.4,1.3) arc (0:-180:.15cm) -- (.1,1.7);
	\roundNbox{dashed}{(0,-1)}{.4}{.2}{.5}{}
	\roundNbox{unshaded}{(0,0)}{.4}{.2}{.5}{$f_{c\otimes \overline{c}\otimes c}$}
	\roundNbox{dashed}{(0,1)}{.4}{.2}{.5}{}
\end{tikzpicture}
=
\dim_\cC(c)^{-1}\,
\begin{tikzpicture}[baseline=-.1cm, xscale=-1]
	\draw (-.1,-1.7) -- (-.1,1.7);
	\draw[thick, red] (-.3,-1.7) -- (-.3,0);
	\draw[thick, blue] (-.3,0) -- (-.3,1.7);
	\draw[thick, DarkGreen] (.1,.25) -- (.1,.7) arc (180:0:.15cm) -- (.4,-.7) arc (0:-180:.15cm) -- (.1,-.25);
	\draw[thick, DarkGreen] (.1,-1.7) -- (.1,-1.3) arc (180:0:.15cm) -- (.4,-1.4) arc (-180:0:.15cm) -- (.7,1.4) arc (0:180:.15cm) -- (.4,1.3) arc (0:-180:.15cm) -- (.1,1.7);
	\roundNbox{unshaded}{(0,0)}{.25}{.2}{0}{$f_c$}
	\roundNbox{dashed}{(0,-1)}{.4}{.2}{.5}{}
	\roundNbox{dashed}{(0,1)}{.4}{.2}{.5}{}
\end{tikzpicture}
=
\dim_\cC(c)^{-1}\,
\begin{tikzpicture}[baseline=-.1cm]
	\draw (.1,-.5) -- (.1,.5);
	\draw[thick, red] (.3,-.5) -- (.3,0);
	\draw[thick, blue] (.3,0) -- (.3,.5);
	\draw[thick, DarkGreen] (-.1,-.25) arc (0:-180:.15cm) -- (-.4,.25) arc (180:0:.15cm);
	\draw[thick, DarkGreen] (-.55,-.5) -- (-.55,.5);
	\roundNbox{unshaded}{(0,0)}{.25}{0}{.2}{$f_c$}
\end{tikzpicture}
\,.
$$
Finally, a similar calculation shows that $f_c'=f_d'$ for all $c,d\in\cC$.
We have
\begin{align*}
\id_{\mathbf{d}} \otimes f_c' 
&=
\dim_\cC(c)^{-1}\,
\begin{tikzpicture}[baseline=-.1cm]
	\draw (.1,-.5) -- (.1,.5);
	\draw[thick, red] (.3,-.5) -- (.3,0);
	\draw[thick, blue] (.3,0) -- (.3,.5);
	\draw[thick, DarkGreen] (-.1,-.25) arc (0:-180:.15cm) -- (-.4,.25) arc (180:0:.15cm);
	\draw[thick, orange] (-.55,-.5) -- (-.55,.5);
	\roundNbox{unshaded}{(0,0)}{.25}{0}{.2}{$f_c$}
\end{tikzpicture}
= 
\dim_\cC(c)^{-1}\,
\begin{tikzpicture}[baseline=-.1cm, xscale=-1]
	\draw (-.1,-1.7) -- (-.1,1.7);
	\draw[thick, red] (-.3,-1.7) -- (-.3,0);
	\draw[thick, blue] (-.3,0) -- (-.3,1.7);
	\draw[thick, DarkGreen] (.1,.25) -- (.1,.7) arc (180:0:.15cm) -- (.4,-.7) arc (0:-180:.15cm) -- (.1,-.25);
	\draw[thick, orange] (.1,-1.7) -- (.1,-1.3) arc (180:0:.15cm) -- (.4,-1.4) arc (-180:0:.15cm) -- (.7,1.4) arc (0:180:.15cm) -- (.4,1.3) arc (0:-180:.15cm) -- (.1,1.7);
	\roundNbox{unshaded}{(0,0)}{.25}{.2}{0}{$f_c$}
	\roundNbox{dashed}{(0,-1)}{.4}{.2}{.5}{}
	\roundNbox{dashed}{(0,1)}{.4}{.2}{.5}{}
\end{tikzpicture}
=
\dim_\cC(c)^{-1}\,
\begin{tikzpicture}[baseline=-.1cm, xscale=-1]
	\draw (-.1,-1.7) -- (-.1,1.7);
	\draw[thick, red] (-.3,-1.7) -- (-.3,0);
	\draw[thick, blue] (-.3,0) -- (-.3,1.7);
	\draw[thick, DarkGreen] (.1,.25) -- (.1,.7) arc (180:0:.15cm) -- (.4,-.7) arc (0:-180:.15cm) -- (.1,-.25);
	\draw[thick, orange] (.1,-1.7) -- (.1,-1.3) arc (180:0:.15cm) -- (.4,-1.4) arc (-180:0:.15cm) -- (.7,1.4) arc (0:180:.15cm) -- (.4,1.3) arc (0:-180:.15cm) -- (.1,1.7);
	\roundNbox{dashed}{(0,-1)}{.4}{.2}{.5}{}
	\roundNbox{unshaded}{(0,0)}{.4}{.2}{.5}{$f_{d\otimes \overline{c}\otimes c}$}
	\roundNbox{dashed}{(0,1)}{.4}{.2}{.5}{}
\end{tikzpicture}
\\
&=
\begin{tikzpicture}[baseline=.4cm, xscale=-1]
	\draw (-.1,-.7) -- (-.1,1.7);
	\draw[thick, blue] (-.3,0) -- (-.3,1.7);
	\draw[thick, red] (-.3,-.7) -- (-.3,0);
	\draw[thick, orange] (.1,-.7) -- (.1,.7) arc (180:0:.15cm) -- (.4,-.4) arc (-180:0:.15cm) -- (.7,1.4) arc (0:180:.15cm) -- (.4,1.3) arc (0:-180:.15cm) -- (.1,1.7);
	\roundNbox{unshaded}{(0,0)}{.4}{.2}{.5}{$f_{d\otimes \overline{d}\otimes d}$}
	\roundNbox{dashed}{(0,1)}{.4}{.2}{.5}{}
\end{tikzpicture}
=
\begin{tikzpicture}[baseline=.4cm, xscale=-1]
	\draw (-.1,-.7) -- (-.1,1.7);
	\draw[thick, blue] (-.3,0) -- (-.3,1.7);
	\draw[thick, red] (-.3,-.7) -- (-.3,0);
	\draw[thick, orange] (.1,-.7) -- (.1,.7) arc (180:0:.15cm) -- (.4,-.4) arc (-180:0:.15cm) -- (.7,1.4) arc (0:180:.15cm) -- (.4,1.3) arc (0:-180:.15cm) -- (.1,1.7);
	\roundNbox{unshaded}{(0,0)}{.25}{.2}{0}{$f_d$}
	\roundNbox{dashed}{(0,0)}{.4}{.2}{.5}{}
	\roundNbox{dashed}{(0,1)}{.4}{.2}{.5}{}
\end{tikzpicture}
=
\begin{tikzpicture}[baseline=-.1cm]
	\draw[thick, orange] (-.1,-.5) -- (-.1,.5);
	\draw[thick, red] (.4,-.5) -- (.4,0);
	\draw[thick, blue] (.4,0) -- (.4,.5);
	\draw (.15,-.5) -- (.15,.5);
	\roundNbox{unshaded}{(0,0)}{.25}{0}{.3}{$f_d$}
	\node at (.4,.7) {\scriptsize{$\mathbf{b}$}};
	\node at (.4,-.7) {\scriptsize{$\mathbf{a}$}};
	\node at (-.1,-.7) {\scriptsize{$\mathbf{d}$}};
	\node at (.15,-.7) {\scriptsize{$\bfH$}};
\end{tikzpicture}
=
\dim_\cC(d)^{-1}\,
\begin{tikzpicture}[baseline=-.1cm]
	\draw (.1,-.5) -- (.1,.5);
	\draw[thick, red] (.3,-.5) -- (.3,0);
	\draw[thick, blue] (.3,0) -- (.3,.5);
	\draw[thick, orange] (-.1,-.25) arc (0:-180:.15cm) -- (-.4,.25) arc (180:0:.15cm);
	\draw[thick, orange] (-.55,-.5) -- (-.55,.5);
	\roundNbox{unshaded}{(0,0)}{.25}{0}{.2}{$f_d$}
\end{tikzpicture}
=
\id_{\mathbf{d}}\otimes f_d'
\end{align*}
Now using that $\id_{\mathbf{d}}\otimes -$ is faithful, we see $f_c'=f_d'$.
Calling this map $f$, we are finished.
\end{proof}

Using Lemma \ref{lem:NTbetweenRightCreation}, we get a concrete description of the right commutant $\cM'$.

\begin{cor}
\label{cor:IdentifyMPrime}
We may identify the objects of $\Phi(\cM)'$ with objects of the form $\bfH\otimes \mathbf{c} \in \Hilb(\cC)$, and for $c,d\in\cC$, the morphism space $\Phi(\cM)'(\bfR_c, \bfR_d)$ is isomorphic to 
$$
\set{f\in \Hom_{\Hilb(\cC)}(\bfH\otimes \mathbf{c}, \bfH\otimes \mathbf{d})}{\,\,\,
\begin{tikzpicture}[baseline=-.1cm]
	\draw (0,-1.1) -- (0,1.1);
	\draw[thick, red] (-.4,-1.1) -- (-.4,-.5);
	\draw[thick, blue] (-.4,-.5) -- (-.4,1.1);
	\draw[thick, DarkGreen] (.4,-1.1) -- (.4,.5);
	\draw[thick, orange] (.4,.5) -- (.4,1.1);
	\roundNbox{unshaded}{(.1,.5)}{.3}{0}{.2}{$f$}
	\roundNbox{unshaded}{(-.1,-.5)}{.3}{.2}{0}{$g$}
	\node at (0,-1.3) {\scriptsize{$\bfH$}};
	\node at (0,1.3) {\scriptsize{$\bfH$}};
	\node at (.4,-1.3) {\scriptsize{$\mathbf{c}$}};
	\node at (.4,1.3) {\scriptsize{$\mathbf{d}$}};
	\node at (-.4,-1.3) {\scriptsize{$\mathbf{a}$}};
	\node at (-.4,1.3) {\scriptsize{$\mathbf{b}$}};
\end{tikzpicture}
=
\begin{tikzpicture}[baseline=-.1cm]
	\draw (0,-1.1) -- (0,1.1);
	\draw[thick, red] (-.4,-1.1) -- (-.4,.5);
	\draw[thick, blue] (-.4,.5) -- (-.4,1.1);
	\draw[thick, DarkGreen] (.4,-1.1) -- (.4,-.5);
	\draw[thick, orange] (.4,-.5) -- (.4,1.1);
	\roundNbox{unshaded}{(.1,-.5)}{.3}{0}{.2}{$f$}
	\roundNbox{unshaded}{(-.1,.5)}{.3}{.2}{0}{$g$}
	\node at (0,-1.3) {\scriptsize{$\bfH$}};
	\node at (0,1.3) {\scriptsize{$\bfH$}};
	\node at (.4,-1.3) {\scriptsize{$\mathbf{c}$}};
	\node at (.4,1.3) {\scriptsize{$\mathbf{d}$}};
	\node at (-.4,-1.3) {\scriptsize{$\mathbf{a}$}};
	\node at (-.4,1.3) {\scriptsize{$\mathbf{b}$}};
\end{tikzpicture}
\,\,\text{ for all }
g\in \Hom_{\Phi(\cM)}(\mathbf{a}\otimes \bfH, \mathbf{b}\otimes \bfH)
}.
$$
Thus $\cM'$ is a \emph{W*}-category.
\end{cor}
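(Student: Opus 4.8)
The plan is to read the statement off from Lemma~\ref{lem:NTbetweenRightCreation} together with the recollection in Section~\ref{sec:Categories} that a dagger category which admits direct sums is a W*-category exactly when each of its endomorphism algebras is a von Neumann algebra.

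First I would dispose of the objects: by definition the objects of $\Phi(\cM)'$ are the left $\cC$-module dagger functors $\bfR_c=-\otimes\mathbf{c}$, and since $\cM_\bfH$ is cyclic with basepoint $\bfH$, such a functor is determined by its value $\bfR_c(\bfH)=\bfH\otimes\mathbf{c}$, with the right $\cC$-action matching ($\bfR_a\otimes c=\bfR_{a\otimes c}$ and $(\bfH\otimes\mathbf{a})\otimes\mathbf{c}\cong\bfH\otimes\mathbf{a\otimes c}$); so we identify $\bfR_c$ with $\bfH\otimes\mathbf{c}$ and the basepoint $\bfR_{1_\cC}$ with $\bfH$. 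For the morphisms, a morphism $\bfR_c\Rightarrow\bfR_d$ in $\Phi(\cM)'$ is a bounded left $\cC$-module dagger natural transformation $\eta$ commuting with the morphisms of $\Phi(\cM)$; by Lemma~\ref{lem:NTbetweenRightCreation}, $\eta_{\mathbf{e}\otimes\bfH}=\id_{\mathbf{e}}\otimes f$ for the single morphism $f:=\eta_\bfH\in\Hom_{\Hilb(\cC)}(\bfH\otimes\mathbf{c},\bfH\otimes\mathbf{d})$, and conversely every such $f$ produces a bounded $\cC$-linear family this way (using $\|\id_{\mathbf{e}}\otimes f\|=\|f\|$ and $(\id_{\mathbf{e}}\otimes f)^{*}=\id_{\mathbf{e}}\otimes f^{*}$, so that the family is bounded and its adjoint exists). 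It then remains to translate ``commutes with $\Phi(\cM)$'': because $\Phi$ is a cyclic $\cC$-module dagger functor out of a cyclic module category, every object of $\Phi(\cM)$ is carried by the coherence isomorphisms $\omega$ of $\Phi$ to one of the form $\mathbf{a}\otimes\bfH$, and for $g\in\Hom_{\Phi(\cM)}(\mathbf{a}\otimes\bfH,\mathbf{b}\otimes\bfH)$ commutation of $\eta$ with $g$ is the square $\bfR_d(g)\circ\eta_{\mathbf{a}\otimes\bfH}=\eta_{\mathbf{b}\otimes\bfH}\circ\bfR_c(g)$, which unwinds (suppressing the associators of $\Hilb(\cC)$) to $(g\otimes\id_{\mathbf{d}})\circ(\id_{\mathbf{a}}\otimes f)=(\id_{\mathbf{b}}\otimes f)\circ(g\otimes\id_{\mathbf{c}})$ — precisely the pictorial identity in the statement. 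This yields the asserted description of $\Phi(\cM)'(\bfR_c,\bfR_d)$; along the way one checks that this set of $f$'s contains $\id_{\bfH\otimes\mathbf{c}}$, is closed under composition, and (using $g^{*}\in\Hom_{\Phi(\cM)}(\mathbf{b}\otimes\bfH,\mathbf{a}\otimes\bfH)$, so that taking adjoints of the relation for $f$ gives the relation for $f^{*}$) is closed under adjoints, so $\Phi(\cM)'$ is a dagger category admitting direct sums ($\bfR_c\oplus\bfR_d\cong\bfR_{c\oplus d}$, direct sums of intertwiners being intertwiners).

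For the W*-conclusion I would fix $c\in\cC$ and observe that, under this identification, $\End_{\Phi(\cM)'}(\bfR_c)$ is the set of $f$ in the von Neumann algebra $M:=\End_{\Hilb(\cC)}(\bfH\otimes\mathbf{c})$ satisfying, for all $a,b$ and all $g\in\Hom_{\Phi(\cM)}(\mathbf{a}\otimes\bfH,\mathbf{b}\otimes\bfH)$, the relation just displayed. For fixed $a,b,g$ the map $f\mapsto(g\otimes\id_{\mathbf{c}})\circ(\id_{\mathbf{a}}\otimes f)-(\id_{\mathbf{b}}\otimes f)\circ(g\otimes\id_{\mathbf{c}})$ is weak*-continuous: tensoring a fixed object of $\Hilb(\cC)$ with a variable morphism is normal (fibrewise it is a direct sum of ampliations of von Neumann algebras), and pre- and post-composition with a fixed morphism is weak*-continuous. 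Hence $\End_{\Phi(\cM)'}(\bfR_c)$ is an intersection of kernels of normal maps, i.e.\ a weak*-closed unital $*$-subalgebra of $M$, so itself a von Neumann algebra, and the criterion from Section~\ref{sec:Categories} gives that $\Phi(\cM)'$ is a W*-category. I expect the morphism identification — in particular pushing ``commutes with $\Phi(\cM)$'' through the coherence data of $\Phi$ and the associators of $\Hilb(\cC)$ — to be the only genuinely delicate step; the W*-part is routine given the (standard) normality of the tensor product on $\Hilb(\cC)$.
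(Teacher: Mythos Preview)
Your proposal is correct and follows exactly the route the paper intends: the paper states the corollary immediately after Lemma~\ref{lem:NTbetweenRightCreation} with only the sentence ``Using Lemma~\ref{lem:NTbetweenRightCreation}, we get a concrete description of the right commutant $\cM'$,'' leaving the unpacking of the commutation condition and the W*-verification implicit. You have simply supplied those details---identifying $\eta$ with $f=\eta_{\bfH}$ via the lemma, rewriting naturality as the displayed string-diagram identity, and checking that each endomorphism algebra is a weak*-closed $*$-subalgebra of $\End_{\Hilb(\cC)}(\bfH\otimes\mathbf{c})$---in the expected way.
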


Next, we show that $\bfB(\bfH)$ has the commutant that one should expect.
We begin with a discussion of creation natural transformations.

\begin{defn}[Creation natural transformations]
\label{defn:CreationNaturalTransformations}
Suppose $b,c\in\Irr(\cC)$ and $a\in\cC$.
For $\alpha\in \cC(c, a\otimes b)$, and $x\in B(\bfH(c), \bfH(b))$, we define a natural transformation $L^{\alpha, x}= (L^{\alpha, x}_d)_{d\in\cC} : \bfH \to \mathbf{a}\otimes \bfH$ as follows.
First, we recall that 
$$
(\mathbf{a}\otimes \bfH)(d)
\cong 
\bigoplus_{e,f\in \Irr(\cC)}
\textbf{a}(e)\boxtimes
\cC(d, e\otimes f)\boxtimes \bfH(f)
\cong 
\bigoplus_{f\in \Irr(\cC)}
\cC(d, a\otimes f)\boxtimes \bfH(f).
$$
via the Hilbert space isomorphism $\psi\boxtimes \beta \boxtimes \xi \mapsto  \mathbf{a}(\psi)(\beta) \boxtimes \xi$.
(That the inner products agree is a simple graphical calculation.)
For $d\in\Irr(\cC)$, we define the component $L^{\alpha, x}_d$ by
$$
\bfH(d)\ni\xi 
=
\begin{tikzpicture}[baseline = -.1cm]
    \draw (0,-.6) -- (0,.6);
    \roundNbox{unshaded}{(0,0)}{.3}{0}{0}{$\xi$}
    \node at (.15,-.5) {\scriptsize{$\mathbf{d}$}};
    \node at (.15,.5) {\scriptsize{$\bfH$}};
\end{tikzpicture}
\overset{L^{\alpha,x}_d}{\longmapsto} 
=
\delta_{c=d} [\alpha\boxtimes x(\xi)]
=
\delta_{c=d}
\begin{tikzpicture}[baseline = -.4cm]
    \draw (0,.6) -- (0,-.3);
    \draw (1,0) -- (1,.6);
    \draw (.5,-.8) -- (.5,-1.2);
    \draw (0,-.3) arc (-180:0:.5cm);
    \filldraw[fill=white] (.5,-.8) circle (.05cm) node [above] {\scriptsize{$\alpha$}};
    \roundNbox{unshaded}{(1,0)}{.3}{.2}{.2}{$x(\xi)$}
    \node at (-.15,-.5) {\scriptsize{$\mathbf{a}$}};
    \node at (1.15,-.5) {\scriptsize{$\mathbf{b}$}};
    \node at (.35,-1) {\scriptsize{$\mathbf{c}$}};
    \node at (1.2,.5) {\scriptsize{$\bfH$}};
\end{tikzpicture}
\in \cC(c, a\otimes b)\boxtimes \bfH(b).
$$
The maps $(L^{\alpha, x}_d)_{d\in\Irr(\cC)}$ are clearly natural with respect to maps between simples objects $d\to d'$, and we extend it by additivity to all objects.

If $b=c$, we define the creation natural transformation $L^\alpha = L^{\alpha,\id_c}$.
We define right creation natural transformations similarly.
It is straightforward to check that the $d$-component of $(L^{\alpha,x})^*$ for $d\in \Irr(\cC)$ is given on the summand $\cC(d, a\otimes f)\boxtimes \bfH(f)$ for $f\in \Irr(\cC)$ by 
$$
(L^{\alpha,x})^*_d(\beta\boxtimes \xi) = \delta_{d=c}\delta_{f=b} \langle \alpha|\beta\rangle_{\cC(c, a\otimes b)}x^*(\xi).
$$
\end{defn}

\begin{prop}
\label{prop:MHprime}
For an object $\bfH\in \Hilb(\cC)$, the commutant $\cM_\bfH'$ is equivalent to $(\cC,1_\cC)$ as a cyclic right $\cC$-module W*-category.
Hence $\bfB(\bfH)' \cong \mathbf{1}\in \Vec(\cC)$.
\end{prop}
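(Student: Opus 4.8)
The plan is to exhibit an explicit isomorphism $\Psi\colon(\cC,1_\cC)\to\cM_\bfH'$ of cyclic right $\cC$-module W*-categories and then read off the statement about $\bfB(\bfH)'$ from the algebra--module correspondence. Using the identification of the objects of $\cM_\bfH'$ with the $\bfH\otimes\mathbf{c}$ for $c\in\cC$ given by Corollary~\ref{cor:IdentifyMPrime}, set $\Psi(c):=\bfR_c\leftrightarrow\bfH\otimes\mathbf{c}$ on objects and $\Psi(\psi):=\id_\bfH\otimes\psi$ on a morphism $\psi\in\cC(c,d)$, where $\psi$ also denotes its image $\mathbf{c}\Rightarrow\mathbf{d}$ under the Yoneda embedding. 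First I would check that $\id_\bfH\otimes\psi$ really lies in $\cM_\bfH'(\bfR_c,\bfR_d)$: since it acts as the identity on the $\bfH$-leg and only touches the $\mathbf{c}$-leg, the commutation relation of Corollary~\ref{cor:IdentifyMPrime} against any $g\in\Hom_{\cM_\bfH}(\mathbf{a}\otimes\bfH,\mathbf{b}\otimes\bfH)$ is just a bimodule slide, up to the (suppressed) associators of $\Hilb(\cC)$. It is then routine that $\Psi$ is a normal cyclic right $\cC$-module dagger functor: it preserves adjoints because $(\id_\bfH\otimes\psi)^*=\id_\bfH\otimes\psi^*$, it is a right $\cC$-module functor because $\bfR_a\otimes c=\bfR_{a\otimes c}$ corresponds to $a\otimes_\cC c$, it sends the basepoint $1_\cC$ to $\bfR_{1_\cC}$, and normality is inherited from $\Hilb(\cC)$. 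Faithfulness is immediate since $\id_\bfH\otimes(-)$ is faithful (as already used in the proof of Lemma~\ref{lem:NTbetweenRightCreation}), and $\Psi$ is a bijection on objects by Corollary~\ref{cor:IdentifyMPrime}.

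The real content is the fullness of $\Psi$: every $f\in\cM_\bfH'(\bfR_c,\bfR_d)\subseteq\Hom_{\Hilb(\cC)}(\bfH\otimes\mathbf{c},\bfH\otimes\mathbf{d})$ has the form $\id_\bfH\otimes\psi$ for a (then unique) $\psi\in\cC(c,d)$. Here I would use the creation natural transformations of Definition~\ref{defn:CreationNaturalTransformations}. For $b',c'\in\Irr(\cC)$, $\alpha\in\cC(c',b\otimes b')$, and $x\in B(\bfH(c'),\bfH(b'))$, the morphism $L^{\alpha,x}\colon\bfH\to\mathbf{b}\otimes\bfH$ lands in the single fiber-summand of $\mathbf{b}\otimes\bfH$ picked out by $\alpha$; as $\alpha$ ranges over an $\ONB$ of $\cC(c',b\otimes b')$ and $x$ over $B(\bfH(c'),\bfH(b'))$, and $b',c'$ over $\Irr(\cC)$, these span a weak*-dense subspace of $\Hom_{\Hilb(\cC)}(\bfH,\mathbf{b}\otimes\bfH)$, and composites $L^{\beta,y}\circ(L^{\alpha,x})^*$ span a weak*-dense subspace of each $\Hom_{\cM_\bfH}(\mathbf{a}\otimes\bfH,\mathbf{b}\otimes\bfH)$. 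Thus $f$ lies in the commutant iff it commutes with every $g=L^{\alpha,x}$ and every $g=(L^{\alpha,x})^*$ in the sense of Corollary~\ref{cor:IdentifyMPrime}. Writing this commutation out on the $\Irr(\cC)$-graded fibers of $\bfH\otimes\mathbf{c}$, with $x$ running over rank-one operators on a fixed $\bfH(e)$, is the internal-to-$\cC$ avatar of the classical fact that the commutant of $B(H)\otimes\id_K$ inside $B(H\otimes K)$ is $\id_H\otimes B(K)$: commuting with ``all matrix units on the $\bfH$-leg'' forces $f$ to be diagonal and scalar on each $\bfH$-fiber, the scalar datum depending only on the $\mathbf{c}$- and $\mathbf{d}$-legs. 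Assembling this datum naturally in $e\in\Irr(\cC)$ yields a bounded natural transformation $\psi\colon\mathbf{c}\Rightarrow\mathbf{d}$, i.e.\ an element $\psi\in\cC(c,d)$ by Yoneda, with $f=\id_\bfH\otimes\psi$; Lemma~\ref{lem:ZeroInHilbC} can be used to verify this equality of natural transformations on a spanning set of simple tensors.

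Granting fullness, $\Psi$ is a fully faithful, bijective-on-objects, normal cyclic right $\cC$-module dagger functor, hence an isomorphism $(\cC,1_\cC)\cong\cM_\bfH'$ of cyclic right $\cC$-module W*-categories; in particular $\cM_\bfH'$ is a W*-category, which also re-proves the last sentence of Corollary~\ref{cor:IdentifyMPrime} in this case. Finally, under the right-handed algebra--module correspondence of Remark~\ref{rem:RightVersionOfConstructionAlgebra} (the right-module analogue of Theorem~\ref{thm:AlgebraModuleCategoryCorrespondence}), the algebra object in $\Vec(\cC)$ attached to the trivial cyclic right $\cC$-module category $(\cC,1_\cC)$ is the tensor unit $\mathbf{1}=\cC(\,\cdot\,,1_\cC)$, so $\bfB(\bfH)'\cong\mathbf{1}$, as claimed. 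I expect the main obstacle to be the fullness step: one must justify that commuting with the creation natural transformations $L^{\alpha,x}$ and their adjoints already forces membership in the commutant (a density/generation argument for the morphism spaces of $\cM_\bfH$), and then push through the ``$B(H)'=\C$'' computation fiber by fiber over $\Irr(\cC)$ while keeping the associators and the $\boxtimes$-balancing factors straight.
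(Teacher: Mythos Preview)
Your approach is essentially the paper's: both reduce to showing that any $f\in\cM_\bfH'(\bfR_c,\bfR_d)$ has the form $\id_\bfH\otimes\psi$ for some $\psi\in\cC(c,d)$, and both use the creation operators $L^{\alpha,x}$ as test morphisms. Two differences are worth noting.

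First, the paper begins with a duality reduction you omit: bending the $\mathbf{c}$-strand up, it suffices to treat $c=1_\cC$, so $f\colon\bfH\Rightarrow\bfH\otimes\mathbf{d}$ and the fiber $(\bfH\otimes\mathbf{d})(e)$ is just $\bigoplus_a\bfH(a)\boxtimes\cC(e,a\otimes d)$. This removes one layer of indices and makes the subsequent claims cleaner.

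Second, and more substantively, your ``assembly'' step is where the real work lies, and your phrasing misidentifies the obstruction. After commuting with diagonal projections and rank-one operators on the $\bfH$-leg (the paper's Claims~1--2), one finds that on each fiber $f$ acts as $\xi\mapsto\xi\boxtimes\beta_a$ for some $\beta_a\in\cC(a,a\otimes d)$ depending on $a\in\operatorname{Supp}(\bfH)$. The issue is \emph{not} naturality in the fiber index $e$ as you write, but rather that the $\beta_a$ for different support objects $a$ must all arise from a single $\psi\in\cC(1_\cC,d)$ via $\beta_a=\id_a\otimes\psi$. The paper extracts this in two steps: commuting with $L^{\alpha,x}$ for $\alpha\in\cC(a,c\otimes a')$ connecting \emph{distinct} $a,a'\in\operatorname{Supp}(\bfH)$ yields a compatibility $(\id_c\otimes\beta_{a'})\circ\alpha=(\alpha\otimes\id_d)\circ\beta_a$ (Claim~3), and then specializing $\alpha$ to a cap and using rigidity forces $\beta_a=\id_a\otimes\psi$ with $\psi$ independent of $a$ (Claim~4). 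Your sketch gestures at this with the $B(H)'=\bbC$ analogy, but that analogy only delivers Claims~1--2; the independence-of-$a$ step has no classical counterpart and needs the argument just described. Finally, your weak*-density claim for $L^{\beta,y}\circ(L^{\alpha,x})^*$ is unnecessary: you already assume $f$ lies in the commutant, so it commutes with the $L^{\alpha,x}$ directly.
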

\begin{proof}
We want to show that any $f=(f_c)_{c\in\cC}\in \Hom_{\Hilb(\cC)}(\bfH \otimes \mathbf{a}, \bfH\otimes \mathbf{b})$ in $\cM^{\prime}_{\bfH}$ is of the form $\id_\bfH \otimes \psi$ for some $\psi\in \cC(a,b)$.
By applying duality maps in $\Hilb(\cC)$, it suffices to consider the case that $a=1_\cC$.

Suppose $f=(f_c)_{c\in\cC}\in \Hom_{\Hilb(\cC)}(\bfH,\bfH \otimes \mathbf{b})$ is in $\cM^{\prime}_{\bfH}$.  
\\

\noindent\textbf{Claim 1:}
For $c\in\Irr(\cC)$, the image of $f_c$ is contained in the `diagonal' $\bfH(c)\boxtimes \cC(c, c\otimes b)$.
\begin{proof}[Proof of Claim 1]
Let $g\in \End_{\Hilb(\cC)}(\bfH)$ be the natural transformation which is the identity on $\bfH(c)$ and zero everywhere else.
Since $f\in \cM_\bfH'$, $f\circ g = (g\otimes \id_{\mathbf{b}})\circ f$.
We have $(f\circ g)_c =f_c$, but $((g\otimes \id_{\mathbf{b}})\circ f)_c$ has image contained in the $\bfH(c)\boxtimes \cC(c, c\otimes b)$ summand of $(\bfH\otimes \mathbf{b})(c)$.
\end{proof}

\noindent\textbf{Claim 2:}
For $c\in \Irr(\cC)$, there is a $\beta_c\in \cC(c, c\otimes b)$ such that $f_c(\xi) = \xi \boxtimes \beta_c$ for all $\xi\in \bfH(c)$.
Thus $f$ is the sum of the right creation operators $R^{\beta_c}$ over $c\in \Irr(\cC)$.
\begin{proof}[Proof of Claim 2]
We may assume there is a unit vector $\xi \in \bfH(c)$ such that $f_c(\xi) \neq 0$, as otherwise the claim is trivial setting $\beta_c=0$.
Note that we can write $f_c(\xi)$ as a finite sum
$$
f_c(\xi) =
\sum_{\beta\in \ONB(c,c\otimes b)} \eta_\beta \boxtimes \beta.
$$
Let $g\in \End_{\Hilb(\cC)}(\bfH)$ be the natural transformation whose $c$-component is the projection onto $\bbC \xi$ and zero everywhere else.
Then 
$$
\sum_{\beta} \eta_\beta \boxtimes \beta
=
f_c(\xi)
=(f\circ g)_c (\xi) 
=
((g\otimes \id_{\mathbf{b}})\circ f)_c(\xi) 
= 
\sum_\beta \langle \xi | \eta_\beta\rangle \xi \boxtimes \beta
=
\xi\boxtimes \beta_{c},
$$
where $\beta_{c} = \sum_\beta\langle \xi | \eta_\beta\rangle \beta \in \cC(c,c\otimes b)\setminus\{0\}$.

Now pick any other unit vector $\eta\in \bfH(c)$.
Let $h\in  \End_{\Hilb(\cC)}(\bfH)$ be the natural transformation whose $c$-component is the rank one operator $|\eta\rangle\langle \xi|$ and zero everywhere else.
Since $f\in \cM_\bfH'$, we have $f\circ h = (h\otimes \id_{\mathbf{b}})\circ f$, so 
$$
f_c(\eta)
=
(f_c\circ h_c)(\xi)
=
(f\circ h)_c(\xi)
=
((h\otimes \id_{\mathbf{b}})\circ f)_c(\xi)
=
(h\otimes \id_{\mathbf{b}})_c(\xi\boxtimes \beta_{c})
=
\eta\boxtimes \beta_{c}.
$$
Thus we see $f_c$ is right creation by $\beta_c$.
\end{proof}

For the next claim, we define $\operatorname{Supp}(\bfH)=\set{ c\in \Irr(\cC)}{\bfH(c) \neq (0)}$.
\\

\noindent\textbf{Claim 3}:
The collection $(\beta_c)_{c\in \operatorname{Supp}(\bfH)}$ satisfy the following `naturality condition': for all $a\in \cC$ and $\alpha\in \cC(c, a\otimes d)$, we have $(\id_{a}\otimes \beta_{d})\circ \alpha =(\alpha\otimes \id_{b})\circ  \beta_c$.
\begin{proof}[Proof of Claim 3]
Suppose $c,d\in \operatorname{Supp}(\bfH)$.  
Let $\xi\in \bfH(c)$ and $\eta\in \bfH(d)$ be unit vectors, and consider the rank one operator $x=|\eta\rangle \langle \xi|$.
Form the creation natural transformation $L^{\alpha, x} \in \Hom_{\Hilb(\cC)}(\bfH , \textbf{a}\otimes \bfH)$ from Definition \ref{defn:CreationNaturalTransformations}.
Since $f\in \cM_\bfH'$, by the definition of the tensor product of natural transformations in $\Hilb(\cC)$, we have
\begin{align*}
\eta\boxtimes [(\id_{a}\otimes \beta_d)\circ \alpha]
&=
(\id_{\mathbf{a}}\otimes f)_c\circ L^{\alpha, x}_c(\xi)
=
(L^{\alpha, x}\otimes \id_b)_c\circ f_c(\xi)
=
\eta\boxtimes [(\alpha\otimes \id_b)\circ \beta_c].
\qedhere
\end{align*}
\end{proof}

\noindent\textbf{Claim 4:}
There is a fixed $\psi\in \cC(1_\cC, b)$ such that for all $c\in\operatorname{supp}(\bfH)$, 
$
\beta_c = \id_c\otimes \psi
$.
\begin{proof}[Proof of Claim 4]
Again, we may assume there are distinct $c,d\in \operatorname{supp}(\bfH)$.
Setting $a=c\otimes \overline{d}$, the naturality condition from Claim 3 implies that
$$
\begin{tikzpicture}[baseline=-.1cm]
    \draw (-.15,-.2) .. controls ++(90:.5cm) and ++(270:1cm) .. (-1,1);
    \draw (.15,-.2) -- (.15,1);
    \draw (0,-1) -- (0,-.8);
    \draw (-.725,1) -- (-.725,.8) arc (-180:0:.3cm) -- (-.125,1);
	\roundNbox{unshaded}{(0,-.5)}{.3}{0}{0}{$\beta_c$}
	\roundNbox{dashed}{(0,.5)}{.3}{.9}{.1}{}
	\node at (.15,1.2) {\scriptsize{$b$}};
	\node at (-.15,1.2) {\scriptsize{$d$}};
	\node at (-.725,1.2) {\scriptsize{$\overline{d}$}};
	\node at (-1,1.2) {\scriptsize{$c$}};
	\node at (-1.4,.5) {\scriptsize{$\alpha$}};
\end{tikzpicture}
=
\begin{tikzpicture}[baseline=-.1cm]
    \draw (0,-1) .. controls ++(90:.4cm) and ++(270:.6cm) .. (-1,0)-- (-1,1);
    \draw (-.15,.8) -- (-.15,1);
    \draw (.15,.8) -- (.15,1);
    \draw (0,.2) -- (0,-.1) arc (0:-180:.3cm) -- (-.6,1);
	\roundNbox{unshaded}{(0,.5)}{.3}{0}{0}{$\beta_d$}
	\roundNbox{dashed}{(0,-.4)}{.3}{.9}{.1}{}
	\node at (.15,1.2) {\scriptsize{$b$}};
	\node at (-.15,1.2) {\scriptsize{$d$}};
	\node at (-.6,1.2) {\scriptsize{$\overline{d}$}};
	\node at (-1,1.2) {\scriptsize{$c$}};
	\node at (-1.4,-.4) {\scriptsize{$\alpha$}};
\end{tikzpicture}
\,.
$$
This has two important implications.
First, setting $c=d$ and applying a $c$-cap between the second and third strings on the top, we see that
$$
\beta_{c} = 
\begin{tikzpicture}[baseline=-.1cm]
    \draw (0,-.7) -- (0,-.3);
    \draw (-.25,.3) -- (-.25,.7);
    \draw (.25,0) -- (.25,.7);
    \draw (0,-.3) .. controls ++(90:.2cm) and ++(270:.2cm) .. (-.25,.3);
    \filldraw (.25,0) circle (.05cm) node [right] {\scriptsize{$\gamma_c$}};
    \roundNbox{dashed}{(0,0)}{.3}{.2}{.5}{}
    \node at (-.2,-.5) {\scriptsize{$c$}};
    \node at (-.45,.5) {\scriptsize{$c$}};
    \node at (.45,.5) {\scriptsize{$b$}};
\end{tikzpicture}
=
\id_{c}\otimes \gamma_c
$$
for some map $\gamma_c\in \cC(1_\cC, b)$.

The naturality condition above now reduces to $\id_c \otimes \ev_d^* \otimes \gamma_c = \id_c\otimes \ev_d^*\otimes \gamma_d$.
By capping off and diving by the appropriate scalars, we see that $\gamma_c = \gamma_d$.
Thus there is a single $\psi\in \cC(1_\cC, b)$ such that for all $c\in \operatorname{supp}(\bfH)$, $\beta_c = \id_c \otimes \psi$.
\end{proof}

From this series of claims, we deduce that there is a fixed $\psi\in \cC(1_\cC, b)$ such that for all $c\in \operatorname{supp}(\bfH)$, $f_c : \bfH(c) \to \bigoplus_{a\in\Irr(\cC)}\bfH(a)\boxtimes \cC(c, a\otimes b)$ is exactly the map $\xi\mapsto \xi\boxtimes (\id_c\otimes \psi) \in \bfH(c)\boxtimes \cC(c, c\otimes b)$.
Note that this formula also holds when $c\notin \operatorname{supp}(\bfH)$, since for those $c\in \Irr(\cC)$, we have $\bfH(c)=(0)$.
The result follows.
\end{proof}

\subsection{W*-categories and linking algebras}

We now recall how to pass back and forth between W*-categories and their linking von Neumann algebras.
Much of the following conversation is contained, either explicitly or implicitly, in \cite[\S2]{MR808930}.
(See also \cite[Prop.\,7.17]{MR808930}.)

Suppose we have a small W*-category $\cX$, so that $\Ob(\cX)$ is a set.
%
By \cite{MR808930}, there is a faithful normal dagger functor $\bfF: \cX \to \Hilb$, also called a faithful \emph{representation}, which is norm closed at the level of hom spaces.

\begin{defn}
We define the auxiliary Hilbert space by
$$
K = \bigoplus_{x\in \Ob(\cX)} \bfF(x).
$$
The \emph{linking algebra} of $(\cX,\bfF)$ is the von Neumann algebra
$$
\text{W*}(\cX,\bfF) =\left( \bigoplus_{x,y\in \Ob(\cX)} \bfF(\cX(x,y))\right)'' \cap B(K),
$$
together with the distinguished projections $p_x := \bfF(\id_x) \in B(\bfF(x))$ for all $x\in \Ob(\cX)$.
Note that $\sum_{x\in\Ob(x)} p_x = 1_{\text{W*}(\cX, \bfF)}$.
Also, for all $x,y\in \Ob(\cX)$, we have a Banach space isomorphism $\cX(x, y) \cong p_y\text{W*}(\cX, \bfF)p_x$.
\end{defn}

Now starting with a von Neumann algebra $X$ and a partition of unity, i.e., a collection of mutually orthogonal projections $P=\{p\}$ which sum to $1$, we get a small W*-category $\cW_{(X,P)}$ as follows.
The objects are the $p\in P$, and the morphism spaces are given by $\cW_{(X,P)}(p, q) = q X p$.
The identity morphism in $\cW_{(X,P)}(p,p)$ is $p$, and composition is exactly multiplication in $X$.

It is now easy to see that starting with the W*-category $\cX$ and the faithful representation $\bfF:\cX\to \Hilb$, we have an equivalence of categories $\cX \cong \cW_{(\text{W*}(\cX,\bfF), \set{p_x}{x\in\Ob(\cX)})}$, and the objects of both categories can be identified with $\Ob(\cX)$.
Starting with a von Neumann algebra $X$ and a distinguished partition of unity $P$, given any faithful representation $\bfF: \cW_{(X,P)} \to \Hilb$, we have an isomorphism of von Neumann algebras $\text{W*}(\cW_{(X,P)}, \bfF)\cong X$.

Moreover, these constructions are well-behaved with respect to inclusions.
Given a W*-subcategory $\cY\subset \cX$ with $\Ob(\cY)=\Ob(\cX)$, together with a faithful representation $\bfF: \cX\to \Hilb$, we get a unital inclusion of von Neumann algebras $\text{W*}(\cY,\bfF|_\cY) \subseteq \text{W*}(\cX, \bfF)$.
Note also that $\text{W*}(\cY, \bfF|_\cY)$ contains the distinguished partition of unity $\set{p_x}{x\in \Ob(\cX)}$.
Conversely, given a von Neumann subalgebra $Y\subseteq X$ such that $Y$ contains the distinguished partition of unity $P=\{p\}$ of $X$, we get a canonical W*-subcategory $\cW_{(Y,P)} \subseteq \cW_{(X,P)}$ whose objects are the elements $p\in P$ and whose morphisms spaces are given by $\cY(p,q) = qYp\subseteq qXp$.

The discussion above sketches the proof of the following theorem.

\begin{thm}
\label{thm:LinkingAlgebraCorrespondence}
Let $\cX$ be a \emph{W*}-category and let $X=\text{\emph{W*}}(\cX,\bfF)$ be its linking algebra with respect to the representation $\bfF$, together with the partition of unity $P=\set{p_x}{x\in \Ob(\cX)}$.
There is a bijective correspondence between:
\begin{enumerate}[(1)]
\item
\emph{W*}-subcategories of $\cX$ whose objects are $\Ob(\cX)$, and
\item
von Neumann subalgebras of $X$ containing the distinguished partition of unity $P$.
\end{enumerate}
\end{thm}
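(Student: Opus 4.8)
The plan is to observe that this theorem is essentially an unpacking of the back-and-forth constructions $\cX \mapsto (\text{W*}(\cX,\bfF), P)$ and $(X,P) \mapsto \cW_{(X,P)}$ discussed in the paragraphs immediately preceding the statement, restricted to the setting where the object set is held fixed. The two maps in the correspondence are: given a W*-subcategory $\cY \subseteq \cX$ with $\Ob(\cY) = \Ob(\cX)$, send it to the von Neumann subalgebra $\text{W*}(\cY, \bfF|_\cY) \subseteq \text{W*}(\cX, \bfF) = X$; and given a von Neumann subalgebra $Y \subseteq X$ containing $P$, send it to the W*-subcategory $\cW_{(Y,P)} \subseteq \cW_{(X,P)} \cong \cX$ whose hom spaces are $\cY(p_x, p_y) = p_y Y p_x$. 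First I would verify that both of these are well-defined, i.e.\ that $\text{W*}(\cY,\bfF|_\cY)$ really is a von Neumann subalgebra of $X$ containing $P$ (it contains each $p_x = \bfF(\id_x)$ since $\id_x \in \cY(x,x)$, and it is weak*-closed by construction as a bicommutant inside $B(K)$), and dually that $\cW_{(Y,P)}$ is genuinely a W*-subcategory of $\cW_{(X,P)}$ with the same objects (each hom space $p_y Y p_x$ is weak*-closed in $p_y X p_x$ because $Y$ is weak*-closed and multiplication by the fixed projections $p_x, p_y$ is weak*-continuous, and it is closed under the relevant adjoints and compositions since $Y$ is a $*$-subalgebra).

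The core of the argument is then to check the two round-trips are the identity. For a W*-subcategory $\cY \subseteq \cX$: applying $\cW_{(-,P)}$ to $\text{W*}(\cY,\bfF|_\cY)$ gives a category whose objects are $\Ob(\cX)$ and whose morphism space from $x$ to $y$ is $p_y \text{W*}(\cY,\bfF|_\cY) p_x$, which by the Banach-space identification recalled in the excerpt ($\cX(x,y) \cong p_y \text{W*}(\cX,\bfF) p_x$, applied to $\cY$) is exactly $\cY(x,y)$ sitting inside $\cX(x,y) \cong p_y X p_x$; composition and $*$ match on the nose because both are inherited from $X$. For a von Neumann subalgebra $Y \subseteq X$ with $P \subseteq Y$: applying $\text{W*}(-,\bfF|)$ to $\cW_{(Y,P)}$ gives the von Neumann algebra generated inside $B(K)$ by $\bigoplus_{x,y} \bfF(p_y Y p_x)$; since $\sum_x p_x = 1$, we have $Y = \sum_{x,y} p_y Y p_x$ (a finite or weak*-convergent sum, using that the $p_x$ form a partition of unity), so the generated von Neumann algebra is the weak*-closure of $Y$, which is $Y$ itself. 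I would state both verifications cleanly and relegate the bookkeeping to the reader, as the excerpt already does for the surrounding material.

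The main obstacle — really the only subtle point — is the interaction between the (possibly infinite) partition of unity $P$ and weak* limits: one must be careful that an element $y \in Y$ is recovered from its ``matrix entries'' $p_y y p_x$ as a weak*-convergent sum, and that this sum lies in any weak*-closed subalgebra containing all the $p_y y p_x$. This is handled by the standard fact that for a partition of unity $\{p_x\}$ in a von Neumann algebra and any $y$, the net of finite partial sums $\sum_{x,y \in F} p_y y p_x$ converges $\sigma$-weakly to $y$; I would either cite this or note it follows from $\sigma$-weak continuity of left and right multiplication by a bounded increasing net of projections together with Kaplansky-type density. A secondary point to mention is that the equivalence $\cX \cong \cW_{(X,P)}$ is only an equivalence of categories, but since we are holding the object set fixed and the equivalence is the identity on objects, it restricts to a genuine bijection on the relevant sets of subcategories, so no coherence subtleties intrude. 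Everything else is formal.
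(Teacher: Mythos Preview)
Your proposal is correct and follows essentially the same approach as the paper: the paper does not give a formal proof at all, stating only that ``the discussion above sketches the proof,'' and that discussion is precisely the two constructions $\cY \mapsto \text{W*}(\cY,\bfF|_\cY)$ and $Y \mapsto \cW_{(Y,P)}$ you describe. Your write-up is in fact more careful than the paper's, since you explicitly flag the one nontrivial point (weak* recovery of $y$ from its matrix entries $p_y y p_x$ when $P$ is infinite), which the paper leaves entirely implicit.
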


As an application of the above theorem, given a dagger subcategory $\cZ\subset \cX$ whose objects are $\Ob(\cX)$, we can define its W*-completion.
First, we take the unital $*$-subalgebra $Z\subset X=\text{W*}(\cX,\bfF)$ corresponding to $\cZ$, and we define the W*-completion $\cZ''$ of $\cZ$ to be the W*-subcategory of $\cX$ corresponding to $Z''\subseteq X$.
It is easy to see that $\cZ''$ can also be obtained by taking $\cZ''(x,y)$ to be the weak* closure of every $\cZ(x,y)$ in $\cX(x,y)$ for all $x,y\in \Ob(\cX)$.
(This is essentially in \cite[Thm.\,4.2]{MR808930}.)

\subsection{The linking algebra of \texorpdfstring{$\cB_\bfH$}{MH} and the bicommutant theorem}

For this section, we assume that $\cC$ is small.
Our main result holds when $\cC$ is essentially small, meaning $\cC$ is equivalent as a dagger category to a small rigid C*-tensor category.

Suppose we have $\bfH\in \Hilb(\cC)$.
We now apply the discussion of the last section to the W*-category $\cB_\bfH$, whose objects are the set $\Ob(\cB_\bfH) = \set{\mathbf{a}\otimes \bfH\otimes \mathbf{b}}{a,b\in \cC}$.
Note that we get a faithful representation $\cB_\bfH \to \Hilb$ by $\mathbf{a}\otimes \bfH \otimes \mathbf{b} \mapsto \bigoplus_{e\in \Irr(\cC)} (\mathbf{a}\otimes \bfH \otimes \mathbf{b})(e)$.
Thus the auxiliary Hilbert space is given by 
$$
K=
\bigoplus_{a,b,c,d\in \Ob(\cC)}
\bigoplus_{e\in\Irr(\cC)}
(\mathbf{a}\otimes \bfH\otimes \mathbf{b})(e),
$$
and the linking algebra W*$(\bfH)$ of $\cB_\bfH$ is the von Neumann algebra
$$
\text{W*}(\bfH)=\left(\bigoplus_{a,b\in[\Ob(\cC)]}\bigoplus_{e\in\Irr(\cC)}
B\bigg((\mathbf{a}\otimes \bfH\otimes \mathbf{b})(e), (\mathbf{c}\otimes \bfH\otimes \mathbf{d})(e)\bigg)\right)'' \cap B(K).
$$
The distinguished partition of unity is 
$$
P=\set{p_{a,b}=\sum_{e\in\Irr(\cC)}\id_{(\mathbf{a}\otimes \bfH \otimes \mathbf{b})(e)} }{a,b,\in\cC}.
$$

We now use Theorem \ref{thm:LinkingAlgebraCorrespondence} to define some canonical W*-subcategories of $\cB_\bfH$.
There is a left action $\lambda$ of $\cM_\bfH$ on $K$.
Each morphism in $f\in \cM_\bfH(\mathbf{a}\otimes \bfH , \mathbf{c}\otimes \bfH)$ includes into W*$(\bfH)$ by defining
$$
\lambda(f)=\sum_{b\in \Ob(\cC)}f\boxtimes \id_{\mathbf{b}} \in 
\left(\bigoplus_{a,b,c\in \Ob(\cC)}
\bigoplus_{e\in\Irr(\cC)}
B\bigg((\mathbf{a}\otimes \bfH\otimes \mathbf{b})(e), (\mathbf{c}\otimes \bfH\otimes \mathbf{b})(e)\bigg)\right)'' 
\subset \text{W*}(\bfH).
$$
Similarly, there is a right action $\rho$ of ${}_\bfH\cM$ on $K$ such that each $\rho(g)\in \text{W*}(\bfH)$.

\begin{defn}
We define the von Neumann subalgebra $\cL_\bfH = \lambda(\cM_\bfH)''\cap B(K)\subset \text{W*}(\bfH)$.
Similarly, there is a von Neumann subalgebra $\cR_\bfH = \rho({}_\bfH\cM)'' \cap B(K)\subset \text{W*}(\bfH)$.
\end{defn}

We now have a partition of unity of W*$(\bfH)$ which is coarser than $P$ given by
$$
Q_L=\set{q_a = \lambda(\id_{\mathbf{a}\otimes \bfH})= \sum_{b\in\Ob(\cC)} p_{a,b}}{a\in \cC}.
$$
Similar to Theorem \ref{thm:LinkingAlgebraCorrespondence}, by construction, we can recover a category equivalent to $\cM_\bfH$, since for all $a,b\in \cC$, we have a Banach space isomorphism
$$
q_b \cL_\bfH q_a \cong \cM_\bfH(\mathbf{a}\otimes \bfH,\mathbf{b}\otimes \bfH).
$$
Similarly, we can recover ${}_\bfH\cM$ from $\cR_\bfH$ from the coarse partition of unity
$$
Q_R=\set{\rho(\id_{\bfH\otimes \mathbf{b}})= \sum_{a\in\Ob(\cC)} p_{a,b}}{b\in \cC}.
$$

Now we have a normal embedding $\iota:\cM_{\mathbf{1}}\hookrightarrow \cM_\bfH$ given by embedding $\cC(a,b) \hookrightarrow \Hom_{\Hilb(\cC)}(\mathbf{a},\mathbf{b})$ by $\psi\mapsto \psi \boxtimes \id_\bfH$.
The image $\iota(\cM_{\mathbf{1}})$ inside $\cM_\bfH$ behaves like the scalars. 

\begin{defn}
We define the left `trivial' algebra $\cL_\cC = (\lambda\circ \iota(\cM_{\mathbf{1}}))''\subset \lambda(\cM_\bfH)\subset \text{W*}(\bfH)$.
Similarly, we have the right `trivial' algebra $\cR_\cC$ defined using $\rho\circ \iota$.
\end{defn}

Using this new language, the following corollary is just a restatement of Proposition \ref{prop:MHprime}.

\begin{cor} 
\label{cor:MHprime}
$\cL_{\bfH}^{\prime}\cap \cR_\bfH=\cR_{\cC}$.
Similarly, $\cL_{\bfH}\cap \cR_{\bfH}^{\prime}=\cL_{\cC}$.
\end{cor}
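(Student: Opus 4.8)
The plan is to deduce this from Proposition~\ref{prop:MHprime} by transporting the statement ``the right commutant $\cM_\bfH'$ equals the module category $(\cC,1_\cC)$ embedded as $\psi\mapsto\id_\bfH\otimes\psi$'' through the dictionary between W*-categories and linking algebras of Theorem~\ref{thm:LinkingAlgebraCorrespondence}. By the evident left--right symmetry of the whole set-up (swapping $\cM_\bfH\leftrightarrow{}_\bfH\cM$, $\lambda\leftrightarrow\rho$, $Q_L\leftrightarrow Q_R$, and the right commutant for the left commutant), it is enough to prove the first identity $\cL_\bfH'\cap\cR_\bfH=\cR_\cC$, and the second one will follow verbatim.

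First I would check that the coarse partition of unity $Q_R=\set{q_c^R}{c\in\cC}$ lies in both algebras, so that it suffices to compare corners. Each $q_c^R=\rho(\id_{\bfH\otimes\mathbf c})=\sum_{a\in\Ob(\cC)}p_{a,c}$ is the orthogonal projection of $K$ onto the blocks with rightmost tensorand $\mathbf c$; since every $\lambda(g)$, $g\in\Hom_{\cM_\bfH}(\mathbf a\otimes\bfH,\mathbf b\otimes\bfH)$, acts only on the leftmost tensorand and fixes the rightmost one, $q_c^R$ commutes with $\lambda(\cM_\bfH)$ and hence with $\cL_\bfH=\lambda(\cM_\bfH)''\cap B(K)$. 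Trivially $q_c^R\in\cR_\bfH$, and $q_c^R=\rho\circ\iota(\id_{c_{\mathbf 1}})$ so $q_c^R\in\cR_\cC=(\rho\circ\iota(\cM_{\mathbf 1}))''$. Thus $\cL_\bfH'\cap\cR_\bfH$ and $\cR_\cC$ are both von Neumann subalgebras of $\cR_\bfH$ containing the partition $Q_R$, so by Theorem~\ref{thm:LinkingAlgebraCorrespondence} it suffices to show that they have the same corners $q_d^R(\,\cdot\,)q_c^R$ for all $c,d\in\cC$.

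Next I would identify those corners. Using the isomorphism $q_d^R\cR_\bfH q_c^R\cong\Hom_{\Hilb(\cC)}(\bfH\otimes\mathbf c,\bfH\otimes\mathbf d)$ implemented by $\rho$, an element of this corner lies in $\cL_\bfH'$ precisely when its preimage commutes with every $\lambda(g)$ (commuting with $\lambda(\cM_\bfH)$ is the same as commuting with its double commutant $\cL_\bfH$). Because $\lambda$ and $\rho$ act on opposite sides of $\bfH$ they commute up to the associator of $\Hilb(\cC)$, so this is exactly the diagrammatic commutation relation that cuts out $\cM_\bfH'(\bfR_c,\bfR_d)$ inside $\Hom_{\Hilb(\cC)}(\bfH\otimes\mathbf c,\bfH\otimes\mathbf d)$ in Corollary~\ref{cor:IdentifyMPrime}. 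Hence $q_d^R(\cL_\bfH'\cap\cR_\bfH)q_c^R\cong\cM_\bfH'(\bfR_c,\bfR_d)$, and by Proposition~\ref{prop:MHprime} this equals $\set{\id_\bfH\otimes\psi}{\psi\in\cC(c,d)}$. Since $\cC(c,d)$ is finite dimensional this space is already weak*-closed, and it is exactly $\rho$ of the image of $\cM_{\mathbf 1}(c_{\mathbf 1},d_{\mathbf 1})\cong\cC(c,d)$ under $\iota$, i.e.\ $q_d^R\cR_\cC q_c^R$. Matching corners gives $\cL_\bfH'\cap\cR_\bfH=\cR_\cC$.

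I expect the one genuinely fiddly point to be the bookkeeping in the third paragraph: checking carefully that, after cutting down by $q_d^R(\,\cdot\,)q_c^R$, the operator-algebraic condition ``commutes with $\cL_\bfH$ inside $\text{W*}(\bfH)$'' becomes, letter for letter, the commutation relation of Corollary~\ref{cor:IdentifyMPrime} defining $\cM_\bfH'$ --- in particular that it is enough to test against $\lambda(\cM_\bfH)$ (rather than all of $\cL_\bfH$ or of W*$(\bfH)$) and that the associator coherences relating the $\lambda$- and $\rho$-actions contribute nothing. Everything else is a routine application of Proposition~\ref{prop:MHprime} and Theorem~\ref{thm:LinkingAlgebraCorrespondence}.
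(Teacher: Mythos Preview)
Your proposal is correct and is essentially the paper's approach: the paper gives no detailed proof at all, simply declaring that ``using this new language, the following corollary is just a restatement of Proposition~\ref{prop:MHprime}.'' You have spelled out precisely what that restatement entails---checking that $Q_R$ lies in both algebras, reducing to corners via Theorem~\ref{thm:LinkingAlgebraCorrespondence}, and identifying those corners with $\cM_\bfH'(\bfR_c,\bfR_d)$ via Corollary~\ref{cor:IdentifyMPrime}---which is exactly the content of the linking-algebra dictionary the paper sets up in this section (and makes explicit for general subcategories in Proposition~\ref{prop:CategoricalAndAlgebraicCommutant} immediately afterward).
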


Now by Theorem \ref{thm:LinkingAlgebraCorrespondence}, there is a bijective correspondence between von Neumann subalgebras $N\subseteq \cL_\bfH$ containing the distinguished partition of unity $Q$, and W*-subcategories $\cW_{(N,Q_L)}\subseteq \cM_\bfH$ with the objects $\mathbf{a}\otimes \bfH$ for $a\in \cC$.
Similarly, we have a bijective correspondence between von Neumann subalgebras $N\subset \cR_\bfH$ containing $R$ and W*-subcategories ${}_{(N,Q_R)}\cW \subseteq {}_\bfH \cM$ with the objects $\bfH\otimes \mathbf{b}$ for $b\in \cC$.

\begin{rem}
\label{rem:LinkingAlgebrasAndCModules}
Note that in general, $\cW_{(N,Q_L)}$ is \emph{not} a $\cC$-module category.
If $\cW_{(N,Q_L)}$ is a $\cC$-module category, then $N$ necessarily contains $\cL_{\cC}$, since $N$ is unital.
The converse is not true, since there is no \emph{a priori} compatibility between the algebra multiplication and the left $\cC$ action.  
However, we call always obtain a W*-algebra containing $N$ which corresponds to a $\cC$-module category by taking the closure in $\cM_\bfH$ of $\cW_{(N,Q_L)}$ under the left $\cC$-action and taking the corresponding W*-algebra.
\end{rem}

We can now describe the relationship between these bijective correspondences and the commutants from Definition \ref{defn:CommutantModuleCategory}.
In our new language, the following proposition is a restatement of Corollary \ref{cor:IdentifyMPrime}.

\begin{prop}
\label{prop:CategoricalAndAlgebraicCommutant}
Suppose $\cN\subseteq \cM_\bfH$ is a cyclic $\cC$-module dagger subcategory (not necessarily \emph{W*}), so that $\cN$ has objects of the form $\mathbf{a}\otimes \bfH$ for $a\in \cC$.
Let $N\subseteq \cL_\bfH$ be the corresponding $*$-subalgebra.
Then $\cN'={}_{(N'\cap \cR_\bfH, Q_R)}\cW$.

We have a similar statement switching the role of left and right.
\end{prop}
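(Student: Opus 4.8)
The plan is to verify that the two definitions of $\cN'$ — one on the categorical side (Definition \ref{defn:CommutantModuleCategory}, as unpacked by Corollary \ref{cor:IdentifyMPrime}) and one on the linking algebra side (the relative commutant $N'\cap \cR_\bfH$ together with the partition of unity $Q_R$) — produce the same W*-subcategory of ${}_\bfH\cM$. Both sides have objects indexed by $\cC$, namely $\bfH\otimes \mathbf{c}$ for $c\in\cC$, so the whole content is an identification of morphism spaces compatible with composition and adjoints. First I would recall from the previous subsection that under the equivalence ${}_\bfH\cM \cong \cW_{(\cR_\bfH, Q_R)}$ we have Banach space isomorphisms $q_d \cR_\bfH q_c \cong \Hom_{\Hilb(\cC)}(\bfH\otimes \mathbf{c}, \bfH\otimes \mathbf{d})$, and that the left $\cC$-action on $\cM_\bfH$ is implemented inside $\text{W*}(\bfH)$ by $\lambda$, so that $N = \lambda(\cN)''\cap B(K)\subseteq \cL_\bfH$ is the $*$-subalgebra (indeed von Neumann subalgebra after closure; but the statement allows $\cN$ non-W*, so one must be careful to take $N$ to be the $*$-algebra generated, not its weak* closure). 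The key observation is that for $f\in q_d\cR_\bfH q_c$ corresponding to $f\in \Hom_{\Hilb(\cC)}(\bfH\otimes\mathbf{c},\bfH\otimes\mathbf{d})$ and $g\in \Hom_{\cN}(\mathbf{a}\otimes\bfH,\mathbf{b}\otimes\bfH)$ corresponding to $\lambda(g)\in N$, the condition $f\lambda(g) = \lambda(g) f$ in $\text{W*}(\bfH)$ is \emph{exactly} the commutation relation appearing in Corollary \ref{cor:IdentifyMPrime}, because $\lambda(g) = \sum_b g\boxtimes \id_{\mathbf{b}}$ acts on the left $\mathbf{a}$-string (turning it into a $\mathbf{b}$-string) while $f$ acts on the right $\mathbf{c}$-string (turning it into a $\mathbf{d}$-string), and these two slots are genuinely independent since $f\in \cR_\bfH$ commutes with all of $\lambda(\cM_\bfH)$ up to the associator.

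Concretely, the steps would be: (1) fix $c,d\in\cC$ and identify $\Phi(\cM)'(\bfR_c,\bfR_d)$ via Corollary \ref{cor:IdentifyMPrime} with the subspace of $\Hom_{\Hilb(\cC)}(\bfH\otimes\mathbf{c},\bfH\otimes\mathbf{d})$ cut out by commutation with every $g\in\Hom_{\cN}(\mathbf{a}\otimes\bfH,\mathbf{b}\otimes\bfH)$, for all $a,b$; (2) identify ${}_{(N'\cap\cR_\bfH,Q_R)}\cW(\bfH\otimes\mathbf{c},\bfH\otimes\mathbf{d})$ with $q_d(N'\cap\cR_\bfH)q_c = (q_d\cR_\bfH q_c)\cap N'$, which unwinds to those $f\in\Hom_{\Hilb(\cC)}(\bfH\otimes\mathbf{c},\bfH\otimes\mathbf{d})$ whose image $\rho(f)$ in $\text{W*}(\bfH)$ commutes with $\lambda(g)$ for all $g$ a morphism of $\cN$ — noting it suffices to test against a generating set of $N$, namely the images $\lambda(g)$ of morphisms $g$ of $\cN$, since commutation is a linear and multiplicative condition preserved under $*$ and weak* limits; (3) match the two commutation conditions literally, using that $\rho(f)$ and $\lambda(g)$ sit in the `diagonal' pieces of $\text{W*}(\bfH)$ described above, so that $\rho(f)\lambda(g) = \lambda(g)\rho(f)$ on the relevant summand $(\mathbf{a}\otimes\bfH\otimes\mathbf{c})(e)$ is precisely the equality of the two string diagrams in Corollary \ref{cor:IdentifyMPrime}; (4) check that this identification is a dagger functor intertwining composition — but this is automatic, since on both sides composition and adjoint are inherited from $\text{W*}(\bfH)$ (equivalently from $\Hilb(\cC)$), and the right $\cC$-module structures agree because both are given by $\bfR_a\otimes c = \bfR_{a\otimes c}$, respectively right multiplication by $\mathbf{c}$.

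The main obstacle I anticipate is bookkeeping around the distinction between $\cN$ being merely a dagger subcategory versus a W*-subcategory, and correspondingly between the $*$-algebra $N$ generated by $\lambda(\cN)$ and its weak* closure. One needs that $N' = (N'')'$ so that testing against $N$ is the same as testing against $\overline{N}^{\,w*}$ (this is standard von Neumann algebra theory, $N' = N'''$), and that the commutant module category $\Phi(\cN)'$ only depends on $\cN$ through the algebra it generates — which follows since the defining commutation condition in Corollary \ref{cor:IdentifyMPrime} is closed under linear combinations, composition, adjoints, and weak* limits of the $g$'s. A second, more cosmetic obstacle is making the `independence of slots' rigorous: one must invoke that $\rho(f)$ for $f\in\cR_\bfH$ commutes with all of $\lambda(\cM_\bfH)$ (with associators suppressed, exactly as in Proposition \ref{prop:TwoCommutingActionsOnL2A} and equation \eqref{eq:Commutation}), so that the graphical manipulation sliding $f$ past $g$ is legitimate; this was essentially the content of the discussion preceding Corollary \ref{cor:MHprime}. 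Once these points are in place the proof is a direct translation between the two languages, so I would keep it to a short paragraph citing Corollary \ref{cor:IdentifyMPrime} and Theorem \ref{thm:LinkingAlgebraCorrespondence} and leaving the remaining diagram-chase to the reader, exactly as the phrasing ``is a restatement of Corollary \ref{cor:IdentifyMPrime}'' in the statement suggests.
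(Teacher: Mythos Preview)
Your proposal is correct and matches the paper's approach exactly: the paper gives no separate proof, simply declaring that ``in our new language, the following proposition is a restatement of Corollary \ref{cor:IdentifyMPrime},'' and your proposal is precisely an unpacking of why that restatement holds. Your careful handling of the $*$-algebra versus W*-closure issue (via $N' = N'''$) is a nice touch that the paper leaves implicit.
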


We are now ready to prove our version of the von Neumann bicommutant theorem.

\begin{thm}[von Neumann bicommutant]
\label{thm:Bicommutant}
Suppose we have a cyclic $\cC$-module dagger functor $\Phi:(\cM, m)\to (\cM_\bfH, \bfH)$.
The bicommutant cyclic $\cC$-module \emph{W*}-category $(\Phi(\cM)'', \bfH)$ is equivalent to the weak* closure of $\Phi(\cM)$ inside $(\cM_\bfH, \bfH)$.
\end{thm}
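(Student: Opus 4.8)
The plan is to reduce the categorical statement to the classical von Neumann bicommutant theorem via the linking-algebra dictionary established in Theorem~\ref{thm:LinkingAlgebraCorrespondence} and Proposition~\ref{prop:CategoricalAndAlgebraicCommutant}. First I would fix a faithful representation of the W*-category $\cB_\bfH$ as in the previous subsection, form the linking von Neumann algebra $\text{W*}(\bfH)$ acting on the auxiliary Hilbert space $K$, and let $N\subseteq \cL_\bfH$ be the $*$-subalgebra corresponding to the cyclic $\cC$-module dagger subcategory $\Phi(\cM)\subseteq \cM_\bfH$ under the correspondence of Theorem~\ref{thm:LinkingAlgebraCorrespondence} (here I use that $\Phi$ is a dagger functor, so its image is a dagger subcategory of $\cM_\bfH$ with objects $\mathbf{a}\otimes\bfH$). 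The weak* closure of $\Phi(\cM)$ inside $\cM_\bfH$ corresponds, by the remark at the end of that subsection, to the von Neumann subalgebra $N''\subseteq \cL_\bfH$ obtained by closing $N$ weak* inside $\cL_\bfH$; equivalently $N'' = \overline{N}^{\,w*}\cap \cL_\bfH$.

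The core of the argument is then to identify $N''$ (the bicommutant taken \emph{inside} $\cL_\bfH$, or equivalently inside $B(K)$ after intersecting back) with the $*$-subalgebra of $\cL_\bfH$ corresponding to $\Phi(\cM)''$. By Proposition~\ref{prop:CategoricalAndAlgebraicCommutant}, the right commutant $\Phi(\cM)'$ corresponds to $N'\cap \cR_\bfH$ (a von Neumann subalgebra of $\cR_\bfH$ containing $Q_R$), and applying the same proposition again with the roles of left and right reversed, the left commutant of $\Phi(\cM)'$, namely $\Phi(\cM)''$, corresponds to $(N'\cap \cR_\bfH)'\cap \cL_\bfH$. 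So the task is to show
$$
\overline{N}^{\,w*}\cap \cL_\bfH = (N'\cap \cR_\bfH)'\cap \cL_\bfH.
$$
The inclusion $\subseteq$ is immediate: $N$ commutes with $N'\cap\cR_\bfH$, hence so does its weak* closure. For $\supseteq$ the plan is to invoke the ordinary bicommutant theorem inside $B(K)$: one has $N'' = (N')'$ with commutants in $B(K)$, and $N''=\overline{N}^{\,w*}$ (weak* closure in $B(K)$). The subtlety is that $N'\cap\cR_\bfH$ is only the \emph{part} of $N'$ lying in $\cR_\bfH$, not all of $N'$. Here I would use Corollary~\ref{cor:MHprime} (the categorical analogue of $\bfB(\bfH)'=\mathbf1$): since $\cL_\bfH'\cap\cR_\bfH=\cR_\cC$ and $\cL_\bfH$ is generated by $N$ together with $\cL_\cC$ (because $\Phi$ is a cyclic $\cC$-module functor, its image is closed under the $\cC$-action, and the generation of $\cL_\bfH$ by the module action on $N$ together with $\cL_\cC$ should be verified diagrammatically), one shows that the von Neumann algebra generated by $N'\cap\cR_\bfH$ inside $\cR_\bfH$ has relative commutant in $\cL_\bfH$ small enough that its commutant recovers $N''\cap\cL_\bfH$. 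Concretely, I would argue $N'\cap B(K) = (N'\cap\cR_\bfH)\vee(\text{something supported off }\cR_\bfH)$ and that the extra piece does not shrink the commutant when we intersect with $\cL_\bfH$, using that elements of $\cL_\bfH$ commute with $\cR_\bfH'\cap\text{W*}(\bfH)$.

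The main obstacle I anticipate is precisely this last point: controlling the commutant of a von Neumann subalgebra of $\cR_\bfH$ \emph{as computed inside $\cL_\bfH$ rather than inside all of $B(K)$}, and showing it agrees with the genuine (ambient) bicommutant of $N$ intersected with $\cL_\bfH$. This requires knowing that $\cL_\bfH$ and $\cR_\bfH$ are ``mutual commutants up to the trivial algebras'' in the strong sense that $\cL_\bfH = (\cR_\bfH'\cap\text{W*}(\bfH))$ and symmetrically — an honest bimodule/commutation statement that should follow from Proposition~\ref{prop:MHprime}/Corollary~\ref{cor:MHprime} but needs care about whether $\cL_\bfH$ is generated by $\lambda(\cM_\bfH)$ with multiplicity as a von Neumann algebra on $K$. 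Once this bicommutation for the ambient pair $(\cL_\bfH,\cR_\bfH)$ is in hand, the theorem follows by: taking weak* closures commutes with the linking-algebra correspondence (last paragraph of the linking-algebra subsection), the classical bicommutant theorem for $N\subseteq B(K)$, and two applications of Proposition~\ref{prop:CategoricalAndAlgebraicCommutant}. The final sentence of the theorem about $\bfM\cong\bfM''$ for a faithful normal representation of a W*-algebra object is then immediate, since faithful normal implies $\check\pi(\cM_\bfM)$ is already weak* closed in $\cM_\bfH$, so it equals its own bicommutant.
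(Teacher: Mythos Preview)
Your approach is essentially the paper's: pass to the linking algebra, let $M\subseteq\cL_\bfH$ be the $*$-subalgebra corresponding to $\Phi(\cM)$, identify the weak* closure of $\Phi(\cM)$ with $\cW_{(M'',Q_L)}$, and apply Proposition~\ref{prop:CategoricalAndAlgebraicCommutant} twice to identify $\Phi(\cM)''$ with $\cW_{((M'\cap\cR_\bfH)'\cap\cL_\bfH,\,Q_L)}$. For the step you flag as the main obstacle, the paper does not invoke the full mutual-commutant statement $\cL_\bfH=\cR_\bfH'\cap\text{W*}(\bfH)$ you propose; it simply rewrites $(M'\cap\cR_\bfH)'\cap\cL_\bfH$ as $(M''\cap\cL_\bfH)\vee(\cR_\bfH'\cap\cL_\bfH)$, applies Corollary~\ref{cor:MHprime} to replace $\cR_\bfH'\cap\cL_\bfH$ by $\cL_\cC$, and uses $\cL_\cC\subseteq M$ (Remark~\ref{rem:LinkingAlgebrasAndCModules}, since $\Phi(\cM)$ is a $\cC$-module subcategory and hence unital) to conclude $M''\vee\cL_\cC=M''$.
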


\begin{proof}
Let $M\subseteq \cL_{H}$ be the induced subalgebra of $\Phi(\cM)\subseteq \cM_{\bfH}$.  
Then we see that the weak* closure of $\Phi(\cM)\subseteq \cM_\bfH$ is $\cW_{(M'', Q_L)}$.  
Also note that by Remark \ref{rem:LinkingAlgebrasAndCModules}, $\cL_{\cC}\subseteq M$ since $M$ is unital.
We then have 
\begin{align*}
\cM^{\prime \prime}
&=
\cW_{\left((M'\cap \cR_{\bfH})'\cap \cL_{\bfH}, Q_L\right)}
&&\text{(by Proposition \ref{prop:CategoricalAndAlgebraicCommutant})}
\\&=
\cW_{\left((M''\cap \cL_{\bfH})\vee (\cR^{\prime}_{\bfH}\cap \cL_{\bfH}), Q_L\right)}
\\&=
\cW_{\left(M''\vee \cL_{\cC} , Q_L\right)}
&&\text{(by Corollary \ref{cor:MHprime})}
\\&=
\cW_{(M'', Q_L)}
&&
\qedhere
\end{align*}
\end{proof}

\begin{rem}
In the event that $\Phi= \check{\theta}$ where $\theta: \bfA \Rightarrow \bfB(\bfH)$ is a representation of a C*-algebra object, Theorem \ref{thm:Bicommutant} gives us a W*-algebra object corresponding to $\check{\theta}(\cM_{\bfA})''$, which we denote by $\bfA''\in \Vec(\cC)$.
Note that the representation $\bfA\Rightarrow \bfB(\bfH)$ factors through $\bfA''$, since the cyclic $\cC$-module dagger functor $\check{\theta}$ factors through $\check{\theta}(\cM_{\bfA})''$.
\end{rem}

\subsection{Bimodules between algebra objects}

Equipped with our definition of the commutant and the bicommutant theorem, we now discuss the notion of a bimodule between algebra objects in $\Vec(\cC)$.

\begin{defn}
Suppose we have two $*$-algebra objects $\bfA,\bfB\in \Vec(\cC)$.
An $\bfA-\bfB$ bimodule consists of 
a Hilbert space object $\bfH\in\Hilb(\cC)$,
together with representations 
$\lambda: \cM_\bfA \to \cM_\bfH \subset \cB_\bfH$ and 
$\rho: {}_\bfB\cM \to {}_\bfH\cM \subset \cB_\bfH$
satisfying the requirement that $\rho({}_\bfB\cM) \subseteq \lambda(\cM_\bfA)'$.
\end{defn}

\begin{ex}
Suppose $\bfA\in \Vec(\cC)$ is a C*-algebra object with a trace $\tau$.
The left and right actions of $\bfA$ on $L^2(\bfA)$ from \eqref{eq:LeftAndRightActionOnL2A} make $L^2(\bfA)$ an $\bfA-\bfA$ bimodule by Proposition \ref{prop:TwoCommutingActionsOnL2A}.
\end{ex}

\begin{defn}
A finite W*-algebra object in $\Vec(\cC)$ is a pair $(\bfM, \tau)$ where $\bfM\in \Vec(\cC)$ is a W*-algebra object and $\tau$ is normal a tracial state on $\bfM$ which is faithful on $\bfM(1_\cC)$.
\end{defn}

\begin{prop}
The GNS representation of a \emph{W*}-algebra object $\bfM\in \Vec(\cC)$ with respect to a normal state $\phi$ on $\bfM(1_\cC)$ gives a normal representation $\pi: \bfM \Rightarrow \bfB(L^2(\bfM)_\phi)$.
\end{prop}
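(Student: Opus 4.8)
The plan is to reduce the statement to the normality criterion for W*-categories recalled in Section~\ref{sec:Categories}: a dagger functor between W*-categories (admitting direct sums) is normal if and only if the induced $*$-homomorphisms on endomorphism algebras are normal. Since the GNS construction in $\cC$ factors (as explained in Section~\ref{sec:StateAndGNS}) through the ordinary GNS construction for the C*-algebra $\bfM(1_\cC)$ followed by the Gelfand--Naimark Theorem~\ref{thm:GelfandNaimark}, the cyclic $\cC$-module dagger functor $\bfF_\phi \colon \cM_\bfM \to \cM_{L^2(\bfM)_\phi}$ is the map whose normality we must check. Concretely, it suffices to show that for each $a \in \cC$, the $*$-homomorphism
$$
\pi_{\overline{a}\otimes a} \colon \bfM(\overline{a}\otimes a) \cong \End_{\cM_\bfM}(a_\bfM) \longrightarrow \End_{\cM_{L^2(\bfM)_\phi}}(a_{L^2(\bfM)_\phi}) \cong \Hom_{\Hilb(\cC)}(\mathbf{a}\otimes L^2(\bfM)_\phi, \mathbf{a}\otimes L^2(\bfM)_\phi)
$$
is normal as a map of von Neumann algebras.

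First I would set up the explicit model of $L^2(\bfM)_\phi$: by Corollary~\ref{cor:CompletionVecToHilb} (via the GNS inner product $\langle f,g\rangle = \phi(\langle g|f\rangle_a)$ on each $\bfM(a)$), and note that the image $\widehat{\bfM}(a)$ of $\bfM(a)$ is dense in $L^2(\bfM)_\phi(a)$. Then I would compute, for $h \in \bfM(\overline{a}\otimes a)$ viewed in $\End_{\cM_\bfM}(a_\bfM)$, the matrix coefficients of $\pi(h)$ against vectors of the form $\alpha \boxtimes \widehat{\xi}$ with $\xi \in \bfM(b)$ and $\alpha \in \cC(c, a\otimes b)$ --- exactly as in the boundedness computation~\eqref{eq:LeftActionBounded} in the proof of Theorem~\ref{thm:GelfandNaimark}. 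The key point is that each such matrix coefficient $\langle \pi(h)_c(\alpha\boxtimes\widehat{\xi}), \beta\boxtimes\widehat{\eta}\rangle$ is, up to fixed morphisms of $\cC$ and the fixed vectors $\xi,\eta,\alpha,\beta$, of the form $\phi$ applied to a fixed-degree ``matrix entry'' of $h$ in the base algebra $\bfM(1_\cC)$ --- i.e.\ it is the composition of $h \mapsto (\text{a fixed bounded linear map }\bfM(\overline{a}\otimes a)\to \bfM(1_\cC))$ with the normal functional $\phi$ on $\bfM(1_\cC)$. The fixed linear map here is built from the module-category composition and a conditional expectation $E$ of the form~\eqref{eq:ConditionalExpectation}, hence is weak*-continuous because $\bfM$ is a W*-algebra object (so each $\bfM(\overline{c}\otimes c)$ is a von Neumann algebra and the structure maps $\mu$, $j$, $\bfM(\psi)$ restrict to normal maps on the relevant corners by Theorem~\ref{thm:W*-algebraCorrespondence}).

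With these matrix coefficients established, normality follows by a standard argument: a bounded net $h_i \to h$ weak* in the von Neumann algebra $\bfM(\overline{a}\otimes a)$ yields, for each pair of vectors from the (dense, but this is fine since the net is uniformly bounded) spanning set above, convergence $\langle \pi(h_i)v, w\rangle \to \langle \pi(h)v, w\rangle$; since $\|\pi(h_i)\|$ is uniformly bounded (the $*$-homomorphism $\pi_{\overline{a}\otimes a}$ is contractive, being a $*$-homomorphism of C*-algebras), this extends to all $v,w$ and gives $\pi(h_i) \to \pi(h)$ weak* in $\End_{\cM_{L^2(\bfM)_\phi}}(a_{L^2(\bfM)_\phi})$. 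Thus $\pi_{\overline{a}\otimes a}$ is normal for every $a$, and by the criterion from Section~\ref{sec:Categories} the functor $\bfF_\phi$, equivalently the $*$-algebra natural transformation $\pi\colon \bfM \Rightarrow \bfB(L^2(\bfM)_\phi)$, is normal.

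The main obstacle I anticipate is bookkeeping rather than conceptual: one must carefully identify the fixed linear map $\bfM(\overline{a}\otimes a)\to\bfM(1_\cC)$ appearing in each matrix coefficient and verify it is genuinely weak*-continuous --- this requires knowing that, on the relevant corners $p\bfM(\overline{c}\otimes c)p$, multiplication $\mu$, the $*$-structure $j$, and the partial-trace/conditional-expectation maps $E_a$ are all normal. These facts all follow from $\bfM$ being a W*-algebra object together with the identifications $\bfM(\overline{c}\otimes c) \cong \End_{\cM_\bfM}(c_\bfM)$ and the fact that a conditional expectation onto a von Neumann subalgebra of a von Neumann algebra is automatically normal (indeed $E_a$ here is of finite index by Lemma~\ref{lem:PimsnerPopa}), so once the diagram chase is organized correctly no further input is needed. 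A secondary, purely technical point is to make sure the spanning sets of vectors in $L^2(\bfM)_\phi(c)$ used for testing weak*-convergence are total, which is immediate since $\widehat{\bfM}(b)$ is dense and the $\alpha\boxtimes(-)$ exhaust $(\mathbf{a}\otimes L^2(\bfM)_\phi)(c)$ by density as in Lemma~\ref{lem:ZeroInHilbC}.
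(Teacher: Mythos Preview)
Your proposal is correct and follows essentially the same strategy as the paper: reduce to checking normality of the induced $*$-homomorphisms on endomorphism algebras, then verify weak*-continuity by computing matrix coefficients against the dense spanning vectors $\alpha\boxtimes\widehat{\xi}$ and observing that each such coefficient is $\phi$ composed with a fixed normal map into $\bfM(1_\cC)$ (built from the conditional expectation $E_c$ and the module-category structure). The only cosmetic difference is that the paper tests normality on \emph{increasing} bounded nets $f_i\nearrow f$ rather than general weak*-convergent bounded nets, which lets the argument conclude directly from the positivity and normality of $E_c$ together with the normality of $\phi$; your formulation via arbitrary bounded nets is equivalent and requires no additional input.
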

\begin{proof}
It suffices to show that the induced $\cC$-module dagger functor $\check{\pi}: \cM_\bfM \to \cM_{L^2(\bfM)}$ is normal.
Since these W*-categories admit direct sums, we need only show that for every increasing bounded net $f_i\nearrow f$ in the W*-algebra $\cM_\bfM(a_\bfM,a_\bfM)\cong \bfM(\overline{a}\otimes a)$, we have $\check{\pi}(f_i)\nearrow \check{\pi}(f)$ in the W*-algebra $\End_{\Hilb(\cC)}(\mathbf{a}\otimes L^2(\bfM))$.
Since the action of $\End_{\Hilb(\cC)}(\mathbf{a}\otimes L^2(\bfM))$ on $H_a:=\bigoplus_{c\in \Irr(\cC)} (\mathbf{a}\otimes L^2(\bfM))(c)\cong \bigoplus_{c\in \Irr(\cC)} \cC(c, a\otimes b)\boxtimes L^2(\bfM)(b)$ is faithful (and normal), we need only check that for every $\sum_{k=1}^n\alpha_k\boxtimes \xi_k \in \cC(c,a\otimes b)\boxtimes \widehat{\bfM}(b)$, 
\begin{align*}
\sum_{k,\ell=1}^n
\langle (\alpha_k\boxtimes \xi_k) |\check{\pi}(f_i) (\alpha)\ell\boxtimes \xi_\ell)\rangle_c
&=
\phi\left(
\begin{tikzpicture}[baseline=.1cm]
	\draw (.4,.7) .. controls ++(90:.6cm) and ++(90:.6cm) .. (-1,.7) -- (-1,.3);
	\draw (0,.3) arc (180:0:.4cm);
	\filldraw (.4,.7) circle (.05cm);
	\filldraw (-.3,1.15) circle (.05cm);
	\draw (.1,-.3) arc (-180:0:.35cm);
	\draw (-.3,1.15) -- (-.3,1.4);
	\draw (-.1,-.3) .. controls ++(270:.55cm) and ++(270:.55cm) .. (-1,-.3);
	\draw (.45,-.65) .. controls ++(270:.5cm) and ++(270:.5cm) .. (-.55,-.71);
	\filldraw[fill=white] (.45,-.65) circle (.05cm) node [above] {\scriptsize{$\alpha_\ell$}};
	\filldraw[fill=white] (-.55,-.71) circle (.05cm) node [above] {\scriptsize{$\overline{\alpha_k}$}};
	\roundNbox{unshaded}{(-1,0)}{.3}{.25}{.25}{$j_b(\xi_k)$}
	\roundNbox{unshaded}{(0,0)}{.3}{0}{0}{$f_i$}
	\roundNbox{unshaded}{(.8,0)}{.3}{0}{0}{$\xi_\ell$}
	\node at (.2,-.7) {\scriptsize{$\mathbf{a}$}};
	\node at (-.2,-.75) {\scriptsize{$\overline{\mathbf{a}}$}};
	\node at (.6,-.8) {\scriptsize{$\mathbf{c}$}};
	\node at (-.7,-.85) {\scriptsize{$\overline{\mathbf{c}}$}};
	\node at (.95,-.5) {\scriptsize{$\mathbf{b}$}};
	\node at (-1.15,-.5) {\scriptsize{$\overline{\mathbf{b}}$}};
	\node at (1,.5) {\scriptsize{$\bfM$}};
	\node at (-1.2,.5) {\scriptsize{$\bfM$}};
	\node at (-.2,.5) {\scriptsize{$\bfM$}};
	\node at (.6,.9) {\scriptsize{$\bfM$}};
	\node at (-.1,1.3) {\scriptsize{$\bfM$}};
\end{tikzpicture}
\right)
\\&\nearrow
\phi\left(
\begin{tikzpicture}[baseline=.1cm]
	\draw (.4,.7) .. controls ++(90:.6cm) and ++(90:.6cm) .. (-1,.7) -- (-1,.3);
	\draw (0,.3) arc (180:0:.4cm);
	\filldraw (.4,.7) circle (.05cm);
	\filldraw (-.3,1.15) circle (.05cm);
	\draw (.1,-.3) arc (-180:0:.35cm);
	\draw (-.3,1.15) -- (-.3,1.4);
	\draw (-.1,-.3) .. controls ++(270:.55cm) and ++(270:.55cm) .. (-1,-.3);
	\draw (.45,-.65) .. controls ++(270:.5cm) and ++(270:.5cm) .. (-.55,-.71);
	\filldraw[fill=white] (.45,-.65) circle (.05cm) node [above] {\scriptsize{$\alpha_\ell$}};
	\filldraw[fill=white] (-.55,-.71) circle (.05cm) node [above] {\scriptsize{$\overline{\alpha_k}$}};
	\roundNbox{unshaded}{(-1,0)}{.3}{.25}{.25}{$j_b(\xi_k)$}
	\roundNbox{unshaded}{(0,0)}{.3}{0}{0}{$f$}
	\roundNbox{unshaded}{(.8,0)}{.3}{0}{0}{$\xi_\ell$}
	\node at (.2,-.7) {\scriptsize{$\mathbf{a}$}};
	\node at (-.2,-.75) {\scriptsize{$\overline{\mathbf{a}}$}};
	\node at (.6,-.8) {\scriptsize{$\mathbf{c}$}};
	\node at (-.7,-.85) {\scriptsize{$\overline{\mathbf{c}}$}};
	\node at (.95,-.5) {\scriptsize{$\mathbf{b}$}};
	\node at (-1.15,-.5) {\scriptsize{$\overline{\mathbf{b}}$}};
	\node at (1,.5) {\scriptsize{$\bfM$}};
	\node at (-1.2,.5) {\scriptsize{$\bfM$}};
	\node at (-.2,.5) {\scriptsize{$\bfM$}};
	\node at (.6,.9) {\scriptsize{$\bfM$}};
	\node at (-.1,1.3) {\scriptsize{$\bfM$}};
\end{tikzpicture}
\right)
=
\sum_{k,\ell=1}^n
\langle (\alpha_k\boxtimes \xi_k |\check{\pi}(f) (\alpha_\ell\boxtimes \xi_\ell)\rangle_c.
\end{align*}
(Since $(f_i)$ is bounded, we need only check $\check{\pi}(f_i)\nearrow \check{\pi}(f)$ on a dense subspace of $H_a$.)
This readily follows from the fact that $\cM_\bfM$ is a W*-category, the positivity and normality of the conditional expectation $E_c$ from \eqref{eq:ConditionalExpectation}, and the normality of $\phi$ on $\bfM(1_\cC)$.
\end{proof}

\begin{cor}
The left and right $\bfM$ action on $L^2(\bfM)$ from \eqref{eq:LeftAndRightActionOnL2A} are normal.
\end{cor}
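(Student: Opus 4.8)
The plan is to reduce both statements to the preceding proposition on normality of GNS representations. Since $(\bfM,\tau)$ is a finite W*-algebra object, $\tau$ is in particular a normal state on $\bfM(1_\cC)$, and by Corollary \ref{cor:TraceLeftRight} we have $L^2(\bfM)_\tau = {}_\tau L^2(\bfM)$, which we write simply $L^2(\bfM)$. First I would observe that the left action $\lambda$ of $\bfM$ on $L^2(\bfM)$ defined in \eqref{eq:LeftAndRightActionOnL2A} is literally the GNS representation $\pi:\bfM\Rightarrow\bfB(L^2(\bfM))$ associated to $\tau$: unwinding the two definitions on a simple tensor $\alpha\boxtimes\xi\in\cC(c,a\otimes b)\boxtimes\widehat{\bfM}(b)$ produces the same diagram, since in both cases one composes $\xi$ with $f$ using the multiplication $\mu$ of $\bfM$ and then reassociates. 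Hence the preceding proposition gives normality of $\lambda$ immediately.

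Second, for the right action $\rho$ I would pass to the opposite algebra: the right cyclic $\cC$-module W*-category ${}_\bfM\cM$ of Remark \ref{rem:RightVersionOfConstructionAlgebra} corresponds to a W*-algebra object $\bfM^{\op}$ (equivalently, using rigidity, to $\bfM^{\text{mp}}\in\Vec(\cC)$), and $\tau$ transports to a normal tracial state on it, faithful on its unit fiber. Under the identification $L^2(\bfM)={}_\tau L^2(\bfM)$, the right action $\rho$ of $\bfM$ is exactly the left GNS action of $\bfM^{\op}$ on its own $L^2$, so normality of $\rho$ again follows from the preceding proposition (applied in $\cC^{\op}$, resp.\ in $\cC$). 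Alternatively, and this is the route I would actually write out in detail, one can rerun the proof of the preceding proposition verbatim with the mirror-image diagram, replacing $\cM_\bfM$ by ${}_\bfM\cM$ and the conditional expectation $E_c$ of \eqref{eq:ConditionalExpectation} by its right-handed analogue; the three ingredients used there — that the module W*-category is a genuine W*-category, that $E_c$ is positive and normal, and that $\tau$ is normal on $\bfM(1_\cC)$ — all hold equally on the right, so the same monotone-net computation shows $\check\rho(f_i)\nearrow\check\rho(f)$ on a dense subspace of each fiber Hilbert space.

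The only genuine obstacle is the identification in the first paragraph: verifying that the operators $\lambda(f)$ and $\rho(f)$ of \eqref{eq:LeftAndRightActionOnL2A} coincide on the nose with the GNS data of $\tau$ (resp.\ of the opposite algebra), modulo canonical unitaries such as the one from Proposition \ref{prop:StateConjugateLeftRight}. This is purely a matter of unwinding definitions on simple tensors $\alpha\boxtimes\xi$, keeping track of the maps $j_a$ and of the associator of $\Hilb(\cC)$, and I expect no surprises.
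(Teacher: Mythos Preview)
Your approach is correct and is exactly what the paper intends: the corollary is stated without proof immediately after the proposition on normality of GNS representations, so the paper's implicit argument is precisely your reduction of $\lambda$ to the GNS representation with respect to $\tau$, with the right action handled by the obvious symmetric/opposite-algebra argument. You have simply spelled out the details the paper leaves to the reader.
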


We now want to prove the analog of the fact that $JMJ = M'$ in the case of a finite von Neumann algebra $(M, \tr_M)$.
We recall the proof in the ordinary operator algebra setting so that we may adapt the technique for a finite von Neumann algebra object $(\bfM, \tau)\in \Vec(\cC)$.
This proof is certainly well-known to experts, but we are unaware if this particular proof appears in the literature.

\begin{thm}
Let $(M,\tr_M)$ be a finite von Neumann algebra, and let $J: L^2(M,\tr_M) \to L^2(M,\tr_M)$ be the conjugate linear unitary given by the extension of $x\Omega\mapsto x^*\Omega$, where $\Omega\in L^2(M,\tr_M)$ is the image of $1\in M$.
Then $JMJ=M'$.
\end{thm}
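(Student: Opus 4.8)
The plan is to prove $JMJ = M'$ by a standard cyclic-separating-vector argument, which is the template we will adapt for the categorified statement. First I would recall that $\Omega$ is cyclic and separating for $M$ acting on $L^2(M,\tr_M)$: cyclicity is immediate since $M\Omega$ is dense by construction, and separating follows from faithfulness of $\tr_M$. Consequently $\Omega$ is also cyclic and separating for $M'$. I would then check the easy inclusion $JMJ\subseteq M'$: for $x,y,z\in M$ one computes $JxJ(y\Omega) = Jx(y^*\Omega) = J(xy^*)\Omega = yx^*\Omega$, and similarly $y'\cdot JxJ$ agrees, where $y'\in M$ acts on the left; the key point is that $JxJ$ implements right multiplication by $x^*$, which manifestly commutes with the left action of $M$. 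One also verifies $J1J = 1$ and that $JMJ$ is weak*-closed since $J$ is a conjugate-linear isometry, so $JMJ$ is a von Neumann subalgebra of $M'$.

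For the reverse inclusion $M'\subseteq JMJ$, the crux is to produce, for each $x'\in M'$, an element of $M$ whose ``$J$-conjugate'' reproduces $x'$. The standard move is to consider the vector $\xi = x'\Omega\in L^2(M,\tr_M)$ and show that right multiplication by a suitable closed operator affiliated to $M$ recovers $x'$; concretely one shows $x'\Omega = a\Omega$ implies $x'(y\Omega) = y(a\Omega) = JaJ(y^*\Omega)$... wait — more carefully, one uses that the map $y\Omega\mapsto x'y\Omega = yx'\Omega$ is ``right multiplication'' and then invokes that every such densely-defined right-multiplication operator, when bounded, lies in $JMJ$. The cleanest route: let $S_0: x\Omega\mapsto x^*\Omega$ and $F_0: x'\Omega\mapsto x'^*\Omega$ be the closable conjugate-linear operators for $M$ and $M'$; in the finite (tracial) case both are already bounded with $\overline{S_0} = \overline{F_0} = J$ because $\|x^*\Omega\|_2 = \|x\Omega\|_2$. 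Then for $x'\in M'$, $x'\Omega = J x'^* J\Omega$ makes sense once we know $Jx'^*J$ maps $\Omega$ correctly, and a symmetric computation to the one above shows $Jx'J\in M'' = M$, whence $x' = J(Jx'J)J\in JMJ$.

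The main obstacle — and the step I expect to need the most care — is justifying that $J$ simultaneously implements the modular conjugations for both $M$ and $M'$, i.e. that the Tomita operator $S_0$ for $M$ and the operator $F_0$ for $M'$ have the same (isometric, hence everywhere-defined) closure $J$. In the general von Neumann algebra setting this is the content of Tomita--Takesaki theory, but in the finite tracial case it collapses to the elementary observation that $\tr_M$ being a trace gives $\|x^*\Omega\|_2^2 = \tr_M(x x^*) = \tr_M(x^*x) = \|x\Omega\|_2^2$, so $S_0$ is isometric and extends to the antiunitary $J$ with $J^2 = 1$; the same computation with $\tr_M$ restricted appropriately handles $F_0$. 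One must also verify $J\Omega = \Omega$ and that $J$ normalizes neither side trivially — i.e. genuinely check $JMJ\subseteq M'$ rather than merely $JMJ\subseteq B(L^2M)$ — which is the short computation indicated above. Once $J$ is in hand with these properties, the double inclusion closes by the symmetric roles of $M$ and $M'$, using the bicommutant theorem $M'' = M$.

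\begin{proof}
Since $\tr_M$ is faithful, $\Omega$ is separating for $M$, and by construction $M\Omega$ is dense, so $\Omega$ is cyclic for $M$; hence $\Omega$ is cyclic and separating for $M'$ as well. Because $\tr_M$ is a trace, $\|x^*\Omega\|_2^2 = \tr_M(xx^*) = \tr_M(x^*x) = \|x\Omega\|_2^2$ for all $x\in M$, so the map $x\Omega\mapsto x^*\Omega$ is isometric on the dense subspace $M\Omega$ and extends to the conjugate-linear unitary $J$, which satisfies $J^2 = \id$ and $J\Omega = \Omega$. For $x,y\in M$ and $x'\in M'$,
$$
(JxJ)(y'\Omega)\text{-type computations}
$$
are replaced by the direct check: for $x\in M$, the operator $JxJ$ acts on $M\Omega$ by $JxJ(y\Omega) = Jx(y^*\Omega) = J(xy^*)\Omega = (yx^*)\Omega$, which is right multiplication by $x^*$ and hence commutes with every left multiplication operator $\ell(z)$, $z\in M$; thus $JxJ\in M'$. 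Since $J$ is an antiunitary, $JMJ$ is weak*-closed, and $J1J = 1$, so $JMJ\subseteq M'$ is a von Neumann subalgebra. Interchanging the roles of $M$ and $M'$ (the trace argument applies verbatim using that $\Omega$ induces a faithful normal tracial state on $M'$ as well), we obtain $JM'J\subseteq M'' = M$ by the bicommutant theorem. Conjugating this inclusion by $J$ and using $J^2=\id$ gives $M'\subseteq JMJ$. Therefore $JMJ = M'$.
\end{proof}
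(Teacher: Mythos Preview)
Your overall strategy---show $JMJ\subseteq M'$ directly, then invoke symmetry to get $JM'J\subseteq M$ and conjugate back---is the right shape, and your plan correctly flags the crux: one must know that $J$ \emph{simultaneously} implements $x\Omega\mapsto x^*\Omega$ on $M$ and $x'\Omega\mapsto x'^*\Omega$ on $M'$. But your formal proof does not actually discharge this. You assert that ``$\Omega$ induces a faithful normal tracial state on $M'$ as well'' and that ``the trace argument applies verbatim,'' yet neither is free: traciality of $\omega_\Omega$ on $M'$ is not given a priori (you only know it on $M$), and even granting it, the symmetric argument yields $J'M'J'\subseteq M$ for the conjugation $J'$ built from $M'$---you still owe $J'=J$.

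Both issues evaporate once you prove the single identity $Jx'\Omega = x'^*\Omega$ for $x'\in M'$, which is precisely what the paper establishes first, in three lines: for $f\in M$,
\[
\langle Jx'\Omega, f\Omega\rangle
= \langle Jf\Omega, x'\Omega\rangle
= \langle f^*\Omega, x'\Omega\rangle
= \langle x'^*\Omega, f\Omega\rangle,
\]
using only that $J$ is anti-unitary and $Jf\Omega=f^*\Omega$ by definition. With this in hand your symmetry argument is complete. The paper, however, takes a different second step: rather than appealing to symmetry, it shows $JM'J\subseteq M$ by an explicit inner-product chase verifying $xJyJ=JyJx$ for $x,y\in M'$. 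That longer computation is chosen deliberately---it is the template the paper then lifts to the categorified setting, where the ``symmetry'' shortcut is unavailable. So: your route, once patched with the three-line identity above, is a valid and shorter proof of the classical statement, but it sidesteps exactly the manipulations the paper needs to generalize.
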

\begin{proof}
It is trivial that $JMJ\subseteq M'$ since left and right multiplication commute.
It is easy to compute that for all $x\in M'$, $Jx\Omega = x^*\Omega$.
Indeed, for all $f\in M$,
\begin{equation}
\label{eq:JxOmega}
\langle Jx\Omega, f\Omega\rangle
=
\langle Jf\Omega, x\Omega\rangle
=
\langle f^*\Omega, x\Omega\rangle
=
\langle x^*\Omega, f\Omega\rangle.
\end{equation} 
Using this, we compute $M'\subseteq JMJ$ by showing $(JMJ)' = JM'J \subseteq M$.
Indeed, for $x,y\in M'$ and $f,g\in M$, we have
\begin{equation}
\label{eq:JM'JinM}
\begin{split}
\langle xJyJ f\Omega, g\Omega \rangle
&=
\langle Jyf^*\Omega, x^*g\Omega \rangle
=
\langle Jf^*y\Omega, gx^*\Omega \rangle
=
\langle Jgx^*\Omega , f^*y\Omega \rangle
\\&=
\langle JgJx\Omega , f^*y\Omega \rangle
=
\langle f x\Omega , Jg^*Jy\Omega \rangle
=
\langle f x\Omega , Jg^*y^*\Omega \rangle
\\&=
\langle xf \Omega , Jy^*g^*\Omega \rangle
=
\langle xf \Omega , Jy^*Jg\Omega \rangle
=
\langle JyJxf \Omega , g\Omega \rangle,
\end{split}
\end{equation}
which relied on the fact that $(JzJ)^* = Jz^*J$ for $z=g,y$.
\end{proof}

To generalize this result to $(\bfM, \tau)\in \Vec(\cC)$, we first need to define the analogs of $J$ and $\Omega$, and define the analogs of the vectors $\Omega$, $f\Omega$ for $f\in \bfM(a)$, and $x\Omega$ for $x\in \bfM'(b)$.
We should then see that $J\bfM J \subseteq \bfM'$.
By construction, we should have $Jf\Omega =j_a^\bfM(f)\Omega$, and we will need to prove $Jx\Omega  = j_b^{\bfM'}(x)\Omega$.
We should then be able to follow the string of inequalities in the above proof to conclude the other inclusion.

\begin{defn}
For $a,b\in \cC$, we define a conjugate linear natural transformation $J^{a,b} : \mathbf{a}\otimes L^2(\bfM) \otimes \mathbf{b} \Rightarrow \overline{\mathbf{b}}\otimes L^2(\bfM) \otimes \overline{\mathbf{a}}$ by defining its $c$-component $J^{a,b}_c$ for $c\in \cC$ by the extension of the map
$$
\cC(d, a\otimes e)\boxtimes \widehat{\bfM}(e)\boxtimes \cC(c, d\otimes b)
\ni
\alpha \boxtimes \xi \boxtimes \beta
\mapsto
\overline{\beta} \boxtimes j_e(\xi) \boxtimes \overline{\alpha}
\in
\cC(\overline{c}, \overline{b}\otimes \overline{d})\boxtimes  \widehat{\bfM}(\overline{e})\boxtimes \cC(\overline{d}, \overline{e}\otimes \overline{a})
$$
for $d,e\in \Irr(\cC)$.
(Recall $\widehat{\bfM}(e)$ is the image of $\bfM(e)$ inside $L^2(\bfM)(e)$ as in Section \ref{sec:StateAndGNS}.)
\end{defn}

The following lemmas are straightforward calculations.

\begin{lem}
Using Definition \ref{defn:ConjugateLinearNaturalTransformation} for composition for conjugate linear natural transformations, and suppressing $\varphi$'s, the the conjugate linear natural transformations $J$ satisfy $J^{\overline{b}, \overline{a}} \circ J^{a,b} = \id_{\mathbf{a}\otimes L^2(\bfM)\otimes \mathbf{b}}$.
\end{lem}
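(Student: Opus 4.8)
The plan is to verify the identity $J^{\overline{b},\overline{a}}\circ J^{a,b}=\id_{\mathbf{a}\otimes L^2(\bfM)\otimes \mathbf{b}}$ componentwise, by tracking what happens to a simple tensor $\alpha\boxtimes \xi\boxtimes \beta$ in $\cC(d,a\otimes e)\boxtimes \widehat{\bfM}(e)\boxtimes \cC(c,d\otimes b)$ under the two applications of $J$, and then unwinding the definition of composition of conjugate linear natural transformations from Definition \ref{defn:ConjugateLinearNaturalTransformation}. First I would apply $J^{a,b}_c$ to get $\overline{\beta}\boxtimes j_e(\xi)\boxtimes \overline{\alpha}\in \cC(\overline{c},\overline{b}\otimes\overline{d})\boxtimes \widehat{\bfM}(\overline{e})\boxtimes \cC(\overline{d},\overline{e}\otimes\overline{a})$; here one must check this really lands in the correct summand, using that $\overline{d},\overline{e}$ (or rather their simple representatives) play the roles of the ``middle'' and ``inner'' simples for the object $\overline{\mathbf{b}}\otimes L^2(\bfM)\otimes \overline{\mathbf{a}}$. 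Then I would apply $J^{\overline{b},\overline{a}}_{\overline{c}}$ to this, obtaining $\overline{\overline{\alpha}}\boxtimes (j_{\overline{e}}\circ j_e)(\xi)\boxtimes \overline{\overline{\beta}}$.

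The key point is then that $(J^{\overline{b},\overline{a}}\circ J^{a,b})_c = (\mathbf{a}\otimes L^2(\bfM)\otimes\mathbf{b})(\varphi_c)\circ J^{\overline{b},\overline{a}}_{\overline{c}}\circ J^{a,b}_c$ by the composition rule for conjugate linear natural transformations, where $\varphi_c:c\to\overline{\overline{c}}$ is the pivotal structure. So the three things to reconcile are: (1) $\overline{\overline{\alpha}}$ versus $\alpha$ — this is handled by the naturality/monoidality of $\varphi$ in $\cC$, precisely the compatibility $\varphi_{a\otimes b}=\overline{\nu}\circ\nu\circ(\varphi_a\otimes\varphi_b)$ together with $\overline{\overline{\alpha}}\circ\varphi_c = \varphi_{a\otimes e}\circ\alpha$; similarly for $\overline{\overline{\beta}}$; (2) $(j_{\overline{e}}\circ j_e)(\xi)$ versus $\xi$ — this is exactly the involutive axiom $\id_{\bfM(e)}=\bfM(\varphi_e)\circ j_{\overline{e}}\circ j_e$ for the $*$-structure $j$ on $\bfM$ from Definition \ref{def:StarAlgebra}, passed to $\widehat{\bfM}(e)\subseteq L^2(\bfM)(e)$; (3) the various $\varphi$'s appearing in the definition of the involutive structure on $\Hilb(\cC)$ (Definition \ref{defn:VecCInvolutive}, in the $\Hilb(\cC)$ version) which, since we are instructed to suppress $\varphi$'s, should be absorbed into the bookkeeping. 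Assembling (1), (2), (3) shows the composite $c$-component equals the identity on each summand, hence $J^{\overline{b},\overline{a}}\circ J^{a,b}=\id$ after extending by additivity and continuity.

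The main obstacle I expect is purely organizational rather than conceptual: correctly accounting for the pivotal isomorphisms $\varphi$ on the three different tensor legs ($\mathbf{a}$, $L^2(\bfM)$, and $\mathbf{b}$) simultaneously, and making sure the identification $\overline{c}\cong c_*$ with a chosen representative in $\Irr(\cC)$ (as in the construction of $\nu$ on $\Vec(\cC)$ in Definition \ref{defn:VecCInvolutive}) does not introduce any uncancelled unitaries. Since every choice of unitary $\gamma_a:a_*\to\overline{a}$ appears together with its adjoint (as emphasized in Remark \ref{rem:BasisAndAdjoint} and Remark \ref{rem:TensorProductCanonical}), this is in the end a routine check, but it is the step where one must be careful. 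The cleanest way to present it is to first reduce to the case $a=b=1_\cC$ by noting that $J^{a,b}$ is ``built from'' $J^{1_\cC,1_\cC}$ by tensoring on duality data on the outer legs (the $\overline{\alpha},\overline{\beta}$ factors only get conjugated, not twisted), so that the content reduces to $j_{\overline{e}}\circ j_e = \bfM(\varphi_e^{-1})$ together with strict involutivity of complex conjugation on $\Vec$ — which is precisely the computation already carried out in the proof of the corollary just before Proposition \ref{prop:TwoCommutingActionsOnL2A}, namely $\overline{\sigma}(\sigma(f)) = (j_{\overline{a}}\circ j_a)(f) = \bfM(\varphi_a^{-1})(f)$.
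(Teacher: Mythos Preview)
The paper gives no proof at all for this lemma; it is listed among ``the following lemmas are straightforward calculations'' and left to the reader. Your proposal is exactly the computation one writes down to fill in that omission, and it is correct: apply $J^{a,b}_c$, then $J^{\overline{b},\overline{a}}_{\overline{c}}$, then the $(\mathbf{a}\otimes L^2(\bfM)\otimes\mathbf{b})(\varphi_c)$ from Definition~\ref{defn:ConjugateLinearNaturalTransformation}, and use the involutive axiom $\bfM(\varphi_e)\circ j_{\overline{e}}\circ j_e=\id$ on the middle leg together with naturality of $\varphi$ in $\cC$ on the outer legs.
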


\begin{lem}
\label{lem:ConjugateInnerProduct}
For $\alpha \boxtimes \xi \boxtimes \beta\in \cC(d, a\otimes e)\boxtimes \widehat{\bfM}(e)\boxtimes \cC(c, d\otimes b)$
and $\gamma \boxtimes \eta \boxtimes \delta\in \cC(\overline{c}, \overline{b}\otimes \overline{d})\boxtimes \widehat{\bfM}(\overline{e})\boxtimes \cC(\overline{d}, \overline{e}\otimes \overline{a})$,
$$
\langle 
J^{a,b} (\alpha \boxtimes \xi \boxtimes \beta)
| 
\gamma \boxtimes \eta \boxtimes \delta
\rangle_{(\mathbf{\overline{b}}\otimes L^2(\bfM)\otimes \mathbf{\overline{a}})(c)} 
= 
\langle 
J^{\overline{b},\overline{a}}(\gamma \boxtimes \eta \boxtimes \delta)
| 
\alpha \boxtimes \xi \boxtimes \beta 
\rangle_{(\mathbf{a}\otimes L^2(\bfM)\otimes \mathbf{b})(c)}.
$$
\end{lem}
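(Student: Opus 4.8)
The statement is a ``conjugate-linearity of $J$ with respect to the inner product'' identity, analogous to the elementary fact in the classical setting that $\langle J\xi, \eta\rangle = \langle J\eta, \xi\rangle$ for the modular conjugation. The plan is to reduce both sides to a diagrammatic expression in $\cC$ (with one $\tau$ wrapped around it, since the inner product on $L^2(\bfM)$ involves $\phi=\tau$), and then observe that the two diagrams coincide after applying the defining properties of $j^\bfM$ (the $*$-structure on $\bfM$) and the bi-involutive coherence of $\cC$.

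First I would unwind the definition of $J^{a,b}$ on the simple tensor $\alpha\boxtimes\xi\boxtimes\beta$: by definition this is $\overline{\beta}\boxtimes j_e(\xi)\boxtimes\overline{\alpha}$, an element of the summand $\cC(\overline c,\overline b\otimes\overline d)\boxtimes\widehat\bfM(\overline e)\boxtimes\cC(\overline d,\overline e\otimes\overline a)$ of $(\overline{\mathbf b}\otimes L^2(\bfM)\otimes\overline{\mathbf a})(c)$. Then I would write out the left-hand inner product using the graphical formula for the inner product on a triple tensor product $\boxtimes$ (as in the displayed inner product formula for $\bfH\otimes\bfK$, iterated once more, with the appropriate balancing factors $d_{\overline d}^{-1}d_{\overline e}^{-1} = d_d^{-1}d_e^{-1}$). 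Because the middle tensor factor carries the $L^2(\bfM)$-inner product, the graphical expression contains $\tau$ applied to a closed diagram built from $j_e(\xi)$, $\eta$, the caps/cups coming from $\overline{\alpha},\overline{\beta},\gamma,\delta$, and the pivotal structure $\varphi$. I would do the same for the right-hand side, i.e.\ expand $\langle J^{\overline b,\overline a}(\gamma\boxtimes\eta\boxtimes\delta) \mid \alpha\boxtimes\xi\boxtimes\beta\rangle$, noting that $J^{\overline b,\overline a}(\gamma\boxtimes\eta\boxtimes\delta) = \overline{\overline{\delta}}\boxtimes j_{\overline e}(\eta)\boxtimes\overline{\overline{\gamma}}$, and use the previous lemma ($J^{\overline b,\overline a}\circ J^{a,b}=\id$, equivalently $j_{\overline e}\circ j_e$ is the pivotal isomorphism) to match the double-conjugates against $\varphi$.

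The key algebraic inputs are: (i) the trace property of $\tau$ (so the closed diagram can be ``rotated'', which is what exchanges the roles of $(\alpha,\xi,\beta)$ and $(\gamma,\eta,\delta)$); (ii) naturality of $j^\bfM$ and the identity $j_{\overline e}(j_e(\xi)) = \bfM(\varphi_e^{-1})(\xi)$, used to cancel the nested conjugates; and (iii) the compatibility of $\nu,\varphi,r$ in the bi-involutive structure of $\cC$, used to reconcile the $\overline{\phantom{x}}$'s on the hom-space labels with the caps and cups. Concretely, after expanding, the left side is $\tau$ of a diagram with $j_e(\xi)$ on one strand and $\eta$ on another joined by evaluation/coevaluation morphisms dictated by $\overline\alpha,\overline\beta,\gamma,\delta$; applying $j^\bfM$-naturality moves the $j$ off $\xi$ and onto the surrounding cap/cup data, and then the trace property lets one read the resulting closed diagram as $\tau$ of the corresponding diagram for the right side.

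\textbf{Main obstacle.} The genuine difficulty is purely bookkeeping: keeping track of the balancing factors $d_d^{-1}, d_e^{-1}$ on each $\boxtimes$, and — more seriously — carefully managing all the coherence isomorphisms $\nu,\varphi,r$ that the paper has agreed to suppress, since here they genuinely intervene (both in the definition of $J$ via $j_e$, and in how $\overline{\mathbf b}\otimes L^2(\bfM)\otimes\overline{\mathbf a}$ decomposes into simples with $\overline d,\overline e$ versus $d_*,e_*$). I expect the cleanest route is to phrase everything in terms of the graphical calculus of $\cC$ from the start — drawing the two closed diagrams with a $\tau$-box around them — so that the equality becomes a visibly true planar isotopy together with one application of the $*$-axiom for $j^\bfM$ and the trace identity; the lemma is then ``straightforward'' exactly in the sense the paper claims, once the diagrams are set up correctly. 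I would present the proof as: expand both sides graphically, invoke naturality and monoidality of $j^\bfM$ to strip the conjugates, then invoke the trace property of $\tau$ to identify the two closed diagrams.
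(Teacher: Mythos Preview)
Your proposal is correct and matches what the paper intends: the paper states this lemma (together with the preceding one) as ``straightforward calculations'' and gives no proof, and your plan---expand both sides via the graphical inner-product formula for triple $\boxtimes$-tensors, use the involutive identity $j_{\overline e}\circ j_e = \bfM(\varphi_e^{-1})$, and invoke the trace property of $\tau$ to swap the order of the two $\bfM$-factors under $\tau$---is exactly the expected route. Your identification of the trace property as essential is right: without it the middle factors $\tau(\langle j_e(\xi)\,|\,\eta\rangle_{\overline e})$ and $\tau(\langle j_{\overline e}(\eta)\,|\,\xi\rangle_e)$ reduce to $\tau(\mu(\xi\otimes\eta))$ versus $\tau(\mu(\eta\otimes\xi))$ (with matching caps), and it is precisely the trace axiom that equates these. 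The balancing factors $d_{\overline d}^{-1}d_{\overline e}^{-1}=d_d^{-1}d_e^{-1}$ take care of themselves, and the outer hom-space factors match because $\psi\mapsto\overline\psi$ is anti-unitary on each $\cC(\,\cdot\,,\,\cdot\,)$.

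One small streamlining you might consider: rather than expanding both sides from scratch, you can observe that the lemma is formally the statement $\langle J\zeta\,|\,\omega\rangle = \langle J^{-1}\omega\,|\,\zeta\rangle$, which holds for any anti-unitary $J$. The preceding lemma gives $J^{\overline b,\overline a}\circ J^{a,b}=\id$, so it suffices to check that $J^{a,b}$ is isometric (in the conjugate-linear sense). On the outer $\cC$-hom factors this is the anti-unitarity of $\overline{\,\cdot\,}$; on the middle $L^2(\bfM)$ factor it is exactly Proposition~\ref{prop:StateConjugateLeftRight} combined with Corollary~\ref{cor:TraceLeftRight}, which is where the trace enters. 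This repackages the same computation but makes the role of the trace more transparent.
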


\begin{lem}
\label{lem:C-C-bilinearJ}
Conjugation by $J$ is an anti $\cC-\cC$ bimodule natural transformation.
That is, for every $x\in \cB_\bfH(\mathbf{a}\otimes \bfH \otimes \mathbf{b} , \mathbf{c}\otimes \bfH \otimes \mathbf{d})$ and $\psi\in \cC(a', c')$ and $\phi\in \cC(b',d')$, we have
$$
\begin{tikzpicture}[baseline = -.1cm]
	\draw (0,1.5) -- (0,-1.5);
	\draw (.4,1.5) -- (.4,-1.5);
	\draw (-.4,1.5) -- (-.4,-1.5);
	\draw (1.1,1.5) -- (1.1,-1.5);
	\draw (-1.1,1.5) -- (-1.1,-1.5);
	\roundNbox{unshaded}{(0,1)}{.3}{1.1}{1.1}{$J$}
	\roundNbox{unshaded}{(1.1,0)}{.3}{0}{0}{$\overline{\psi}$}
	\roundNbox{unshaded}{(0,0)}{.3}{.3}{.3}{$x$}
	\roundNbox{unshaded}{(-1.1,0)}{.3}{0}{0}{$\overline{\phi}$}
	\roundNbox{unshaded}{(0,-1)}{.3}{1.1}{1.1}{$J$}
	\node at (0,-1.7) {\scriptsize{$\bfH$}};
	\node at (-.4,-1.7) {\scriptsize{$\mathbf{a}$}};
	\node at (.4,-1.7) {\scriptsize{$\mathbf{b}$}};
	\node at (-1.1,-1.7) {\scriptsize{$\mathbf{a}'$}};
	\node at (1.1,-1.7) {\scriptsize{$\mathbf{b}'$}};
	\node at (-1.3,-.5) {\scriptsize{$\overline{\mathbf{b}'}$}};
	\node at (1.3,-.5) {\scriptsize{$\overline{\mathbf{a}'}$}};
	\node at (-.6,-.5) {\scriptsize{$\overline{\mathbf{b}}$}};
	\node at (.6,-.5) {\scriptsize{$\overline{\mathbf{a}}$}};
	\node at (-.6,.5) {\scriptsize{$\overline{\mathbf{d}}$}};
	\node at (.6,.5) {\scriptsize{$\overline{\mathbf{c}}$}};
	\node at (-1.3,.5) {\scriptsize{$\overline{\mathbf{d}'}$}};
	\node at (1.3,.5) {\scriptsize{$\overline{\mathbf{c}'}$}};
	\node at (-1.1,1.7) {\scriptsize{$\mathbf{c}'$}};
	\node at (1.1,1.7) {\scriptsize{$\mathbf{d}'$}};
	\node at (-.4,1.7) {\scriptsize{$\mathbf{c}$}};
	\node at (.4,1.7) {\scriptsize{$\mathbf{d}$}};
	\node at (0,1.7) {\scriptsize{$\bfH$}};
\end{tikzpicture}
=
\begin{tikzpicture}[baseline = -.1cm]
	\draw (0,1.5) -- (0,-1.5);
	\draw (.4,1.5) -- (.4,-1.5);
	\draw (-.4,1.5) -- (-.4,-1.5);
	\draw (1.1,1.5) -- (1.1,-1.5);
	\draw (-1.1,1.5) -- (-1.1,-1.5);
	\roundNbox{unshaded}{(0,1)}{.3}{.3}{.3}{$J$}
	\roundNbox{unshaded}{(1.1,0)}{.3}{0}{0}{$\phi$}
	\roundNbox{unshaded}{(0,0)}{.3}{.3}{.3}{$x$}
	\roundNbox{unshaded}{(-1.1,0)}{.3}{0}{0}{$\psi$}
	\roundNbox{unshaded}{(0,-1)}{.3}{.3}{.3}{$J$}
	\node at (0,-1.7) {\scriptsize{$\bfH$}};
	\node at (-.4,-1.7) {\scriptsize{$\mathbf{a}$}};
	\node at (.4,-1.7) {\scriptsize{$\mathbf{b}$}};
	\node at (-1.1,-1.7) {\scriptsize{$\mathbf{a}'$}};
	\node at (1.1,-1.7) {\scriptsize{$\mathbf{b}'$}};
	\node at (-.6,-.5) {\scriptsize{$\overline{\mathbf{b}}$}};
	\node at (.6,-.5) {\scriptsize{$\overline{\mathbf{a}}$}};
	\node at (-.6,.5) {\scriptsize{$\overline{\mathbf{d}}$}};
	\node at (.6,.5) {\scriptsize{$\overline{\mathbf{c}}$}};
	\node at (-1.1,1.7) {\scriptsize{$\mathbf{c}'$}};
	\node at (1.1,1.7) {\scriptsize{$\mathbf{d}'$}};
	\node at (-.4,1.7) {\scriptsize{$\mathbf{c}$}};
	\node at (.4,1.7) {\scriptsize{$\mathbf{d}$}};
	\node at (0,1.7) {\scriptsize{$\bfH$}};
\end{tikzpicture}
\,.
$$
\end{lem}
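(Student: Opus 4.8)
The statement ``conjugation by $J$ is an anti $\cC-\cC$ bimodule natural transformation'' (Lemma \ref{lem:C-C-bilinearJ}) is a direct computational verification, and the plan is to reduce it to the definitions of $J^{a,b}$, of the $\cC$-module structure on $\cB_\bfH$, and of the conjugate functor and conjugate morphisms in $\Hilb(\cC)$ (Definition \ref{defn:VecCInvolutive} as transported to $\Hilb(\cC)$). First I would fix simple objects and spell out both sides on a spanning set of vectors in the appropriate fibers, using the diagrammatic calculus for simple tensors \eqref{eq:SimpleTensors} together with the explicit formula for $J^{a,b}_c$ on $\cC(d,a\otimes e)\boxtimes\widehat\bfM(e)\boxtimes\cC(c,d\otimes b)$ sending $\alpha\boxtimes\xi\boxtimes\beta\mapsto\overline\beta\boxtimes j_e(\xi)\boxtimes\overline\alpha$.

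The heart of the argument is to track how the outer legs $\psi\in\cC(a',c')$ and $\phi\in\cC(b',d')$ move past $J$. On the left-hand side, $x$ is sandwiched between $\overline\phi$ and $\overline\psi$ (acting on the conjugated legs $\overline{\mathbf{b}'}$ and $\overline{\mathbf{a}'}$), and then $J$ is applied above and below; on the right-hand side $x$ is sandwiched between $\psi$ and $\phi$ directly, with $J$ applied on the outside. Since $J$ reflects the order of the legs and replaces each morphism on a leg by its conjugate $\overline{\,\cdot\,}$ (recall $\overline{\psi}=(\psi^*)^\vee$), the key fact is exactly that applying $J$, then $\overline\psi\otimes x\otimes\overline\phi$ in the conjugated picture, then $J$ again, recovers $\psi\otimes x\otimes\phi$ in the original picture, using $\overline{\overline{\psi}}\cong\psi$ via $\varphi$ and the compatibility of $\nu,\varphi$ with composition. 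Concretely I would use the previous lemma ($J^{\overline b,\overline a}\circ J^{a,b}=\id$, i.e.\ $J$ is ``unitary'' up to $\varphi$) to collapse the two copies of $J$, leaving only the transported morphisms on the legs; then naturality of $\varphi$ in $\cC$ (equivalently, the fact that $\overline{\overline\psi}=\varphi\circ\psi\circ\varphi^{-1}$) identifies the two sides. The $\bfH$-leg passes through untouched on both sides, so no interaction with $j_e$ or the GNS data is needed there.

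The main obstacle is bookkeeping rather than conceptual: one must be careful with (i) the order-reversal of the two $\cC$-legs under $J$ and the resulting appearance of $\nu$'s (which I would handle by invoking associativity of $\nu$, writing a single $\nu$ with several inputs as in the proof of Theorem \ref{thm:StarAlgebraDaggerModuleCategoryCorrespondence}), and (ii) the suppressed $\varphi$'s, which the statement explicitly allows us to suppress but which must nonetheless be inserted consistently to make the conjugate-linear composites well-defined. I would also need Lemma \ref{lem:ConjugateInnerProduct} to justify that it suffices to check the identity after pairing with arbitrary vectors, reducing everything to a scalar identity in $\cC$ that follows from the zig-zag axioms and \eqref{eq:Bi-involutiveStructureOnC}. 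A clean alternative, which I would actually prefer to write, is to avoid vectors entirely: observe that both sides are natural transformations between the same pair of objects in $\Hilb(\cC)$, that the left $\cC$-action on $\cB_\bfH$ is $\mathbf{c}\mapsto\mathbf{c}\otimes-$ and the right action $\mathbf{d}\mapsto -\otimes\mathbf{d}$, and that $J$ intertwines $\overline{\,\cdot\,}\otimes\mathbf{b}'$ with $\overline{\mathbf{b}'}\otimes\overline{\,\cdot\,}$ by construction; the claimed identity is then just functoriality of $\otimes$ in $\Hilb(\cC)$ combined with the definition of the anti-tensor functor $\overline{\,\cdot\,}$ on $\Hilb(\cC)$, i.e.\ the compatibility of $\nu$ with composition. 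Either way the proof is short once the conventions are pinned down, and I would present it in the ``straightforward calculation'' register the paper has already adopted for the neighboring lemmas.

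\begin{proof}
This is a direct diagrammatic computation, analogous to those in the proof of Theorem \ref{thm:StarAlgebraDaggerModuleCategoryCorrespondence}. Both sides are natural transformations
$\mathbf{a}'\otimes(\mathbf{a}\otimes \bfH\otimes\mathbf{b})\otimes\mathbf{b}'\Rightarrow \overline{\mathbf{d}'}\otimes(\overline{\mathbf{d}}\otimes\bfH\otimes\overline{\mathbf{c}})\otimes\overline{\mathbf{c}'}$, so by Lemma \ref{lem:ConjugateInnerProduct} (and density of simple tensors) it suffices to compare them on vectors of the form $\psi'\boxtimes(\alpha\boxtimes\xi\boxtimes\beta)\boxtimes\phi'$ with $\psi'\in\cC(e,a'\otimes f)$, $\phi'\in\cC(c,e\otimes b')$, and then pair against an arbitrary vector in the target. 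Unwinding the definition of $J^{a,b}_{(-)}$, of the tensor product of natural transformations in $\Hilb(\cC)$, and of $\overline{\psi}=(\psi^*)^\vee$, $\overline{\phi}=(\phi^*)^\vee$, both sides become the same expression built from $x$, the conjugates $j_e(\xi)$ of the $\bfH$-leg vectors, the reflected $\cC$-legs, and a composite of $\nu$'s and $\varphi$'s. Concretely, on the left-hand side we apply $J$ below, then $\overline{\phi}\otimes x\otimes\overline{\psi}$ on the conjugated legs, then $J$ above; by the previous lemma the two copies of $J$ compose (up to the suppressed $\varphi$'s) to reflect each $\cC$-leg twice and hence act as $\varphi$ on each leg, and naturality of $\varphi$ in $\cC$ moves these $\varphi$'s past $\overline{\overline{\psi}},\overline{\overline{\phi}}$, leaving precisely $\psi\otimes x\otimes\phi$ as on the right-hand side. (All reorderings of $\cC$-legs are governed by $\nu$, and we use associativity of $\nu$ to treat composites of $\nu$'s as a single map with several inputs, exactly as in Theorem \ref{thm:StarAlgebraDaggerModuleCategoryCorrespondence}.) The $\bfH$-leg is carried through $x$ on both sides without interacting with the $j_e$'s, so no further identities are needed. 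The remaining scalar identity in $\cC$ follows from the zig-zag axioms and \eqref{eq:Bi-involutiveStructureOnC}.
\end{proof}
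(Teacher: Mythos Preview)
The paper does not give a proof of this lemma; it is one of three lemmas introduced with the sentence ``The following lemmas are straightforward calculations,'' and no further argument is supplied. Your proposal is thus not competing against any written proof, only against the paper's implicit claim that unwinding the definition of $J^{a,b}$ (together with the bi-involutive structure maps $\nu,\varphi$) yields the identity directly.

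Your approach is exactly this: evaluate both sides on spanning vectors and reduce to naturality of $\varphi$ and associativity of $\nu$. That is correct and is the intended verification. One small slip: you describe the common target of the two sides as $\overline{\mathbf{d}'}\otimes(\overline{\mathbf{d}}\otimes\bfH\otimes\overline{\mathbf{c}})\otimes\overline{\mathbf{c}'}$, but in fact both composites are \emph{linear} (each contains two copies of the conjugate-linear $J$) and go from $\mathbf{a}'\otimes\mathbf{a}\otimes\bfH\otimes\mathbf{b}\otimes\mathbf{b}'$ to $\mathbf{c}'\otimes\mathbf{c}\otimes\bfH\otimes\mathbf{d}\otimes\mathbf{d}'$, as the top and bottom labels in the displayed diagrams indicate. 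This does not affect the substance of your argument. Also note that in the displayed identity the middle box $x$ sits between the two $J$'s on \emph{both} sides (so it acts on the conjugated strands in both pictures); the content of the lemma is purely about how the \emph{outer} $\cC$-morphisms $\psi,\phi$ transform under conjugation by $J$, which is why the relevant identity is just $J(\overline{\psi}\otimes-)J=(-\otimes\psi)$ on the outer legs, i.e.\ $\overline{\overline{\psi}}=\psi$ up to $\varphi$. Your invocation of the previous lemma ($J\circ J=\id$ up to $\varphi$) and naturality of $\varphi$ is precisely what is needed; the appeal to Lemma~\ref{lem:ConjugateInnerProduct} and zig-zag identities is more than the paper would have written but does no harm.
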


\begin{prop}
\label{prop:JMJinM'}
For $f\in \bfM(a)$, we have $J^{1_\cC,1_\cC}\lambda(j_a(f))J^{1_\cC,a} =\rho(f)$ as natural transformations $L^2(\bfM)\otimes \mathbf{a}\Rightarrow L^2(\bfM)$.
\end{prop}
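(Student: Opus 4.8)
The plan is to verify the identity $J^{1_\cC,1_\cC}\lambda(j_a(f))J^{1_\cC,a}=\rho(f)$ by evaluating both natural transformations on a generating set of vectors in $(L^2(\bfM)\otimes \mathbf{a})(c)$ for $c\in \Irr(\cC)$, and comparing. By Lemma \ref{lem:ZeroInHilbC}-type density, it suffices to check equality on simple tensors $\xi\boxtimes \beta\in \widehat{\bfM}(b)\boxtimes \cC(c, b\otimes a)$ spanning a dense subspace. So the first step is to write down $\lambda(j_a(f))$ and $\rho(f)$ explicitly on such vectors using the formulas \eqref{eq:LeftAndRightActionOnL2A}, and to write down $J^{1_\cC, a}$ and $J^{1_\cC,1_\cC}$ on the relevant fibers from the definition of $J^{a,b}$ (with the $a=1_\cC$ or $b=1_\cC$ special cases, so one of the legs is trivial).

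Second, I would carry out the composite $J^{1_\cC,1_\cC}\circ \lambda(j_a(f))\circ J^{1_\cC,a}$ on $\xi\boxtimes \beta$. Applying $J^{1_\cC,a}$ turns $\xi\boxtimes\beta$ (living in $\widehat\bfM(b)\boxtimes \cC(c,b\otimes a)$) into something in $\overline{\mathbf{a}}\otimes L^2(\bfM)$ with legs $\overline\beta$ and $j_b(\xi)$; then $\lambda(j_a(f))$ multiplies on the left by $j_a(f)$ using the multiplication $\mu$ of $\bfM$ (graphically, a trivalent vertex capping $j_a(f)$ into $j_b(\xi)$), producing an element involving $\mu(j_a(f)\otimes j_b(\xi))$; finally $J^{1_\cC,1_\cC}$ conjugates back, applying $j$ once more to the $\bfM$-leg. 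The key algebraic input is the monoidal axiom for the $*$-structure $j$ from Definition \ref{def:StarAlgebra}, namely $j_{a\otimes b}(\mu_{a,b}(f,g))=\bfM(\nu_{b,a}^{-1})(\mu_{\overline b,\overline a}(j_b(g),j_a(f)))$, together with $j\circ j=\id$ (up to $\varphi$); these conspire to turn $j(\mu(j_a(f)\otimes j_b(\xi)))$ back into $\mu(\xi\otimes f)$ with the strings rearranged by $\nu$'s and $\varphi$'s. Matching this against the explicit form of $\rho(f)(\xi\boxtimes\beta)$, which is $\mu(\xi\otimes f)$ glued to $\beta$, should give the equality once all the coherence isomorphisms (which I have been suppressing per the convention of Section \ref{sec:CStarAlgebras}) are tracked correctly. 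Lemma \ref{lem:C-C-bilinearJ} and Lemma \ref{lem:ConjugateInnerProduct} may be invoked to move the $\mathbf{a}$-leg/$\overline{\mathbf{a}}$-leg past $J$ cleanly rather than recomputing from scratch.

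The main obstacle I expect is bookkeeping of the involutive coherence data: the conjugate-linear composition convention of Definition \ref{defn:ConjugateLinearNaturalTransformation} inserts $\bfM(\varphi_a)$ factors, the definition of $J^{a,b}$ uses $j$ on the middle leg while the $\overline{\,\cdot\,}$-functor on $\Hilb(\cC)$ introduces $\nu_\Vec$ and $\varphi$, and the two outer $J$'s are at different object-labels ($1_\cC$ versus $a$), so the diagram is genuinely asymmetric. Getting every $\nu$, $\varphi$, and complex-conjugation-of-$\Vec$ to cancel in pairs — essentially an application of the compatibility $\varphi_{a\otimes b}=\overline{\nu_{b\otimes a}}\circ \nu_{\overline a\otimes\overline b}\circ(\varphi_a\otimes\varphi_b)$ and the axioms of $j$ — is the delicate part. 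A clean way to organize this, which I would adopt, is to first prove the statement graphically in the strictified setting (suppressing all coherence isomorphisms), and then remark that the general case follows since both sides are built from the same data and the suppressed isomorphisms are inserted in the canonical way on each side; this mirrors the level of detail used elsewhere in the paper (e.g. the proof of Theorem \ref{thm:StarAlgebraDaggerModuleCategoryCorrespondence}). The remaining lemmas stated just before the proposition ($J\circ J=\id$ and the bilinearity/inner-product lemmas) then suffice to upgrade the computation to the claimed natural-transformation identity, and we conclude by Lemma \ref{lem:ZeroInHilbC}.
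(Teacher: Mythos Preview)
Your proposal is correct and follows essentially the same route as the paper: fix $c\in\Irr(\cC)$, evaluate the composite on a simple tensor $\xi\boxtimes\alpha\in\widehat{\bfM}(b)\boxtimes\cC(c,b\otimes a)$, and match against $\rho(f)_c(\xi\boxtimes\alpha)$ using the monoidality and involutivity of $j$. The paper's proof is in fact shorter than you anticipate—just a four-line graphical computation that never explicitly invokes Lemmas \ref{lem:ConjugateInnerProduct} or \ref{lem:C-C-bilinearJ} or the density argument, and the coherence bookkeeping you worry about is absorbed entirely into the suppressed graphical calculus.
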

\begin{proof}
Fix $c\in\Irr(\cC)$ and $\xi\boxtimes \alpha \in \widehat{\bfM}(b)\boxtimes \cC(c, b\otimes a)$.
Then 
\begin{align*}
(J^{1_\cC,1_\cC}_c\circ \lambda(j_a(f))_c\circ J^{1_\cC,a}_c) (\xi\boxtimes \alpha )
&= 
(J^{1_\cC,1_\cC}_c\circ \lambda(j_a(f))_c) (\overline{\alpha} \boxtimes j_b(\xi))
\\&=
J^{1_\cC,1_\cC}_c 
\left(
\begin{tikzpicture}[baseline=-.1cm, xscale=-1]
	\draw (0,.9) -- (0,1.3);
	\draw (.6,.3) arc (0:180:.6cm);
	\draw (.6,-.3) arc (0:-180:.6cm);
	\draw (0,-.9) -- (0,-1.3);
	\filldraw (0,.9) circle (.05cm);
	\filldraw[fill=white] (0,-.9) circle (.05cm) node [above] {\scriptsize{$\overline{\alpha}$}};
	\roundNbox{unshaded}{(-.6,0)}{.3}{.2}{.2}{$j_b(\xi)$}
	\roundNbox{unshaded}{(.6,0)}{.3}{.2}{.2}{$j_a(f)$}
	\node at (.2,1.1) {\scriptsize{$\widehat{\bfM}$}};
	\node at (-.8,.5) {\scriptsize{$\widehat{\bfM}$}};
	\node at (.8,.5) {\scriptsize{$\bfM$}};
	\node at (.8,-.5) {\scriptsize{$\overline{\mathbf{a}}$}};
	\node at (-.8,-.5) {\scriptsize{$\overline{\mathbf{b}}$}};
	\node at (.2,-1.1) {\scriptsize{$\overline{\mathbf{c}}$}};
\end{tikzpicture}
\right)
=
\begin{tikzpicture}[baseline=-.1cm]
	\draw (0,.8) -- (0,1.2);
	\draw (.5,.3) arc (0:180:.5cm);
	\draw (.5,-.3) arc (0:-180:.5cm);
	\draw (0,-.8) -- (0,-1.2);
	\filldraw (0,.8) circle (.05cm);
	\filldraw[fill=white] (0,-.8) circle (.05cm) node [above] {\scriptsize{$\alpha$}};
	\roundNbox{unshaded}{(-.6,0)}{.3}{0}{0}{$\xi$}
	\roundNbox{unshaded}{(.6,0)}{.3}{0}{0}{$f$}
	\node at (.2,1.1) {\scriptsize{$\widehat{\bfM}$}};
	\node at (-.8,.5) {\scriptsize{$\widehat{\bfM}$}};
	\node at (.8,.5) {\scriptsize{$\bfM$}};
	\node at (.8,-.5) {\scriptsize{$\mathbf{a}$}};
	\node at (-.8,-.5) {\scriptsize{$\mathbf{b}$}};
	\node at (.2,-1.1) {\scriptsize{$\mathbf{c}$}};
\end{tikzpicture}
=
\rho(f)_c(\xi\boxtimes \alpha ).
\end{align*}
\end{proof}

\begin{defn}
Using the left and right actions $\lambda, \rho$ of $\bfM$ on $L^2(\bfM)$ from \eqref{eq:LeftAndRightActionOnL2A}, we define the W*-algebra object $J\bfM J\in \Vec(\cC)$ is the algebra object corresponding to the \emph{left} $\cC$-module W*-category generated by the $J\lambda(f)J=\rho(j(f))$ for $f\in \bfM(a)$.
(Note that the left $\cC$-action here is given by $\psi\triangleright \rho(f) = \rho(f)\otimes \overline{\psi}$.)
By Proposition \ref{prop:JMJinM'}, we have $J\bfM J \subseteq \bfM'$, i.e., there is a canonical injective $*$-algebra natural transformation $J\bfM J \Rightarrow \bfM'$.
\end{defn}

We now define $\Omega: \mathbf{1} \Rightarrow L^2(\bfM)$ to be the bounded natural transformation corresponding to $i_\bfM\in \bfM(1_\cC)$.
For $f\in \bfM(a)$, we see $f\Omega \in \widehat{\bfM}(a)\subset L^2(\bfM)(a)$ corresponds to $\lambda(f)(\id_\mathbf{a}\otimes \Omega): \mathbf{a} \Rightarrow L^2(\bfM)$:
$$
\begin{tikzpicture}[baseline = -.1cm]
    \draw (0,-.3) -- (0,-.7);
    \draw (0,.3) -- (0,.7);
    \roundNbox{unshaded}{(0,0)}{.3}{.2}{.2}{$f\Omega$}
    \node at (.2,-.5) {\scriptsize{$\mathbf{a}$}};
    \node at (.2,.5) {\scriptsize{$\widehat{\bfM}$}};
\end{tikzpicture}
=
\begin{tikzpicture}[baseline = -.1cm]
    \draw (0,.3) arc (180:0:.3cm);
    \draw (0,-.3) -- (0,-.7);
    \draw (.3,.6) -- (.3,1);
    \filldraw (.3,.6) circle (.05cm);
    \filldraw (.6,.3) circle (.05cm) node [right] {\scriptsize{$i_\bfM$}};
    \roundNbox{unshaded}{(0,0)}{.3}{0}{0}{$f$}
    \node at (.2,-.5) {\scriptsize{$\mathbf{a}$}};
    \node at (.5,.8) {\scriptsize{$\widehat{\bfM}$}};
    \node at (-.2,.5) {\scriptsize{$\bfM$}};
\end{tikzpicture}
=
\begin{tikzpicture}[baseline = -.1cm]
    \draw (-.2,-.3) -- (-.2,-.7);
    \draw (.2,-.3) -- (.2,-.5);
    \draw (0,.3) -- (0,.7);
    \filldraw (.2,-.5) circle (.05cm) node [right] {\scriptsize{$\Omega$}};
    \roundNbox{unshaded}{(0,0)}{.3}{.2}{.2}{$\lambda(f)$}
    \node at (-.4,-.5) {\scriptsize{$\mathbf{a}$}};
    \node at (.5,.5) {\scriptsize{$L^2(\bfM)$}};
\end{tikzpicture}
\,.
$$
Moreover, it is immediate that $Jf\Omega = J^{1_\cC,1_\cC}_a f\Omega=j_a(f)\Omega$. 

\begin{lem}
\label{lem:JxOmega=x*Omega}
Suppose $x\in \bfM'(b)\subseteq {}_\bfH\cM({}_{\bfM'}b, {}_{\bfM'}1)$.
We have $J x\Omega = j_b^{\bfM'}(x) \Omega$.
\end{lem}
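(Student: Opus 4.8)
The plan is to imitate the classical computation \eqref{eq:JxOmega} from the preceding theorem, reading off the defining inner product of $L^2(\bfM)$ fiberwise. Recall that $\Omega \colon \mathbf 1 \Rightarrow L^2(\bfM)$ corresponds to $i_\bfM \in \bfM(1_\cC)$, and that for $x \in \bfM'(b)$ the vector $x\Omega \in L^2(\bfM)(b)$ is the image of $\rho(x)(\id_{\mathbf b} \otimes \Omega)$ under the map corresponding to $\rho(x) \colon L^2(\bfM)\otimes \mathbf{b} \Rightarrow L^2(\bfM)$, evaluated against $\Omega$; concretely $x\Omega$ is the class of $x$ viewed inside $\widehat{\bfM'}(b) \subseteq {}_{\bfM'}L^2(\bfM')(b)$, but pushed into $L^2(\bfM)(b)$ through the bimodule structure. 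The claim asserts $J x \Omega = j_b^{\bfM'}(x)\Omega$, where $J = J^{1_\cC,1_\cC}$ on the relevant fiber is the conjugate linear natural transformation from the Definition preceding the statement.

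First I would unwind both sides as vectors in $L^2(\bfM)(\overline b)$ (note $J^{1_\cC, b}$ lands in $\overline{\mathbf b}\otimes L^2(\bfM)$, and after the duality identification this is the fiber $L^2(\bfM)(\overline b)$). The key is that $L^2(\bfM)(\overline b)$ is a completion, so it suffices to check the identity by pairing both sides against an arbitrary $f\Omega$ for $f \in \bfM(\overline b)$ — exactly the device used in \eqref{eq:JxOmega}. So I would compute
$$
\langle J x\Omega \mid f\Omega \rangle_{L^2(\bfM)(\overline b)}
\quad\text{and}\quad
\langle j_b^{\bfM'}(x)\Omega \mid f\Omega\rangle_{L^2(\bfM)(\overline b)}
$$
and show they agree for all such $f$. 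For the first pairing, I would apply Lemma \ref{lem:ConjugateInnerProduct} (the self-adjointness-type identity for $J$) to move $J$ off of $x\Omega$ and onto $f\Omega$, using $J f\Omega = j_{\overline b}^{\bfM}(f)\Omega$ (immediate from the discussion just before Lemma \ref{lem:JxOmega=x*Omega}). This converts $\langle J x\Omega \mid f\Omega\rangle$ into $\langle J f\Omega \mid x\Omega \rangle = \langle j_{\overline b}^{\bfM}(f)\Omega \mid x\Omega\rangle$. Then I would use that $x$ commutes with the left $\bfM$-action (since $x \in \bfM'$) — that is, the $\cB_\bfH$ commutation relation from Corollary \ref{cor:IdentifyMPrime} — together with Lemma \ref{lem:C-C-bilinearJ} and Proposition \ref{prop:JMJinM'} to slide the action of $j_{\overline b}^\bfM(f)$ through $x$, landing on an expression manifestly equal to $\langle j_b^{\bfM'}(x)\Omega \mid f\Omega\rangle$.

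The cleanest route is probably to phrase everything in terms of the right module category ${}_{\bfM'}\cM \subseteq {}_\bfH\cM$ and the diagrammatic calculus of Remark \ref{rem:RightVersionOfConstructionAlgebra}: then $x\Omega$, $j_b^{\bfM'}(x)\Omega$, and the action of $J$ all become explicit string diagrams, and the identity reduces to the compatibility of the $*$-structure $j^{\bfM'}$ on $\bfM'$ (which by Theorem \ref{thm:StarAlgebraDaggerModuleCategoryCorrespondence} is built from the dagger structure of $\cM_\bfH$ via caps) with the conjugate linear natural transformation $J$ (whose fiber maps are literally $\alpha\boxtimes\xi\boxtimes\beta \mapsto \overline\beta \boxtimes j_e(\xi)\boxtimes\overline\alpha$). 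In other words, the whole statement is a bookkeeping identity relating the ``external'' conjugation $J$ on $L^2(\bfM)$ to the ``internal'' $*$-structure of the commutant algebra $\bfM'$, and both are ultimately encoded by the same caps/cups and $\varphi$'s in $\cC$.

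\textbf{Main obstacle.} I expect the genuine difficulty to be entirely bookkeeping: keeping the $\varphi$'s, the balancing factors $d_b^{-1}$ hidden in each $\boxtimes$, and the duality isomorphisms $\overline{\mathbf b}\otimes L^2(\bfM) \cong$ (fiber) straight, so that ``$J$ off one side, onto the other'' in Lemma \ref{lem:ConjugateInnerProduct} matches up exactly with ``$j^{\bfM'}$ is the caps-composite of $j^\bfM$''. There is no conceptual hurdle — the classical proof \eqref{eq:JxOmega} is a one-line computation and ours is the same computation with strings — but verifying that the conjugate linear natural transformation $J$ on $L^2(\bfM)\otimes\mathbf b$ agrees, fiber by fiber, with the composite of duality maps that define $j_b^{\bfM'}$ on $\bfM'(b) \cong \Hom_{\Hilb(\cC)}(\bfH\otimes\mathbf b, \bfH)$ will take a careful diagram chase. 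I would do this chase once for a general simple $c$-component and invoke density/additivity for the rest.
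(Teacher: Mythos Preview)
Your proposal is correct and follows essentially the same route as the paper: test both sides against $f\Omega$ for $f\in\bfM$, use Lemma~\ref{lem:ConjugateInnerProduct} to move $J$ to the other slot (reducing to $\langle x\Omega \mid Jf\Omega\rangle$ with $Jf\Omega = j^{\bfM}(f)\Omega$), then use the commutation of $\bfM$ and $\bfM'$ and unwind the definition of $j^{\bfM'}$ via the dagger structure of $\cB_\bfH$. One point the paper makes explicit that you absorb into ``bookkeeping'': after commuting $x$ past $\lambda(j(f))$, the diagram acquires a cap on the wrong side, and the paper uses \emph{sphericality} (the balanced solutions to the conjugate equations) to rotate it --- this is exactly the step where the $\varphi$'s and balancing factors you mention enter, so be prepared to invoke it rather than Lemma~\ref{lem:C-C-bilinearJ} or Proposition~\ref{prop:JMJinM'}, which are not actually needed here.
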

\begin{proof}
The proof follows \eqref{eq:JxOmega} almost exactly.
Suppose $f\in \bfM(b)$. Then
$$
\langle f\Omega|Jx\Omega \rangle 
=
\langle x\Omega | Jf\Omega \rangle
=
\begin{tikzpicture}[baseline = -.1cm]
	\draw (-.2,1) -- (-.2,.5);
	\draw (.2,-1) -- (.2,-.5);
	\draw (0,.5) -- (0,-.5);
	\draw (.2,.8) arc (180:0:.4cm) -- (1,-.8) arc (0:-180:.6cm);
	\filldraw (-.2,1) circle (.05cm) node [left] {\scriptsize{$\Omega^*$}};
	\filldraw (.2,-1) circle (.05cm) node [right] {\scriptsize{$\Omega$}};
	\roundNbox{unshaded}{(0,.5)}{.3}{.2}{.2}{$x^*$}
	\roundNbox{unshaded}{(0,-.5)}{.3}{.5}{.5}{$\lambda(j_b(f))$}
	\node at (-.5,0) {\scriptsize{$L^2(\bfM)$}};
\end{tikzpicture}
=
\begin{tikzpicture}[baseline = -.1cm]
	\draw (0,-1) -- (0,1);
	\draw (.2,-.2) arc (180:0:.2cm) -- (.6,-.8) arc (0:-180:.6cm) -- (-.6,.5);
	\filldraw (0,1) circle (.05cm) node [left] {\scriptsize{$\Omega^*$}};
	\filldraw (0,-1) circle (.05cm) node [right] {\scriptsize{$\Omega$}};
	\roundNbox{unshaded}{(0,-.5)}{.3}{.1}{.1}{$x^*$}
	\roundNbox{unshaded}{(0,.5)}{.3}{.5}{.5}{$\lambda(j_b(f))$}
\end{tikzpicture}
=
\begin{tikzpicture}[baseline = -.1cm]
	\draw (0,-1) -- (0,1);
	\draw (.2,-.2) arc (180:0:.2cm) -- (.6,-.8) arc (-180:0:.2cm) -- (1,.8) .. controls ++(90:.7cm) and ++(90:.7cm) .. (-1,.8) -- (-1,.2) arc (-180:0:.2cm);
	\filldraw (0,1) circle (.05cm) node [left] {\scriptsize{$\Omega^*$}};
	\filldraw (0,-1) circle (.05cm) node [right] {\scriptsize{$\Omega$}};
	\roundNbox{unshaded}{(0,-.5)}{.3}{.1}{.1}{$x^*$}
	\roundNbox{unshaded}{(0,.5)}{.3}{.5}{.5}{$\lambda(j_b(f))$}
\end{tikzpicture}
\,.
$$
The first equality follows from Lemma \ref{lem:ConjugateInnerProduct}, the third equality follows from commutation between $\bfM$ and $\bfM'$, and the last equality follows from sphericality, since we have balanced solutions to the conjugate equations.
Now we use the correspondence between the dagger structure of $\cB_\bfH$ and the definition of $j^{\bfM}$ and $j^{\bfM'}$ to see that the right hand side above is equal to
\begin{align*}
\begin{tikzpicture}[baseline = -.1cm, yscale=-1]
	\draw (-.2,1) -- (-.2,.5);
	\draw (.2,-1) -- (.2,-.5);
	\draw (0,.5) -- (0,-.5);
	\draw (.2,.8) arc (180:0:.4cm) -- (1,-.8) arc (0:-180:.6cm);
	\filldraw (-.2,1) circle (.05cm) node [left] {\scriptsize{$\Omega$}};
	\filldraw (.2,-1) circle (.05cm) node [right] {\scriptsize{$\Omega^*$}};
	\roundNbox{unshaded}{(0,.5)}{.3}{.5}{.5}{$j^{\bfM'}_b(x)$}
	\roundNbox{unshaded}{(0,-.5)}{.3}{.3}{.3}{$\lambda(f)^*$}
	\node at (-.5,0) {\scriptsize{$L^2(\bfM)$}};
\end{tikzpicture}
&=
\langle f\Omega | j_b^{\bfM'}(x)\Omega\rangle.
\qedhere
\end{align*}
\end{proof}

\begin{thm}
For all $x\in \bfM'(b)$ and $y\in \bfM'(\overline{a})$, 
$$
xJyJ 
= 
\begin{tikzpicture}[baseline = -.1cm]
	\draw (0,2) -- (0,-2);
	\draw (-.4,-2) -- (-.4,.5);
	\draw (.4,-2) -- (.4,-.5);
	\draw (.4,.5) -- (.4,1.5);
	\roundNbox{unshaded}{(.1,1.5)}{.3}{0}{.2}{$x$}
	\roundNbox{unshaded}{(0,.5)}{.3}{.3}{.3}{$J$}
	\roundNbox{unshaded}{(.1,-.5)}{.3}{0}{.2}{$y$}
	\roundNbox{unshaded}{(0,-1.5)}{.3}{.3}{.3}{$J$}
	\node at (0,-2.2) {\scriptsize{$\bfH$}};
	\node at (-.4,-2.2) {\scriptsize{$\mathbf{a}$}};
	\node at (.4,-2.2) {\scriptsize{$\mathbf{b}$}};
	\node at (-.6,-.5) {\scriptsize{$\overline{\mathbf{b}}$}};
	\node at (.6,-1) {\scriptsize{$\overline{\mathbf{a}}$}};
	\node at (.6,1) {\scriptsize{$\mathbf{b}$}};
	\node at (0,2.2) {\scriptsize{$\bfH$}};
\end{tikzpicture}
=
\begin{tikzpicture}[baseline = -.1cm]
	\draw (0,2) -- (0,-2);
	\draw (-.4,-2) -- (-.4,-.5);
	\draw (.4,-2) -- (.4,-1.5);
	\draw (.4,-.5) -- (.4,.5);
	\roundNbox{unshaded}{(0,1.5)}{.3}{.3}{.3}{$J$}
	\roundNbox{unshaded}{(.1,.5)}{.3}{0}{.2}{$y$}
	\roundNbox{unshaded}{(0,-.5)}{.3}{.3}{.3}{$J$}
	\roundNbox{unshaded}{(.1,-1.5)}{.3}{0}{.2}{$x$}
	\node at (0,-2.2) {\scriptsize{$\bfH$}};
	\node at (-.4,-2.2) {\scriptsize{$\mathbf{a}$}};
	\node at (.4,-2.2) {\scriptsize{$\mathbf{b}$}};
	\node at (.6,0) {\scriptsize{$\overline{\mathbf{a}}$}};
	\node at (0,2.2) {\scriptsize{$\bfH$}};
\end{tikzpicture}
=
JyJx.
$$
Thus $J\bfM'J \subseteq \bfM$, and together with Proposition \ref{prop:JMJinM'}, $J\bfM J = \bfM'$.
\end{thm}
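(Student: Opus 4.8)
The plan is to prove the displayed identity $xJyJ=JyJx$ (the rest of the statement is an immediate corollary of it) by transcribing the classical computation \eqref{eq:JM'JinM} into $\cC$. Granting the identity, fix $y\in\bfM'(\overline{a})$ and let $x$ range over all $\bfM'(b)$, $b\in\cC$: the identity says precisely that $JyJ$ commutes with the cyclic right $\cC$-module dagger subcategory corresponding to $\bfM'$, i.e.\ that $JyJ$ lies in the bicommutant of $\bfM$. Since the GNS representation of $(\bfM,\tau)$ on $L^2(\bfM)$ is normal and faithful — faithful because $\tau$ is faithful on $\bfM(1_\cC)$ and the inner products $\langle\,\cdot\,|\,\cdot\,\rangle_a$ are positive definite by Corollary \ref{cor:NormInequalities} — it is a W*-embedding, so Theorem \ref{thm:Bicommutant} gives $\bfM''\cong\bfM$ and hence $J\bfM'J\subseteq\bfM$. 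Conjugating once more by $J$ and using that the $J^{a,b}$ satisfy $J^{\overline{b},\overline{a}}\circ J^{a,b}=\id$ (so that $J(J\bfM'J)J$ recovers $\bfM'$ up to the canonical equivalence), we obtain $\bfM'\subseteq J\bfM J$; together with $J\bfM J\subseteq\bfM'$ from Proposition \ref{prop:JMJinM'}, this yields $J\bfM J=\bfM'$.

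To prove $xJyJ=JyJx$ as natural transformations $\mathbf{a}\otimes\bfH\otimes\mathbf{b}\Rightarrow\bfH$, it suffices — by the obvious extension of Lemma \ref{lem:ZeroInHilbC} and because $\widehat{\bfM}(e)$ is dense in $L^2(\bfM)(e)$ — to evaluate both sides on vectors of the form $\alpha\boxtimes f\Omega\boxtimes\beta$ with $f\in\bfM$ and to pair the results against vectors $g\Omega$, $g\in\bfM$, in the target fibers $L^2(\bfM)(c)$. After these substitutions each side unwinds into a chain of inner-product identities exactly parallel to the nine equalities of \eqref{eq:JM'JinM}: the moves ``$Jf\Omega=f^*\Omega$'' become $J^{1_\cC,1_\cC}(f\Omega)=j_a(f)\Omega$ from the definitions of $\Omega$ and $J$; the moves ``$Jx\Omega=x^*\Omega$'' for $x\in\bfM'$ become Lemma \ref{lem:JxOmega=x*Omega}; the moves that swap a $J$ across an inner product become Lemma \ref{lem:ConjugateInnerProduct}; the moves that commute $\bfM$ past $\bfM'$ (and past $J\bfM J$, using Proposition \ref{prop:JMJinM'}) are the defining commutation relation of the commutant $\bfM'$; the move using that $(JgJ)^*=Jg^*J$ uses Proposition \ref{prop:JMJinM'} together with the fact that $\rho$ is a $*$-representation; and the way the $\mathbf{a},\mathbf{b}$ legs are carried through the conjugations is controlled by the $\cC$--$\cC$-bilinearity of $J$-conjugation, Lemma \ref{lem:C-C-bilinearJ}.

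The main obstacle is bookkeeping rather than any conceptual difficulty: one must track which object-leg each of $x$, $y$, $\Omega$, and the two copies of $J$ (appearing as $J^{a,b}$, $J^{1_\cC,a}$, and $J^{1_\cC,1_\cC}$) acts on, and carry along the conjugate functor $\overline{\,\cdot\,}$ and the pivotal isomorphisms $\varphi$ that intervene when composing conjugate-linear natural transformations (Definition \ref{defn:ConjugateLinearNaturalTransformation}) and when identifying $\bfM(a)$, $\overline{\bfM(\overline{a})}$, and the relevant hom spaces of $\cB_\bfH$. Every commutation, naturality, and adjoint step that is needed is furnished by the lemmas cited above, so once the indices are arranged correctly the computation is a routine — if lengthy — diagram chase, step for step the analog of the nine equalities displayed in \eqref{eq:JM'JinM}.
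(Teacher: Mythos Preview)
Your proposal is correct and follows essentially the same approach as the paper: both argue by transcribing the classical chain of equalities \eqref{eq:JM'JinM} into $\cC$, pairing $xJyJ(\alpha\boxtimes f\Omega\boxtimes\beta)$ against $g\Omega$ and invoking exactly the ingredients you list (Lemma \ref{lem:C-C-bilinearJ}, $Jf\Omega=j(f)\Omega$, commutation of $\bfM$ with $\bfM'$, Lemma \ref{lem:ConjugateInnerProduct}, the definition of $j^{\bfM'}$, Lemma \ref{lem:JxOmega=x*Omega}, and Proposition \ref{prop:JMJinM'}) in the same order, with the conclusion following from (an analog of) Lemma \ref{lem:ZeroInHilbC}. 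The only minor difference is that the paper leaves the final deduction $J\bfM J=\bfM'$ as immediate from $J\bfM'J\subseteq\bfM$ and Proposition \ref{prop:JMJinM'} (conjugating the latter by $J$ gives $\bfM\subseteq J\bfM'J$), whereas you route it through the bicommutant theorem; both arguments are valid.
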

\begin{proof}
The proof follows \eqref{eq:JM'JinM} almost exactly.
Fix $c\in\Irr(\cC)$, and suppose
$\alpha\boxtimes f\boxtimes \beta\in \cC(e, a\otimes d)\boxtimes \bfM(d)\boxtimes \cC(c, e\otimes b)$
and $g\in \bfM(c)$.
We calculate $\langle xJyJ (\alpha\boxtimes f\boxtimes \beta)\Omega, g\Omega\rangle_{L^2(\bfM)(c)}$ as follows.
To get the first line of \eqref{eq:JM'JinM}, we get the following equalities by Lemma \ref{lem:C-C-bilinearJ}, $Jf\Omega = j_d^\bfM(f)\Omega$, the fact that $\bfM$ and $\bfM'$ commute, and Lemma \ref{lem:ConjugateInnerProduct} respectively:
$$
\begin{tikzpicture}[baseline = -.1cm]
	\draw (0,3) -- (0,-3);
	\draw (.5,.5) -- (.5,1.5);
	\draw (-.6,.5) -- (-.6,-3) arc (-180:0:.2cm) -- (-.2,-2.8);
	\draw (-.4,-3.2) arc (-180:0:.5cm) -- (.6,-.5);
	\draw (-.3,2.8) -- (-.3,3) .. controls ++(90:.5cm) and ++(90:.5cm) .. (1,3) -- (1,-3.7) .. controls ++(270:.5cm) and ++(270:.5cm) .. (.1,-3.7);
	\filldraw[fill=white] (-.4,-3.2) circle (.05cm) node [above] {\scriptsize{$\alpha$}};
	\filldraw[fill=white] (.1,-3.7) circle (.05cm) node [above] {\scriptsize{$\beta$}};
	\filldraw (0,-3) circle (.05cm) node [right] {\scriptsize{$\Omega$}};
	\filldraw (0,3) circle (.05cm) node [right] {\scriptsize{$\Omega^*$}};
	\roundNbox{unshaded}{(0,2.5)}{.3}{.2}{.2}{$g^*$}
	\roundNbox{unshaded}{(.1,1.5)}{.3}{0}{.4}{$x$}
	\roundNbox{unshaded}{(0,.5)}{.3}{.5}{.5}{$J$}
	\roundNbox{unshaded}{(.1,-.5)}{.3}{0}{.4}{$y$}
	\roundNbox{unshaded}{(0,-1.5)}{.3}{.5}{.5}{$J$}
	\roundNbox{unshaded}{(0,-2.5)}{.3}{.05}{.05}{$f$}
	\node at (1.2,0) {\scriptsize{$\overline{\mathbf{c}}$}};
	\node at (-.8,-.5) {\scriptsize{$\overline{\mathbf{b}}$}};
	\node at (.8,-1) {\scriptsize{$\overline{\mathbf{a}}$}};
	\node at (.7,1) {\scriptsize{$\mathbf{b}$}};
	\node at (-.5,3) {\scriptsize{$\mathbf{c}$}};
\end{tikzpicture}
=
\begin{tikzpicture}[baseline = -.1cm]
	\draw (0,3) -- (0,-3);
	\draw (.5,.5) -- (.5,1.5);
	\draw (-.3,2.8) -- (-.3,3) .. controls ++(90:.5cm) and ++(90:.5cm) .. (1,3) -- (1,.5);
	\draw (-.6,.5) -- (-.6,-3) arc (-180:0:.2cm) -- (-.2,-2.8);
	\draw (-.4,-3.2) arc (-180:0:.5cm) -- (.6,-.5);
	\draw (-1,.5) -- (-1,-1.5);
	\draw (1,-1.5) -- (1,-3.7) .. controls ++(270:.5cm) and ++(270:.5cm) .. (.1,-3.7);
	\filldraw[fill=white] (-.4,-3.2) circle (.05cm) node [above] {\scriptsize{$\alpha$}};
	\filldraw[fill=white] (.1,-3.7) circle (.05cm) node [above] {\scriptsize{$\beta$}};
	\filldraw (0,-3) circle (.05cm) node [right] {\scriptsize{$\Omega$}};
	\filldraw (0,3) circle (.05cm) node [right] {\scriptsize{$\Omega^*$}};
	\roundNbox{unshaded}{(0,2.5)}{.3}{.2}{.2}{$g^*$}
	\roundNbox{unshaded}{(.1,1.5)}{.3}{0}{.4}{$x$}
	\roundNbox{unshaded}{(0,.5)}{.3}{.9}{.9}{$J$}
	\roundNbox{unshaded}{(.1,-.5)}{.3}{0}{.4}{$y$}
	\roundNbox{unshaded}{(0,-1.5)}{.3}{.9}{.9}{$J$}
	\roundNbox{unshaded}{(0,-2.5)}{.3}{.05}{.05}{$f$}
	\node at (-.8,-.5) {\scriptsize{$\overline{\mathbf{b}}$}};
	\node at (.8,-1) {\scriptsize{$\overline{\mathbf{a}}$}};
	\node at (-1.2,-.5) {\scriptsize{$\mathbf{c}$}};
	\node at (1.2,1) {\scriptsize{$\overline{\mathbf{c}}$}};
	\node at (.7,1) {\scriptsize{$\mathbf{b}$}};
	\node at (-.5,3) {\scriptsize{$\mathbf{c}$}};
\end{tikzpicture}
=
\begin{tikzpicture}[baseline = -.1cm]
	\coordinate (a) at (.3,-2.6);
	\coordinate (b) at ($ (a) + (-.5,-.5) $);
	\draw (0,3) -- (0,-2);
	\draw (.5,.5) -- (.5,1.5);
	\draw (-.3,2.8) -- (-.3,3) .. controls ++(90:.5cm) and ++(90:.5cm) .. (1,3) -- (1,.5);
	\draw ($ (a) + (-.6,.8) $) -- ($ (a) + (-.6,.6) $) arc (-180:0:.6cm) -- ($ (a) + (.6,.6) $) -- ($ (a) + (.6,1.8) $);
	\draw (a) arc (0:-180:.5cm) -- (-.7,.5);
	\draw (b) arc (0:-180:.4cm) -- (-1,.5);
	\filldraw[fill=white] (a) circle (.05cm) node [above] {\scriptsize{$\overline{\alpha}$}};
	\filldraw[fill=white] (b) circle (.05cm) node [above] {\scriptsize{$\overline{\beta}$}};
	\filldraw (0,-2) circle (.05cm) node [right] {\scriptsize{$\Omega$}};
	\filldraw (0,3) circle (.05cm) node [right] {\scriptsize{$\Omega^*$}};
	\roundNbox{unshaded}{(0,2.5)}{.3}{.2}{.2}{$g^*$}
	\roundNbox{unshaded}{(.1,1.5)}{.3}{0}{.4}{$x$}
	\roundNbox{unshaded}{(0,.5)}{.3}{.9}{.9}{$J$}
	\roundNbox{unshaded}{(.1,-.5)}{.3}{0}{.8}{$y$}
	\roundNbox{unshaded}{(0,-1.5)}{.3}{.25}{.25}{$j_d(f)$}
	\node at (-.5,-2) {\scriptsize{$\overline{\mathbf{d}}$}};
	\node at (1.05,-1) {\scriptsize{$\overline{\mathbf{a}}$}};
	\node at (-.85,-.5) {\scriptsize{$\overline{\mathbf{b}}$}};
	\node at (-1.15,-.5) {\scriptsize{$\mathbf{c}$}};
	\node at (1.2,1) {\scriptsize{$\overline{\mathbf{c}}$}};
	\node at (.7,1) {\scriptsize{$\mathbf{b}$}};
	\node at (-.5,3) {\scriptsize{$\mathbf{c}$}};
\end{tikzpicture}
=
\begin{tikzpicture}[baseline = -.1cm]
	\coordinate (a) at (.3,-2.6);
	\coordinate (b) at ($ (a) + (-.5,-.5) $);
	\draw (0,3) -- (0,-2);
	\draw (.5,.5) -- (.5,2.5);
	\draw (-.4,1.8) -- (-.4,3) .. controls ++(90:.5cm) and ++(90:.5cm) .. (1,3) -- (1,.5);
	\draw ($ (a) + (-.6,1.8) $) -- ($ (a) + (-.6,.6) $) arc (-180:0:.6cm) -- ($ (a) + (.6,.6) $) -- ($ (a) + (.6,.8) $);
	\draw (a) arc (0:-180:.5cm) -- (-.7,.5);
	\draw (b) arc (0:-180:.4cm) -- (-1,.5);
	\filldraw[fill=white] (a) circle (.05cm) node [above] {\scriptsize{$\overline{\alpha}$}};
	\filldraw[fill=white] (b) circle (.05cm) node [above] {\scriptsize{$\overline{\beta}$}};
	\filldraw (0,-2) circle (.05cm) node [right] {\scriptsize{$\Omega$}};
	\filldraw (0,3) circle (.05cm) node [right] {\scriptsize{$\Omega^*$}};
	\roundNbox{unshaded}{(.1,2.5)}{.3}{0}{.4}{$x$}
	\roundNbox{unshaded}{(0,1.5)}{.3}{.3}{0}{$g^*$}
	\roundNbox{unshaded}{(0,.5)}{.3}{.9}{.9}{$J$}
	\roundNbox{unshaded}{(0,-.5)}{.3}{.25}{.25}{$j_d(f)$}
	\roundNbox{unshaded}{(.1,-1.5)}{.3}{0}{.8}{$y$}
	\node at (1.05,-2) {\scriptsize{$\overline{\mathbf{a}}$}};
	\node at (-.5,-1.5) {\scriptsize{$\overline{\mathbf{d}}$}};
	\node at (-.85,-.5) {\scriptsize{$\overline{\mathbf{b}}$}};
	\node at (-1.15,-.5) {\scriptsize{$\mathbf{c}$}};
	\node at (1.2,1) {\scriptsize{$\overline{\mathbf{c}}$}};
	\node at (.7,1) {\scriptsize{$\mathbf{b}$}};
	\node at (-.6,3) {\scriptsize{$\mathbf{c}$}};
\end{tikzpicture}
=
\begin{tikzpicture}[baseline = -.1cm, yscale=-1]
	\coordinate (a) at (.3,-2.6);
	\coordinate (b) at ($ (a) + (-.5,-.5) $);
	\draw (0,3) -- (0,-2);
	\draw (.5,.5) -- (.5,2.5);
	\draw (-.4,1.8) -- (-.4,3) .. controls ++(90:.5cm) and ++(90:.5cm) .. (1,3) -- (1,.5);
	\draw ($ (a) + (-.6,1.8) $) -- ($ (a) + (-.6,.6) $) arc (-180:0:.6cm) -- ($ (a) + (.6,.6) $) -- ($ (a) + (.6,.8) $);
	\draw (a) arc (0:-180:.5cm) -- (-.7,.5);
	\draw (b) arc (0:-180:.4cm) -- (-1,.5);
	\filldraw[fill=white] (a) circle (.05cm) node [below] {\scriptsize{$\alpha^{\vee}$}};
	\filldraw[fill=white] (b) circle (.05cm) node [below] {\scriptsize{$\beta^{\vee}$}};
	\filldraw (0,-2) circle (.05cm) node [right] {\scriptsize{$\Omega^*$}};
	\filldraw (0,3) circle (.05cm) node [right] {\scriptsize{$\Omega$}};
	\roundNbox{unshaded}{(.1,2.5)}{.3}{0}{.4}{$x^*$}
	\roundNbox{unshaded}{(0,1.5)}{.3}{.3}{0}{$g$}
	\roundNbox{unshaded}{(0,.5)}{.3}{.9}{.9}{$J$}
	\roundNbox{unshaded}{(0,-.5)}{.3}{.3}{.3}{$j_d(f)^*$}
	\roundNbox{unshaded}{(.1,-1.5)}{.3}{0}{.8}{$y^*$}
	\node at (1.05,-2) {\scriptsize{$\overline{\mathbf{a}}$}};
	\node at (-.5,-1.5) {\scriptsize{$\overline{\mathbf{d}}$}};
	\node at (-.85,-.5) {\scriptsize{$\overline{\mathbf{b}}$}};
	\node at (-1.15,-.5) {\scriptsize{$\mathbf{c}$}};
	\node at (1.2,1) {\scriptsize{$\overline{\mathbf{c}}$}};
	\node at (.7,1) {\scriptsize{$\mathbf{b}$}};
	\node at (-.6,3) {\scriptsize{$\mathbf{c}$}};
\end{tikzpicture}
$$
We get the first two terms in the second line of \eqref{eq:JM'JinM} by applying isotopy, using the definition of $j^{\bfM'}$, applying Lemma \ref{lem:JxOmega=x*Omega}, and using Proposition \ref{prop:JMJinM'} respectively:
$$
\begin{tikzpicture}[baseline = -.1cm, yscale=-1]
	\coordinate (a) at (.3,-2.6);
	\coordinate (b) at ($ (a) + (-.5,-.5) $);
	\draw (0,3) -- (0,-2);
	\draw (.5,.5) -- (.5,2.5);
	\draw (-.4,1.8) -- (-.4,3) .. controls ++(90:.5cm) and ++(90:.5cm) .. (1,3) -- (1,.5);
	\draw ($ (a) + (-.6,1.8) $) -- ($ (a) + (-.6,.6) $) arc (-180:0:.6cm) -- ($ (a) + (.6,.6) $) -- ($ (a) + (.6,.8) $);
	\draw (a) arc (0:-180:.5cm) -- (-.7,.5);
	\draw (b) arc (0:-180:.4cm) -- (-1,.5);
	\filldraw[fill=white] (a) circle (.05cm) node [below] {\scriptsize{$\alpha^{\vee}$}};
	\filldraw[fill=white] (b) circle (.05cm) node [below] {\scriptsize{$\beta^{\vee}$}};
	\filldraw (0,-2) circle (.05cm) node [right] {\scriptsize{$\Omega^*$}};
	\filldraw (0,3) circle (.05cm) node [right] {\scriptsize{$\Omega$}};
	\roundNbox{unshaded}{(.1,2.5)}{.3}{0}{.4}{$x^*$}
	\roundNbox{unshaded}{(0,1.5)}{.3}{.3}{0}{$g$}
	\roundNbox{unshaded}{(0,.5)}{.3}{.9}{.9}{$J$}
	\roundNbox{unshaded}{(0,-.5)}{.3}{.3}{.3}{$j_d(f)^*$}
	\roundNbox{unshaded}{(.1,-1.5)}{.3}{0}{.8}{$y^*$}
	\node at (1.05,-2) {\scriptsize{$\overline{\mathbf{a}}$}};
	\node at (-.5,-1.5) {\scriptsize{$\overline{\mathbf{d}}$}};
	\node at (-.85,-.5) {\scriptsize{$\overline{\mathbf{b}}$}};
	\node at (-1.15,-.5) {\scriptsize{$\mathbf{c}$}};
	\node at (1.2,1) {\scriptsize{$\overline{\mathbf{c}}$}};
	\node at (.7,1) {\scriptsize{$\mathbf{b}$}};
	\node at (-.6,3) {\scriptsize{$\mathbf{c}$}};
\end{tikzpicture}
=
\begin{tikzpicture}[baseline = -.1cm, yscale=-1]
	\coordinate (a) at (.3,-2.6);
	\coordinate (b) at ($ (a) + (-.5,-.5) $);
	\draw (0,3) -- (0,-2);
	\draw (.2,2.2) arc (-180:0:.15cm) -- (.5,2.8) arc (180:0:.15cm) -- (.8,.5);
	\draw (-.4,1.8) -- (-.4,3) .. controls ++(90:.5cm) and ++(90:.5cm) .. (1,3) -- (1,.5);
	\draw ($ (a) + (-.6,1.8) $) -- ($ (a) + (-.6,.6) $) arc (-180:0:.6cm) -- ($ (a) + (.6,.6) $) -- ($ (a) + (.6,.8) $);
	\draw (a) arc (0:-180:.5cm) -- (-.7,.5);
	\draw (b) arc (0:-180:.4cm) -- (-1,.5);
	\filldraw[fill=white] (a) circle (.05cm) node [below] {\scriptsize{$\alpha^{\vee}$}};
	\filldraw[fill=white] (b) circle (.05cm) node [below] {\scriptsize{$\beta^{\vee}$}};
	\filldraw (0,-2) circle (.05cm) node [right] {\scriptsize{$\Omega^*$}};
	\filldraw (0,3) circle (.05cm) node [right] {\scriptsize{$\Omega$}};
	\roundNbox{unshaded}{(.1,2.5)}{.3}{0}{0}{$x^*$}
	\roundNbox{unshaded}{(0,1.5)}{.3}{.3}{0}{$g$}
	\roundNbox{unshaded}{(0,.5)}{.3}{.9}{.9}{$J$}
	\roundNbox{unshaded}{(0,-.5)}{.3}{.3}{.3}{$j_d(f)^*$}
	\roundNbox{unshaded}{(.1,-1.5)}{.3}{0}{.8}{$y^*$}
	\node at (1.05,-2) {\scriptsize{$\overline{\mathbf{a}}$}};
	\node at (-.5,-1.5) {\scriptsize{$\overline{\mathbf{d}}$}};
	\node at (-.85,-.5) {\scriptsize{$\overline{\mathbf{b}}$}};
	\node at (-1.15,-.5) {\scriptsize{$\mathbf{c}$}};
	\node at (1.2,1) {\scriptsize{$\overline{\mathbf{c}}$}};
	\node at (.6,1) {\scriptsize{$\mathbf{b}$}};
	\node at (-.6,3) {\scriptsize{$\mathbf{c}$}};
\end{tikzpicture}
=
\begin{tikzpicture}[baseline = -.1cm, yscale=-1]
	\coordinate (a) at (.3,-2.6);
	\coordinate (b) at ($ (a) + (-.5,-.5) $);
	\draw (0,3) -- (0,-2);
	\draw (.6,2.8) arc (180:0:.2cm) -- (1,.5);
	\draw (-.8,1.8) -- (-.8,3) .. controls ++(90:.7cm) and ++(90:.7cm) .. (1.2,3) -- (1.2,.5);
	\draw ($ (a) + (-.6,1.8) $) -- ($ (a) + (-.6,.6) $) arc (-180:0:.6cm) -- ($ (a) + (.6,.6) $) -- ($ (a) + (.6,.8) $);
	\draw (a) arc (0:-180:.5cm) -- (-.7,.5);
	\draw (b) arc (0:-180:.4cm) -- (-1,.5);
	\filldraw[fill=white] (a) circle (.05cm) node [below] {\scriptsize{$\alpha^{\vee}$}};
	\filldraw[fill=white] (b) circle (.05cm) node [below] {\scriptsize{$\beta^{\vee}$}};
	\filldraw (0,-2) circle (.05cm) node [right] {\scriptsize{$\Omega^*$}};
	\filldraw (0,3) circle (.05cm) node [right] {\scriptsize{$\Omega$}};
	\roundNbox{unshaded}{(.1,2.5)}{.3}{.4}{.4}{$j^{\bfM'}_{b}(x)$}
	\roundNbox{unshaded}{(0,1.5)}{.3}{.7}{0}{$g$}
	\roundNbox{unshaded}{(0,.5)}{.3}{1.1}{1.1}{$J$}
	\roundNbox{unshaded}{(0,-.5)}{.3}{.3}{.3}{$j_d(f)^*$}
	\roundNbox{unshaded}{(.1,-1.5)}{.3}{0}{.8}{$y^*$}
	\node at (1.05,-2) {\scriptsize{$\overline{\mathbf{a}}$}};
	\node at (-.85,-.5) {\scriptsize{$\overline{\mathbf{b}}$}};
	\node at (-.5,-1.5) {\scriptsize{$\overline{\mathbf{d}}$}};
	\node at (-1.15,-.5) {\scriptsize{$\mathbf{c}$}};
	\node at (1.4,1) {\scriptsize{$\overline{\mathbf{c}}$}};
	\node at (.8,1) {\scriptsize{$\mathbf{b}$}};
	\node at (-1,2.5) {\scriptsize{$\mathbf{c}$}};
\end{tikzpicture}
=
\begin{tikzpicture}[baseline = -.1cm, yscale=-1]
	\coordinate (a) at (.3,-2.6);
	\coordinate (b) at ($ (a) + (-.5,-.5) $);
	\draw (0,4) -- (0,-2);
	\draw (.6,.5) -- (.6,2.5);
	\draw (.2,3.8) -- (.2,3.9) arc (0:180:.4cm) -- (-.6,2.5);
	\draw (-.3,1.5) -- (-.3,2.5);
	\draw (-1,2.8) -- (-1,4) .. controls ++(90:.7cm) and ++(90:.7cm) .. (1,4) -- (1,.5);
	\draw ($ (a) + (-.6,1.8) $) -- ($ (a) + (-.6,.6) $) arc (-180:0:.6cm) -- ($ (a) + (.6,.6) $) -- ($ (a) + (.6,.8) $);
	\draw (a) arc (0:-180:.5cm) -- (-.7,.5);
	\draw (b) arc (0:-180:.4cm) -- (-1,.5);
	\filldraw[fill=white] (a) circle (.05cm) node [below] {\scriptsize{$\alpha^{\vee}$}};
	\filldraw[fill=white] (b) circle (.05cm) node [below] {\scriptsize{$\beta^{\vee}$}};
	\filldraw (0,-2) circle (.05cm) node [right] {\scriptsize{$\Omega^*$}};
	\filldraw (0,4) circle (.05cm) node [left] {\scriptsize{$\Omega$}};
	\roundNbox{unshaded}{(.1,3.5)}{.3}{0}{0}{$x$}
	\roundNbox{unshaded}{(0,2.5)}{.3}{1}{1}{$J$}
	\roundNbox{unshaded}{(0,1.5)}{.3}{.3}{0}{$g$}
	\roundNbox{unshaded}{(0,.5)}{.3}{1}{1}{$J$}
	\roundNbox{unshaded}{(0,-.5)}{.3}{.3}{.3}{$j_d(f)^*$}
	\roundNbox{unshaded}{(.1,-1.5)}{.3}{0}{.8}{$y^*$}
	\node at (1.05,-2) {\scriptsize{$\overline{\mathbf{a}}$}};
	\node at (-.85,-.5) {\scriptsize{$\overline{\mathbf{b}}$}};
	\node at (-.5,-1.5) {\scriptsize{$\overline{\mathbf{d}}$}};
	\node at (-1.15,-.5) {\scriptsize{$\mathbf{c}$}};
	\node at (1.2,1) {\scriptsize{$\overline{\mathbf{c}}$}};
	\node at (.8,1) {\scriptsize{$\mathbf{b}$}};
	\node at (-1.2,3.5) {\scriptsize{$\mathbf{c}$}};
	\node at (-.8,3.5) {\scriptsize{$\overline{\mathbf{b}}$}};
	\node at (-.5,2) {\scriptsize{$\mathbf{c}$}};
\end{tikzpicture}
=
\begin{tikzpicture}[baseline = -.1cm, yscale=-1]
	\coordinate (a) at (.3,-2.6);
	\coordinate (b) at ($ (a) + (-.5,-.5) $);
	\draw (0,4) -- (0,-2);
	\draw (.8,-.5) -- (.8,1.5);
	\draw (.2,3.8) -- (.2,3.9) .. controls ++(90:.5cm) and ++(90:.5cm) .. (-.8,3.9) -- (-.8,1.5);
	\draw (-.3,.5) -- (-.3,1.5);
	\draw (-.3,1.5) -- (-.3,2.5);
	\draw (.5,1.5) -- (.5,-.5);
	\draw (-1.1,1.8) -- (-1.1,4) .. controls ++(90:.7cm) and ++(90:.7cm) .. (1.1,4) -- (1.1,-.5);
	\draw ($ (a) + (-.6,1.8) $) -- ($ (a) + (-.6,.6) $) arc (-180:0:.6cm) -- ($ (a) + (.6,.6) $) -- ($ (a) + (.6,.8) $);
	\draw (a) arc (0:-180:.5cm) -- (-.7,-.5);
	\draw (b) arc (0:-180:.4cm) -- (-1,-.5);
	\filldraw[fill=white] (a) circle (.05cm) node [below] {\scriptsize{$\alpha^{\vee}$}};
	\filldraw[fill=white] (b) circle (.05cm) node [below] {\scriptsize{$\beta^{\vee}$}};
	\filldraw (0,-2) circle (.05cm) node [right] {\scriptsize{$\Omega^*$}};
	\filldraw (0,4) circle (.05cm) node [left] {\scriptsize{$\Omega$}};
	\roundNbox{unshaded}{(.1,3.5)}{.3}{0}{0}{$x$}
	\roundNbox{unshaded}{(0,2.5)}{.3}{.3}{.3}{$j_d(f)^*$}
	\roundNbox{unshaded}{(0,1.5)}{.3}{1}{1}{$J$}
	\roundNbox{unshaded}{(0,.5)}{.3}{.3}{0}{$g$}
	\roundNbox{unshaded}{(0,-.5)}{.3}{1}{1}{$J$}
	\roundNbox{unshaded}{(.1,-1.5)}{.3}{0}{.8}{$y^*$}
	\node at (1.05,-2) {\scriptsize{$\overline{\mathbf{a}}$}};
	\node at (-.85,-1.5) {\scriptsize{$\overline{\mathbf{b}}$}};
	\node at (-.5,-1.5) {\scriptsize{$\overline{\mathbf{d}}$}};
	\node at (-1.15,-1.5) {\scriptsize{$\mathbf{c}$}};
	\node at (1.25,.5) {\scriptsize{$\overline{\mathbf{c}}$}};
	\node at (.95,.5) {\scriptsize{$\mathbf{b}$}};
	\node at (.65,.5) {\scriptsize{$\mathbf{d}$}};
	\node at (-1.2,3.5) {\scriptsize{$\mathbf{c}$}};
	\node at (-.6,3.5) {\scriptsize{$\overline{\mathbf{b}}$}};
	\node at (-.5,1) {\scriptsize{$\mathbf{c}$}};
	\node at (-.5,2) {\scriptsize{$\overline{\mathbf{d}}$}};
\end{tikzpicture}
$$
Now conjugation by $J$ is a natural transformation, so we may pull $\alpha^{\vee}$ and $\beta^{\vee}$ through the pair of $J$'s to move them to the bottom.
We rotate $\alpha^{\vee}$ and $\beta^{\vee}$ and the $\overline{\mathbf{d}}$ string attached to $j_d(f)^*$ to obtain $\alpha$ and $\beta$ and $f$ again.
We then use Lemma \ref{lem:JxOmega=x*Omega} again follows by the definition of $j^{\bfM'}$ to finish the second line of \eqref{eq:JM'JinM}.
$$
\begin{tikzpicture}[baseline = -.1cm, yscale=-1]
	\coordinate (a) at (.3,-2.6);
	\coordinate (b) at ($ (a) + (-.5,-.5) $);
	\draw (0,4) -- (0,-2);
	\draw (.8,-.5) -- (.8,1.5);
	\draw (.2,3.8) -- (.2,3.9) .. controls ++(90:.5cm) and ++(90:.5cm) .. (-.8,3.9) -- (-.8,1.5);
	\draw (-.3,.5) -- (-.3,1.5);
	\draw (-.3,1.5) -- (-.3,2.5);
	\draw (.5,1.5) -- (.5,-.5);
	\draw (-1.1,1.8) -- (-1.1,4) .. controls ++(90:.7cm) and ++(90:.7cm) .. (1.1,4) -- (1.1,-.5);
	\draw ($ (a) + (-.6,1.8) $) -- ($ (a) + (-.6,.6) $) arc (-180:0:.6cm) -- ($ (a) + (.6,.6) $) -- ($ (a) + (.6,.8) $);
	\draw (a) arc (0:-180:.5cm) -- (-.7,-.5);
	\draw (b) arc (0:-180:.4cm) -- (-1,-.5);
	\filldraw[fill=white] (a) circle (.05cm) node [below] {\scriptsize{$\alpha^{\vee}$}};
	\filldraw[fill=white] (b) circle (.05cm) node [below] {\scriptsize{$\beta^{\vee}$}};
	\filldraw (0,-2) circle (.05cm) node [right] {\scriptsize{$\Omega^*$}};
	\filldraw (0,4) circle (.05cm) node [left] {\scriptsize{$\Omega$}};
	\roundNbox{unshaded}{(.1,3.5)}{.3}{0}{0}{$x$}
	\roundNbox{unshaded}{(0,2.5)}{.3}{.3}{.3}{$j_d(f)^*$}
	\roundNbox{unshaded}{(0,1.5)}{.3}{1}{1}{$J$}
	\roundNbox{unshaded}{(0,.5)}{.3}{.3}{0}{$g$}
	\roundNbox{unshaded}{(0,-.5)}{.3}{1}{1}{$J$}
	\roundNbox{unshaded}{(.1,-1.5)}{.3}{0}{.8}{$y^*$}
	\node at (1.05,-2) {\scriptsize{$\overline{\mathbf{a}}$}};
	\node at (-.85,-1.5) {\scriptsize{$\overline{\mathbf{b}}$}};
	\node at (-.5,-1.5) {\scriptsize{$\overline{\mathbf{d}}$}};
	\node at (-1.15,-1.5) {\scriptsize{$\mathbf{c}$}};
	\node at (1.25,.5) {\scriptsize{$\overline{\mathbf{c}}$}};
	\node at (.95,.5) {\scriptsize{$\mathbf{b}$}};
	\node at (.65,.5) {\scriptsize{$\mathbf{d}$}};
	\node at (-1.2,3.5) {\scriptsize{$\mathbf{c}$}};
	\node at (-.6,3.5) {\scriptsize{$\overline{\mathbf{b}}$}};
	\node at (-.5,1) {\scriptsize{$\mathbf{c}$}};
	\node at (-.5,2) {\scriptsize{$\overline{\mathbf{d}}$}};
\end{tikzpicture}
=
\begin{tikzpicture}[baseline = -.1cm]
	\coordinate (a) at (-.6,-2.2);
	\coordinate (b) at ($ (a) + (.6,-1.4) $);
	\draw (0,3) -- (0,-3);
	\draw (.6,1.5) -- (.6,-.5);
	\draw (.6,2.8) -- (.6,3) .. controls ++(90:.5cm) and ++(90:.5cm) .. (-.6,3) -- (-.6,1.5);
	\draw (-.2,-1.8) arc (0:-180:.4cm) -- (-1,-.8);
	\draw (-.2,.5) -- (-.2,-.5);
	\draw (a) -- ($ (a) + (0,-.8) $) arc (-180:0:.6cm) -- ($ (a) + (1.2,-.6) $);
	\draw (b) .. controls ++(270:.5cm) and ++(270:.5cm) .. (1,-3.6) -- (1,-.5);
	\filldraw[fill=white] (a) circle (.05cm) node [above] {\scriptsize{$\alpha$}};
	\filldraw[fill=white] (b) circle (.05cm) node [above] {\scriptsize{$\beta$}};
	\filldraw (0,-3) circle (.05cm) node [right] {\scriptsize{$\Omega$}};
	\filldraw (0,3) circle (.05cm) node [right] {\scriptsize{$\Omega^*$}};
	\roundNbox{unshaded}{(.1,2.5)}{.3}{0}{.4}{$y^*$}
	\roundNbox{unshaded}{(0,1.5)}{.3}{.5}{.5}{$J$}
	\roundNbox{unshaded}{(0,.5)}{.3}{.2}{0}{$g$}
	\roundNbox{unshaded}{(0,-.5)}{.3}{.9}{.9}{$J$}
	\roundNbox{unshaded}{(0,-1.5)}{.3}{.2}{0}{$f$}
	\roundNbox{unshaded}{(.1,-2.5)}{.3}{0}{.4}{$x$}
	\node at (.8,3) {\scriptsize{$\overline{\mathbf{a}}$}};
	\node at (-.8,2.5) {\scriptsize{$\mathbf{a}$}};
	\node at (.8,.5) {\scriptsize{$\overline{\mathbf{a}}$}};
	\node at (-.4,0) {\scriptsize{$\mathbf{c}$}};
	\node at (-1.2,-1) {\scriptsize{$\mathbf{a}$}};
	\node at (1.2,-1) {\scriptsize{$\overline{\mathbf{c}}$}};
	\node at (.8,-3) {\scriptsize{$\mathbf{b}$}};
	\node at ($ (b) + (-.2,-.2) $) {\scriptsize{$\mathbf{c}$}};
\end{tikzpicture}
=
\begin{tikzpicture}[baseline = -.1cm]
	\coordinate (a) at (-.6,-2.2);
	\coordinate (b) at ($ (a) + (.6,-1.4) $);
	\draw (0,2) -- (0,-3);
	\draw (.6,1.8) arc (180:0:.2cm) -- (1,-.5);
	\draw (-.2,-1.8) arc (0:-180:.4cm) -- (-1,-.8);
	\draw (-.2,.5) -- (-.2,-.5);
	\draw (a) -- ($ (a) + (0,-.8) $) arc (-180:0:.6cm) -- ($ (a) + (1.2,-.6) $);
	\draw (b) .. controls ++(270:.5cm) and ++(270:.5cm) .. (1,-3.6) -- (1,-.5);
	\filldraw[fill=white] (a) circle (.05cm) node [above] {\scriptsize{$\alpha$}};
	\filldraw[fill=white] (b) circle (.05cm) node [above] {\scriptsize{$\beta$}};
	\filldraw (0,-3) circle (.05cm) node [right] {\scriptsize{$\Omega$}};
	\filldraw (0,2) circle (.05cm) node [right] {\scriptsize{$\Omega^*$}};
	\roundNbox{unshaded}{(.1,1.5)}{.3}{.45}{.45}{$j_{\overline{a}}^{\bfM'}(y)^*$}
	\roundNbox{unshaded}{(0,.5)}{.3}{.2}{0}{$g$}
	\roundNbox{unshaded}{(0,-.5)}{.3}{.9}{.9}{$J$}
	\roundNbox{unshaded}{(0,-1.5)}{.3}{.2}{0}{$f$}
	\roundNbox{unshaded}{(.1,-2.5)}{.3}{0}{.4}{$x$}
	\node at (1.2,.5) {\scriptsize{$\overline{\mathbf{a}}$}};
	\node at (-.4,0) {\scriptsize{$\mathbf{c}$}};
	\node at (-1.2,-1) {\scriptsize{$\mathbf{a}$}};
	\node at (1.2,-1) {\scriptsize{$\overline{\mathbf{c}}$}};
	\node at (.8,-3) {\scriptsize{$\mathbf{b}$}};
	\node at ($ (b) + (-.2,-.2) $) {\scriptsize{$\mathbf{c}$}};
\end{tikzpicture}
=
\begin{tikzpicture}[baseline = -.1cm]
	\coordinate (a) at (-.6,-2.2);
	\coordinate (b) at ($ (a) + (.6,-1.4) $);
	\draw (0,2) -- (0,-3);
	\draw (.4,-.5) -- (.4,1.5);
	\draw (-.2,-1.8) arc (0:-180:.4cm) -- (-1,-.8);
	\draw (-.4,.5) -- (-.4,-.5);
	\draw (a) -- ($ (a) + (0,-.8) $) arc (-180:0:.6cm) -- ($ (a) + (1.2,-.6) $);
	\draw (b) .. controls ++(270:.5cm) and ++(270:.5cm) .. (1,-3.6) -- (1,-.5);
	\filldraw[fill=white] (a) circle (.05cm) node [above] {\scriptsize{$\alpha$}};
	\filldraw[fill=white] (b) circle (.05cm) node [above] {\scriptsize{$\beta$}};
	\filldraw (0,-3) circle (.05cm) node [right] {\scriptsize{$\Omega$}};
	\filldraw (0,2) circle (.05cm) node [right] {\scriptsize{$\Omega^*$}};
	\roundNbox{unshaded}{(.1,1.5)}{.3}{0}{.3}{$y$}
	\roundNbox{unshaded}{(-.1,.5)}{.3}{.3}{0}{$g$}
	\roundNbox{unshaded}{(0,-.5)}{.3}{.9}{.9}{$J$}
	\roundNbox{unshaded}{(0,-1.5)}{.3}{.2}{0}{$f$}
	\roundNbox{unshaded}{(.1,-2.5)}{.3}{0}{.4}{$x$}
	\node at (.6,.5) {\scriptsize{$\overline{\mathbf{a}}$}};
	\node at (-.6,0) {\scriptsize{$\mathbf{c}$}};
	\node at (-1.2,-1) {\scriptsize{$\mathbf{a}$}};
	\node at (1.2,-1) {\scriptsize{$\overline{\mathbf{c}}$}};
	\node at (.8,-3) {\scriptsize{$\mathbf{b}$}};
	\node at ($ (b) + (-.2,-.2) $) {\scriptsize{$\mathbf{c}$}};
\end{tikzpicture}
$$
We obtain the final line of \eqref{eq:JM'JinM} by again using that $\bfM$ and $\bfM'$ commute, then using the definition of $j^\bfM$, then using $Jj^\bfM_c(g)\Omega = g\Omega$, and finally Lemma \ref{lem:C-C-bilinearJ} again.
$$
\begin{tikzpicture}[baseline = -.1cm]
	\coordinate (a) at (-.6,-2.2);
	\coordinate (b) at ($ (a) + (.6,-1.4) $);
	\draw (0,2) -- (0,-3);
	\draw (.4,-.5) -- (.4,1.5);
	\draw (-.2,-1.8) arc (0:-180:.4cm) -- (-1,-.8);
	\draw (-.4,.5) -- (-.4,-.5);
	\draw (a) -- ($ (a) + (0,-.8) $) arc (-180:0:.6cm) -- ($ (a) + (1.2,-.6) $);
	\draw (b) .. controls ++(270:.5cm) and ++(270:.5cm) .. (1,-3.6) -- (1,-.5);
	\filldraw[fill=white] (a) circle (.05cm) node [above] {\scriptsize{$\alpha$}};
	\filldraw[fill=white] (b) circle (.05cm) node [above] {\scriptsize{$\beta$}};
	\filldraw (0,-3) circle (.05cm) node [right] {\scriptsize{$\Omega$}};
	\filldraw (0,2) circle (.05cm) node [right] {\scriptsize{$\Omega^*$}};
	\roundNbox{unshaded}{(.1,1.5)}{.3}{0}{.3}{$y$}
	\roundNbox{unshaded}{(-.1,.5)}{.3}{.3}{0}{$g$}
	\roundNbox{unshaded}{(0,-.5)}{.3}{.9}{.9}{$J$}
	\roundNbox{unshaded}{(0,-1.5)}{.3}{.2}{0}{$f$}
	\roundNbox{unshaded}{(.1,-2.5)}{.3}{0}{.4}{$x$}
	\node at (.6,.5) {\scriptsize{$\overline{\mathbf{a}}$}};
	\node at (-.6,0) {\scriptsize{$\mathbf{c}$}};
	\node at (-1.2,-1) {\scriptsize{$\mathbf{a}$}};
	\node at (1.2,-1) {\scriptsize{$\overline{\mathbf{c}}$}};
	\node at (.8,-3) {\scriptsize{$\mathbf{b}$}};
	\node at ($ (b) + (-.2,-.2) $) {\scriptsize{$\mathbf{c}$}};
\end{tikzpicture}
=
\begin{tikzpicture}[baseline = -.1cm]
	\coordinate (a) at (-.6,-3.2);
	\coordinate (b) at ($ (a) + (.6,-.6) $);
	\draw (0,2) -- (0,-3);
	\draw (-.4,-.5) -- (-.4,1.5);
	\draw (-.2,-2.8) arc (0:-180:.4cm) -- (-1,-.8);
	\draw (.4,.5) -- (.4,-.5);
	\draw (a) arc (-180:0:.6cm) -- ($ (a) + (1.2,1.6) $);
	\draw (b) .. controls ++(270:.5cm) and ++(270:.5cm) .. (1,-3.8) -- (1,-.5);
	\filldraw[fill=white] (a) circle (.05cm) node [above] {\scriptsize{$\alpha$}};
	\filldraw[fill=white] (b) circle (.05cm) node [above] {\scriptsize{$\beta$}};
	\filldraw (0,-3) circle (.05cm) node [right] {\scriptsize{$\Omega$}};
	\filldraw (0,2) circle (.05cm) node [right] {\scriptsize{$\Omega^*$}};
	\roundNbox{unshaded}{(-.1,1.5)}{.3}{.3}{0}{$g$}
	\roundNbox{unshaded}{(.1,.5)}{.3}{0}{.3}{$y$}
	\roundNbox{unshaded}{(0,-.5)}{.3}{.9}{.9}{$J$}
	\roundNbox{unshaded}{(.1,-1.5)}{.3}{0}{.4}{$x$}
	\roundNbox{unshaded}{(0,-2.5)}{.3}{.2}{0}{$f$}
	\node at (.6,0) {\scriptsize{$\overline{\mathbf{a}}$}};
	\node at (-.6,.5) {\scriptsize{$\mathbf{c}$}};
	\node at (-1.2,-1) {\scriptsize{$\mathbf{a}$}};
	\node at (1.2,-1) {\scriptsize{$\overline{\mathbf{c}}$}};
	\node at (.8,-3) {\scriptsize{$\mathbf{b}$}};
	\node at ($ (b) + (-.2,-.2) $) {\scriptsize{$\mathbf{c}$}};
\end{tikzpicture}
=
\begin{tikzpicture}[baseline = -.1cm]
	\coordinate (a) at (-.6,-3.2);
	\coordinate (b) at ($ (a) + (.6,-.6) $);
	\draw (0,2) -- (0,-3);
	\draw (-1,-.5) -- (-1,1.8) arc (180:0:.3cm);
	\draw (-.2,-2.8) arc (0:-180:.4cm) -- (-1,-.8);
	\draw (.4,.5) -- (.4,-.5);
	\draw (a) arc (-180:0:.6cm) -- ($ (a) + (1.2,1.6) $);
	\draw (b) .. controls ++(270:.5cm) and ++(270:.5cm) .. (1,-3.8) -- (1,-.5);
	\filldraw[fill=white] (a) circle (.05cm) node [above] {\scriptsize{$\alpha$}};
	\filldraw[fill=white] (b) circle (.05cm) node [above] {\scriptsize{$\beta$}};
	\filldraw (0,-3) circle (.05cm) node [right] {\scriptsize{$\Omega$}};
	\filldraw (0,2) circle (.05cm) node [right] {\scriptsize{$\Omega^*$}};
	\roundNbox{unshaded}{(-.1,1.5)}{.3}{.3}{.3}{$j_c(g)^*$}
	\roundNbox{unshaded}{(.1,.5)}{.3}{0}{.3}{$y$}
	\roundNbox{unshaded}{(0,-.5)}{.3}{.9}{.9}{$J$}
	\roundNbox{unshaded}{(.1,-1.5)}{.3}{0}{.4}{$x$}
	\roundNbox{unshaded}{(0,-2.5)}{.3}{.2}{0}{$f$}
	\node at (.6,0) {\scriptsize{$\overline{\mathbf{a}}$}};
	\node at (-.8,.5) {\scriptsize{$\mathbf{c}$}};
	\node at (-1.2,-1) {\scriptsize{$\mathbf{a}$}};
	\node at (1.2,-1) {\scriptsize{$\overline{\mathbf{c}}$}};
	\node at (.8,-3) {\scriptsize{$\mathbf{b}$}};
	\node at ($ (b) + (-.2,-.2) $) {\scriptsize{$\mathbf{c}$}};
\end{tikzpicture}
=
\begin{tikzpicture}[baseline = -.1cm]
	\coordinate (a) at (-.6,-3.2);
	\coordinate (b) at ($ (a) + (.6,-.6) $);
	\draw (0,3) -- (0,-3);
	\draw (-.4,-.5) -- (-.4,1.5);
	\draw (-.2,-2.8) arc (0:-180:.4cm) -- (-1,-.8);
	\draw (.4,.5) -- (.4,-.5);
	\draw (a) arc (-180:0:.6cm) -- ($ (a) + (1.2,1.6) $);
	\draw (b) .. controls ++(270:.5cm) and ++(270:.5cm) .. (1,-3.8) -- (1,-.5);
	\draw (-.4, 2.8) -- (-.4,3) .. controls ++(90:.5cm) and ++(90:.5cm) .. (1,3) -- (1,1.5);
	\filldraw[fill=white] (a) circle (.05cm) node [above] {\scriptsize{$\alpha$}};
	\filldraw[fill=white] (b) circle (.05cm) node [above] {\scriptsize{$\beta$}};
	\filldraw (0,-3) circle (.05cm) node [right] {\scriptsize{$\Omega$}};
	\filldraw (0,3) circle (.05cm) node [right] {\scriptsize{$\Omega^*$}};
	\roundNbox{unshaded}{(-.1,2.5)}{.3}{.3}{0}{$g^*$}
	\roundNbox{unshaded}{(0,1.5)}{.3}{.9}{.9}{$J$}
	\roundNbox{unshaded}{(.1,.5)}{.3}{0}{.3}{$y$}
	\roundNbox{unshaded}{(0,-.5)}{.3}{.9}{.9}{$J$}
	\roundNbox{unshaded}{(.1,-1.5)}{.3}{0}{.4}{$x$}
	\roundNbox{unshaded}{(0,-2.5)}{.3}{.2}{0}{$f$}
	\node at (-.6,3) {\scriptsize{$\mathbf{c}$}};
	\node at (1.2,2.5) {\scriptsize{$\overline{\mathbf{c}}$}};
	\node at (.6,0) {\scriptsize{$\overline{\mathbf{a}}$}};
	\node at (-.6,.5) {\scriptsize{$\mathbf{c}$}};
	\node at (-1.2,-1) {\scriptsize{$\mathbf{a}$}};
	\node at (1.2,-1) {\scriptsize{$\overline{\mathbf{c}}$}};
	\node at (.8,-3) {\scriptsize{$\mathbf{b}$}};
	\node at ($ (b) + (-.2,-.2) $) {\scriptsize{$\mathbf{c}$}};
\end{tikzpicture}
=
\begin{tikzpicture}[baseline = -.1cm]
	\coordinate (a) at (-.6,-3.2);
	\coordinate (b) at ($ (a) + (.6,-.6) $);
	\draw (0,3) -- (0,-3);
	\draw (-.2,-2.8) arc (0:-180:.4cm) -- (-1,-.8);
	\draw (.4,.5) -- (.4,-.5);
	\draw (a) arc (-180:0:.6cm) -- ($ (a) + (1.2,1.6) $);
	\draw (b) .. controls ++(270:.5cm) and ++(270:.5cm) .. (1,-3.8) -- (1,-.5);
	\draw (-.4, 2.8) -- (-.4,3) .. controls ++(90:.5cm) and ++(90:.5cm) .. (1,3) -- (1,-.5);
	\filldraw[fill=white] (a) circle (.05cm) node [above] {\scriptsize{$\alpha$}};
	\filldraw[fill=white] (b) circle (.05cm) node [above] {\scriptsize{$\beta$}};
	\filldraw (0,-3) circle (.05cm) node [right] {\scriptsize{$\Omega$}};
	\filldraw (0,3) circle (.05cm) node [right] {\scriptsize{$\Omega^*$}};
	\roundNbox{unshaded}{(-.1,2.5)}{.3}{.3}{0}{$g^*$}
	\roundNbox{unshaded}{(0,1.5)}{.3}{0}{0}{$J$}
	\roundNbox{unshaded}{(.1,.5)}{.3}{0}{.3}{$y$}
	\roundNbox{unshaded}{(0,-.5)}{.3}{1}{.5}{$J$}
	\roundNbox{unshaded}{(.1,-1.5)}{.3}{0}{.4}{$x$}
	\roundNbox{unshaded}{(0,-2.5)}{.3}{.2}{0}{$f$}
	\node at (-.6,3) {\scriptsize{$\mathbf{c}$}};
	\node at (1.2,0) {\scriptsize{$\overline{\mathbf{c}}$}};
	\node at (.6,0) {\scriptsize{$\overline{\mathbf{a}}$}};
	\node at (-1.2,-1) {\scriptsize{$\mathbf{a}$}};
	\node at (.8,-3) {\scriptsize{$\mathbf{b}$}};
	\node at ($ (b) + (-.2,-.2) $) {\scriptsize{$\mathbf{c}$}};
\end{tikzpicture}
\,.
$$
We conclude that $xJyJ(\alpha\boxtimes f\boxtimes \beta) = JyJx(\alpha\boxtimes f\boxtimes \beta)$ for all $\alpha\boxtimes f\boxtimes \beta\in \cC(e, a\otimes d)\boxtimes \bfM(d)\boxtimes \cC(c, e\otimes b)$, and thus $xJyJ=JyJx$ by a result similar to Lemma \ref{lem:ZeroInHilbC}.
\end{proof}

\subsection{Analytic properties}
\label{sec:AnalyticProperties}

We now consider connected W*-algebra objects $\bfM\in \Vec(\cC)$.
These algebras have natural definitions of analytic properties such as amenability, the Haagerup property, and property (T).  
We show that our definitions specialize to the usual definitions for two well-studied classes of examples: subfactors and rigid C*-tensor categories due to \cite{MR1729488,MR3406647}, and discrete (quantum) groups, studied by many authors.

Before we begin, we would like to remark on our specialization to connected algebras.  
For ordinary W*-algebras, the correct definitions for things like property (T), amenability, and the Haagerup property are undoubtably via Connes' correspondences.  
There, no traces or multipliers are required.  
While we strongly believe that a robust theory of correspondences exists in our setting, the development of this theory in full generality requires a generalization of Tomita-Takesaki theory, a task which would take us too far a field.  
We restrict our attention to connected algebras, which bear the strongest resemblance to the best studied examples: discrete (quantum) groups and rigid C*-tensor categories.

On the other hand, it would be relatively easy to modify our definitions to include algebras whose base algebra is a finite von Neumann algebra.
This would allow us to include the well studied case of actual ${\rm II}_1$ factors as examples.  
However, the definitions are not quite as clean and natural as they are for connected algebras, so we exclude them here, but we've set things up suggestively so that the interested reader may easily derive the correct generalization.

Let $\bfM\in \Vec(\cC)$ be a connected W*-algebra object, and let $\tau$ denote its unique state. 
Let $H_{\tau}:=\bigoplus_{a\in \Irr(\cC)} L^{2}(\bfM(a))_{\tau}$, where the inner product on $\bfM(a)$ is given by $\langle f|g\rangle_a = \tau(\langle f|g\rangle_{a})$.  
Then we can canonically identify $\End_{\Hilb(\cC)}(L^{2}(\bfM)_{\tau})\cong \bigoplus_{a\in\Irr(\cC)} B(L^{2}(\bfM)_{\tau})\subseteq B(H_{\tau})$.  
We define the von Neumann algebra $\ell^{\infty}(\bfM):= \bigoplus_{a\in\Irr(\cC)} B(L^{2}(\bfM(a))_{\tau})$.

Let $\Psi: \bfM\Rightarrow \bfM$ be a ucp map (note that all ucp maps are automatically normal since $\bfM$ is locally finite).
Then $m_{\Psi}:=\bigoplus_{a\in \Irr(\cC)} \Psi_{a}$ extends to a bounded norm one map in $\ell^{\infty}(\bfM)$ acting on $H_\tau$.  
Note conversely, for any $m\in \ell^{\infty}(\bfM)$, there exists some categorical multiplier $\Psi$ such that $m=m_{\Psi}$.
Thus we call an element $m_{\Psi}\in \ell^{\infty}(\bfM)$ a \emph{ucp-multiplier} if $\Psi$ is a ucp map.
(This is a slight conflict of notation with our earlier notion of cp-multiplier on $\cM_\bfM$, but they are equivalent notions by definition.)

Below, point-wise convergence of operators in $\ell^{\infty}(\bfM)\subseteq B(H_{\tau})$ means strong operator topology convergence in $B(H_{\tau})$. 
Also, a ucp multiplier $m_{\Psi}\in \ell^{\infty}(\bfM)\subseteq B(H_{\tau})$, so it naturally makes sense to talk about finite rank and compact multipliers.

\begin{defn}
\label{def:analytic} 
Let $\bfM\in \Vec(\cC)$ be a connected W*-algebra object.
Then $\bfM$
\begin{enumerate}[(1)]
\item
is \textit{amenable} if there exists a net of finite rank ucp multipliers converging to the identity point-wise in $\ell^{\infty}(\bfM)$.
\item
has the \textit{Haagerup property} if there exists a net of compact ucp multipliers converging to the identity point-wise in $\ell^{\infty}(\bfM)$.
\item
has \textit{property} (T) if every sequence of ucp multipliers which converges to the identity point-wise converges in the operator norm in $\ell^{\infty}(\bfM)$.
\end{enumerate}
\end{defn}

Notice that since connected implies all $\bfM(a)$ are finite dimensional, a ucp multiplier $m_{\Psi}$ is finite rank if and only if it is non-zero for at most finitely many $a\in \Irr(\cC)$. 
Similarly, $m_{\Psi}$ is compact if and only if for every $\varepsilon>0$, there is a finite set $F\subset \Irr(\cC)$ such that $\|\Psi_a\|<\varepsilon$ for all $a\in \Irr(\cC)\setminus F$.

We now consider several classes of examples which have recently generated a great deal of interest, and show that various notions of cp-multipliers in different settings correspond to our ucp maps.
We give a translation for the definition of analytic properties from well known examples to our context.

\begin{ex}[Discrete groups]
This is actually a special case of \ref{ex:dqg} below.  Suppose $G$ is a discrete group.
The group algebra $\bfG=\bbC[G]$ is a connected W*-algebra object in the rigid C*-tensor category $\cC=\fdHilb(G)$ of finite dimensional $G$-graded Hilbert spaces. 
Then $\ell^{\infty}(\bfG)=\ell^{\infty}(G)$.  
It follows the more general results from Example \ref{ex:dqg} below that cp-multipliers agree with the usual definition, and thus our definitions of analytic properties agree with the usual notions. 
\end{ex}{}


\begin{ex}[Symmetric enveloping algebra object]
\label{ex:SymEA}
Consider $\cC$ as a left $\cC\boxtimes \cC^{\text{mp}}$-module W*-category, where the action is given by $(a\boxtimes b^{\text{mp}})(c)=a\otimes c\otimes b$.  
Then the cyclic $(\cC\boxtimes \cC^{\text{mp}})$-module W*-category $(\cC, 1_\cC)$ yields an algebra $\bfM\in \Vec(\cC)$ called the symmetric enveloping algebra \cite{MR1302385,MR1729488}, or the quantum double.

For an object $A\in \cC\boxtimes \cC^{\text{mp}}$, $\bfM(A)=\bigoplus_{a\in \Irr(\cC)} (\cC\boxtimes \cC^{\text{mp}})(A, a\boxtimes \overline{a}^{\text{mp}})$.  
Note that we have a canonical isomorphism
$\gamma_{a,b}:(a\boxtimes \overline{a}^{\text{mp}})\otimes (b\boxtimes \overline{b}^{\text{mp}})\rightarrow (a\otimes b)\boxtimes \overline{(a\otimes b)}^{\text{mp}}$
given by the involutive structure on $\cC$.
The algebra structure on $\bfM$ is given for $f\in  (\cC\otimes \cC^{\text{mp}})(A, a\boxtimes \overline{a}^{\text{mp}})$ and  $g\in (\cC\otimes \cC^{\text{mp}})(B, b\boxtimes \overline{b}^{\text{mp}})$ by 
$$
f\cdot g= 
\sum_{\substack{c\in\Irr(\cC) \\
\alpha\in \ONB(a\otimes b, c)}} d_{c}(\alpha \boxtimes \overline{\alpha})\circ \gamma_{a,b}(f\otimes g).
$$ 

Using the identification of $(\cC, 1_\cC)$ with the category of free $\bfM$-modules, we see that $\cM(a\boxtimes b^{\text{mp}}, c\boxtimes d^{\text{mp}})\cong\cC(a\otimes b, c\otimes d)$.  
Now, in \cite[Def.~3.4]{MR3406647},  Popa and Vaes define a \textit{multiplier} on a rigid C*-tensor category as a family of maps $\Theta_{a,b}: \cC(a\otimes b, a\otimes b)\rightarrow \cC(a\otimes b, a\otimes b)$ satisfying certain compatibility conditions.
In \cite[Prop.~3.6]{MR3406647}, they show that these maps uniquely extend to maps $\Theta_{a\otimes b, c\otimes d}: \cC(a\otimes b, c\otimes d)\rightarrow \cC(a\otimes b, c\otimes d)$ satisfying the same type of compatibility conditions.  By inspection, these conditions are precisely the conditions for a family $\Theta_{a\otimes b, c\otimes d}$ to be a multiplier in the sense of Definition \ref{defn:multiplier}.
Furthermore, they define a \textit{cp-multiplier} to be a multiplier for which $\Theta_{a,b}$ is positive for all objects $a,b\in \cC$.  
This precisely corresponds to our definition of a ucp-multiplier.  
This yields a canonical bijection between ucp maps $\theta: \bfA\Rightarrow \bfA$ and cp-multipliers for $\cC$ in the sense of Popa and Vaes \cite{MR3406647}.
\end{ex}

From this discussion, we deduce the following result.

\begin{prop} 
The symmetric enveloping algebra object $\bfM$ has a property from Definition \ref{def:analytic} if and only if $\cC$ has the corresponding property in the sense of \cite[Def.~5.1]{MR3406647}.
\end{prop}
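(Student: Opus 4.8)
The plan is to transfer the analytic properties back and forth across the dictionary established in Example \ref{ex:SymEA}. By the discussion there, we have a canonical bijection between ucp maps $\theta\colon \bfM\Rightarrow \bfM$ and cp-multipliers $(\Theta_{a,b})$ for $\cC$ in the sense of Popa--Vaes, which is compatible with composition and with the base-point normalization, so that $\theta$ is unital iff the corresponding multiplier sends $\id$ to $\id$. The key point is then to check that this bijection respects the three decorations used to define the properties: the size of the support (finite-rank), decay at infinity (compact), and the topology of convergence. First I would fix the identification $H_\tau = \bigoplus_{a\in\Irr(\cC)} L^2(\bfM(a))_\tau$ and observe that, for the symmetric enveloping algebra object, $\bfM(a\boxtimes \overline{a}^{\mathrm{mp}})$ is identified with $\End_\cC(a)$-type hom-spaces via $\cM(a\boxtimes b^{\mathrm{mp}}, c\boxtimes d^{\mathrm{mp}})\cong \cC(a\otimes b, c\otimes d)$; in particular the $a$-th block $\Psi_a$ of a ucp multiplier $m_\Psi\in \ell^\infty(\bfM)$ is, up to the trace-normalization of the inner products, exactly the ``diagonal'' component $\Theta_{a,\overline a}$ appearing in \cite[Def.~3.4]{MR3406647}. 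This is the heart of the matter and I expect it to be the main obstacle: one must verify that the categorical multiplier $\Theta$ of Popa--Vaes, which is a priori indexed by pairs $(a,b)$, is reconstructed from the single family $(\Theta_{a,a})$ (or equivalently from $m_\Psi$) in a way compatible with the reconstruction in \cite[Prop.~3.6]{MR3406647}, and that the operator $m_\Psi$ acting on $H_\tau$ has the same norm, finite-rank, and compactness behavior as the multiplier data.

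Next I would dispatch the three properties in turn. For amenability: a net of finite-rank ucp multipliers $m_{\Psi_i}\to \id$ point-wise in $\ell^\infty(\bfM)$ corresponds, block by block, to a net of cp-multipliers $\Theta^{(i)}$ that are supported on finitely many simple objects and converge to the identity; since each $\bfM(a)$ is finite dimensional (connectedness forces local finiteness by Corollary \ref{cor:FiniteDimension}), point-wise SOT convergence in $\ell^\infty(\bfM)$ is exactly point-wise convergence $\Theta^{(i)}_{a,b}\to \id$ on each fixed hom-space, which is Popa--Vaes' notion of approximation; this is precisely amenability of $\cC$ in the sense of \cite[Def.~5.1]{MR3406647}. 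The Haagerup property is identical with ``finite rank'' replaced by ``compact'': a ucp multiplier $m_\Psi$ is compact iff for every $\varepsilon>0$ the set $\{a\in\Irr(\cC)\colon \|\Psi_a\|\geq\varepsilon\}$ is finite (as noted in the paper right after Definition \ref{def:analytic}), which is exactly the ``$c_0$'' decay condition in the Popa--Vaes definition. For property (T): the statement ``every point-wise convergent sequence of ucp multipliers converges in operator norm in $\ell^\infty(\bfM)$'' translates, via the block decomposition, into ``every sequence of cp-multipliers converging point-wise converges uniformly'', matching \cite[Def.~5.1]{MR3406647}.

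The remaining steps are bookkeeping: one checks that the bijection $\theta\leftrightarrow\Theta$ preserves unitality (so ``ucp'' on one side matches ``unital cp-multiplier'' on the other), and that the norm of $m_\Psi\in B(H_\tau)$ agrees with the supremum of the multiplier norms $\|\Theta_{a,b}\|$ over all $a,b$ — this uses that the GNS inner products on the $\bfM(a)$ are the trace-normalized ones and that the action of $\ell^\infty(\bfM)$ on $H_\tau$ is block-diagonal, so operator norm is a supremum of block norms; combined with \cite[Prop.~3.6]{MR3406647} relating $\|\Theta_{a,b}\|$ to $\|\Theta_{a',a'}\|$ on common subobjects, this pins down that norm convergence on one side is norm convergence on the other. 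Assembling these observations gives the equivalence of each of the three properties with its Popa--Vaes counterpart, which is the assertion of the proposition. I would keep the write-up short, citing Example \ref{ex:SymEA} for the dictionary and \cite{MR3406647} for the fact that the three properties are phrased exactly in terms of cp-multipliers.
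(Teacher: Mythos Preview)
Your proposal is correct and follows essentially the same approach as the paper: the paper treats this proposition as an immediate consequence of the dictionary set up in Example~\ref{ex:SymEA}, stating only ``From this discussion, we deduce the following result'' with no further argument. Your more detailed unpacking of why finite-rank, compactness, and the two convergence notions match under that bijection is exactly the content the paper leaves implicit; note in particular that for the symmetric enveloping algebra the fibers $\bfM(c\boxtimes \overline{c}^{\mathrm{mp}})$ over simples are one-dimensional (and zero otherwise), so $\ell^\infty(\bfM)\cong \ell^\infty(\Irr(\cC))$ and a ucp multiplier is just a scalar function on $\Irr(\cC)$, which makes the comparison with Popa--Vaes' admissible functions even more direct than your write-up suggests.
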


\begin{ex}[Discrete quantum groups]
\label{ex:dqg}
We rapidly recall the basics of discrete quantum groups from \cite{MR3204665}.
We refer the reader there for additional details.

Suppose $\cC$ admits a dagger tensor functor $(\bfF,\eta):\cC\rightarrow \fdHilb$.  
Using Tannaka-Krein-Woronowicz recontruction, we obtain a discrete quantum group $\bbG$, which has two canonical algebras associated to it. 
The first is the Hopf $*$-algebra $\bbC[\bbG]$ which, as in the case of group algebra $\bbC[G]$, has many C*-completions, each of which yield a \textit{compact} quantum group often thought of as the ``algebra of functions'' on the compact dual of $\bbG$.  
The second is the type ${\rm I}$ von Neumann algebra $\ell^{\infty}(\bbG)=\prod_{a\in \Irr(\cC)} B(\bfF(a))$, which has a non-trivial co-algebra structure making it into multiplier Hopf algebra.   
We describe both of these algebras more explicitly below.  
We remark that when viewed as a locally compact quantum group, one uses the latter von Neumann algebra as the fundamental object.

For convenience, for $c\in\cC$, we define the space $H_{c}=\fdHilb(\bbC, \bfF(c))$, which can canonically be identified with $\bfF(c)$ itself, and $H^{*}_{c}=\fdHilb(\bfF(c), \C)$ which is canonically identified with the dual space $\bfF(c)^*$.  
Then using the tensorator $\eta$, we can define maps $\eta_{c,d}:H_{c}\otimes H_{d}\rightarrow H_{c\otimes d}$  and $\eta^{*}_{c,d}: H^{*}_{c\otimes d}\rightarrow H^{*}_{c}\otimes H^{*}_{d}$.
For $\alpha\in \cC(a\otimes b, c)$, we set $\hat{\alpha}=\bfF(\alpha)\circ \eta_{a,b}: \bfF(a)\otimes \bfF(b)\rightarrow \bfF(c)$.   
In particular, since $\ev_{\overline{a}}\in \cC(a\otimes \overline{a}, 1_\cC)$ and $\ev_{a}\in \cC( \overline{a}\otimes a, 1_{\cC})$ are standard solutions to the duality equations in $\cC$, we see $\hat{\ev}_{a}$ and $\hat{\ev}_{\overline{a}}$ solve the duality equations for $\bfF(a), \bfF(\overline{a})$ in $\fdHilb$, since $\bfF$ is a dagger functor.  
It is an important point that these solutions in general are highly \textit{non-standard}.
For example, $\hat{\ev}^{*}_{a}\circ \hat{\ev}_{a}=d_{a}$, but in general, if $d_{a}$ is not an integer, we have $\dim_{\Hilb}(\bfF(a))< d_{a}$.  
It is also easy to check that in general they do \textit{not} induce a pivotal structure on $\fdHilb$.


Define $\ell^{\infty}(\bbG)=\bigoplus_{a\in \Irr(\cC)} B(\bfF(a))$, the von Neumann algebra direct sum.
While boring as an algebra, $\ell^{\infty}(\bbG)$ has a much more interesting co-algebra structure, which we will not describe in detail.
The \emph{group algebra}, also called the \emph{polynomial algebra on the compact dual}, is defined as $\bbC[\bbG]:=\bigoplus_{a\in \Irr(\cC)} H^{*}_{a}\otimes H_{a}$, the algebraic vector space direct sum, which we can view as the ``restricted dual" of the algebra $\ell^{\infty}(\bbG)$.  
This has a matrix co-algebra structure $\Delta$ (dual to the algebra structure on $\ell^{\infty}(\bbG)$.
The multiplication is defined for $x=\sum_{a\in\Irr(\cC)}x^{a}_{(1)}\otimes x^{a}_{(2)}$ and $y=\sum_{b\in \in\Irr(\cC)}y^{b}_{(1)}\otimes y^{b}_{(2)}$ (in Sweedler notation) by 
$$
xy  =\sum_{\substack{a,b\in\Irr(\cC)
\\
\alpha\in \ONB(a\otimes b, c)}}
d_c 
(x^{a}_{(1)}\otimes y^{b}_{(1)})\circ \hat{\alpha}^{*}\otimes \hat{\alpha}\circ (x^{a}_{(2)}\otimes y^{b}_{(2)}),
$$
under which $\bbC[\bbG]$ is an associative algebra.  
Define the \emph{antipode}
$S: H^{*}_{a}\otimes H_{a}\rightarrow H^{*}_{\overline{a}}\otimes H_{\overline{a}}$ 
by 
$$
S(x_{(1)}\otimes x_{(2)})
=
\widehat{\ev}^{*}_{a}\circ (1_{\overline{a}}\otimes x_{(2)}) \otimes (x_{(1)}\otimes 1_{\overline{a}})\circ \widehat{\ev}_{\overline{a}}
$$
which we extend to all of $\bbC[\bbG]$ linearly.  
It is easily verified that $S(xy)=S(y)S(x)$.  
Note that $S^{2}=1$ if and only if the $(\widehat{\ev}_{a}, \widehat{\ev}_{\overline{a}})$ induce a pivotal structure on $\fdHilb$, which is equivalent to asking that $\bbG$ is of \textit{Kac type}.

To define a $*$-structure, first note that the dagger structure on the category $\fdHilb$ gives us conjugate linear maps $*: H_{a}\rightarrow H^{*}_{a}$ and $*: H^{*}_{a}\rightarrow H_{a}$, yielding a conjugate linear map $j: H^{*}_{a}\otimes H_{a}\rightarrow H^{*}_{a}\otimes H_{a}$ given by $j(x_{(1)}\otimes x_{(2)})=x^{*}_{(2)}\otimes x^{*}_{(1)}$.  
The anit-linear involution on $\bbC[\bbG]$ is given by $x^{\#}=S\circ j(x)$.  
It is straightforward to verify that $x^{\# \#}=x$ and $(xy)^{\#}=y^{\#}x^{\#}$.
The maps $S$ and $\#$ satisfy the relation $S(S(x^{\#})^{\#})$, which makes $(\bbC[\bbG], \Delta, S)$ into a Hopf $*$-algebra.
In general, $S$ and ${\#}$ do not commute; rather, they commute precisely when $\bbG$ is Kac type.

To make the connection with the standard aspects of the theory of compact quantum groups, for $a\in \Irr(\cC)$, let $B_{a}:=\{e^{a}_{i}\}$ be an orthonormal basis for $\bfF(a)$.
Here, we again slightly abuse notation to identify $e^{a}_{i}$ with the morphism $\bbC \to \bfF(a)$ sending $1\mapsto e^{a}_{i}$.  
We use the notation $(e^{a}_{j})^*$ for dual basis element of $H^{*}_{a}$.
We define $u^{a}_{ij}=(e^{a}_{i})^*\otimes e^{a}_{j}\in H^{*}_{a}\otimes H_{a}\in \bbC[\bbG]$.  
It is well known (and easy to check) that $u^{a}:=(u^{a}_{ij})_{i,j}\in M_{n}(\bbC[\bbG])$ is unitary.  
In fact, the map $e^{a}_{i}\mapsto \sum_{j} e_{j} \otimes u^{a}_{i,j}$ extends to a linear map $\pi: \bfF(a)\rightarrow \bfF(a)\otimes \bbC[\bbG]$, and makes $\bfF(a)$ into a \textit{unitary co-representation} of $\bbC[\bbG]$.
One can show $\cC$ is equivalent to the rigid C*-tensor category of unitary co-representations of $\bbC[\bbG]$.
Again, we refer the reader to \cite{MR3204665} for more details on compact quantum groups.

\begin{defn} 
A \textit{state} on $\bbG$ is a linear functional $\phi$ on $\bbC[\bbG]$ such that $\phi(1_{\bbC[\bbG]})=1_\bbC$ where $1_{\bbC[\bbG]} = u^{1_\cC}_{1,1}$, and $\phi(x^{\#} x)\ge 0$ for all $x\in \bbC[\bbG]$.
\end{defn}

The pairing $H^{*}_{a}\otimes H_{a}\rightarrow \bbC$ given by $x_{(1)}\otimes x_{(2)}\rightarrow x_{(1)}(x_{(2)})$ is non-degenerate (since $H_a\in \fdHilb$).  
Thus for every linear functional $\phi_{a}: H^{*}_{a}\otimes H_{a}\rightarrow \C$, there is a unique $\Phi_{a}\in \End(\bfF(a))$ such that $\phi_{a}(x_{(1)}\otimes x_{(2)})=x_{(1)}(\Phi_{a}(x_{(2)}))$.  

Thus a linear functional $\phi$ on $\bbC[\bbG]$ is uniquely defined by a sequence $m_{\phi}=\Phi_{a}:\ \Phi_{a}\in \End(\bfF(a))\}$.  
If $\phi$ is a state, $\phi$ extends to a state on the universal $C^{*}$-algebra $C^{*}_{u}(\bbG)$
(for example, boundedness follows from \cite[Lem.~4.2]{MR1310296}.)
Then $\Phi_{a}$ is the image of $u^{a}$ under the amplification of the state $\phi$ (viewed as a ucp map from $C^{*}_{u}(\bbG)\rightarrow \bbC$), and thus $\|\Phi_{a}\|\leq 1$. 
Hence $m_{\phi}\in \ell^{\infty}(\bbG)$.

\begin{defn} 
A \textit{cp-multiplier} on $\bbG$ is an element $m_{\phi}\in \ell^{\infty}(\bbG)$ such that the corresponding functional $\phi$ is a state.
\end{defn}

We are now ready to give definitions for analytic properties for quantum groups.  
First, defining $H:=\bigoplus_{a\in \Irr(\cC)} H_{a}$, we can view $\ell^{\infty}(\bbG)$ as a von Neumann subalgebra of $B(H)$, which enables us to discuss finite rank/compact ucp-multipliers.
Again, point-wise convergence means strong operator topology convergence, viewing $\ell^\infty(\bbG)\subset B(H)$.
 
\begin{defn}  
Let $\bbG$ be a discrete quantum group.  
Then $\bbG$
\begin{enumerate}[(1)]
\item
is \textit{amenable} if there exists a net of finite rank ucp-multipliers converging point-wise to the identity in $\ell^{\infty}(\bbG)$.
\item
has the \textit{Haagerup property} if there exists a net of compact ucp-multipliers converging point-wise to the identity in $\ell^{\infty}(\bbG)$.
\item
has \textit{property} (T) if every net of ucp-multipliers converging point-wise to the identity in $\ell^{\infty}(\bbG)$ converges uniformly to the identity.
\end{enumerate}
\end{defn}

We remark that the definitions presented here are somewhat non-standard.  For a general overview of analytic properties (and many equivalent characterizations of the above properties) for locally compact quantum groups in general, we recommend the survey papers \cite{MR3456763} and \cite{1605.02800}, and the references therein.   Our definitions can be seen to be equivalent to the more usual definitions via the context of \textit{completely positive multipliers} for quantum groups (see \cite{MR3019431}).  
The equivalence of amenability to other definitions can be seen by applying the usual group-theory type arguments; for example, our definition clearly is equivalent to saying ``the trivial representation on $\C[\mathbbm{G}]$ can be approximated by finitely supported states'', so by standard group theory type arguments, the universal C*-norm equals the reduced C*-norm on $\C[\mathbbm{G}]$, hence the reducing map is injective.  Thus the compact dual is \textit{co-amenable}, which is equivalent to $\mathbbm{G}$  being amenable (see, for example \cite[Thm.~3.12, 3.13 and 3.15]{1605.01770}).
For the Haagerup property, this this can be seen directly from \cite[Thm.~5.5]{MR3456763}.
For property (T), this comes from \cite[Thm.~3.1]{1605.02800}.  We greatly thank Makoto Yamashita for very helpful discussions about the equivalent characterizations of these properties for discrete quantum groups.

We now unify these definitions with our categorical definitions. 
Recall the tensor functor $\bfF$ makes $\fdHilb$ into a $\cC$-module W*-category.  
Let $\bfG$ be the connected W*-algebra object corresponding to $\bbC\in \fdHilb$, given by $\bfG(a)=\fdHilb(\bfF(a), \bbC)=H^{*}_{a}$.  
A natural transformation $\Theta: \bfG\rightarrow \bfG$ is uniquely determined by a family of linear maps $\Phi_{a}: H^{*}_{a}\rightarrow H^{*}_{a}$.  
Each $\Phi_{a}$ can be viewed as an operator $\Phi_{a}\in B(H_{a})$, which acts on $H^{*}_{a}$ by precomposition. 
Such a $\Phi_{a}$ induces a categorical multiplier $\Phi_{a,b}: \fdHilb(\bfF(a), \bfF(b))\rightarrow \fdHilb(\bfF(a), \bfF(b))$, given by
$$
\Phi_{a,b}(f
):= 
\sum_{\substack{a,b\in\Irr(\cC)
\\
\alpha\in \ONB(a\otimes b, c)}} d_{c} \left(\id_{\bfF(a)}\otimes \left(\hat{\ev}_{a}\circ (\id_{\bfF(\overline{a})}\otimes f)\circ \hat{\alpha}^{*} \circ \Phi_{c}\circ \hat{\alpha}\right) \right)\circ (\hat{\ev}^{*}_{a}\otimes \id_{\bfF(b)})
$$
(compare with \eqref{eq:AmplifiedMultiplier}).
Conversely, every cp-multiplier is of this form and produces a family $\Phi_{a}$ and a state $\phi$ as described above.

Recall that if $A,B$ are $n\times n$ matrices, their \emph{Schur product} is the $n\times n$ matrix given by $(A\star B)_{ij}=A_{ij}B_{ij}$.  
The Schur Product Theorem states that the Schur product of two positive matrices is again positive. 
We have the following lemma that will be useful later.

\begin{lem}  
\label{lem:SchurProduct}
Let $A$ be an $n\times n$ matrix.  
Then $A$ is positive if and only if $v (B\star A) v^{*}\geq 0$ for all positive matrices $B$, where $v=(1,1, \dots, 1)$.
\end{lem}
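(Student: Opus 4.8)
The statement to prove is Lemma \ref{lem:SchurProduct}: an $n \times n$ matrix $A$ is positive semidefinite if and only if $v(B \star A)v^* \geq 0$ for all positive semidefinite $n \times n$ matrices $B$, where $v = (1,1,\dots,1)$.

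\textbf{Proof plan.}

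The forward direction is an immediate consequence of the Schur Product Theorem, which is invoked in the statement: if $A \geq 0$ and $B \geq 0$, then $B \star A \geq 0$, and hence $v(B \star A)v^* = \sum_{i,j}(B\star A)_{ij} \geq 0$ since $v(B\star A)v^*$ is precisely the quadratic form of the positive semidefinite matrix $B \star A$ evaluated at the all-ones vector $v^*$. So no work is needed there beyond citing the Schur Product Theorem.

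For the reverse direction, I would argue by contrapositive: suppose $A$ is \emph{not} positive semidefinite; I must produce a positive semidefinite matrix $B$ with $v(B\star A)v^* < 0$. Since $A$ fails to be positive, there is a vector $w = (w_1,\dots,w_n) \in \mathbb{C}^n$ with $w A w^* < 0$, i.e.\ $\sum_{i,j} \overline{w_i} A_{ij} w_j < 0$. The key observation is to choose $B$ to be the rank-one positive semidefinite matrix $B_{ij} = \overline{w_i} w_j$ (that is, $B = w^* w$ as an outer product, which is manifestly positive semidefinite). Then
$$
v(B \star A)v^* = \sum_{i,j}(B\star A)_{ij} = \sum_{i,j} \overline{w_i} w_j A_{ij} = wAw^* < 0,
$$
which contradicts the hypothesis. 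Hence $A$ must be positive semidefinite. (If the paper's convention has $v A v^*$ linear in the first slot and conjugate-linear in the second, one simply takes $B_{ij} = w_i \overline{w_j}$ instead; the computation is identical up to relabeling.)

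\textbf{Main obstacle.} There is essentially no obstacle here — the only subtlety is bookkeeping around the conjugation convention (which slot of the quadratic form is conjugate-linear) so that the outer-product matrix $B = w^*w$ reproduces exactly the form $wAw^*$ after taking the Schur product and summing all entries. Once that convention is pinned down, both directions are one line each. I would write the proof in two short sentences: one citing the Schur Product Theorem for the forward implication, and one exhibiting $B = w^* w$ for a vector $w$ witnessing non-positivity of $A$ for the converse.

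\begin{proof}
If $A$ is positive, then for any positive matrix $B$ the Schur Product Theorem gives $B \star A \geq 0$, so $v(B\star A)v^* \geq 0$, as this is the quadratic form of $B\star A$ at the vector $v^*$.

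Conversely, suppose $v(B\star A)v^* \geq 0$ for all positive matrices $B$. Let $w \in \mathbb{C}^n$ be arbitrary and set $B = w^* w$, the rank-one positive matrix with entries $B_{ij} = w_i \overline{w_j}$. Then
$$
0 \leq v(B \star A) v^* = \sum_{i,j} (B\star A)_{ij} = \sum_{i,j} w_i \overline{w_j} A_{ij} = w^* A w \cdot(\text{after relabeling}),
$$
so that $wAw^* \geq 0$ for every $w$. Hence $A$ is positive.
\end{proof}
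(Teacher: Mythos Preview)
Your proof is correct and follows essentially the same idea as the paper's: both reduce the converse to choosing $B$ as the rank-one positive matrix built from an arbitrary vector, so that $v(B\star A)v^*$ reproduces the quadratic form of $A$ at that vector. The paper phrases this via diagonal matrices (writing $B = ZVZ^*$ with $Z=\operatorname{diag}(z)$ and $V$ the all-ones matrix), while you write the outer product directly; the content is identical, though you should clean up the convention mismatch in your final display (the ``after relabeling'' parenthetical should be replaced by a consistent choice of $B_{ij}$).
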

\begin{proof}
By the Schur product theorem, if $A\geq 0$, then the condition follows.  
Conversely, assume our condition holds.  
Let $z=(z_1, \dots, z_{n})$ be an arbitrary row vector, and let $Z=\operatorname{diag}(z)$, so that $Z^{*}=\operatorname{diag}(\overline{z})$.  
Let $V$ be the $n\times n$ matrix with all entries equal to $1$.  
Then $z A z^{*}=v (Z (V\star A) Z^{*}) v^{*}=v\left((ZVZ^{*})\star A\right) v^{*}\geq 0$ since $ZVZ^{*}\geq 0$.
Here we have used the fact that since $Z, Z^{*}$ are diagonal, $Z(V\star A)Z^{*}=(ZVZ^{*})\star A$.  
Thus $A$ is positive. 
\end{proof}

\begin{prop} 
A natural transformation $\Phi: \bfG\rightarrow \bfG$ is ucp if and only if $\phi \in \bbC[\bbG]$ is a state. 
\end{prop}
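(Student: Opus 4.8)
The plan is to unwind both sides of the asserted equivalence through the dictionary already set up in Example \ref{ex:dqg}, reducing ``$\Phi$ is ucp'' and ``$\phi$ is a state'' to a common positivity condition on the family of matrices $(\Phi_a)$, and then to bridge the two with the Schur Product Theorem via Lemma \ref{lem:SchurProduct}. Recall that $\Phi$ is unital precisely when $\Phi_{1_\cC} = \id$, which matches $\phi(1_{\bbC[\bbG]}) = 1_\bbC$ since $1_{\bbC[\bbG]} = u^{1_\cC}_{1,1}$; so the unital parts of the two conditions are literally the same, and the content is in comparing ``completely positive'' with ``$\phi(x^\# x) \geq 0$''.

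First I would make precise what complete positivity of $\Phi: \bfG \Rightarrow \bfG$ means here. Since $\bfG$ is the connected W*-algebra object corresponding to $\bbC\in\fdHilb$, the C*-algebras $\bfG(\overline{c}\otimes c) \cong \cM_\bfG(c_\bfG, c_\bfG) \cong \fdHilb(\bfF(c),\bfF(c)) = B(\bfF(c))$, and under this identification the amplified map $\Phi_{\overline c\otimes c}$ is exactly the categorical multiplier $\Phi_{c,c}$ from the displayed formula in Example \ref{ex:dqg} (this is the content of the ``compare with \eqref{eq:AmplifiedMultiplier}'' remark, and I would justify it by the same direct-sum argument used in the Corollary following Definition \ref{defn:multiplier}). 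So ``$\Phi$ is completely positive'' unwinds to: for every $c\in\cC$, the map $\Phi_{c,c}: B(\bfF(c)) \to B(\bfF(c))$ is positive. Expanding $\Phi_{c,c}$ using the formula and the (nonstandard) solutions $\hat\ev_a, \hat\ev_{\overline a}$ to the duality equations, positivity of $\Phi_{c,c}$ for all $c$ should be equivalent to positivity of the matrices obtained by pairing: concretely, writing things in the orthonormal bases $B_a = \{e^a_i\}$ and using $u^a_{ij} = (e^a_i)^*\otimes e^a_j$, the statement ``$\Phi_{a,b}$ positive for all $a,b$'' should reduce, after collecting terms by the simple summands $c$ of $a\otimes b$ and using that $\hat\alpha$ ranges over $\ONB(a\otimes b, c)$, to the positivity of the ``Gram-type'' matrices $\big(\phi(u^a_{ik}{}^\# u^b_{j\ell})\big)$ indexed appropriately.

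Next I would compare this to ``$\phi$ is a state'', i.e.\ $\phi(x^\# x)\geq 0$ for all $x\in\bbC[\bbG] = \bigoplus_a H^*_a\otimes H_a$. Writing a general $x$ as a finite linear combination $x = \sum x_{a,ij} u^a_{ij}$ and expanding $\phi(x^\# x)$ using $\#$, the antipode $S$, the multiplication formula, and the definition $m_\phi = (\Phi_a)$, the quantity $\phi(x^\# x)$ becomes a quadratic form in the coefficients $x_{a,ij}$ whose matrix is built out of the same entries $\phi(u^a_{ik}{}^\# u^b_{j\ell})$ as above, modulated by the duality maps $\hat\ev$. This is where Lemma \ref{lem:SchurProduct} enters: positivity of $\Phi_{c,c}$ for each $c$ gives, by the Schur Product Theorem direction, positivity of the Schur products against the ``all-ones'' vectors that appear when one collapses the matrix indices coming from the bases $B_a$, which is exactly the combination showing up in $\phi(x^\# x)$; conversely, the $v(B\star A)v^* \geq 0 \Rightarrow A\geq 0$ direction of Lemma \ref{lem:SchurProduct} lets one recover full positivity of each $\Phi_{c,c}$ from the scalar inequalities $\phi(x^\# x)\geq 0$. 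I would organize this as: (i) $\phi$ state $\Rightarrow$ each $\Phi_{c,c} \geq 0$, using Lemma \ref{lem:SchurProduct} to upgrade scalar positivity to matrix positivity; (ii) each $\Phi_{c,c}\geq 0$ $\Rightarrow$ $\phi(x^\# x) \geq 0$, using the Schur Product Theorem direction to handle the cross terms between different simple objects $c$.

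The main obstacle I anticipate is bookkeeping in the intertwiner algebra: one must carefully track how the nonstandard duality solutions $\hat\ev_a$ (with $\hat\ev_a^*\circ\hat\ev_a = d_a \neq \dim_{\Hilb}\bfF(a)$ in general) interact with the orthonormal bases $B_a$ and the normalization factors $d_c$ in the multiplication and in the formula for $\Phi_{a,b}$, and verify that everything recombines so that the $d_c$'s cancel against the ones implicit in $\ONB(a\otimes b, c)$ being orthonormal for the \emph{balanced} inner product. A secondary subtlety is that $\bbG$ need not be of Kac type, so $S$ and $\#$ do not commute and $S^2\neq 1$; I would need to check that the asymmetry of $\#$ versus $S\circ j$ does not spoil the identification of $\phi(x^\# x)$ with a positive semidefinite form — this should come out right precisely because $x^\# x$ pairs $x$ with itself through $S\circ j$, which is exactly the operation dual to the C*-algebra $*$ on each $B(\bfF(c))$, but it deserves an explicit check. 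Once these compatibilities are pinned down, the equivalence follows formally from Lemma \ref{lem:SchurProduct} and the Schur Product Theorem.
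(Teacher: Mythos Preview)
Your overall plan is the paper's: put both conditions in coordinates via the bases $B_a$ and bridge them with Lemma~\ref{lem:SchurProduct}. But you have the two sides of that Lemma assigned the wrong way around. When you actually expand things, the \emph{state} condition $\phi(x^\# x)\geq 0$ is directly the positivity of a single operator $\widehat\Phi$ on $H_\Lambda=\bigoplus_a H^*_a\otimes H_a$, with matrix entries $\Phi^{k,l;b}_{i,j;a}=\phi\big((u^b_{kl})^\# u^a_{ij}\big)$; there is no Schur product here, just a quadratic form in the coefficients of $x$. It is the \emph{ucp} condition that produces the Schur-type expression: decomposing a general $c$ into simples with multiplicity spaces $V_a$, the statement $t^*\Phi_{c,c}(f)t\geq 0$ for all positive $f$ and all $t$ becomes $v\big((T^*FT)\star\widehat\Phi\big)v^*\geq 0$ for all positive $F$ on $K_\Lambda$ and all $T:H_\Lambda\to K_\Lambda$; since every positive $P$ on $H_\Lambda$ is a $T^*FT$, this is exactly the condition in Lemma~\ref{lem:SchurProduct}. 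So state $\Rightarrow$ ucp is the Schur Product Theorem direction and ucp $\Rightarrow$ state is the converse the Lemma supplies, the reverse of your (i)--(ii).

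This reversal is not fatal and will correct itself once you write the sums out. Your worries about the $d_c$ normalizations and the non-Kac behaviour of $\#$ versus $S$ are handled by packaging everything into the scalars $\Phi^{k,l;b}_{i,j;a}$; once those are defined, neither the quantum dimensions nor the antipode reappear in the comparison.
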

\begin{proof}
Let $\set{\Phi_{a}\in \End(\bfF(a))}{ a\in \Irr(\cC)}$ be the sequence associated to $\Phi$.  
Let $B_{a}$ be an orthonormal basis for $\bfF(a)$ as above.
For $i,j\in B_{a}$ and $k,l\in B_{b}$, define the number $\Phi^{k,l;b}_{i,j;a}=\phi((u^{b}_{kl})^{\#} u^{a}_{ij})$.   
Here and throughout, the letter after the semi-colon indicates the component of the preceding index.

We want to find a nice characterization of positivity for ucp maps which we can compare to positivity of states on the algebra $\bbC[\bbG]$. 
First note that $\Phi$ is a ucp map if and only if for any $c\in \cC$, $f\in B(\bfF(c))$, and $t\in H_{c}$, the corresponding multiplier satisfies $t^{*}\circ \Phi_{c,c}(f)\circ t\ge 0$.  We will express this condition by choosing coordinates and writing this expression in terms of coefficients with respect to orthonormal bases.
First, we write $\bfF(c)\cong \bigoplus_{a\in \Lambda}\bfF(\cC(c, a))\otimes \bfF(a) =: K_{\Lambda}$, where $\Lambda$ is the finite subset of $\Irr(\cC)$ such that $a\prec c$.
We have a choice of orthonormal basis for $\bfF(a)$ (hence $H_{a}$), for $a\in \Irr(\cC)$ given above, and we choose an orthonormal basis $V_{a}$ of $\bfF(\cC(a, c))$ for each $a\in \Lambda$.  
Then viewing $f\in \End(K_{\Lambda})$, we define $f_{a,b}(v,i,w,k)$ as the coefficients of $f$ with respect to the tensor product basis, namely $\sum_{v\in V_{a} i\in B_{a}} f_{a,b}(v,i,w,k) w\otimes e^{b}_{k}=P_{b}(f(v\otimes e^{a}_{i}))$, where here $P_{b}$ is the projection of $K_{\Lambda}$ onto the component $ \bfF(\cC(c, b))\otimes \bfF(b)$.
Similarly, define the numbers $t_{a}(v,j)$ as the coefficient of $e^{a}_{j}$ in the expansion of $v\circ t$ for $v\in V_{a}$.   
Then positivity is equivalent to  
$$
\displaystyle 
\sum_{a,b\in\Lambda}
\sum_{w\in V_{b},\, v\in V_{a}}
\sum_{k,l\in B_{b},\, i,j\in B_{a}}  \overline{t_{b}(w,l)} f_{a,b}(v,i,w,k) t_{a}(v,j) \Phi^{k,l; b}_{i,j; a} 
\geq 0
$$ 

The above discussion leads to the following abstract characterizations of positivity in both cases: 
\begin{enumerate}[(1)]
\item
We see $\phi$ is a state on $\bbC[\bbG]$ if and only if for any finite set $\Lambda\subseteq \Irr(\cC)$, and any functions $\alpha_{a}: B_{a}\times B_{a}\rightarrow \C$, the sum 
$$
\sum_{a,b\in\Lambda}
\sum_{k,l\in B_{b},\ i,j\in B_{a}} 
\overline{\alpha_{b}(k,l)} \alpha_{a}(i, j) \Phi^{k,l;b}_{i,j; a}
\geq 0.
$$
\item
$\Phi:\bfG\Rightarrow\bfG$ is a ucp map if and only if for any finite set $\Lambda\subseteq \Irr(\cC)$ and arbitrary finite sets $V_{a}$ for $a\in \Lambda$, and for any maps $t_{a}: V_{a}\times B_{a} \rightarrow \bbC$ and $f_{a,b}: V_{a}\times B_{a}\times V_{b}\times B_{b}$ whose values $f_{a,b}(i,v,k,w)$ are the coefficients of an $n\times n$ positive matrix for  $n=|\bigcup_{a} V_{a}\times B_{a} |$ (as above)
we have 
$$
\sum_{a,b\in\Lambda}
\sum_{w\in V_{b},\, v\in V_{a}}
\sum_{k,l\in B_{b},\, i,j\in B_{a}} 
\overline{t_{b}(w,l)} f_{a,b}(v,i,w,k) t_{a}(v,j) \Phi^{k,l;b}_{i,j;a} 
\geq 0.
$$
\end{enumerate}

Now, we claim conditions $(1)$ and $(2)$ on the coefficients of $\Phi$ are equivalent.   
Let $\bbC[V_{a}]$ denote the Hilbert space with orthonormal basis given by $V_{a}$.   
Define $K_{\Lambda}:=\bigoplus_{a\in \Lambda} \C[V_{a}]\otimes H_{a}$ and $H_{\Lambda}:=\bigoplus_{a\in\Lambda} H^{*}_{a}\otimes H_{a}$.  
Then we can naturally view $f$ as a positive operator $F: K_{\Lambda}\rightarrow K_{\Lambda}$.  
Similarly we can view $T_{a}((e^{a}_{i})^*):=\sum_{v\in V_{a}} t_{a}(v,i) v$ as a linear map $H^{*}_{a}\rightarrow \bbC[V_{a}]$.
Setting $T:=\bigoplus_{a\in \Lambda} T_{a}\otimes \id_{a}: H_{\Lambda}\rightarrow K_{\Lambda}$, we have the positive operator $T^*\circ F\circ T$, which we view as a matrix with our distinguished orthonormal basis.  
Similarly, defining 
$\widehat{\Phi}((e^{a}_{i})^* \otimes e^{a}_{j}):=\sum_{b\in \Lambda,\  k,l\in B_{b}} \Phi^{k,l; b}_{i,j ; a} (e^{b}_{k})^*\otimes e^{b}_{l}$ 
yields an operator $\widehat{\Phi}: H_{\Lambda}\rightarrow H_{\Lambda}$.  

Its clear that condition (1) on $\Phi$ simply states that the operator $\widehat{\Phi}$ is positive, while condition (2) states that 
$$
v\left((T^{*}\circ F\circ T)\star \widehat{\Phi}\right) v^{*} \ge 0
$$ 
for all $K_{\Lambda}$ and positive $F: K_{\Lambda}\rightarrow K_{\Lambda}$, where $v=(1,1,\dots, 1)$.  
But $T^{*}\circ F \circ T\in \End(H_{\lambda})$ is always a positive operator, and every positive $P\in \End(H_{\lambda})$ arises this way for some $K_{\Lambda}$ (for example, pick $K_{\Lambda}=H_{\lambda}$, and set $T=\id$).  
Thus we can replace this condition with $v (P\star \widehat{\Phi}) v^{*}\ge 0$ for all positive operators $P\in \End(H_{\Lambda})$, which by Lemma \ref{lem:SchurProduct} above is equivalent to $\widehat{\Phi}$ being positive.  
Thus the conditions (1) and (2) are equivalent.
\end{proof}

We now deduce the following proposition.

\begin{prop} 
A discrete quantum group $\bbG$ is amenable, has the Haagerup property, or property (T) if and only if the corresponding \emph{W*}-algebra object $\bfG$ in $\Rep(\bbG)$ does.
\end{prop}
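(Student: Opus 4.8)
The proof will proceed by reducing each of the three analytic properties for $\bbG$ to the corresponding property for the connected W*-algebra object $\bfG=\bfA_{\bfF,\bbC}\in\Vec(\cC)$, $\cC=\Rep(\bbG)$, via the dictionary just established. The key structural input is the preceding proposition identifying ucp natural transformations $\Phi:\bfG\Rightarrow\bfG$ with states $\phi$ on $\bbC[\bbG]$, under the assignment $\Phi\leftrightarrow m_\phi=(\Phi_a)_{a\in\Irr(\cC)}\in\ell^\infty(\bbG)=\bigoplus_{a\in\Irr(\cC)}B(\bfF(a))$. So the first step is to observe that this is not merely a bijection of ucp maps with states, but is compatible with the relevant topologies: $\ell^\infty(\bbG)$ is the \emph{same} von Neumann algebra appearing in Definition \ref{def:analytic} (where we called it $\ell^\infty(\bfM)$ with $\bfM=\bfG$), provided one checks that the Hilbert space $H_\tau=\bigoplus_a L^2(\bfG(a))_\tau$ used in the categorical definition and the Hilbert space $H=\bigoplus_a H_a=\bigoplus_a\bfF(a)$ used in the quantum group definition induce the same strong operator topology on $\bigoplus_a B(\bfF(a))$. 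Since $\bfG(a)=H_a^*$ is canonically (anti)isomorphic to $\bfF(a)$ as a finite-dimensional Hilbert space, and the trace $\tau$ on the connected algebra is the unique state, $L^2(\bfG(a))_\tau\cong H_a$ isometrically up to a positive scalar depending only on $a$; rescaling each block by a positive scalar does not change SOT, so point-wise convergence in the two senses coincides. This is a routine but necessary bookkeeping step.

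Next I would match the finiteness conditions. On the categorical side, $m_\Psi$ is finite rank iff $\Psi_a=0$ for all but finitely many $a\in\Irr(\cC)$ (using that $\bfG(a)$ is finite-dimensional, as noted after Definition \ref{def:analytic}), and compact iff for every $\varepsilon>0$ only finitely many $\|\Psi_a\|\ge\varepsilon$. On the quantum group side the definition of amenability/Haagerup uses exactly the same conditions on $m_\phi\in\ell^\infty(\bbG)$ (finitely supported, resp. $\to 0$ at infinity), since each block $B(\bfF(a))$ is finite-dimensional. Thus ``finite rank ucp multiplier'' and ``compact ucp multiplier'' translate verbatim under the correspondence $\Phi\leftrightarrow\phi$. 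Unitality matches because $\phi(1_{\bbC[\bbG]})=1$ corresponds to $\Phi_{1_\cC}=\id_{\bfF(1_\cC)}$, i.e. $\Phi_{1_\cC}(i_\bfG)=i_\bfG$, which is precisely the ucp (as opposed to merely cp) condition in Definition \ref{defn:multiplier}/the definition of ucp map. The identity element of $\ell^\infty(\bbG)$ — which both definitions ask nets of multipliers to converge to — corresponds to the identity natural transformation $\id_\bfG$ on both sides.

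With these identifications in place the proof is immediate: for amenability, a net of finite-rank ucp multipliers of $\bbG$ converging point-wise to the identity in $\ell^\infty(\bbG)$ is, under $\phi\mapsto\Phi$, exactly a net of finite-rank ucp multipliers of $\bfG$ converging point-wise to $\id_\bfG$ in $\ell^\infty(\bfG)$, and conversely; hence $\bbG$ is amenable iff $\bfG$ is. The Haagerup property follows identically with ``finite rank'' replaced by ``compact.'' For property (T), the statement ``every net of ucp multipliers converging point-wise to the identity converges uniformly (in operator norm) to the identity'' is preserved verbatim in both directions, since point-wise convergence, norm convergence, and the target (the identity of $\ell^\infty(\bbG)\cong\ell^\infty(\bfG)$) all correspond. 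I expect the only genuinely delicate point to be the first step — verifying carefully that the two a priori different Hilbert space completions $H_\tau$ and $H$ yield literally the same von Neumann algebra $\ell^\infty(\bfG)=\ell^\infty(\bbG)$ with the same SOT, i.e. pinning down the scalars relating $L^2(\bfG(a))_\tau$ to $\bfF(a)$ and confirming they are bounded and bounded away from zero on each fixed block (which suffices, as point-wise/SOT convergence is tested one block at a time) — while everything downstream is a direct unwinding of definitions via the preceding propositions.
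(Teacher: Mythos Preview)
Your proposal is correct and is precisely the argument the paper has in mind: in the paper the proposition is stated as an immediate deduction from the preceding proposition identifying ucp natural transformations $\Phi:\bfG\Rightarrow\bfG$ with states $\phi$ on $\bbC[\bbG]$, with no further proof given. Your write-up simply makes explicit the routine bookkeeping (the blockwise identification $\ell^\infty(\bfG)\cong\ell^\infty(\bbG)$ and the matching of point-wise, norm, finite-rank, and compact conditions) that the paper leaves implicit.
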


\end{ex}


\bibliographystyle{amsalpha}
{\footnotesize{
\bibliography{../../../Documents/research/penneys/bibliography}

\newcommand{\etalchar}[1]{$^{#1}$}
\providecommand{\bysame}{\leavevmode\hbox to3em{\hrulefill}\thinspace}
\providecommand{\MR}{\relax\ifhmode\unskip\space\fi MR }
\providecommand{\MRhref}[2]{%
  \href{http://www.ams.org/mathscinet-getitem?mr=#1}{#2}
}
\providecommand{\href}[2]{#2}
\begin{thebibliography}{DGNO10}

\bibitem[BDH14]{MR3342166}
Arthur Bartels, Christopher~L. Douglas, and Andr{\'e} Henriques,
  \emph{Dualizability and index of subfactors}, Quantum Topol. \textbf{5}
  (2014), no.~3, 289--345, \mathscinet{MR3342166} \doi{10.4171/QT/53}
  \arXiv{1110.5671}. \MR{3342166}

\bibitem[BdlHV08]{MR2415834}
Bachir Bekka, Pierre de~la Harpe, and Alain Valette, \emph{Kazhdan's property
  ({T})}, New Mathematical Monographs, vol.~11, Cambridge University Press,
  Cambridge, 2008, \mathscinet{MR2415834} \doi{10.1017/CBO9780511542749}.
  \MR{2415834}

\bibitem[BE16]{1603.05928}
Jonathan Brundan and Alexander~P. Ellis, \emph{Monoidal supercategories}, 2016,
  \arxiv{1603.05928}.

\bibitem[BGH{\etalchar{+}}16]{1603.09294}
Paul Bruillard, Cesar Galindo, Tobias Hagge, Siu-Hung Ng, Julia~Yael Plavnik,
  Eric~C. Rowell, and Zhenghan Wang, \emph{Fermionic modular categories and the
  16-fold way}, 2016, \arxiv{1603.09294}.

\bibitem[BHP12]{MR3405915}
Arnaud Brothier, Michael Hartglass, and David Penneys, \emph{Rigid {$C\sp
  *$}-tensor categories of bimodules over interpolated free group factors}, J.
  Math. Phys. \textbf{53} (2012), no.~12, 123525, 43, \mathscinet{MR3405915}
  \doi{10.1063/1.4769178} \arxiv{1208.5505}. \MR{3405915}

\bibitem[Bis97]{MR1424954}
Dietmar Bisch, \emph{Bimodules, higher relative commutants and the fusion
  algebra associated to a subfactor}, Operator algebras and their applications
  (Waterloo, ON, 1994/1995), 13-63, Fields Inst. Commun., 13, Amer. Math. Soc.,
  Providence, RI, 1997, \mathscinet{MR1424954}, \googlebooks{_InIRTO8Y7gC}.

\bibitem[BN11]{MR2863377}
Alain Brugui{\`e}res and Sonia Natale, \emph{Exact sequences of tensor
  categories}, Int. Math. Res. Not. IMRN (2011), no.~24, 5644--5705,
  \mathscinet{MR2863377} \doi{10.1093/imrn/rnq294} \arxiv{1006.0569}.
  \MR{2863377}

\bibitem[Bra16]{1605.01770}
Michael Brannan, \emph{Approximation properties for locally compact quantum
  groups}, 2016, \arxiv{1605.01770}.

\bibitem[Con80]{MR561983}
Alain Connes, \emph{On the spatial theory of von {N}eumann algebras}, J. Funct.
  Anal. \textbf{35} (1980), no.~2, 153--164, \mathscinet{MR561983}.

\bibitem[Daw12]{MR3019431}
Matthew Daws, \emph{Completely positive multipliers of quantum groups},
  Internat. J. Math. \textbf{23} (2012), no.~12, 1250132, 23,
  \mathscinet{MR3019431} \doi{10.1142/S0129167X12501327}. \MR{3019431}

\bibitem[Day70]{MR0272852}
Brian Day, \emph{On closed categories of functors}, Reports of the {M}idwest
  {C}ategory {S}eminar, {IV}, Lecture Notes in Mathematics, Vol. 137, Springer,
  Berlin, 1970, \mathscinet{MR0272852}, pp.~1--38. \MR{0272852}

\bibitem[DCY15]{MR3420332}
Kenny De~Commer and Makoto Yamashita, \emph{Tannaka-{K}re\u\i n duality for
  compact quantum homogeneous spaces {II}. {C}lassification of quantum
  homogeneous spaces for quantum {$\rm SU(2)$}}, J. Reine Angew. Math.
  \textbf{708} (2015), 143--171, \mathscinet{MR3420332}
  \doi{10.1515/crelle-2013-0074}. \MR{3420332}

\bibitem[DFSW16]{MR3456763}
Matthew Daws, Pierre Fima, Adam Skalski, and Stuart White, \emph{The {H}aagerup
  property for locally compact quantum groups}, J. Reine Angew. Math.
  \textbf{711} (2016), 189--229, 10.1515/crelle-2013-0113\mathscinet{MR3456763}
  \doi{}. \MR{3456763}

\bibitem[DGNO10]{MR2609644}
Vladimir Drinfeld, Shlomo Gelaki, Dmitri Nikshych, and Victor Ostrik, \emph{On
  braided fusion categories. {I}}, Selecta Math. (N.S.) \textbf{16} (2010),
  no.~1, 1--119, \mathscinet{MR2609644} \doi{10.1007/s00029-010-0017-z}
  \arxiv{0906.0620}. \MR{2609644 (2011e:18015)}

\bibitem[DK94]{MR1310296}
Mathijs~S. Dijkhuizen and Tom~H. Koornwinder, \emph{C{QG} algebras: a direct
  algebraic approach to compact quantum groups}, Lett. Math. Phys. \textbf{32}
  (1994), no.~4, 315--330, \mathscinet{MR1310296} \doi{10.1007/BF00761142}
  \arxiv{hep-th/9406042}. \MR{1310296}

\bibitem[DSPS13]{1312.7188}
Chris Douglas, Chris Schommer-Pries, and Noah Snyder, \emph{Dualizable tensor
  categories}, 2013, \arxiv{1312.7188}.

\bibitem[DSV16]{1605.02800}
Matthew Daws, Adam Skalski, and Ami Viselter, \emph{Around property {(T)} for
  quantum groups}, 2016, \arxiv{1605.02800}.

\bibitem[EK98]{MR1642584}
David~E. Evans and Yasuyuki Kawahigashi, \emph{Quantum symmetries on operator
  algebras}, Oxford Mathematical Monographs. Oxford Science Publications. The
  Clarendon Press, Oxford University Press, New York, 1998, xvi+829 pp. ISBN:
  0-19-851175-2, \mathscinet{MR1642584}.

\bibitem[FK98]{MR1642530}
Michael Frank and Eberhard Kirchberg, \emph{On conditional expectations of
  finite index}, J. Operator Theory \textbf{40} (1998), no.~1, 87--111,
  \mathscinet{MR1642530}. \MR{1642530}

\bibitem[GJ16]{MR3447719}
Shamindra~Kumar Ghosh and Corey Jones, \emph{Annular representation theory for
  rigid {$C^*$}-tensor categories}, J. Funct. Anal. \textbf{270} (2016), no.~4,
  1537--1584, \mathscinet{MR3447719} \doi{10.1016/j.jfa.2015.08.017}
  \arxiv{1502.06543}. \MR{3447719}

\bibitem[GLR85]{MR808930}
P.~Ghez, R.~Lima, and J.~E. Roberts, \emph{{$W\sp \ast$}-categories}, Pacific
  J. Math. \textbf{120} (1985), no.~1, 79--109, \mathscinet{MR808930}.
  \MR{808930 (87g:46091)}

\bibitem[GN43]{MR0009426}
I.~Gelfand and M.~Neumark, \emph{On the imbedding of normed rings into the ring
  of operators in {H}ilbert space}, Rec. Math. [Mat. Sbornik] N.S.
  \textbf{12(54)} (1943), 197--213, \mathscinet{MR0009426}. \MR{0009426}

\bibitem[HP15]{1511.05226}
Andr\'e Henriques and David Penneys, \emph{Bicommutant categories from fusion
  categories}, 2015, \arxiv{1511.05226} \doi{10.1007/s00029-016-0251-0}.

\bibitem[HPT]{uAPA}
Andr\'e Henriques, David Penneys, and James Tener, \emph{Unitary anchored
  planar algebras in $\mathcal{Z}(\mathcal{C})$}, In preparation.

\bibitem[HPT16a]{1509.02937}
Andr\'e Henriques, David Penneys, and James~E. Tener, \emph{Categorified trace
  for module tensor categories over braided tensor categories}, Doc. Math.
  \textbf{21} (2016), 1089--1149, \arXiv{1509.02937}.

\bibitem[HPT16b]{1607.06041}
\bysame, \emph{Planar algebras in braided tensor categories}, 2016,
  \arxiv{1607.06041}.

\bibitem[ILP98]{MR1622812}
Masaki Izumi, Roberto Longo, and Sorin Popa, \emph{A {G}alois correspondence
  for compact groups of automorphisms of von {N}eumann algebras with a
  generalization to {K}ac algebras}, J. Funct. Anal. \textbf{155} (1998),
  no.~1, 25--63, \mathscinet{MR1622812}.

\bibitem[JL16]{1602.02662}
Arthur Jaffe and Zhengwei Liu, \emph{Planar para algebras, reflection
  positivity}, 2016, \arxiv{1602.02662}.

\bibitem[Jon83]{MR0696688}
Vaughan F.~R. Jones, \emph{Index for subfactors}, Invent. Math. \textbf{72}
  (1983), no.~1, 1--25, \mathscinet{MR696688}, \doi{10.1007/BF01389127}.

\bibitem[Jon99]{math.QA/9909027}
\bysame, \emph{Planar algebras {I}}, 1999, \arXiv{math.QA/9909027}.

\bibitem[Kel05]{MR2177301}
G.~M. Kelly, \emph{Basic concepts of enriched category theory}, Repr. Theory
  Appl. Categ. (2005), no.~10, vi+137, \mathscinet{MR2177301}, Reprint of the
  1982 original [Cambridge Univ. Press, Cambridge; \mathscinet{MR0651714}].
  \MR{2177301}

\bibitem[KO02]{MR1936496}
Alexander Kirillov, Jr. and Viktor Ostrik, \emph{On a {$q$}-analogue of the
  {M}c{K}ay correspondence and the {ADE} classification of {$\mathfrak{sl}_2$}
  conformal field theories}, Adv. Math. \textbf{171} (2002), no.~2, 183--227,
  \mathscinet{MR1936496} \arXiv{math.QA/0101219} \doi{10.1006/aima.2002.2072}.
  \MR{MR1936496 (2003j:17019)}

\bibitem[LR97]{MR1444286}
R.~Longo and J.~E. Roberts, \emph{A theory of dimension}, $K$-Theory
  \textbf{11} (1997), no.~2, 103--159, \mathscinet{MR1444286}
  \doi{10.1023/A:1007714415067}. \MR{1444286}

\bibitem[MP]{Enriched}
Scott Morrison and David Penneys, \emph{Tensor categories enriched in braided
  tensor categories}, In preparation.

\bibitem[MPS10]{MR2559686}
Scott Morrison, Emily Peters, and Noah Snyder, \emph{Skein theory for the
  {$D_{2n}$} planar algebras}, J. Pure Appl. Algebra \textbf{214} (2010),
  no.~2, 117--139, \arXiv{0808.0764} \mathscinet{MR2559686}
  \doi{10.1016/j.jpaa.2009.04.010}. \MR{MR2559686}

\bibitem[M{\"u}g03]{MR1966525}
Michael M{\"u}ger, \emph{From subfactors to categories and topology. {II}.
  {T}he quantum double of tensor categories and subfactors}, J. Pure Appl.
  Algebra \textbf{180} (2003), no.~1-2, 159--219, \mathscinet{MR1966525}
  \doi{10.1016/S0022-4049(02)00248-7} \arXiv{math.CT/0111205}.

\bibitem[MvN43]{MR0009096}
F.~J. Murray and J.~von Neumann, \emph{On rings of operators. {IV}}, Ann. of
  Math. (2) \textbf{44} (1943), 716--808, \mathscinet{MR0009096}. \MR{0009096
  (5,101a)}

\bibitem[NT13]{MR3204665}
Sergey Neshveyev and Lars Tuset, \emph{Compact quantum groups and their
  representation categories}, Cours Sp\'ecialis\'es [Specialized Courses],
  vol.~20, Soci\'et\'e Math\'ematique de France, Paris, 2013,
  \mathscinet{MR3204665}. \MR{3204665}

\bibitem[NY15]{1511.06332}
Sergey Neshveyev and Makoto Yamashita, \emph{A few remarks on the tube algebra
  of a monoidal category}, 2015, \arxiv{1511.06332}.

\bibitem[NY16]{MR3509018}
Sergey Neshveyev and Makoto Yamashita, \emph{Drinfeld center and representation
  theory for monoidal categories}, Comm. Math. Phys. \textbf{345} (2016),
  no.~1, 385--434, \mathscinet{MR3509018} \doi{10.1007/s00220-016-2642-7}
  \arxiv{1501.07390}. \MR{3509018}

\bibitem[Ocn88]{MR996454}
Adrian Ocneanu, \emph{Quantized groups, string algebras and {G}alois theory for
  algebras}, Operator algebras and applications, Vol.\ 2, London Math. Soc.
  Lecture Note Ser., vol. 136, Cambridge Univ. Press, Cambridge, 1988,
  \mathscinet{MR996454}, pp.~119--172.

\bibitem[Ost03]{MR1976459}
Victor Ostrik, \emph{Module categories, weak {H}opf algebras and modular
  invariants}, Transform. Groups \textbf{8} (2003), no.~2, 177--206,
  \mathscinet{MR1976459} \arXiv{math/0111139}. \MR{MR1976459 (2004h:18006)}

\bibitem[Pop94]{MR1302385}
Sorin Popa, \emph{Symmetric enveloping algebras, amenability and {AFD}
  properties for subfactors}, Math. Res. Lett. \textbf{1} (1994), no.~4,
  409--425, \mathscinet{MR1302385}, \doi{10.4310/MRL.1994.v1.n4.a2}.
  \MR{1302385 (95i:46095)}

\bibitem[Pop95]{MR1334479}
Sorin Popa, \emph{An axiomatization of the lattice of higher relative
  commutants of a subfactor}, Invent. Math. \textbf{120} (1995), no.~3,
  427--445, \mathscinet{MR1334479} \doi{10.1007/BF01241137}.

\bibitem[Pop99]{MR1729488}
\bysame, \emph{Some properties of the symmetric enveloping algebra of a
  subfactor, with applications to amenability and property {T}}, Doc. Math.
  \textbf{4} (1999), 665--744 (electronic), \mathscinet{MR1729488}.

\bibitem[PP86]{MR860811}
Mihai Pimsner and Sorin Popa, \emph{Entropy and index for subfactors}, Ann.
  Sci. \'{E}cole Norm. Sup. (4) \textbf{19} (1986), no.~1, 57--106,
  \mathscinet{MR860811}.

\bibitem[PSV15]{1511.07329}
Sorin Popa, Dimitri Shlyakhtenko, and Stefaan Vaes, \emph{Cohomology and
  $l^2$-{B}etti numbers for subfactors and quasi-regular inclusions}, 2015,
  \arxiv{1511.07329}.

\bibitem[PV15]{MR3406647}
Sorin Popa and Stefaan Vaes, \emph{Representation theory for subfactors,
  {$\lambda$}-lattices and {$\rm C^*$}-tensor categories}, Comm. Math. Phys.
  \textbf{340} (2015), no.~3, 1239--1280, \mathscinet{MR3406647}
  \doi{10.1007/s00220-015-2442-5} \arxiv{1412.2732}. \MR{3406647}

\bibitem[Ush16]{1606.03466}
Robert Usher, \emph{Fermionic 6j-symbols in superfusion categories}, 2016,
  \arxiv{1606.03466}.

\bibitem[Yam04]{MR2091457}
Shigeru Yamagami, \emph{Frobenius duality in {$C^*$}-tensor categories}, J.
  Operator Theory \textbf{52} (2004), no.~1, 3--20, \mathscinet{MR2091457}.
  \MR{2091457 (2005f:46109)}

\end{thebibliography}
}}
\end{document}